\begin{document}
\newcommand{\M}{{\mathcal M}}
\newcommand{\loc}{{\mathrm{loc}}}
\newcommand{\core}{C_0^{\infty}(\Omega)}
\newcommand{\sob}{W^{1,p}(\Omega)}
\newcommand{\sobloc}{W^{1,p}_{\mathrm{loc}}(\Omega)}
\newcommand{\merhav}{{\mathcal D}^{1,p}}
\newcommand{\be}{\begin{equation}}
\newcommand{\ee}{\end{equation}}
\newcommand{\mysection}[1]{\section{#1}\setcounter{equation}{0}}
\newcommand{\laplace}{\Delta}
\newcommand{\pl}{\laplace_p}
\newcommand{\grad}{\nabla}
\newcommand{\pd}{\partial}
\newcommand{\bo}{\pd}
\newcommand{\csub}{\subset \subset}
\newcommand{\sm}{\setminus}
\newcommand{\ol}{\overline}
\newcommand{\ssm}{:}
\newcommand{\diver}{\mathrm{div}\,}
\newcommand{\bea}{\begin{eqnarray}}
\newcommand{\eea}{\end{eqnarray}}
\newcommand{\bean}{\begin{eqnarray*}}
\newcommand{\eean}{\end{eqnarray*}}
\newcommand{\thkl}{\rule[-.5mm]{.3mm}{3mm}}
\newcommand{\cw}{\stackrel{\rightharpoonup}{\rightharpoonup}}
\newcommand{\id}{\operatorname{id}}
\newcommand{\supp}{\operatorname{supp}}
\newcommand{\wlim}{\mbox{ w-lim }}
\newcommand{\mymu}{{x_N^{-p_*}}}
\newcommand{\R}{{\mathbb R}}
\newcommand{\N}{{\mathbb N}}
\newcommand{\Z}{{\mathbb Z}}
\newcommand{\Q}{{\mathbb Q}}
\newcommand{\abs}[1]{\lvert#1\rvert}
\newcommand{\norm}[1]{\left\lVert#1\right\rVert}
\newcommand{\im}{\mathrm{im}}
\newcommand{\kernl}{\mathrm{ker}}
\newtheorem{theorem}{Theorem}[section]
\newtheorem{corollary}[theorem]{Corollary}
\newtheorem{lemma}[theorem]{Lemma}
\newtheorem{cor}[theorem]{Corollary}
\newtheorem{lem}[theorem]{Lemma}
\newtheorem{notation}[theorem]{Notation}
\newtheorem{definition}[theorem]{Definition}
\newtheorem{remark}[theorem]{Remark}
\newtheorem{proposition}[theorem]{Proposition}
\newtheorem{assertion}[theorem]{Assertion}
\newtheorem{problem}[theorem]{Problem}
\newtheorem{conjecture}[theorem]{Conjecture}
\newtheorem{question}[theorem]{Question}
\newtheorem{example}[theorem]{Example}
\newtheorem{Thm}[theorem]{Theorem}
\newtheorem{Lem}[theorem]{Lemma}
\newtheorem{Pro}[theorem]{Proposition}
\newtheorem{Def}[theorem]{Definition}
\newtheorem{Exa}[theorem]{Example}
\newtheorem{Exs}[theorem]{Examples}
\newtheorem{Rems}[theorem]{Remarks}
\newtheorem{Rem}[theorem]{Remark}

\newtheorem{Cor}[theorem]{Corollary}
\newtheorem{Conj}[theorem]{Conjecture}
\newtheorem{Prob}[theorem]{Problem}
\newtheorem{Ques}[theorem]{Question}
\newtheorem*{corollary*}{Corollary}
\newtheorem*{theorem*}{Theorem}
\newcommand{\pf}{\noindent \mbox{{\bf Proof}: }}


\renewcommand{\theequation}{\thesection.\arabic{equation}}
\catcode`@=11 \@addtoreset{equation}{section} \catcode`@=12
\newcommand{\Real}{\mathbb{R}}
\newcommand{\real}{\mathbb{R}}
\newcommand{\Nat}{\mathbb{N}}
\newcommand{\ZZ}{\mathbb{Z}}
\newcommand{\CC}{\mathbb{C}}
\newcommand{\Pess}{\opname{Pess}}
\newcommand{\Proof}{\mbox{\noindent {\bf Proof} \hspace{2mm}}}
\newcommand{\mbinom}[2]{\left (\!\!{\renewcommand{\arraystretch}{0.5}
\mbox{$\begin{array}[c]{c}  #1\\ #2  \end{array}$}}\!\! \right )}
\newcommand{\brang}[1]{\langle #1 \rangle}
\newcommand{\vstrut}[1]{\rule{0mm}{#1mm}}
\newcommand{\rec}[1]{\frac{1}{#1}}
\newcommand{\set}[1]{\{#1\}}
\newcommand{\dist}[2]{$\mbox{\rm dist}\,(#1,#2)$}
\newcommand{\opname}[1]{\mbox{\rm #1}\,}
\newcommand{\mb}[1]{\;\mbox{ #1 }\;}
\newcommand{\undersym}[2]
 {{\renewcommand{\arraystretch}{0.5}  \mbox{$\begin{array}[t]{c}
 #1\\ #2  \end{array}$}}}
\newlength{\wex}  \newlength{\hex}
\newcommand{\understack}[3]{%
 \settowidth{\wex}{\mbox{$#3$}} \settoheight{\hex}{\mbox{$#1$}}
 \hspace{\wex}  \raisebox{-1.2\hex}{\makebox[-\wex][c]{$#2$}}
 \makebox[\wex][c]{$#1$}   }%
\newcommand{\smit}[1]{\mbox{\small \it #1}}
\newcommand{\lgit}[1]{\mbox{\large \it #1}}
\newcommand{\scts}[1]{\scriptstyle #1}
\newcommand{\scss}[1]{\scriptscriptstyle #1}
\newcommand{\txts}[1]{\textstyle #1}
\newcommand{\dsps}[1]{\displaystyle #1}
\newcommand{\dx}{\,\mathrm{d}x}
\newcommand{\dy}{\,\mathrm{d}y}
\newcommand{\dz}{\,\mathrm{d}z}
\newcommand{\dt}{\,\mathrm{d}t}
\newcommand{\dr}{\,\mathrm{d}r}
\newcommand{\du}{\,\mathrm{d}u}
\newcommand{\dv}{\,\mathrm{d}v}
\newcommand{\dV}{\,\mathrm{d}V}
\newcommand{\ds}{\,\mathrm{d}s}
\newcommand{\dS}{\,\mathrm{d}S}
\newcommand{\dk}{\,\mathrm{d}k}

\newcommand{\dphi}{\,\mathrm{d}\phi}
\newcommand{\dtau}{\,\mathrm{d}\tau}
\newcommand{\dxi}{\,\mathrm{d}\xi}
\newcommand{\deta}{\,\mathrm{d}\eta}
\newcommand{\dsigma}{\,\mathrm{d}\sigma}
\newcommand{\dtheta}{\,\mathrm{d}\theta}
\newcommand{\dnu}{\,\mathrm{d}\nu}
\newcommand{\Ker}{\mathrm{Ker}}
\newcommand{\Ima}{\mathrm{Im}}

\newcommand{\Rd}{\color{red}}
\newcommand{\Bk}{\color{black}}
\newcommand{\Mg}{\color{magenta}}
\newcommand{\Wh}{\color{white}}
\newcommand{\Bl}{\color{blue}}
\newcommand{\Yl}{\color{yellow}}


\renewcommand{\div}{\mathrm{div}}
\newcommand{\red}[1]{{\color{red} #1}}

\newcommand{\cqfd}{\begin{flushright}                  
			 $\Box$
                 \end{flushright}}


\title{$L^p$ cohomology and Hodge decomposition for ALE manifolds}

\author{Baptiste Devyver and Klaus Kr\"{o}ncke}
\address{Baptiste Devyver, Institut Fourier - Universit\'e de Grenoble Alpes, France}
\email{baptiste.devyver@univ-grenoble-alpes.fr}
\address{Klaus Kr\"oncke, KTH Stockholm, Sweden}
\email{kroncke@kth.se}

\maketitle
\tableofcontents

\begin{abstract}  

In this article, we prove that the dimensions of $\overline{H}_p^k(M)$, the $L^p$ reduced cohomology spaces on an ALE manifold $M$ of dimension $n\geq 3$, for any values $p\in (1,+\infty)$ and $k\in \{0,\cdots,n\}$, is equal to the dimension of some spaces of decaying harmonic forms that depends on $p$ and $k$.  In this class of manifolds, this provides an extension to $p\neq 2$ of the well-known result of Hodge.  In particular, we prove that for fixed $k\notin\left\{1,n-1\right\}$, the dimension is independent of $p\in (1,\infty)$, while for $k\in\left\{1,n-1\right\}$, the dimension jumps exactly once by a factor $N-1$ (where $N$ is the number of ends) if $p$ varies in $(1,\infty)$.
 We also prove $L^p$ Hodge decompositions for $k$-forms on such manifolds, for the optimal values of $k$ and $p$.

\end{abstract}

\section{Introduction}

In this paper, we look at $L^p$ reduced cohomology spaces and $L^p$ Hodge decomposition in the class of asymptotically, locally Euclidean manifolds (ALE manifolds, in short). In the case $p=2$, there is a very rich literature about $L^2$ cohomology and $L^2$ Betti numbers for non-compact manifolds. Much less is known in the case $p\neq 2$, however see for instance the survey \cite{PP} for pointers to the litterature, both in the case $p=2$ and $p\neq 2$. A general question is to relate the $L^p$ Betti numbers with the topology of the manifold, and its geometry at infinity. For $p=2$, an important fact for $L^2$ cohomology is the Hodge theorem, which asserts that for a general complete manifold, the dimension of the $k^{th}$ space of reduced $L^2$ cohomology is equal to the dimension of the space of $L^2$ differential forms of degree $k$, that are harmonic for the Hodge Laplacian. Our first goal in this paper is to show that for asymptotically locally Euclidean (in short, ALE) manifolds with a finite number of ends, and any $p\in (1,\infty)$, the dimension of the $k^{th}$ space of reduced $L^p$ cohomology can also be computed as the dimension of a space of harmonic forms, and thus can be compared to the dimension of the corresponding $L^2$ cohomology space. Quite surprisingly, this appears to be unknown, despite the rather restricted class of manifolds considered and their quite simple geometry at infinity. Let us introduce for $\alpha\in\R$ the following notation:

$$\ker_{\alpha}(\Delta_k)=\{\omega \in C^\infty(\Lambda^kM)\,;\,\Delta_k\omega=0,\,|\omega|=O_{r\to \infty}(r^{\alpha})\}.$$
Here, $r$ is a ``radial coordinate'' at infinity in $M$, see Section 2, and the operator $\Delta_k=d_{k-1}d^*_k+d_{k+1}^*d_k$ is the Hodge Laplacian, $d_k$ being the differential acting on $k$-forms and $d_k^*$ its formal adjoint. We also denote by $\mathcal{H}_k(M)$ the vector space of $L^2$ harmonic forms of degree $k$, and 

$$H^k_p(M)=\frac{\ker_{L^p}(d_k)}{\im_{L^p}(d_{k-1})}$$
the $k^{th}$ space of {\em reduced} $L^p$ cohomology of $M$, where by definition

$$\ker_{L^p}(d_k)=\{\alpha\in L^p(\Lambda^kM)\,;\,d_k\alpha=0\},$$
the equation $d_k\alpha=0$ being intended in the weak sense, and $\im_{L^p}(d_k)$ being the closure in $L^p$ of $dC_c^\infty(\Lambda^kM)$. We insist on the fact that we chose to define $\im_{L^p}(d_k)$ as a {\em closed} subspace, in order to avoid writing closures throughout the text and to keep notations light, however this is not the choice that is made by most authors in the literature. The Hodge theorem for the $L^2$ cohomology on complete manifolds mentionned before writes:

$$\overline{H}_2^k(M)\simeq \mathcal{H}_k(M).$$
With this settled, we show:

\begin{Thm}\label{thm:main1}

Let $M$ be a connected, oriented ALE manifold with dimension $n\geq 3$. Let $k\in \{0,\cdots,n\}$ and $p\in (1,+\infty)$. Denote by $N$ the number of ends of $M$, which is assumed to be finite. Then, if one of the following holds:

\begin{itemize}

\item[(a)] $k\notin\{1,n-1\}$;

\item[(b)] $k=1$ and $p\in (1,n)$, or $k=n-1$ and $p\in (\frac{n}{n-1},\infty)$;

\item[(c)] $k\in \{1,n-1\}$ and $M$ has only one end;

\end{itemize} 
then,

$$H^k_p(M)\simeq \overline{H}_2^k(M)\simeq \mathcal{H}_k(M).$$
On the other hand, if $k=1$ and $p\geq n$, or if $k=n-1$ and $p\leq \frac{n}{n-1}$, then

$$H^k_p(M)\simeq \ker_{-n}(\Delta_k),$$
which is a subspace of $\mathcal{H}_k(M)$ of codimension $N-1$.
\end{Thm}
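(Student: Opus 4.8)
The plan is to treat $k=1$, $p\ge n$ directly and to deduce the case $k=n-1$, $p\le \frac{n}{n-1}$ from it by Hodge duality. Since the Hodge star $*\colon \Lambda^kM\to\Lambda^{n-k}M$ is a pointwise isometry that commutes with $\Delta$ up to sign and preserves both $L^p$ and the decay rate, it identifies $\ker_{-n}(\Delta_{n-1})$ with $\ker_{-n}(\Delta_1)$ and $\mathcal H_{n-1}(M)$ with $\mathcal H_1(M)$. Combined with Poincaré duality for reduced $L^p$ cohomology, which gives $\dim H^{n-1}_p(M)=\dim H^1_{p'}(M)$ with $p'=\frac{p}{p-1}\ge n$, the statement for $k=n-1$ follows at once from the one for $k=1$. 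So I would fix $k=1$ and $p\ge n$ for the rest.

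First I would establish the codimension statement $\dim \mathcal H_1(M)-\dim\ker_{-n}(\Delta_1)=N-1$. Using the asymptotic analysis of harmonic $1$-forms on the ALE ends (separation of variables on the model cone and the resulting indicial roots), every $\omega\in\mathcal H_1(M)$ admits on each end an expansion whose slowest $L^2$-admissible term decays like $r^{1-n}$ and whose next term decays like $r^{-n}$, with no intermediate rate; hence $\ker_{-n}(\Delta_1)$ is exactly the subspace of those $\omega$ whose leading coefficients $a_1,\dots,a_N$ (one per end) all vanish. Reading off these coefficients defines a linear map $\mathcal H_1(M)\to\R^N$ with kernel $\ker_{-n}(\Delta_1)$, and Stokes applied to the coclosed form $\omega$ forces $\sum_i a_i=0$, so the image lies in a hyperplane. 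I would then show it fills the hyperplane by exhibiting, for prescribed limits $(c_1,\dots,c_N)$, a bounded harmonic function $u$ with $u\to c_i$ on the $i$-th end (solvable since $n\ge3$ makes the ends nonparabolic); its differential $du$ is an $L^2$ harmonic field with leading term $\sim r^{1-n}$. This gives rank $N-1$, hence the codimension.

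Next I would identify $H^1_p(M)$ with $\ker_{-n}(\Delta_1)$. For the injection $\ker_{-n}(\Delta_1)\hookrightarrow H^1_p(M)$: any $h\in\ker_{-n}(\Delta_1)$ is closed, coclosed, and lies in $L^q$ for every $q>1$ (as $|h|=O(r^{-n})$); if $h=\lim_j d\phi_j$ in $L^p$ with $\phi_j\in C_c^\infty$, then pairing against $h\in L^{p'}$ gives $\|h\|_{L^2}^2=\lim_j\int\langle d\phi_j,h\rangle=\lim_j\int\langle\phi_j,d^*h\rangle=0$, so $h=0$, and distinct elements of $\ker_{-n}(\Delta_1)$ give distinct nonzero classes. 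To see that the slow modes die, take $u$ as above and a cutoff $\chi_R$; with a linear cutoff on $B_{2R}\setminus B_R$ one has $\|u\,d\chi_R\|_{L^p}^p\lesssim R^{n-p}\to0$ for $p>n$, while at $p=n$ a logarithmic cutoff on $B_{R^2}\setminus B_R$ with $|d\chi_R|\lesssim (r\log R)^{-1}$ gives $\|u\,d\chi_R\|_{L^p}^p\lesssim(\log R)^{1-n}\to0$. Since $d(u\chi_R)=\chi_R\,du+u\,d\chi_R\to du$ in $L^p$, we get $du\in\overline{\im_{L^p}(d)}$, so every slow mode represents the zero class once $p\ge n$.

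Finally, for surjectivity I would show every reduced $L^p$ class is represented by an element of $\ker_{-n}(\Delta_1)$. Given a closed $\alpha\in L^p$, I would solve $\Delta_1\eta=\alpha$ in a suitable weighted $L^p$-Sobolev space using the Fredholm theory of $\Delta_1$ on ALE manifolds; writing $\alpha=d(d^*\eta)+d^*(d\eta)$ and using $d\alpha=0$ shows $d^*d\eta$ is a harmonic field, so $\alpha$ is cohomologous to it. By the asymptotic analysis this harmonic field lies in $\ker_{-n}(\Delta_1)$ plus slow modes, and the slow modes are exact for $p\ge n$, so $\alpha$ is cohomologous to an element of $\ker_{-n}(\Delta_1)$; with the injection this yields $H^1_p(M)\simeq\ker_{-n}(\Delta_1)$. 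I expect the main obstacle to be exactly this surjectivity step: one must choose the weight so that $\Delta_1$ is solvable in $L^p$ and simultaneously forces the harmonic remainder into the decay class $O(r^{-n})$ modulo the $N-1$ slow modes, i.e.\ the correct weighted Fredholm/Hodge decomposition at the borderline exponent $p=n$, where the indicial roots and the $L^p$-mapping properties of the relevant Green operator interact most delicately.
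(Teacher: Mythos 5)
For the range you do treat ($k=1$, $p\ge n$, with Hodge/Poincar\'e duality handling $k=n-1$, $p\le\frac{n}{n-1}$), your ingredients match the paper's: the expansion $\omega=B_ir^{1-n}dr+\mathcal{O}_\infty(r^{1-n-\epsilon})$ and the splitting $\ker_{1-n}(\Delta_1)=d(\ker_0(\Delta_0))\oplus\ker_{-n}(\Delta_1)$ with its $N-1$ count (Lemma \ref{lem:asym_forms}, Corollary \ref{cor:decomp_kernel_1}), injectivity of $\ker_{-n}(\Delta_1)\to H^1_p(M)$ via the $L^p$--$L^{p'}$ pairing, and the vanishing of the classes $[du]$ via $p$-parabolicity for $p\ge n$ (Proposition \ref{pro:im_p>n}). (A small imprecision: on a general ALE end the Stokes constraint is the weighted sum $\sum_i B_i/\mathrm{Card}(\Gamma_i)=0$, not $\sum_i B_i=0$; this does not affect the codimension.) The first genuine gap is one of scope: the proposal proves only the final assertion of the theorem. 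Cases (a), (b), (c) --- $H^k_p(M)\simeq\mathcal{H}_k(M)$ for $k\notin\{1,n-1\}$ and all $p$, for $k=1$ and $p<n$, and for one-ended $M$ --- are never argued, and they do not follow by duality from the case you fix (duality only exchanges $(1,p)$ with $(n-1,p')$). In the paper these occupy Theorem \ref{mainthm:part_1} and rest on the strong $L^p$ Hodge decomposition, in particular on the closedness of $\im_{L^p}(d_{k-1})+\im_{L^p}(d^*_{k+1})$.

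The second gap is in your surjectivity step. For $p>n$ the operator $\Delta_1:W^{2,p}_{2-n/p}\to L^p$ is Fredholm but not surjective: by duality of the weighted spaces its cokernel is $\ker_{L^{p'}}(\Delta_1)=\ker_{-n}(\Delta_1)$, which is nonzero whenever $\mathcal{H}_1(M)\neq\{0\}$, so $\Delta_1\eta=\alpha$ is not solvable for an arbitrary closed $\alpha\in L^p$; one must first split off the (finite-dimensional) component of $\alpha$ along $\ker_{-n}(\Delta_1)$, which is fixable but absent from your argument. More seriously, at $p=n$ the natural weight $\delta=2-\frac{n}{p}=1$ is an exceptional (indicial) value for the Laplacian, so the weighted Fredholm theory you invoke is simply unavailable there, and you offer no substitute --- you correctly identify this as "the main obstacle" but do not overcome it, while the theorem explicitly includes $p=n$. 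The paper does not resolve $p=n$ by a sharper weighted analysis at all: it proves the relevant closedness statement for $p\neq n$ via the Fredholm theory of the Hodge--Dirac operator (Proposition \ref{prop:closed_subspace}), and then handles $p=n$ by a Riesz--Thorin interpolation argument for the Hodge projectors between an exponent below $n$ and one above $n$ (Proposition \ref{prop:interpolation}). Some device of this kind is needed to close your argument at the borderline exponent.
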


\begin{Rem}
{\em 
In fact, we will show that $\mathcal{H}_k(M)=\ker_{1-n}(\Delta_k)$ for every $k$ and $\ker_{1-n}(\Delta_k)=\ker_{-n}(\Delta_k)$ for $k\notin\{1,n-1\}$. For $k\in \{1,n-1\}$, we will in turn show that $\ker_{-n}(\Delta_k)\subset\ker_{1-n}(\Delta_k)$ is a subspace of codimension $N-1$.
}
\end{Rem}

\begin{Rem}
{\em 

According to G. Carron in \cite{C3}, the spaces of reduced $L^2$ cohomology have a topological interpretation, for manifolds with ends that are quasi-isometric to flat ends; this class of manifolds includes in particular all ALE manifolds. Thus, as a corollary to Theorem \ref{thm:main1}, for any $p\in (1,+\infty)$, one can relate the reduced $L^p$ cohomology spaces of an ALE manifold with some topological invariants. In the special case of AE (asymptotically Euclidean) manifolds of dimension $n\geq3$ with a finite number of ends $N$, the topological interpretation of the reduced $L^2$ cohomology spaces is as follows: these spaces can be identified with the relative cohomology spaces of a manifold with boundary, obtained from $M$ by removing in each end the complement of a large Euclidean ball. If $M$ is AE, then the dimension of $\mathcal{H}_k(M)$ is also equal to 

$$
\begin{cases}
\mathrm{dim}\,H^k(\bar{M}), & k\notin \{1,n-1\}\\
\mathrm{dim}\,H^k(\bar{M})+N-1, & k\in\{1,n-1\}
\end{cases}
$$
where $\bar{M}$ is a compact manifold without boundary, obtained by one point -- compactifying each end of $M$.

}
\end{Rem}
Let us now compare briefly the result of Theorem \ref{thm:main1} for $p\neq 2$ with the results available in the literature.  For $p\neq 2$, the reduced $L^p$ cohomology spaces of ALE manifolds have been studied by Gold'shtein, Kuzminov and Shvedov in \cite{GKS}; more specifically, they show (see \cite[Theorem 1]{GKS}) that the long exact sequence in relative cohomology induces pieces of exact sequences in reduced $L^p$ cohomology. They also compute the reduced $L^p$ cohomology of cylinders (see \cite[Theorem 2]{GKS}), which can then be used to get some information about the spaces $H^k_p(M)$ (see for instance the proof of \cite[Theorem 5]{GKS}). Let us explain what their approach yields in the particular case where the manifold $M$ has Euclidean ends; fix a smooth, open relatively compact set $U$ such that $M\setminus U$ is the union of $N$ cones $E_i:=[a,\infty)\times S^{n-1}$, $1\leq i\leq N$, each one being endowed with the cone metric $dt^2+t^2g_{S^{n-1}}$. Note that $\partial U$ consists in the disjoint union of $N$ spheres $S^{n-1}$. Let $1\leq k\leq n-1$. According to \cite[Theorem 1]{GKS}, the following two pieces of long sequences are exact:

\begin{equation}\label{eq:rel1}
H^{k-1}(U)\to \oplus_{i=1}^N \overline{H}^k_p(E_i,\partial E_i)\to \overline{H}_p^k(M) \to H^k(U),
\end{equation}
and

\begin{equation}\label{eq:rel2}
H^k(U,\partial U)\to \overline{H}_p^k(M)\to \oplus_{i=1}^N\overline{H}_p^k(E_i) \to H^{k+1}(U,\partial U)
\end{equation}
(all the $L^p$ cohomology spaces here are reduced, and we recall that the relative cohomology spaces above are defined using similar definitions as the non-relative ones, but using forms with vanish at the boundary in a certain sense). We warn the reader that the exactness of the above two sequences stop on the right and on the left, and does not extend to a long exact sequence. According to \cite[Theorem 2]{GKS}, one has

\begin{equation}\label{eq:van1}
p\geq \frac{n}{k}\Rightarrow \overline{H}_p^k(E_i,\partial E_i)=\{0\}
\end{equation}
and

\begin{equation}\label{eq:van2}
p\leq \frac{n}{k}\Rightarrow \overline{H}_p^k(E_i)=\{0\}.
\end{equation}
Hence,

\begin{equation}\label{eq:van11}
p\geq \frac{n}{k}\Rightarrow \mathrm{dim}\,\overline{H}_p^k(M)\leq \mathrm{dim}\,H^k(U),
\end{equation}
and

\begin{equation}\label{eq:van22}
p\leq \frac{n}{k}\Rightarrow \mathrm{dim}\,\overline{H}_p^k(M) \leq \mathrm{dim}\,H^k(U,\partial U).
\end{equation}
Moreover, Carron's result in \cite{C3} implies that

$$\overline{H}_2^k(M)\simeq H^k(U,\partial U).$$
The exact sequence in relative cohomology of the pair $(U,\partial U)$ and the well-known computation of the cohomology of spheres imply that

$$\mathrm{dim }\,H^k(U,\partial U)=\begin{cases}
\mathrm{dim }\,H^k(U),& k\neq 1,\\
\mathrm{dim }\,H^k(U)+(N-1),& k=1.
\end{cases}$$
Therefore,

\begin{equation}\label{eq:vanL2}
\mathrm{dim }\,\overline{H}_2^k(M)=\mathrm{dim }\,H^k(U,\partial U)=\begin{cases}
\mathrm{dim }\,H^k(U),& k\neq 1,\\
\mathrm{dim }\,H^k(U)+(N-1),& k=1.
\end{cases}
\end{equation}
Assume first that $k\neq 1$, then by \eqref{eq:van11}, \eqref{eq:van22} and \eqref{eq:vanL2}, one has

\begin{equation}\label{eq:ineq1}
\mathrm{dim}\,\overline{H}_p^k(M)\leq \mathrm{dim }\,\overline{H}_2^k(M).
\end{equation}
And for $k=1$, one has

\begin{equation}\label{eq:ineq2}
\mathrm{dim}\,\overline{H}_p^1(M)\leq \begin{cases}
\mathrm{dim }\,\overline{H}_2^1(M),&p< n,\\
\mathrm{dim }\,\overline{H}_2^1(M)-(N-1),&p\geq n.
\end{cases}
\end{equation}
Using duality (see Proposition \ref{prop:duality}), one can in fact obtain for $k=n-1$ that

\begin{equation}\label{eq:ineq3}
\mathrm{dim}\,\overline{H}_p^{n-1}(M)\leq \begin{cases}
\mathrm{dim }\,\overline{H}_2^1(M),&p> \frac{n}{n-1},\\
\mathrm{dim }\,\overline{H}_2^1(M)-(N-1),&p\leq \frac{n}{n-1}.
\end{cases}
\end{equation}
It does not seem possible to relate more precisely reduced $L^2$ and $L^p$ cohomology using directly the results in \cite{GKS}. Our Theorem \ref{thm:main1} therefore appears as a substantial improvement.

\medskip

Another main result of this paper concerns $L^p$ Hodge decompositions on ALE manifolds. For this, we need two definitions:

\begin{Def}

{\em

For $p\in (1,+\infty)$ and $k\in \{0,\cdots,n\}$, we say that the {\em $L^p$ Hodge decomposition} holds for $k$-forms, provided $\im_{L^p}(d_{k-1})+ \im_{L^p}(d^*_{k+1})$ is closed, and 

\begin{equation}\label{eq:HodgeLp}\tag{$\mathscr{H}_p$}
L^p(\Lambda^k M)=\im_{L^p}(d_{k-1})\oplus \im_{L^p}(d^*_{k+1})\oplus \ker_{L^p}(\Delta_{k}).
\end{equation}
}

\end{Def}
We also consider a ``modified'' $L^p$ Hodge decomposition: 

\begin{Def}

{\em 

For $p\in (1,+\infty)$ and $k\in \{0,\cdots,n\}$, we say that a {\em modified $L^p$ Hodge decomposition} holds for $k$-forms, provided $\im_{L^p}(d_{k-1})+\im_{L^p}(d^*_{k+1})$ is closed and

\begin{equation}\label{eq:mod-HodgeLp}\tag{$\tilde{\mathscr{H}}_p$}
L^p(\Lambda^kM)=\im_{L^p}(d_{k-1})\oplus \im_{L^p}(d^*_{k+1})\oplus \ker_{-n}(\Delta_{k}).
\end{equation}

}

\end{Def}
The celebrated De Rham-Kodaira theorem asserts that \eqref{eq:HodgeLp} holds for $p=2$ on {\em any} complete Riemannian manifold. However, for $p\neq 2$, the question of the existence or non-existence of a $L^p$ Hodge decomposition, or --what is closely related-- of the $L^p$ boundedness of the Hodge projectors, is notoriously already difficult in the case of forms of degree $1$. In the case of the Euclidean space itself, the $L^p$ Hodge decomposition \eqref{eq:HodgeLp} has first been proved (even in a stronger form) by T. Iwaniec and G. Martin in \cite{IM}. It is well-known that the question of the $L^p$ boundedness of the Hodge projectors is itself related to the $L^p$ boundedness of Riesz transforms (on forms), that is of the operators $d\Delta_k^{-1/2}$ and $d^*\Delta_k^{-1/2}$ (see Section 6 for some details on this). This is in fact the approach of \cite{IM} to prove the $L^p$ Hodge decomposition on $\R^n$. The same approach has been used by X.D. Li in order to prove \eqref{eq:HodgeLp} in the case the Bochner-Weitzenb\"ock curvature tensors in degree $k-1$, $k$ and $k+1$ are non-negative (see \cite{XDLi1}), building on the celebrated work of Bakry about the Riesz transforms on Riemannian manifolds (\cite{Bakry}). X.D. Li also generalizes results due to Lohou\'e, and obtains stronger $L^p$ Hodge decompositions under assumptions that are essentially equivalent to the positivity of the bottom of the spectrum of the Hodge Laplacian (see \cite{XDLi2}), however these assumptions are not satisfied for ALE manifolds. Let us also warn the reader that the terminology used by X. D. Li (weak and strong $L^p$ Hodge decomposition) has a different meaning than that used in the present paper. Concerning $L^p$ cohomology in degree $1$, more is known: according to P. Auscher and T. Coulhon in \cite{AC}, on ALE manifolds, the $L^p$ boundedness of the Hodge projector on exact forms is essentially equivalent to the boundedness of the scalar Riesz transform $\nabla \Delta^{-1/2}$. The boundedness on $L^p$ spaces of the latter has only relatively recently been elucidated (see \cite{CCH}). For the boundedness on $L^p$ of the Riesz transforms on forms, partial results on asymptotically conical manifolds can be found in \cite{GS}; even for ALE manifolds, these results are somehow incomplete (but they can be made complete by using results that we will prove in the present paper). They however rely on a difficult, precise blow-up analysis of the Schwarz kernel of the resolvent of the Hodge Laplacian. 

To summarize, even for manifolds with Euclidean ends, the only results known to the authors about the $L^p$ Hodge decomposition problem use the boundedness of the Riesz transform either on functions, or on forms. In their 2003 paper \cite[p.5]{CD2}, T. Coulhon and X.T. Duong write in this respect: ``the problem of finding sufficient conditions [for the $L^p$ boundedness of the Hodge projector on exact $1$-forms] is not easier than the Riesz transform problem''.  However, in the present paper, we want to convey the idea that the former {\em can} indeed be easier to prove, at least in some situations: indeed, we prove {\em directly} the $L^p$ boundedness or unboundedness of the Hodge projectors of any degree on ALE manifolds, {\em via} the $L^p$ Hodge decomposition, without using any result on Riesz transforms. As a consequence of our detailed analysis of the decay properties of harmonic forms on ALE manifolds, we are in fact also able to complete the results of \cite{GS} and to completely characterize on any ALE manifold the exponents $p\in (1,+\infty)$ and $k\in\{0,\cdots,n\}$ for which the Riesz transforms on $k$-forms are bounded on $L^p$. See Corollary \ref{cor:Riesz}. However, we stress again that these results are not used to obtain the various $L^p$ Hodge decompositions, and in fact, in the case $p$ and $k$ are such that the Riesz transforms on $k$-forms are unbounded on $L^p$, we still obtain a modified Hodge decomposition, a result that seems inaccessible using only results about the Riesz transforms and the Hodge projectors. Our method ultimately relies on Fredholm properties of the Hodge Laplacian in weighted Sobolev spaces, which are well-known on ALE manifolds. Thus, we prove the $L^p$ Hodge decomposition on $k$-forms for the optimal values of $p$ and $k$ on ALE manifolds, and moreover we give a substitute (the modified $L^p$ Hodge decomposition) for the values of $p$ when this decomposition fails; our result is as follows:

\begin{Thm}\label{thm:Hodge}

Let $M$ be a connected, oriented ALE manifold with dimension $n\geq 3$, and let $p\in (1,+\infty)$, $k\in \{0,\cdots,n\}$. Then, the $L^p$ Hodge decomposition \eqref{eq:HodgeLp} holds if and only if one of the following holds:

\begin{itemize}

\item[(a)] $k\notin\{1,n-1\}$;

\item[(b)] $k\in \{1,n-1\}$ and $p\in (\frac{n}{n-1},n)$;

\item[(c)] $k\in \{1,n-1\}$ and $M$ has only one end.

\end{itemize} 
Moreover,

\begin{itemize}

\item[(d)] if $k\in \{1,n-1\}$, $M$ has $N\geq 2$ ends and $p\geq n$, then the modified $L^p$ Hodge decomposition \eqref{eq:mod-HodgeLp} holds.

\item[(e)] if $k\in \{1,n-1\}$, $M$ has $N\geq 2$ ends and $p\leq \frac{n}{n-1}$, then the space $im_{L^p}(d_{k-1})\oplus \im_{L^p}(d^*_{k+1})\oplus \ker_{L^p}(\Delta_{k})$ is closed and has codimension $N-1$ in $L^p(\Lambda^kM)$.

\end{itemize}

\end{Thm}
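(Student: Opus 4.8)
\emph{My plan.} I would combine the Hodge-star symmetry with the Fredholm theory of $\Delta_k$ on weighted spaces that underlies Theorem \ref{thm:main1}, and reduce the whole statement to a comparison of the $L^p$-integrability of two borderline families of harmonic forms. First, since the Hodge star is an isometry $L^p(\Lambda^kM)\to L^p(\Lambda^{n-k}M)$ intertwining $d$ and $d^*$ and preserving $\ker(\Delta)$, the assertions for degree $n-1$ follow from those for degree $1$ at the \emph{same} exponent $p$, and any $k\notin\{1,n-1\}$ may be taken with $k\le n/2$; so it suffices to analyse $k=1$ (and, simultaneously, $k=n-1$) apart from the generic degrees. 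Next, as $\im_{L^p}(d_{k-1})$ and $\im_{L^p}(d^*_{k+1})$ are closed by definition and their annihilators in the $L^p$--$L^{p'}$ pairing are $\ker_{L^{p'}}(d^*_k)$ and $\ker_{L^{p'}}(d_k)$, one has
$$\bigl(\im_{L^p}(d_{k-1})+\im_{L^p}(d^*_{k+1})\bigr)^{\perp}=\ker_{L^{p'}}(d_k)\cap\ker_{L^{p'}}(d^*_k)=\ker_{L^{p'}}(\Delta_k),$$
the last equality because an $L^{p'}$ form annihilated by $\Delta_k$ decays and is therefore closed and coclosed. Hence, \emph{once} the sum $\im_{L^p}(d_{k-1})+\im_{L^p}(d^*_{k+1})$ is shown to be closed, its codimension equals $\dim\ker_{L^{p'}}(\Delta_k)$, and \eqref{eq:HodgeLp} holds if and only if $\ker_{L^p}(\Delta_k)$ has the same dimension and meets the sum trivially.

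\emph{The arithmetic of the thresholds.} The proof of Theorem \ref{thm:main1} pins down the decay: $\mathcal H_k(M)=\ker_{1-n}(\Delta_k)$, and for $k\in\{1,n-1\}$ with $N$ ends the subspace $\ker_{-n}(\Delta_k)$ has codimension $N-1$, the missing forms decaying exactly like $r^{1-n}$. Since $r^{-n}\in L^p$ near infinity for every $p>1$ while $r^{1-n}\in L^p$ precisely for $p>\tfrac{n}{n-1}$, I would read off $\ker_{L^p}(\Delta_k)=\ker_{1-n}(\Delta_k)$ when $p>\tfrac{n}{n-1}$ and $\ker_{L^p}(\Delta_k)=\ker_{-n}(\Delta_k)$ when $p\le\tfrac{n}{n-1}$ (the two coinciding for all $p$ when $k\notin\{1,n-1\}$ or $N=1$). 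Writing $h=\dim\mathcal H_k(M)$ and $h_0=\dim\ker_{-n}(\Delta_k)=h-(N-1)$, and applying the same statement to $p'$ (so the codimension above equals $h$ for $p<n$ and $h_0$ for $p\ge n$), the five cases become bookkeeping: in (a) and (c) both numbers equal $h=h_0$; in (b), i.e. $p\in(\tfrac{n}{n-1},n)$, both equal $h$; for $p\ge n$ one finds $\dim\ker_{L^p}(\Delta_k)=h>h_0=\mathrm{codim}$, so $\ker_{L^p}(\Delta_k)$ cannot inject into the quotient and \eqref{eq:HodgeLp} must fail, whereas $\ker_{-n}(\Delta_k)$ has the correct dimension $h_0$; and for $p\le\tfrac{n}{n-1}$ one finds $\dim\ker_{L^p}(\Delta_k)=h_0<h=\mathrm{codim}$, so the sum falls short by exactly $N-1$ dimensions.

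\emph{The analytic input and cases (d), (e).} To convert these counts into the decomposition I must (i) prove $\im_{L^p}(d_{k-1})+\im_{L^p}(d^*_{k+1})$ closed and (ii) solve $\Delta_k u=f$ with $du,d^*u\in L^p$ whenever $f$ annihilates the cokernel. On each ALE end the metric is Euclidean up to decaying corrections, where the classical Calderón--Zygmund bound $\|\nabla^2u\|_{L^p}\lesssim\|\Delta u\|_{L^p}$ and the explicit asymptotics of the Green kernel apply; on the compact core I would invoke interior elliptic $L^p$ regularity; and the finite-dimensional kernel and cokernel, with the admissible decay rates, come from the weighted Fredholm theory behind Theorem \ref{thm:main1}. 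This gives closedness and, after subtracting a harmonic representative to meet the finitely many obstructions, the factorisation $f=dd^*u+d^*du$, establishing \eqref{eq:HodgeLp} in cases (a), (b), (c). For $p\ge n$ (case (d)) the $r^{1-n}$ forms are $L^p$-harmonic but, by Theorem \ref{thm:main1}, now lie in $\im_{L^p}(d_{k-1})$, so the intersection with the sum is nonzero and \eqref{eq:HodgeLp} fails; replacing the harmonic slot by $\ker_{-n}(\Delta_k)$, whose dimension $h_0$ matches the codimension, deletes precisely this overlap and yields \eqref{eq:mod-HodgeLp}. For $p\le\tfrac{n}{n-1}$ (case (e)) the $r^{1-n}$ forms have left $L^p$, so $\ker_{L^p}(\Delta_k)=\ker_{-n}(\Delta_k)$ is an honest direct summand but the sum has codimension $h-h_0=N-1$, the deficit being the $N-1$ reduced cohomology classes (still nontrivial since $p<n$) whose harmonic representatives are no longer $p$-integrable.

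\emph{The main obstacle.} The hard part will be steps (i)--(ii): transferring the weighted-$L^2$ Fredholm theory, in which the kernel, cokernel and index of $\Delta_k$ are explicit, to unweighted $L^p$ solvability with a genuine two-derivative gain measured in $L^p$, and doing so uniformly as $p\to\tfrac{n}{n-1}^{+}$ and $p\to n^{-}$. Exactly at these endpoints the governing indicial root $r^{1-n}$ sits on the boundary of $L^p$ (respectively $L^{p'}$), so the Green operator and the maps $d\,\Delta_k^{-1}$, $d^*\Delta_k^{-1}$ must be controlled through the precise end asymptotics of the resolvent kernel rather than by any off-the-shelf Riesz-transform theorem; securing this borderline control, and with it the closedness of the sum for every $p\in(1,\infty)$, is the crux of the argument.
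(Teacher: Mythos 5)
Your reduction to $k=1$ by Hodge duality, the identification of the annihilator of $\im_{L^p}(d_{k-1})+\im_{L^p}(d^*_{k+1})$ with $\ker_{L^{p'}}(\Delta_k)$, and the dimension bookkeeping based on the decay thresholds $r^{1-n}\in L^p\Leftrightarrow p>\tfrac{n}{n-1}$ all match the paper's strategy, and your counting argument for why \eqref{eq:HodgeLp} must fail when $p\geq n$ (the kernel is too large to inject into the quotient) is correct and in fact slicker than the paper's explicit exhibition of $d(\ker_0(\Delta_0))\subset\im_{L^p}(d_0)\cap\ker_{L^p}(\Delta_1)$. However, everything you conclude is conditional on the two analytic steps you yourself label ``the crux,'' and these are left genuinely unresolved. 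The central one is the closedness of $\im_{L^p}(d_{k-1})+\im_{L^p}(d^*_{k+1})$ together with the reverse inclusion $\mathrm{Ann}_{L^p}(\ker_{L^{p'}}(\Delta_k))\subset\im_{L^p}(d_{k-1})+\im_{L^p}(d^*_{k+1})$. The paper proves this (Proposition \ref{prop:closed_subspace}) not by solving $\Delta_k u=f$ with Calder\'on--Zygmund bounds, but by running the Fredholm theory for the \emph{first-order} Hodge--Dirac operator $\mathcal{D}=d+d^*$ on $W^{1,p}_{1-n/p}(\Lambda^*M)$, where the weight $1-\tfrac np$ is non-exceptional precisely when $p\neq n$; the passage from ``$\mathcal{D}\tilde\omega_i$ Cauchy'' to ``$d\tilde\omega_i$ and $d^*\tilde\omega_i$ converge separately in $L^p$'' requires the estimate $\|\nabla\alpha\|_p\lesssim\|\alpha\|_{W^{1,p}_{1-n/p}}$ and the auxiliary Lemma \ref{lem:bounded_kernel} on $\ker_\delta(\mathcal{D})$ for $\delta<1$. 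Your proposed route through the second-order operator would in addition make the weight $2-\tfrac np$ exceptional at $p=\tfrac n2$, an extra bad exponent the Dirac-operator route avoids, and it does not by itself produce potentials $\alpha,\beta$ with $d\alpha,d^*\beta$ approximable by $dC_c^\infty$, $d^*C_c^\infty$ in $L^p$.

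The second unresolved point is the endpoint $p=n$, which your sketch proposes to handle ``through the precise end asymptotics of the resolvent kernel.'' The paper does something quite different and more elementary: it proves the strong decompositions for $2\leq p<n$ and for $q>n$, shows the corresponding Hodge projectors agree on $L^p\cap L^q$, and obtains $L^n$-boundedness by Riesz--Thorin interpolation (Proposition \ref{prop:interpolation}), from which closedness at $p=n$ follows via Lemma \ref{lem:proj_id}. Finally, your assertion that for $p\geq n$ the $r^{1-n}$-decaying harmonic $1$-forms lie in $\im_{L^p}(d_0)$ cannot be deduced from Theorem \ref{thm:main1} (that would be circular, since Theorem \ref{thm:main1} is itself a consequence of these decompositions); the paper proves it directly from the $p$-parabolicity of $M$ for $p\geq n$, via cut-off functions $\chi_j$ with $\|\nabla\chi_j\|_p\to0$ applied to $\chi_j u$ for $u\in\ker_0(\Delta_0)$ (Proposition \ref{pro:im_p>n}). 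Until these three inputs are supplied, the case analysis, while arithmetically correct, does not constitute a proof.
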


\begin{Rem}
{\em 

In case (e), a complement of dimension $N-1$ can be found explicitly, see Proposition \ref{pro:small_exp}.

}
\end{Rem}
This result has interesting consequences for the $L^p$ boudnedness of the Hodge projectors, and the Riesz transforms on forms (see Sections 4 and 6). In particular, it easily implies that on an ALE manifold of dimension $n\geq 3$, the Riesz transform on functions $\nabla \Delta^{-1/2}$ is bounded on $L^p$, if and only if $p\in (1,p^*)$ where $p^*=n$ if $M$ has at least two ends, $p^*=+\infty$ otherwise. See Corollary \ref{cor:Riesz-functions}. This provides arguably one of the shortest and most elementary proof of this well-known fact (see \cite{CCH}, \cite{GH}).

\medskip

As the careful reader would have noticed, our results do not apply to ALE manifolds of dimension $n=2$. We thus state as an interesting open problem:

\noindent {\bf Open problem:} extend Theorems \ref{thm:Hodge} and \ref{thm:main1} to ALE manifolds of dimension $n=2$.

\medskip

The plan of this article is as follows: in Section 2, we set the stage and present some general results concerning the decay of harmonic forms on ALE manifolds. In Section 3, we concentrate more specifically on the case of $1$-forms, which is special. In Section 4, we give some general results concerning $L^p$ Hodge decompositions. In Section 5, we prove the main results of this paper. In Section 6, we give applications to Hodge projectors. The appendix contains mostly material related to the theory of weighted Sobolev spaces on ALE manifolds, which is not presented elsewhere in the manuscript; it also contains the proof of some of the key technical results of the paper (some of which appearing for the first time in the literature). 

\begin{center}

\textbf{Acknowledgements } 

\end{center}

The authors wish to thank G. Carron for having pointed out an important mistake in an early version of this article. 

\medskip

B. Devyver was partly supported by the French ANR through the project RAGE ANR-18-CE40-0012, and as well as in the framework of the ``Investissements d'avenir'' program (ANR-15-IDEX-02) and the LabEx PERSYVAL (ANR-11-LABX-0025-01). K. Kr\"{o}ncke was partly supported by the DFG through the projects 338891943 and 441564857 in the framework of the priority program ``Geometry at Infinity'' (SPP2026).

\section{The setting, and decay of harmonic forms}\label{sec:decay_harmonic_forms}

{\em Notation:} if $\alpha\in \R$ and $f:\R^n \to \R$, we write $f=\mathcal{O}_\infty(r^\alpha)$ or $f\in \mathcal{O}_\infty(r^\alpha)$ to mean that for every $k\in \N$,

$$\partial^kf(x)=O(r(x)^{\alpha-k})\quad \mbox{as }r(x)\to\infty.$$
Here, $r(x)=d(0,x)$ is the usual radial coordinate.

\medskip

In all this paper, we consider $(M^n,g)$ a connected, oriented ALE manifold of dimension $n\geq 3$ and order $\tau>0$. By definition, this means that there exists a compact set $K\subset M$ and a finite number of ends $(E_i)_{i=1,\cdots,N}$, finite subgroups $(\Gamma_i)_{i=1,\cdots,N}$ of $SO(n,\R)$ acting freely on $S^{n-1}$, and diffeomorphims $\phi_i: E_i\to \left(\R^n\setminus \overline{B(0,1)}\right)/\Gamma_i$, $i=1,\cdots,N$ such that $M\setminus K=\sqcup_{i=1}^N E_i$ and $(\phi_i)^*g_{eucl}-g\in \mathcal{O}_\infty(r^{-\tau})$. Here, by abuse of notations, we call $r$ a smooth, positive function on $M$, which in each end $E_i$ is equal to $r_i\circ \phi_i$, where $r_i$ is the distance function to $0$ in $\R^n/\Gamma_i$, and $\mathcal{O}_\infty(r^{-\tau})$ has an obvious meaning despite $r$ not being the Euclidean radial coordinate. For a more detailed account concerning analysis on ALE manifolds, the reader may consult the celebrated paper \cite{Bar86}. The main result of \cite{BKN} shows that any manifold with a finite number of ends, Euclidean volume growth, and faster-than-quadratic curvature decay at infinity is ALE.

\medskip

We denote by
\begin{align*}
	d_k: C^{\infty}(\Lambda^{k}M)\to C^{\infty}(\Lambda^{k+1}M)
	\end{align*}
the exterior derivative on k-forms. Its formal adjoint is denoted by
\begin{align*}
d^*_{k+1}=		(d_k)^*:C^{\infty}(\Lambda^{k+1}M)\to C^{\infty}(\Lambda^{k}M)
	\end{align*}
and the Hodge Laplacian on k-forms is
\begin{align*}
	\Delta_k=d_{k-1}\circ d^*_{k}+ d^*_{k+1}\circ d_k:C^{\infty}(\Lambda^{k}M)\to C^{\infty}(\Lambda^{k}M).
\end{align*}
Sometimes, we will work with the vector bundle of differential forms of any degree:

$$\Lambda^*M=\oplus_{k=0}^n \Lambda^kM,$$
and the differential, co-differential and Hodge Laplacian acting on sections of $\Lambda^*M$ will be simply denoted $d$, $d^*$ and $\Delta$ respectively. We also define the Hodge-Dirac operator

$$\mathcal{D}=d+d^*$$
acting on sections of $\Lambda^*M$. Because of $d^2=0$ and $(d^*)^2=0$, it is obvious that $\mathcal{D}^2=\Delta$. Furthermore, we introduce the notations 
\begin{align*}
	\im_{L^p}(d_{k-1})=\overline{d(C^{\infty}_c(\Lambda^{k-1}M)}^{L^p},
	\qquad \im_{L^p}(d^*_{k+1})=\overline{d^*(C^{\infty}_c(\Lambda^{k+1}M)}^{L^p}
\end{align*}
and
\begin{align*}
	\ker_{L^p}(d_k)&=\left\{\omega\in L^p(\Lambda^kM)\mid d\omega=0\right\},\\
	 \ker_{L^p}(d^*_k)&=\left\{\omega\in L^p(\Lambda^kM)\mid d^*\omega=0\right\},\\
	\ker_{L^p}(\Delta_{k})&=\left\{\omega\in L^p(\Lambda^kM)\mid \Delta_{k}\omega=0\right\}.
\end{align*}
Here, the equations $d_k\omega=0$, $d_k^*\omega=0$ and $\Delta_k\omega=0$ are intended in the weak sense. These are closed subspaces of $L^p(\Lambda^kM)$. Note that the following inclusions take place:
\begin{align*}
	&\im_{L^p}(d_{k-1})\subset \ker_{L^p}(d_k),\qquad 
	\im_{L^p}(d^*_{k+1})\subset \ker_{L^p}(d^*_k),\\
	&\im_{L^p}(d_{k-1})\cap \im_{L^p}(d^*_{k+1})\subset
	\ker_{L^p}(d_k)\cap \ker_{L^p}(d^*_k)\subset
	\ker_{L^p}(\Delta_{k}).
\end{align*}
Finally, for $\alpha\in\R$, we introduce the notation
\begin{align*}
\ker_{\alpha}(\Delta_{k})=\left\{\omega\in C^{\infty}(\Lambda^kM)\mid \Delta_k\omega=0, \omega=\mathcal{O}_\infty(r^{\alpha}) \text{ as }r\to\infty\right\}.
\end{align*}
With these notations settled, one can quote a first result, which is well-known and is due to Yau (see \cite[Theorem 3]{Y}):

\begin{Pro}\label{pro:kernel_0}

Let $p\in (1,\infty)$ and $k\in \{0,n\}$. Then,

$$\ker_{L^p}(\Delta_k)=\{0\}.$$

\end{Pro}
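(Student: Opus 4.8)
The plan is to reduce both cases to a single $L^p$-Liouville statement for the scalar Laplacian. First I would observe that the Hodge star $*\colon\Lambda^kM\to\Lambda^{n-k}M$ is a pointwise isometry which maps $L^p(\Lambda^kM)$ isometrically onto $L^p(\Lambda^{n-k}M)$ and intertwines the Hodge Laplacians, $*\Delta_k=\Delta_{n-k}*$; hence $\ker_{L^p}(\Delta_n)\cong\ker_{L^p}(\Delta_0)$ and it is enough to treat $k=0$. In degree zero the Hodge Laplacian is the scalar Laplacian $\Delta_0=d^*d$. So I would take $u\in L^p(M)$ with $\Delta_0u=0$ weakly; by elliptic regularity $u$ is smooth and harmonic in the classical sense, and after splitting into real and imaginary parts I may assume $u$ is real-valued.

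The core of the argument is a Caccioppoli estimate producing the Liouville property. I would test the weak equation against $\phi=u\,|u|^{p-2}\eta^2$, with $\eta\in C_c^\infty(M)$ a cutoff, use $\int_M\langle\nabla u,\nabla\phi\rangle\dV=0$ together with the identity $\nabla\bigl(u|u|^{p-2}\bigr)=(p-1)|u|^{p-2}\nabla u$, and absorb the cross term by Cauchy--Schwarz and Young's inequality. This yields
\begin{equation*}
\int_M|u|^{p-2}|\nabla u|^2\,\eta^2\dV\;\le\;\frac{4}{(p-1)^2}\int_M|u|^{p}\,|\nabla\eta|^2\dV .
\end{equation*}
Choosing $\eta=\eta_R$ equal to $1$ on $B(o,R)$, supported in $B(o,2R)$ and with $|\nabla\eta_R|\le C/R$, the right-hand side is at most $CR^{-2}\|u\|_{L^p}^p$, which tends to $0$ as $R\to\infty$ because $u\in L^p$. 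Letting $R\to\infty$ forces $\int_{B(o,R)}|u|^{p-2}|\nabla u|^2\dV=0$ for every fixed $R$, i.e.\ $\nabla u\equiv0$ on the open set $\{u\neq0\}$.

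To finish, I would argue that since $u$ is continuous and $\nabla u=0$ on each component of $\{u\neq0\}$, such a component is open and closed in the connected manifold $M$; hence either $u\equiv0$, or $u$ equals a nonzero constant. But an ALE manifold has infinite volume (each end is, up to a finite quotient and a perturbation of order $\tau>0$, a Euclidean end), so a nonzero constant is never in $L^p(M)$; therefore $u\equiv0$. Alternatively, one may simply invoke Yau's $L^p$-Liouville theorem from \cite{Y}: the nonnegative subharmonic function $|u|\in L^p$ must be constant, hence zero on the infinite-volume manifold $M$. I expect the only genuinely delicate point to be the regularization when $1<p<2$, where the weight $|u|^{p-2}$ is singular at the zeros of $u$; there I would first run the estimate with $|u|$ replaced by $(u^2+\varepsilon^2)^{1/2}$ and then let $\varepsilon\to0$.
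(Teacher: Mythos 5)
Your proposal is correct, and it is essentially the paper's approach: the paper offers no proof of this proposition at all, simply citing Yau's $L^p$-Liouville theorem together with the (implicit) facts that the Hodge star reduces $k=n$ to $k=0$ and that an ALE manifold has infinite volume. Your Caccioppoli estimate with test function $u|u|^{p-2}\eta^2$ (regularized via $(u^2+\varepsilon^2)^{1/2}$ for $1<p<2$) is precisely the standard proof of that cited theorem, so the only difference is that you have made the argument self-contained.
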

Furthermore, it is easily seen that:

\begin{Pro}

Let $p\in (1,\infty)$, then

$$L^p(\Lambda^0M)=\im_{L^p}(d^*_1),\quad L^p(\Lambda^{n}M)=\im_{L^p}(d_{n-1}).$$

\end{Pro}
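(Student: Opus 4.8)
The plan is to prove both equalities by duality, reducing each to the vanishing statement of Proposition \ref{pro:kernel_0}. Since $\im_{L^p}(d^*_1)$ is by definition a closed subspace of $L^p(\Lambda^0M)$ and $p\in(1,\infty)$, it coincides with the whole space if and only if its annihilator in the dual space $L^{p'}(\Lambda^0M)$ is trivial, where $p'$ is the conjugate exponent determined by $1/p+1/p'=1$ (so that $p'\in(1,\infty)$ as well). The same remark applies verbatim to $\im_{L^p}(d_{n-1})\subset L^p(\Lambda^nM)$.

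First I would identify this annihilator. By continuity of the $L^p$--$L^{p'}$ pairing and the definition of $\im_{L^p}(d^*_1)$ as the $L^p$-closure of $d^*(C_c^\infty(\Lambda^1M))$, a function $f\in L^{p'}(\Lambda^0M)$ annihilates $\im_{L^p}(d^*_1)$ if and only if
$$\int_M f\cdot d^*\phi \,\dV=0\qquad\text{for all }\phi\in C_c^\infty(\Lambda^1M).$$
Since $\phi$ is compactly supported, the left-hand side equals $\int_M \langle df,\phi\rangle\,\dV$, so this condition is exactly the weak equation $d_0f=0$, i.e. $f\in\ker_{L^{p'}}(d_0)$. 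Because $d_0f=0$ forces $\Delta_0f=d^*_1d_0f=0$, we obtain $\ker_{L^{p'}}(d_0)\subset\ker_{L^{p'}}(\Delta_0)$, and the latter space is $\{0\}$ by Proposition \ref{pro:kernel_0} applied with $k=0$ and exponent $p'$. Hence the annihilator is trivial and $L^p(\Lambda^0M)=\im_{L^p}(d^*_1)$.

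For the second identity I would run the same argument: the annihilator of $\im_{L^p}(d_{n-1})$ in $L^{p'}(\Lambda^nM)$ consists of those $\omega$ satisfying $\int_M\langle\omega,d\phi\rangle\,\dV=0$ for every $\phi\in C_c^\infty(\Lambda^{n-1}M)$, which means $d^*_n\omega=0$ weakly, hence $\omega\in\ker_{L^{p'}}(d^*_n)\subset\ker_{L^{p'}}(\Delta_n)=\{0\}$ by Proposition \ref{pro:kernel_0} with $k=n$. Alternatively, since $M$ is oriented one may invoke the Hodge star, a fiberwise isometry $L^{p'}(\Lambda^nM)\to L^{p'}(\Lambda^0M)$ intertwining $d^*_n$ with $\pm d_0$, which reduces the $n$-form statement directly to the $0$-form case already established.

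The argument is essentially routine, which is why the statement can be claimed to be ``easily seen''; the only point that deserves a little care is the identification of the annihilator with a weak kernel, namely verifying that $\int_M f\, d^*\phi\,\dV=0$ for all compactly supported test forms is precisely the distributional equation $d_0f=0$, and that $f\in L^{p'}_{\mathrm{loc}}$ makes $d_0f$ a well-defined distribution so that the pairing is legitimate. Beyond this, everything hinges on Proposition \ref{pro:kernel_0}, which is where the geometry of the manifold (through Yau's theorem, and implicitly the infinite volume of the ends) actually enters.
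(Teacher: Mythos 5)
Your proof is correct and follows essentially the same route as the paper's: both identify, via $L^p$--$L^{p'}$ duality and the closedness of the image, the annihilator of $\im_{L^p}(d^*_1)$ with the weak kernel $\ker_{L^{p'}}(d_0)$ and then show it vanishes. The only (harmless) difference is the last step: you quote Yau's theorem (Proposition \ref{pro:kernel_0}) after observing $\ker_{L^{p'}}(d_0)\subset\ker_{L^{p'}}(\Delta_0)$, whereas the paper argues more elementarily that a distributionally closed $L^{p'}$ function on a connected manifold of infinite volume is a constant, hence zero.
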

\begin{proof}

Let us prove the identity for the $0$-forms, the other case being completely analogous. Denote $q=p'$ the conjugate H\"older exponent of $p$. Since $\im_{L^p}(d_1^*)$ is closed, by $L^p-L^q$ duality it is enough to show that the annihilator in $L^q$ of $\im_{L^p}(d_1^*)$ is $\{0\}$. But if $\varphi\in L^q(\Lambda^0M)$ belongs to this annihilator, then by definition one has

$$(\varphi,\omega)=0,\quad \omega\in \im_{L^p}(d_1^*),$$
and in particular $d\varphi=0$ in the distribution sense. Since $\varphi\in L^1_{loc}$ and $M$ is connected, it follows that $\varphi$ is constant. But since $\varphi\in L^q$, $q\in (1,\infty)$ and $M$ has infinite volume, necessarily $\varphi\equiv 0$. This concludes the proof.

\end{proof}
Thus, in the case $k\in \{0,n\}$, we have a (trivial) $L^p$ Hodge decomposition for every $p\in (1,\infty)$, and the $L^p$ cohomology spaces $H^0_p(M)$, $H^{n}_p(M)$ vanish (recall $M$ is assumed to be connected). In the remaining part of the paper, we will thus only deal with forms of degree $k\in \{1,\cdots,n-1\}$. The following proposition is one of the main ingredients of this paper:
\begin{proposition}[see also {\cite[Proposition 4.3]{KP}}]\label{prop:decay_harmonic_forms}
	Let $p\in (1,\infty)$ and $k\in\{1,\cdots,n-1\}$. The following hold true:
	
\begin{enumerate}
\item[(a)] $$\ker_{L^p}(\Delta_{k})= \ker_{L^p}(d_k)\cap \ker_{L^p}(d^*_k).$$

\item[(b)] $$\ker_{L^p}(\Delta_{k})\subset \ker_{1-n}(\Delta_{k}).$$

\item[(c)] if $k\notin\left\{1,n-1\right\}$ or $p\leq \frac{n}{n-1}$, then 

$$\ker_{L^p}(\Delta_{k})= \ker_{-n}(\Delta_{k}).$$

\item[(d)] if $p>\frac{n}{n-1}$, then

$$\ker_{L^p}(\Delta_k)=\ker_{1-n}(\Delta_k).$$
\item[(e)] Furthermore, both $\ker_{1-n}(\Delta_1)$ and $\ker_{-n}(\Delta_1)$ are finite dimensional.
\end{enumerate}

\end{proposition}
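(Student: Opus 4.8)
The plan is to deduce all five assertions from four ingredients: local elliptic regularity together with the asymptotic expansion of harmonic forms at the ALE ends, a single integration by parts, the explicit classification of homogeneous harmonic forms on flat $\R^n$ that are simultaneously closed and co-closed, and the weighted Fredholm theory of the appendix. \textbf{Step 1 (regularity and crude decay).} Given $\omega\in L^p(\Lambda^kM)$ with $\Delta_k\omega=0$ in the weak sense, elliptic regularity makes $\omega$ smooth, and the indicial-root analysis at each end (the appendix; compare \cite{KP}) produces an asymptotic expansion of $\omega$ whose leading terms are homogeneous harmonic forms for the flat model operator. Since on flat $\R^n$ the Hodge Laplacian acts componentwise as the scalar Laplacian, the admissible homogeneous rates are $\{2-n,1-n,-n,\dots\}$ for decaying solutions and $\{0,1,2,\dots\}$ otherwise; membership in $L^p$ excludes the non-decaying rates, so the leading rate is $\le 2-n$. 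Thus $\omega=\mathcal{O}_\infty(r^{2-n})$, and consequently $d\omega,d^*\omega=\mathcal{O}_\infty(r^{1-n})$.

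\textbf{Step 2 (part (a)).} The inclusion $\supset$ is immediate from $\Delta_k=d_{k-1}d^*_k+d^*_{k+1}d_k$. For $\subset$ I would apply Green's formula on $\{r<R\}$, writing $\int_{r<R}(|d\omega|^2+|d^*\omega|^2)=\int_{r<R}\langle\Delta_k\omega,\omega\rangle+\mathcal{B}_R$, where the interior term vanishes and the boundary integrand $\mathcal{B}_R$ is controlled by $|\omega|\,(|d\omega|+|d^*\omega|)$ on $\{r=R\}$. By Step 1 this is $O(R^{n-1}\cdot r^{2-n}\cdot r^{1-n})=O(R^{2-n})\to 0$ as $R\to\infty$ (using $n\ge 3$). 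Hence $d\omega=d^*\omega=0$, so $\omega\in\ker_{L^p}(d_k)\cap\ker_{L^p}(d^*_k)$.

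\textbf{Step 3 (parts (b), (c), (d): the decay dichotomy).} By (a), $\omega$ is closed and co-closed, hence the leading homogeneous term of its expansion is closed and co-closed. A direct computation upstairs on $\R^n$ (whence also for the $\Gamma_i$-invariant forms on the ends) shows that for $1\le k\le n-1$ the only such form of rate $2-n$ is zero: writing it as $r^{2-n}\eta$ with $\eta$ a constant $k$-form, co-closedness forces $\iota_x\eta=0$ for all $x$, hence $\eta=0$. This upgrades Step 1 to $\omega=\mathcal{O}_\infty(r^{1-n})$, i.e. (b). Classifying the rate-$(1-n)$ closed and co-closed homogeneous harmonic $k$-forms by separation of variables on the cone shows they are nonzero only for $k\in\{1,n-1\}$ (the contributions of $H^0$ and $H^{n-1}$ of the cross-section; for $k=1$ they are the multiples of $d(r^{2-n})$). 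Therefore for $k\notin\{1,n-1\}$ the rate-$(1-n)$ term vanishes and $\omega=\mathcal{O}_\infty(r^{-n})$; and for $k\in\{1,n-1\}$ with $p\le\frac{n}{n-1}$ a nonzero rate-$(1-n)$ term would violate $L^p$ (since $r^{1-n}\in L^p\iff p>\frac{n}{n-1}$), so it too vanishes. This gives the forward inclusions in (c). The reverse inclusions are membership checks: $r^{-n}\in L^p$ for every $p>1$, yielding $\ker_{-n}(\Delta_k)\subset\ker_{L^p}(\Delta_k)$ always, and $r^{1-n}\in L^p$ for $p>\frac{n}{n-1}$, which with (b) gives (d).

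\textbf{Step 4 (part (e)) and the main obstacle.} Because $\ker_{-n}(\Delta_1)\subset\ker_{1-n}(\Delta_1)$, it suffices to bound the latter. I would fix a non-critical weight $\delta$ just above the one associated with the rate $1-n$ and below the next indicial rate; the Fredholm theory of the appendix then gives that $\Delta_1\colon W^{2,2}_\delta\to W^{0,2}_{\delta-2}$ is Fredholm, so its kernel is finite-dimensional, and this kernel contains $\ker_{1-n}(\Delta_1)$. I expect the genuine difficulty to lie entirely in Steps 1 and 3: establishing the asymptotic expansion with flat-harmonic leading terms and, above all, classifying the rate-$(1-n)$ closed-and-co-closed homogeneous harmonic forms so as to isolate the exceptional degrees $k\in\{1,n-1\}$. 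Once that structure and the weighted Fredholm statements are available, the integration by parts of Step 2, the membership computations of Step 3, and the finiteness in Step 4 are all routine.
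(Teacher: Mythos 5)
Your proposal is correct and follows essentially the same route as the paper's proof in Appendix B: iterative improvement of the decay via the asymptotic expansion with Euclidean-harmonic leading terms (Lemma \ref{lem:iterate}), a single integration by parts with boundary term $O(R^{2-n})$ for (a), and the classification of homogeneous closed-and-co-closed harmonic forms of rates $2-n$ and $1-n$ (Lemma \ref{lem:decay-harmo-Rn}, after \cite[Lemma 4.2]{KP}) to isolate the exceptional degrees $k\in\{1,n-1\}$. The only notable deviation is part (e), which the paper obtains by identifying $\ker_{1-n}(\Delta_1)$ with $\ker_{L^2}(\Delta_1)=\mathcal{H}_1(M)$ via (d) and quoting finiteness of the reduced $L^2$ Betti numbers of ALE manifolds, whereas your weighted-Fredholm argument with a non-exceptional weight $\delta\in(1-n,2-n)$ is an equally valid and arguably more self-contained alternative.
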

The proof of (a)-(d) in Proposition \ref{prop:decay_harmonic_forms} relies on a standard iteration procedure in weighted Sobolev spaces. For readability of the paper, and since some of the arguments of the proof will be used again later in the paper, we have decided to include a sketch of the proof in appendix.
From the proof of Proposition \ref{prop:decay_harmonic_forms}, we also extract the following result which will be used later:

\begin{Cor}\label{cor:decay_harmonic}

Let $\omega$ be a harmonic form on $M$, and assume that there exists $\epsilon>0$ such that, as $r\to\infty$,

$$\omega\in \mathcal{O}_\infty(r^{1-n-\epsilon}).$$
Then, as $r\to\infty$,

$$\omega\in \mathcal{O}_\infty(r^{-n}).$$

\end{Cor}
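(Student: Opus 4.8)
The plan is to run one further step of the same weighted-Sobolev iteration that establishes Proposition \ref{prop:decay_harmonic_forms}, now initialized at the hypothesis $\omega\in\mathcal{O}_\infty(r^{1-n-\epsilon})$ instead of merely $\omega\in L^p$. Since the statement concerns only the behaviour as $r\to\infty$ and $\omega$ is smooth, I would localize to a single end $E_i$ and transport everything through $\phi_i$ to $(\R^n\setminus\overline{B(0,1)})/\Gamma_i$. There the Hodge Laplacian decomposes as $\Delta_k=\Delta_k^{\mathrm{eucl}}+P$, where $\Delta_k^{\mathrm{eucl}}$ is the flat Hodge Laplacian and $P$ is a second-order operator whose coefficients lie in $\mathcal{O}_\infty(r^{-\tau})$, as a direct consequence of $(\phi_i)^*g_{\mathrm{eucl}}-g\in\mathcal{O}_\infty(r^{-\tau})$. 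I may also assume $\epsilon\in(0,1)$, since for $\epsilon\geq 1$ the hypothesis already yields $\omega\in\mathcal{O}_\infty(r^{-n})$.

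The core is a single improvement-of-decay step. Rewriting $\Delta_k\omega=0$ as $\Delta_k^{\mathrm{eucl}}\omega=-P\omega=:f$ and feeding in $\omega\in\mathcal{O}_\infty(r^\gamma)$, one gets $f\in\mathcal{O}_\infty(r^{\gamma-2-\tau})$ because $P$ is second order with coefficients decaying like $r^{-\tau}$. The decisive structural fact, which is exactly what the proof of Proposition \ref{prop:decay_harmonic_forms} establishes, is that $\Delta_k^{\mathrm{eucl}}$ on $(\R^n\setminus\{0\})/\Gamma_i$ has \emph{no indicial root in the open interval} $(-n,1-n)$. I would then invoke the weighted elliptic regularity on ALE ends from the appendix (a Lockhart--McOwen type improvement): if $\omega\in\mathcal{O}_\infty(r^\gamma)$ solves $\Delta_k^{\mathrm{eucl}}\omega=f$ with $f\in\mathcal{O}_\infty(r^{(\gamma-\tau)-2})$ and the closed interval $[\gamma-\tau,\gamma]$ contains no indicial root, then in fact $\omega\in\mathcal{O}_\infty(r^{\gamma-\tau})$. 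The point is that absence of indicial roots in the window precludes any intermediate homogeneous solution that could obstruct the gain, so the decay of $\omega$ is forced to match that produced by the faster-decaying right-hand side.

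Starting from $\gamma=1-n-\epsilon\in(-n,1-n)$ and iterating, as long as $\gamma-\tau>-n$ the window $[\gamma-\tau,\gamma]$ stays inside the root-free interval $(-n,1-n)$ and each pass improves the rate of $\omega$ by $\tau>0$. Since that interval has finite length, after finitely many steps the exponent is pushed down to any value arbitrarily close to $-n$ from above; that is, $\omega\in\mathcal{O}_\infty(r^{-n+\eta})$ for every small $\eta>0$. To reach the exact rate I would then use the asymptotic expansion of a harmonic form at the \emph{critical} weight $-n$: a form decaying strictly faster than every rate just above $-n$ must have leading term at the first indicial root encountered, namely $-n$, so $\omega\in\mathcal{O}_\infty(r^{-n})$ on $E_i$. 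As $i$ was arbitrary, this gives the claim. The upgrade from a weighted pointwise bound on $\omega$ alone to full $\mathcal{O}_\infty$ control of all derivatives is routine, via interior elliptic estimates for $\Delta_k$ applied on unit balls after the dyadic rescaling $x\mapsto Rx$, combined with Sobolev embedding.

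The step I expect to be the main obstacle is the weighted elliptic theory on the Euclidean end together with the precise location of the indicial roots, in particular the absence of roots in $(-n,1-n)$ and the sharpening to the exact rate $-n$ at the critical weight. This is precisely the separation-of-variables and Fredholm analysis underlying Proposition \ref{prop:decay_harmonic_forms}, so the corollary genuinely \emph{extracts} from that proof rather than requiring new machinery; the only additional bookkeeping is tracking the extra factor $r^{-\tau}$ gained from $P$ at each iteration and verifying that the target weight $\gamma-\tau$ remains noncritical throughout, which it does as long as it stays in the root-free window $(-n,1-n)$ until the final expansion lands it on $-n$.
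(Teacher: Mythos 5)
Your proposal is correct and follows essentially the same route as the paper: the corollary is obtained there by applying the iteration of Lemma \ref{lem:iterate} (solve away $\Delta_{\R^n}\omega=(\Delta_{\R^n}-\Delta)\omega$ at a non-exceptional weight via Bartnik's isomorphism, gain $\tau$ each pass, and let the Euclidean-harmonic remainder drop to the next exceptional weight by Proposition \ref{prop:decay-Rn}), starting from $\delta=1-n-\epsilon$ and using exactly the gap in the exceptional set between $1-n$ and $-n$ that you identify, so that $k_-(1-n-\epsilon)=-n$. Your "Lockhart--McOwen improvement step" is precisely this mechanism, and your handling of the critical weight $-n$ at the end matches the paper's.
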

We will also need two results about the asymptotic of harmonic forms at infinity. These two results are also proved in appendix. The first one is an expansion for bounded harmonic functions:

\begin{Lem}\label{lem:expansion_fn}

Let $u\in \ker_0(\Delta_0)$. Then, there exists $\epsilon>0$ such that in each end $E_i$, there exists a two constants $c_i\in\R$ and $A_i\in\R$ such that following expansion holds:

\begin{align}\label{eq:expansion_u}
	u=c_i+A_ir^{2-n}+\mathcal{O}_{\infty}( r^{2-n-\epsilon}),\quad r\to\infty.
\end{align}

\end{Lem}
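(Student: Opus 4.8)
The plan is to work one end at a time, comparing the metric Laplacian with the flat one and bootstrapping in the decay order. Fix an end $E_i$ and transport everything to the model $(\R^n\setminus\overline{B(0,1)})/\Gamma_i$ via $\phi_i$. Writing the (pulled-back) Laplacian as $\Delta_0=\Delta_{\mathrm{eucl}}+E$, the definition of an ALE manifold of order $\tau$ gives that $E$ is a second order operator whose second, first and zeroth order coefficients lie respectively in $\mathcal{O}_\infty(r^{-\tau})$, $\mathcal{O}_\infty(r^{-1-\tau})$ and $\mathcal{O}_\infty(r^{-2-\tau})$. The single most important observation is that constants are harmonic for both $\Delta_0$ and $\Delta_{\mathrm{eucl}}$, so $E$ annihilates constants; this is exactly what lets the forcing term improve at each step of the iteration, and it is what prevents a slow $r^{-\tau}$ tail from ever overtaking the $r^{2-n}$ term claimed in \eqref{eq:expansion_u}.

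First I would show that $u$ tends to a constant $c_i$ on $E_i$. Since $u\in\ker_0(\Delta_0)$ we already have $u\in\mathcal{O}_\infty(r^0)$, hence $\Delta_{\mathrm{eucl}}u=-Eu\in\mathcal{O}_\infty(r^{-2-\tau})$. Decomposing along eigenfunctions $Y_\lambda$ of the Laplacian of $S^{n-1}/\Gamma_i$, the homogeneous solutions of the flat equation are $r^{\lambda}Y_\lambda$ and $r^{2-n-\lambda}Y_\lambda$, so the indicial roots of $\Delta_{\mathrm{eucl}}$ on this cone are $\{0,1,2,\dots\}\cup\{2-n,1-n,-n,\dots\}$. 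Boundedness of $u$ rules out every growing mode $r^{\lambda}Y_\lambda$ with $\lambda\geq 1$, so the only non-decaying contribution is the degree $0$ constant. Applying the weighted Fredholm and expansion theory for $\Delta_{\mathrm{eucl}}$ on the cone (as used in the proof of Proposition \ref{prop:decay_harmonic_forms}, see the appendix) then yields $u=c_i+v$ with $v\in\mathcal{O}_\infty(r^{-\sigma_0})$ for some $\sigma_0>0$.

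Next I would bootstrap. Because $E$ kills $c_i$, if $v\in\mathcal{O}_\infty(r^{-\sigma})$ then $\Delta_{\mathrm{eucl}}v=\Delta_{\mathrm{eucl}}u=-Ev\in\mathcal{O}_\infty(r^{-2-\tau-\sigma})$. Inverting $\Delta_{\mathrm{eucl}}$ on the cone gains two orders, so a particular solution improves $v$ to $\mathcal{O}_\infty(r^{-\sigma-\tau})$, while the homogeneous correction can only enter at the next indicial root. Iterating, the decay of $v$ increases by $\tau$ at each step until it reaches the first negative indicial root $2-n$; at that threshold the degree $0$ homogeneous solution $r^{2-n}$ (the constant spherical harmonic, always present) appears with a coefficient $A_i$, giving $v=A_ir^{2-n}+w$. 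One further inversion, now with forcing in $\mathcal{O}_\infty(r^{-n-\tau})$ and the next indicial root at $1-n=(2-n)-1$, produces $w\in\mathcal{O}_\infty(r^{2-n-\epsilon})$ with $\epsilon=\min(1,\tau)>0$, which is precisely \eqref{eq:expansion_u}.

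The main obstacle is the rigorous inversion step: one must establish that solving $\Delta_{\mathrm{eucl}}w=f$ on the cone with $f\in\mathcal{O}_\infty(r^{\beta})$ really produces $w\in\mathcal{O}_\infty(r^{\beta+2})$ together with the matching finite-dimensional space of homogeneous terms, the correspondence being controlled by the indicial roots; this is where the weighted Sobolev machinery on ALE ends enters, and it must be combined with interior elliptic (Schauder) estimates on dyadic annuli to upgrade pointwise $L^p$ decay to the full $\mathcal{O}_\infty$ statement for all derivatives. The indicial bookkeeping (that no root lies strictly between $0$ and $2-n$, and that the first root below $2-n$ is $1-n$) is routine but must be carried out to pin down $\epsilon$.
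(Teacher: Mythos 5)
Your proposal is correct and follows essentially the same route as the paper: both write $\Delta_0$ as the Euclidean Laplacian plus a perturbation that annihilates constants, and bootstrap the decay through the indicial roots of the flat cone Laplacian until the $r^{2-n}$ mode is reached (the paper packages this iteration as Lemma \ref{lem:iterate} and quotes \cite[Lemma 4.1]{KP} for the expansion of the Euclidean-harmonic leading part, while you rederive it via spherical harmonics). The only cosmetic differences are that the paper identifies the limit $c_i$ by a Harnack/maximum-principle argument rather than by ruling out growing modes, and it sidesteps the exceptional-weight issue you flag at the end by choosing the gain per step inside $[\tau/2,\tau]$ so that the target weight is always non-exceptional.
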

The second lemma is an expansion for decaying harmonic one-forms:

\begin{Lem}\label{lem:expansion_forms}

Let $\omega\in \ker_{-\alpha}(\Delta_1)$ for some $\alpha>0$. Then, there exists $\epsilon>0$ such that in each end $E_i$, there exists a constant $B_i\in\R$ such that following expansion holds:

\begin{align}\label{eq:expansion_o}
\omega=B_ir^{1-n}dr+\mathcal{O}_\infty(r^{1-n-\epsilon}),\quad r\to\infty.
\end{align}

\end{Lem}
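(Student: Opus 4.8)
The plan is to reduce the statement to the analogous expansion for functions, Lemma \ref{lem:expansion_fn}, by realizing $\omega$ as an exact form $dF_i$ on each end and controlling the potential $F_i$. Two facts drive the argument: first, that a decaying harmonic $1$-form on $M$ is automatically closed and coclosed; second, that each end has vanishing first de Rham cohomology, so that a closed $1$-form on it is exact.

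I would first establish that $\omega$ is closed and coclosed, $d\omega=0=d^*\omega$. This does \emph{not} follow from $\Delta_1\omega=0$ alone — for instance $r^{2-n}\,dx_1$ is harmonic on $\R^n\setminus\{0\}$ but neither closed nor coclosed — so it is genuinely the crux. The point is that $\omega$ is globally smooth on the \emph{complete} manifold $M$, and that the weighted elliptic theory of Section \ref{sec:decay_harmonic_forms} forces any $\omega\in\ker_{-\alpha}(\Delta_1)$ to satisfy in fact $\omega=\mathcal{O}_\infty(r^{2-n})$, a rate exceeding $(n-2)/2$. With such a rate one integrates by parts against a cutoff $\chi_R$ equal to $1$ on $B_R$ and supported in $B_{2R}$ with $|\nabla\chi_R|\lesssim R^{-1}$: from $\int_M\chi_R^2\langle\Delta_1\omega,\omega\rangle=0$ one obtains $\int_M\chi_R^2(|d\omega|^2+|d^*\omega|^2)\leq C\int_M|\nabla\chi_R|^2|\omega|^2\lesssim R^{-2}\cdot R^{n}\cdot R^{-2\alpha}=R^{\,n-2-2\alpha}$, which tends to $0$ as $R\to\infty$ since $\alpha\geq n-2>(n-2)/2$. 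Hence $d\omega=d^*\omega=0$ on all of $M$.

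Next, I restrict to a single end $E_i$. Since $E_i$ is diffeomorphic to $(\R^n\setminus\overline{B(0,1)})/\Gamma_i$, it deformation retracts onto the spherical space form $S^{n-1}/\Gamma_i$, for which $H^1(S^{n-1}/\Gamma_i;\R)=0$ (positive Ricci curvature, or finiteness of $\Gamma_i$); hence $H^1(E_i;\R)=0$ and the closed form $\omega|_{E_i}$ is exact, $\omega=dF_i$ for some $F_i\in C^\infty(E_i)$. Coclosedness then gives $\Delta_0F_i=d^*dF_i=d^*\omega=0$, so $F_i$ is harmonic; and since $\omega=dF_i\to0$, the function $F_i$ has a finite limit $c_i$ at infinity, so it is a bounded harmonic function on the end. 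The expansion of Lemma \ref{lem:expansion_fn} is purely local at infinity and thus applies to $F_i$ on $E_i$: there are $\epsilon>0$ and $A_i\in\R$ with $F_i=c_i+A_ir^{2-n}+\mathcal{O}_\infty(r^{2-n-\epsilon})$. Applying $d$ and using that on an ALE end $d$ maps $\mathcal{O}_\infty(r^\beta)$ into $\mathcal{O}_\infty(r^{\beta-1})$, I get $\omega=(2-n)A_i\,r^{1-n}\,dr+\mathcal{O}_\infty(r^{1-n-\epsilon})$, which is the claim with $B_i:=(2-n)A_i$.

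The main obstacle is the first step: establishing that $\omega$ is closed and coclosed, equivalently ruling out the slowly decaying non-Hodge-harmonic modes such as $r^{2-n}\cdot(\text{const form})$. Everything hinges on first upgrading the a priori decay $\mathcal{O}_\infty(r^{-\alpha})$ to the sharp $\mathcal{O}_\infty(r^{2-n})$ via the indicial-root analysis in weighted Sobolev spaces, after which the integration by parts is routine. As an alternative to the potential argument, one could separate variables on the model cone $[a,\infty)\times(S^{n-1}/\Gamma_i)$, decompose $\omega$ into eigenmodes of the Hodge Laplacian on the link, solve the resulting Euler-type ODEs, and read off that among closed and coclosed decaying modes the slowest is the $\ell=0$ mode $r^{1-n}\,dr$, with a gap of one full power before the next; the ALE metric is then treated as an $\mathcal{O}_\infty(r^{-\tau})$ perturbation through weighted Schauder estimates. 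The potential route above is shorter since it reuses Lemma \ref{lem:expansion_fn}.
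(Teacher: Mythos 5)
Your route is genuinely different from the paper's in its second half. The paper first invokes Proposition \ref{prop:decay_harmonic_forms} (noting $\omega\in\ker_{-\alpha}(\Delta_1)\subset L^p(\Lambda^1M)$ for $p>n/\alpha$) to get that $\omega$ is closed, co-closed and $\mathcal{O}_\infty(r^{1-n})$, then applies the iteration lemma of the appendix (Lemma \ref{lem:iterate}) to write $\omega=\bar\omega+\mathcal{O}_\infty(r^{1-n-\epsilon})$ with $\bar\omega$ harmonic for the \emph{Euclidean} Laplacian near infinity, and reads off the leading term $A_i\,d(r^{2-n})$ from the mode analysis of \cite[Lemma 4.2]{KP}. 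You instead exploit $H^1(E_i;\R)=0$ to write $\omega=dF_i$ on each end with $F_i$ harmonic and bounded, and differentiate the expansion of Lemma \ref{lem:expansion_fn}. This reuses the function-level result instead of redoing the mode analysis for $1$-forms, which is a real economy; the small price is that Lemma \ref{lem:expansion_fn} is stated for globally defined bounded harmonic functions, so you must observe (correctly) that its proof is local at infinity in each end. Also, your first step --- re-deriving closedness and co-closedness by the cutoff argument after the decay upgrade --- is exactly the content of Proposition \ref{prop:decay_harmonic_forms}(a) and could simply be cited.

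The one genuine soft spot is the sentence ``since $\omega=dF_i\to 0$, the function $F_i$ has a finite limit $c_i$ at infinity''. That implication is false in general, and with the decay you have actually established at that point, namely $\omega=\mathcal{O}_\infty(r^{2-n})$, the quantitative version fails precisely when $n=3$: the radial integral $\int^\infty s^{2-n}\,ds$ diverges, and the oscillation of $F_i$ on the sphere of radius $r$ is only $O(r\cdot r^{2-n})=O(r^{3-n})$, which does not tend to $0$ for $n=3$; so boundedness of $F_i$ does not follow from what you have written. The repair is immediate: before introducing $F_i$, invoke Proposition \ref{prop:decay_harmonic_forms}(b) to upgrade to $\omega=\mathcal{O}_\infty(r^{1-n})$; then $\int^\infty s^{1-n}\,ds<\infty$ and the spherical oscillation is $O(r^{2-n})\to 0$, so $F_i$ is Cauchy at infinity, hence has a limit and is bounded, for every $n\geq 3$. (Alternatively, note $F_i=O(\log r)=O(r^\epsilon)$ for all $\epsilon>0$ and run the indicial-root argument of Lemma \ref{lem:iterate} on $F_i$ itself to conclude $F_i=\mathcal{O}_\infty(1)$.) With that patch your argument is complete and correct.
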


%

\section{Harmonic functions on ALE manifolds}

%
%
In this section, we investigate in more details the space of harmonic forms of degree $1$ and $n-1$. In order to do that, we will first have to look at the space of bounded harmonic functions. In the following, we will use the Green operator $\Delta^{-1}$ on $M$, defined by
$$\Delta^{-1}w(x)=\int_MG(x,y)w(y)\,dy,$$
where $G(x,y)$ is the positive Green function on $M$, which exists thanks to the assumptions $n\geq 3$ and $M$ being ALE (since the volume growth is faster than quadratic, $M$ is non-parabolic, as a consequence, e.g. of \cite[Theorem 7.6]{Grig}. See \cite[Theorem 5.1]{Grig} for the different characterizations of non-parabolicity). Moreover, the following assymptotics hold: for all $y\in M$,
\begin{equation}\label{eq:Green}
G(x,y)\in O(r^{2-n}(x)),\quad \mbox{as } r(x)\to \infty.
\end{equation}
Also, the above estimate is uniform for all $y$ staying in a fixed compact set. For the sake of completeness, we will explain these two points below. The proof of \eqref{eq:Green} relies on the concept of {\em minimal growth at infinity} for positive harmonic functions that we now introduce:

\begin{Def}

{\em 

Let $u$ be a postive harmonic function defined in an end $E_i$. We say that $u$ has {\em minimal growth at infinity in the end }$E_i$, if the following holds: for every positive harmonic function $v$ in the end $E_i$, there exists a constant $C>0$ such that

$$u\leq Cv.$$
If $u$ is a positive harmonic function outside a compact set and has minimal growth at infinity in every end, then we shall simply say that $u$ has {\em minimal growth at infinity}.

}

\end{Def}
It follows right away that if $u$ and $w$ are positive harmonic functions in an end $E_i$ which both have minimal growth, then there is a positive constant $C$ such that

$$C^{-1}u\leq v\leq Cu.$$
It is also standard and not hard to show that for every $o\in M$, the Green function $G(o,\cdot)$ with pole $o$ --provided it is finite-- has minimal growth at infinity: indeed, this is a consequence of the construction of the Green function through an exhaustion sequence of compact sets in $M$, and of the maximum principle. See for instance \cite{P}. Here is the two-sided estimate we need for the Green function: 

\begin{Lem}\label{lem:min_growth}

Let $K\Subset M$ be a compact set; then, there exists a constant $C>0$ such that for all $x\in K$ and $r(y)\to\infty$, 

$$C^{-1}r(y)^{2-n}\leq G(x,y)\leq C r(y)^{2-n}.$$

\end{Lem}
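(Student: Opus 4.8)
The plan is to sandwich $G(x,\cdot)$ between two explicit radial barriers of size $r^{2-n}$ on each end, and then to upgrade the pointwise comparison (for fixed $x$) to one that is uniform over $x\in K$ by a boundary comparison on a single fixed sphere.

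First I would construct, in each end $E_i\simeq(\R^n\setminus\overline{B(0,1)})/\Gamma_i$, a positive supersolution and a positive subsolution of $\Delta_0 u=0$ that are comparable to $r^{2-n}$. Fix $\delta\in(0,\min(\tau,n-2))$ and set
$$\Psi=r^{2-n}-c\,r^{2-n-\delta},\qquad \Phi=r^{2-n}+c\,r^{2-n-\delta}.$$
Since the metric is asymptotically Euclidean of order $\tau$, one has $(\Delta_0^{g}-\Delta_0^{\mathrm{eucl}})f\in\mathcal{O}_\infty(r^{s-2-\tau})$ whenever $f\in\mathcal{O}_\infty(r^{s})$; moreover $\Delta_0^{\mathrm{eucl}}r^{2-n}=0$, while $\Delta_0^{\mathrm{eucl}}r^{2-n-\delta}$ is a nonzero multiple of $r^{-n-\delta}$ of definite sign. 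As $\delta<\tau$, the $r^{-n-\delta}$ term dominates the $\mathcal{O}_\infty(r^{-n-\tau})$ error, so for a suitable $c>0$ and all $r$ larger than some $a$, $\Psi$ is positive with $\Delta_0\Psi\geq0$, $\Phi$ is positive with $\Delta_0\Phi\leq0$, and both satisfy $\Psi\asymp\Phi\asymp r^{2-n}$.

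Next, choose $a$ large enough that $K\subset\{r<a\}$, so that for each fixed $x\in K$ the function $y\mapsto G(x,y)$ is harmonic and positive on $\{r>a\}$ and tends to $0$ as $r(y)\to\infty$; this is the standard decay of the minimal positive Green function on the non-parabolic manifold $M$ (equivalently, it follows from the minimal growth property recalled above). Write $S_a=\{r=a\}$ and set $\Lambda=\max_{S_a}\big(G(x,\cdot)/\Psi\big)$ and $\mu=\min_{S_a}\big(G(x,\cdot)/\Phi\big)$. Then both $\Lambda\Psi-G(x,\cdot)$ and $G(x,\cdot)-\mu\Phi$ are supersolutions ($\Delta_0\,(\cdot)\geq0$) on $\{r>a\}$, are nonnegative on $S_a$, and have nonnegative $\liminf$ at infinity (all three functions decay there). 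The minimum principle on the end — justified by applying the ordinary minimum principle on the relatively compact annuli $\{a<r<R\}$ and letting $R\to\infty$, the contribution on $\{r=R\}$ vanishing in the limit — gives $\mu\,\Phi(y)\leq G(x,y)\leq\Lambda\,\Psi(y)$ for $r(y)>a$, hence the two-sided estimate $C_x^{-1}r(y)^{2-n}\leq G(x,y)\leq C_x r(y)^{2-n}$ for each fixed $x$.

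For uniformity, note that $\Lambda=\Lambda(x)$ and $\mu=\mu(x)$ are controlled by the extrema of $G$ on the compact product $K\times S_a$: since $K$ and $S_a$ are disjoint compact sets and $G$ is jointly continuous and strictly positive off the diagonal, $\max_{K\times S_a}G<\infty$ and $\min_{K\times S_a}G>0$. Dividing by the positive, bounded values of $\Psi$ and $\Phi$ on $S_a$ shows $\Lambda(x)$ is bounded above and $\mu(x)$ bounded below by positive constants independent of $x\in K$; taking the maximum over the finitely many ends yields the claimed uniform $C$. The main obstacle is the comparison principle on the non-compact ends: one must guarantee that a supersolution controlled on the inner sphere $S_a$ and vanishing at infinity stays controlled throughout the end, which is exactly where the decay $G(x,\cdot)\to0$ and the hypothesis $n\geq3$ (non-parabolicity) are needed.
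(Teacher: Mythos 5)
Your route is genuinely different from the paper's, and for the most part it works. Where the paper manufactures an \emph{exactly} harmonic comparison function $\varphi_R=r^{2-n}-\Delta^{-1}f_R$ on each end --- at the cost of invoking the Sobolev inequality, the resulting bound $G(x,y)\lesssim d(x,y)^{2-n}$, a delicate convolution estimate, and the Agmon-type minimal-growth criterion of \cite{DFP} --- you compare $G$ directly with the explicit approximate barriers $r^{2-n}\pm c\,r^{2-n-\delta}$, $0<\delta<\tau$, which is shorter and more elementary. The barrier computation is correct ($\Delta_0^{\mathrm{eucl}}r^{2-n-\delta}$ is a nonzero definite-sign multiple of $r^{-n-\delta}$, which dominates the $\mathcal{O}_\infty(r^{-n-\tau})$ metric error when $\delta<\tau$), and the \emph{lower} bound is complete as written: $G(x,\cdot)-\mu\Phi$ is a supersolution, nonnegative on $S_a$, with nonnegative $\liminf$ at infinity because $G>0$ and $\Phi\to 0$, so the minimum principle on the annuli $\{a<r<R\}$ applies. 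The uniformity over $x\in K$ via positivity and continuity of $G$ on $K\times S_a$ is an acceptable substitute for the paper's Harnack argument.

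The gap is in the upper bound. The step $\liminf_{y\to\infty}\bigl(\Lambda\Psi-G(x,\cdot)\bigr)\geq 0$ requires $G(x,\cdot)\to 0$ at infinity, and the justification you offer does not stand: non-parabolicity of $M$ alone does \emph{not} force the minimal Green function to decay (on a manifold with one Euclidean and one cylindrical end, $G$ is finite but tends to a positive constant along the cylinder), and the minimal-growth property as the paper defines it only compares $G$ against positive \emph{harmonic} functions on the end --- a positive harmonic function on $E_i$ that decays at infinity is exactly what neither you nor the paper possesses at this stage, since $\Psi$ and $\Phi$ are merely sub/supersolutions. In the paper, the decay of $G$ is itself a nontrivial output of the Sobolev inequality, so quoting it here is close to circular. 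The repair is easy and keeps your argument self-contained: run the comparison against the exhaustion approximations $G_j(x,\cdot)$ of $G(x,\cdot)$, which vanish on the outer boundary $\partial\Omega_j$. For $j$ large, $\Lambda\Psi-G_j(x,\cdot)$ is a supersolution on $\Omega_j\cap\{r>a\}$, nonnegative on $S_a$ (since $G_j\leq G$) and on $\partial\Omega_j$ (where it equals $\Lambda\Psi\geq 0$), hence nonnegative by the ordinary minimum principle on a relatively compact set; letting $j\to\infty$ gives $G(x,\cdot)\leq\Lambda\Psi$ with no a priori information at infinity, and in particular \emph{proves} the decay you assumed. With that modification the proof is correct.
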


\begin{proof}

The hypothesis on the metric $g$ implies that

$$\Delta r^{2-n}=\mathcal{O}_\infty(r^{-\kappa}),$$
with $\kappa=n+\tau>3$.  Let $\chi\in C_c^\infty(\R^n)$ with $\chi\equiv 1$ in restriction to $B_{\R^n}(0,1)$, $\chi\equiv0$ in restriction to $\R^n\setminus B_{\R^n}(0,2)$. For $R>0$ large enough let $\xi_R$ be the radial function on $M$ which, in each end $E_i$, identifies with the function $x\mapsto 1-\chi\left(\frac{x}{R}\right)$. Finally, let $f=\Delta r^{2-n}$ and $f_R=f\xi_R$; it is plain to see that the support of $f_R$ is inside $\Omega_R^i=(\R^n\setminus B_{\R^n}(0,R))/\Gamma_i$ in each end $E_i$, and that there is a constant $C>0$ independant of $R$ such that

$$
|f_R|\leq Cr^{-\kappa}.
$$
Consider the function $\varphi_R:=r^{2-n}-\Delta^{-1}f_R$. Then, $\Delta\varphi_R=0$ in $\R^n\setminus B_{\R^n/\Gamma}(0,2R)$ in each end. We claim that it is enough to prove that for some choice of $R$ large enough, there exists a constant $c>0$ such that, as $x\to\infty$,

\begin{equation}\label{eq:phiR}
c^{-1}r(x)^{2-n}\leq \varphi_R(x)\leq cr(x)^{2-n}.
\end{equation}
Indeed, \eqref{eq:phiR} first implies that 

$$\lim_{x\to\infty}\varphi_R(x)=0,$$
and according to \cite[Proposition 6.1]{DFP} (with $u_0=\varphi_R$, $u_1=1$), $\varphi_R$ has minimal growth at infinity, hence there is a constant $C>0$ such that, as $y\to\infty$,

$$C^{-1}\varphi(y)\leq G(o,y)\leq C\varphi(y),$$
where $o\in K$ is a fixed point.  Given the asymptotics on $\varphi_R$, one gets

$$G(o,y)\simeq r(y)^{2-n},\quad\mbox{ as }y\to\infty.$$
The local Harnack inequality (see \cite[Theorem 5, p. 353]{Ev}) for positive solutions of uniformly elliptic operators with smooth coefficients (such as the Laplacian on $M$) then yields the existence of a uniform constant $\tilde{C}>0$ such that for all $x\in K$,

$$ \tilde{C}^{-1} r(y)^{2-n}\leq G(x,y)\leq  \tilde{C} r(y)^{2-n},\quad\mbox{ as }y\to\infty.$$
Hence, in order to prove Lemma \ref{lem:min_growth}, it is enough to prove \eqref{eq:phiR}, which is what we do now. We first estimate, thanks to the positivity of the Green operator:

$$|\Delta^{-1}f_R(x)|\leq \Delta^{-1}|f_R|(x)\lesssim (\Delta^{-1}(\mathbf{1}_{\Omega_R^i}r^{-\kappa}))(x).$$
We claim that on $M$ the following Sobolev inequality holds:

\begin{equation}\label{eq:Sob}
||u||_{\frac{2n}{n-2}}\lesssim||\nabla u||_2,\quad \forall u\in C_c^\infty(M).
\end{equation}
Indeed, such an inequality when $u$ has compact support in an end of $M$ comes from the fact that \eqref{eq:Sob} holds on $\R^n$, hence also on $\R^n$ endowed with a metric which is bi-Lipschitz to the Euclidean one, and the fact that $u$ identifies with a smooth, $\Gamma$-periodic function on $\R^n$ with compact support (this uses the fact that $\Gamma$ is finite). Hence, \eqref{eq:Sob} holds in the ends of $M$, and the validity of \eqref{eq:Sob} on $M$ itself then follows from \cite[Proposition 2.4]{C2}. The Sobolev inequality \eqref{eq:Sob} implies estimates $p_t(x,y)\leq Ct^{-n/2}e^{-\frac{d^2(x,y)}{ct}}$ for some positive constants $C,c$, and by integration this yields the following uniform bound for the Green function on $M$:

$$G(x,y)\lesssim d(x,y)^{2-n}.$$
See for instance \cite[Exercice 15.8]{Grig2} for details. Hence, one has the following estimate:

$$|\Delta^{-1}f_R(x)|\lesssim (k*\mathbf{1}_{\Omega_R^i}r^{-\kappa})(x),\quad x\in E_i$$
where the kernel $k$ is by definition $k(x,y)=|x-y|^{2-n}$ and $*$ denotes the convolution in the Euclidean end $E_i$ for the Riemannian measure. Since (up to a constant) the Riemannian measure on $M$ is bounded from above by the Euclidean measure in each end and $d$ is comparable to the Euclidean distance, it follows that $|\Delta^{-1}f_R(x)|$ is bounded by a constant multiple of the following integral in $\R^n$:

$$I(x):=\int_{|y|\geq R} \frac{dy}{|x-y|^{n-2}|y|^\kappa},$$
where by a slight abuse of notations, the point $x\in E_i$ has been identified to $\Phi_i(x)\in\R^n$. In order to estimate $I(x)$, we let $\delta=\frac{|\bar{x}|}{2}$, and we split the domain of integration into three parts (if $R<4\delta$) or two parts (if $R\geq 4\delta$): in the case $R<4\delta$, we write

$$I\leq I_1+I_2+I_3,$$
where

$$I_1(x)=\int_{|\bar{x}-y|\leq \delta}\frac{dy}{|\bar{x}-y|^{n-2}|y|^\kappa},$$

$$I_2(x)=\int_{R\leq |y|\leq 4\delta\,;\,|\bar{x}-y|\geq \delta}\frac{dy}{|\bar{x}-y|^{n-2}|y|^\kappa},$$
and 

$$I_3(x)=\int_{|y|\geq 4\delta,\,|\bar{x}-y|\geq \delta}\frac{dy}{|\bar{x}-y|^{n-2}|y|^\kappa}.$$
In the case $R\geq 4\delta$, we do not need $I_2(x)$ so by convention in this case we set $I_2(x)=0$. We first estimate $I_1(x)$: notice that by the definition of $\delta$, $|\bar{x}-y|\leq \delta$ implies $|y|\geq \delta$, so

$$I_1(x)\lesssim \delta^{-\kappa}\int_0^\delta\frac{t^{n-1}dt}{t^{n-2}}\simeq \delta^{2-\kappa}.$$
Next, if $R<4\delta$, we estimate $I_2(x)$ by

$$I_2(x)\lesssim \delta^{2-n}\int_{R}^{4\delta}t^{n-1-\kappa}\,dt\lesssim \delta^{2-n}R^{n-2-\kappa},$$
(since $n-2-\kappa<0$). Finally, we estimate $I_3(x)$: note that if $|y|\geq 4\delta=2|\bar{x}|$ then $|\bar{x}-y|\simeq |y|$, so 

$$I_3(x)\lesssim \int_{|y|\geq 4\delta} \frac{dy}{|y|^{n-2+\kappa}}\simeq \delta^{2-\kappa}.$$
Hence, recalling that $\delta\simeq r(x)$, one obtains the existence of a constant $C>0$ such that for all $x\in M$,

$$I(x)\leq C(R^{n-2-\kappa}+r(x)^{n-\kappa})r(x)^{2-n}.$$
Therefore, recalling that $\kappa>n$, one can choose $R>0$ so that $CR^{n-2-\kappa}<\frac{1}{4}$, one gets that for all $x\in M$ such that $r(x)\geq (4C)^{\frac{1}{\kappa-n}}$,

$$|\Delta^{-1}f_R(x)|\leq I(x)<\frac{1}{2}r(x)^{2-n}.$$
Let $r_0:=(4C)^{\frac{1}{\kappa-n}}$. Then, for the above choice of $R$ we get the following estimate on $\varphi_R$:

$$\frac{1}{2}r(x)^{2-n}\leq \varphi_R(x)\leq \frac{3}{2}r(x)^{2-n},\quad r(x)\geq r_0,$$
which proves \eqref{eq:phiR}, and the proof is complete.

\end{proof}
%
We now describe the space of bounded harmonic functions on $M$. It is well-known that the dimension of this space is closely related to the number of ends of $M$, even if $M$ is not ALE, as is apparent for instance from \cite[Theorem 2.1]{LT1} and \cite[Theorems 4.2 \& 4.5]{LT2}. Here we investigate such a relationship in detail, in the setting of ALE manifolds. Recall that $\ker_{0}(\Delta_0)$ denotes the space of bounded harmonic functions. Then we have:
\begin{lem}\label{lem:dimension_bounded_harmonics}
For each tuple $c=(c_1,\ldots c_{N})\in \R^{N}$, there exists a unique bounded harmonic function $u\in C^{\infty}(M)$ such that in each end $E_i$, $i=1,\ldots, N$, we have $u\to c_i$, as $r\to\infty$. In particular,
\begin{align*}
	\dim(\ker_{0}(\Delta_0))=N.
\end{align*}

\end{lem}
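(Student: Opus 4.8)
The plan is to prove that the linear map
$$\Phi:\ker_0(\Delta_0)\to\R^N,\qquad \Phi(u)=(c_1,\dots,c_N),$$
which assigns to a bounded harmonic function its vector of limits at the ends, is a well-defined isomorphism. That $\Phi$ is well-defined is exactly the content of Lemma \ref{lem:expansion_fn}: every $u\in\ker_0(\Delta_0)$ admits in each end $E_i$ a limit $c_i=\lim_{r\to\infty}u$, with $u-c_i\in\mathcal{O}_\infty(r^{2-n})$. Once $\Phi$ is shown to be bijective, the statement follows at once: surjectivity is the existence assertion, injectivity is the uniqueness assertion, and bijectivity yields $\dim(\ker_0(\Delta_0))=N$.

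For injectivity, I would suppose $\Phi(u)=0$, i.e. $u$ is bounded, harmonic, and $c_i=0$ in every end. By Lemma \ref{lem:expansion_fn} we then have $u\in\mathcal{O}_\infty(r^{2-n})$, so in particular $u\to0$ uniformly as $r\to\infty$. I would then invoke the maximum principle on the connected manifold $M$: given $\delta>0$, outside a sufficiently large compact set $|u|<\delta$, so any positive supremum of $u$ must be attained at an interior point, forcing $u$ to be constant and hence (by the decay) identically zero; applying the same argument to $-u$ shows $u\geq0$, and therefore $u\equiv0$.

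For surjectivity, fix $c=(c_1,\dots,c_N)$ and choose a smooth bounded function $\psi$ on $M$ equal to $c_i$ in the far region of each end $E_i$. Then $f:=-\Delta\psi$ is compactly supported. I would set $v:=-\Delta^{-1}f$ using the Green operator, and put $u:=\psi+v$. By construction $\Delta u=\Delta\psi+\Delta v=\Delta\psi-f=0$, so $u$ is harmonic. The decay estimate for the Green function---namely \eqref{eq:Green}, upgraded to a bound that is uniform over the compact support of $f$ by Lemma \ref{lem:min_growth} and the symmetry $G(x,y)=G(y,x)$---gives
$$|v(x)|\leq\int_M G(x,y)|f(y)|\,dy\lesssim r(x)^{2-n}\xrightarrow[r(x)\to\infty]{}0,$$
so $v$ is bounded and tends to $0$ at every end. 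Hence $u$ is a bounded harmonic function with $u\to c_i$ in $E_i$, i.e. $\Phi(u)=c$.

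The routine points (the existence of $\psi$ and the boundedness of $u$) are immediate; the heart of the matter is surjectivity, and there the main obstacle is controlling $v=-\Delta^{-1}f$ at infinity. This is where the two-sided Green estimate of Lemma \ref{lem:min_growth} is essential, guaranteeing both that $\Delta^{-1}f$ is well-defined for compactly supported $f$ and that it decays like $r^{2-n}$, so that the prescribed limits $c_i$ are not perturbed. Injectivity, by contrast, is a soft consequence of the maximum principle combined with the asymptotic expansion of Lemma \ref{lem:expansion_fn}.
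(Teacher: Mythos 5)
Your proof is correct and follows essentially the same route as the paper: uniqueness via the maximum principle applied to a bounded harmonic function tending to zero at infinity, and existence by correcting a cutoff function $\psi$ through the Green operator, using the $O(r^{2-n})$ decay of $G(\cdot,y)$ uniformly for $y$ in a compact set. The only flaw is a sign slip in the surjectivity step: with $f:=-\Delta\psi$ and $v:=-\Delta^{-1}f$ one gets $\Delta u=\Delta\psi+\Delta\psi\neq 0$, so you should instead take $u:=\psi-\Delta^{-1}(\Delta\psi)$ (i.e.\ drop one of the two minus signs), after which the argument goes through verbatim.
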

This result should be compared with \cite[Theorem 4.5]{LT2}.
\begin{proof}
	We first consider uniqueness: assume that $u,v\in \ker_{0}(\Delta_0)$ are both converging to the same constant $c_i$ at each end $E_i$, then $u-v$ is a harmonic function converging to $0$ at each end. By the maximum principle, this implies that $u-v\equiv0$, so $u\equiv v$.\\
	For the existence, let $c=(c_1,\ldots c_{N})\in \R^{N}$ and consider a bounded function $v\in C^{\infty}(M)$ such that $v\equiv c_i$ at each end $E_i$. Then, $\Delta v$ is compactly supported and by \eqref{eq:Green}, we have at each end $\Delta^{-1}(\Delta v)=O(r^{2-n})$ as $r\to \infty$, so that $u:=v-\Delta^{-1}(\Delta v)$ is the desired bounded harmonic function.
\end{proof}
Because $M$ is connected, the kernel of $d_0$ is given by the constant functions and the isomorphism theorem from linear algebra directly yields
\begin{cor}\label{cor:dimension_d_harmonic functions}
	The subspace
	\begin{align*}
		d_0(\ker_{0}(\Delta_0))\subset C^{\infty}(\Lambda^1M)
		\end{align*}
	is $N-1$-dimensional.
\end{cor}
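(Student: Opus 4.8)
The plan is to deduce this immediately from Lemma \ref{lem:dimension_bounded_harmonics} via the rank--nullity theorem, since the corollary is really just a bookkeeping consequence of the dimension count already established there. First I would view $d_0$ as a linear map restricted to the finite-dimensional space $\ker_{0}(\Delta_0)$, that is, I would consider
$$
d_0\colon \ker_{0}(\Delta_0)\longrightarrow C^{\infty}(\Lambda^1M),
$$
whose image is precisely the subspace $d_0(\ker_{0}(\Delta_0))$ under scrutiny.

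Next I would identify the kernel of this restricted map. An element of $\ker(d_0)\cap \ker_{0}(\Delta_0)$ is a bounded harmonic function $u$ with $d_0u=0$; since $M$ is connected, $d_0u=0$ forces $u$ to be constant. Conversely every constant function is bounded and harmonic with vanishing differential, so the kernel of $d_0$ on $\ker_{0}(\Delta_0)$ is exactly the one-dimensional space of constants.

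Finally I would apply the rank--nullity theorem: by Lemma \ref{lem:dimension_bounded_harmonics} we have $\dim\big(\ker_{0}(\Delta_0)\big)=N$, and the kernel of $d_0$ on this space is $1$-dimensional, whence
$$
\dim\big(d_0(\ker_{0}(\Delta_0))\big)=N-1.
$$
There is essentially no obstacle here: all the geometric and analytic content has already been absorbed into Lemma \ref{lem:dimension_bounded_harmonics} (the existence and uniqueness of a bounded harmonic function with prescribed limits at each end), and the connectedness of $M$ is exactly what pins the kernel down to the constants. The only point requiring a word of care is that one should invoke connectedness to rule out any nonconstant bounded harmonic function with zero differential, which is immediate.
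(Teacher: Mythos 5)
Your proposal is correct and coincides with the paper's argument: the corollary is deduced from Lemma \ref{lem:dimension_bounded_harmonics} by rank--nullity, using connectedness of $M$ to identify the kernel of $d_0$ on $\ker_0(\Delta_0)$ with the one-dimensional space of constants. Nothing further is needed.
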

%
%
\begin{Lem}\label{lem:Ai}
Let $u$ be a bounded harmonic function; by \eqref{eq:expansion_u} in every end $E_i$, one has for some $\varepsilon>0$,

$$u=c_i+A_ir^{2-n}+\mathcal{O}_\infty(r^{2-n-\varepsilon}).$$
Then the following two properties hold concerning the real numbers $A_i$:
\begin{enumerate}

\item[(i)] $$\sum_{i=1}^N\frac{A_i}{\mathrm{Card}(\Gamma_i)}=0.$$

\item[(ii)] If $u$ is nonconstant, then there is $i\in\{1,\cdots,N\}$ such that $A_i\neq 0$.

\end{enumerate}

\end{Lem}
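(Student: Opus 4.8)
The plan is to derive both statements from the divergence theorem applied on the exhausting family of domains $\Omega_R=\{x\in M\,;\,r(x)\leq R\}$, whose boundary $\partial\Omega_R$ is, for $R$ large, the disjoint union over the ends of the hypersurfaces $\Sigma_R^i:=\{r=R\}\cap E_i$, each diffeomorphic to $S^{n-1}/\Gamma_i$. The single computation underlying everything is the flux of the model function $r^{2-n}$: since the metric is asymptotically Euclidean on $E_i$, the outward unit normal satisfies $\nu=\nabla r/|\nabla r|+\mathcal{O}_\infty(r^{-\tau})$ with $|\nabla r|=1+\mathcal{O}_\infty(r^{-\tau})$, and the induced area form agrees with the Euclidean one on $S_R^{n-1}/\Gamma_i$ up to a factor $1+\mathcal{O}(r^{-\tau})$. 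Hence
$$\int_{\Sigma_R^i}\partial_\nu(r^{2-n})\,dS=(2-n)R^{1-n}\cdot\frac{|S^{n-1}|R^{n-1}}{\mathrm{Card}(\Gamma_i)}+\mathcal{O}(R^{-\tau})=(2-n)\frac{|S^{n-1}|}{\mathrm{Card}(\Gamma_i)}+\mathcal{O}(R^{-\tau}),$$
which is independent of $R$ to leading order.

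For (i), I would integrate the equation $\Delta u=0$ against the constant $1$: by the divergence theorem, $0=\int_{\Omega_R}\Delta u\,dV=-\int_{\partial\Omega_R}\partial_\nu u\,dS$. Plugging the expansion $u=c_i+A_ir^{2-n}+\mathcal{O}_\infty(r^{2-n-\varepsilon})$ of Lemma \ref{lem:expansion_fn} into the boundary integral, the constant $c_i$ contributes nothing, the term $A_ir^{2-n}$ contributes $A_i(2-n)|S^{n-1}|/\mathrm{Card}(\Gamma_i)$ by the flux computation above, and the remainder contributes $\int_{\Sigma_R^i}\mathcal{O}(r^{1-n-\varepsilon})\,dS=\mathcal{O}(R^{-\varepsilon})$. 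Summing over the $N$ ends and letting $R\to\infty$ gives $(2-n)|S^{n-1}|\sum_{i=1}^N A_i/\mathrm{Card}(\Gamma_i)=0$, and since $(2-n)|S^{n-1}|\neq0$ this is exactly (i).

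For (ii), I would argue by contraposition: assume all $A_i=0$ and show $u$ is constant. First note $\nabla u=\mathcal{O}_\infty(r^{1-n})$ always holds by differentiating the expansion, so $|\nabla u|^2=\mathcal{O}(r^{2-2n})$ is integrable against $dV\sim r^{n-1}\,dr\,d\sigma$ (because $2-2n+n-1=1-n\leq -2<-1$); thus $u$ has finite Dirichlet energy. Green's identity on $\Omega_R$, using $\mathrm{div}(u\nabla u)=|\nabla u|^2$ for harmonic $u$, gives $\int_{\Omega_R}|\nabla u|^2\,dV=\int_{\partial\Omega_R}u\,\partial_\nu u\,dS$. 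When all $A_i=0$ the expansion improves to $u=c_i+\mathcal{O}_\infty(r^{2-n-\varepsilon})$, hence $\partial_\nu u=\mathcal{O}(r^{1-n-\varepsilon})$ while $u$ stays bounded, so the boundary integral is $\mathcal{O}(R^{-\varepsilon})\to0$. Letting $R\to\infty$ forces $\int_M|\nabla u|^2\,dV=0$, so $\nabla u\equiv0$ and $u$ is constant because $M$ is connected; this proves (ii).

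The routine-but-essential point to be careful about is the error bookkeeping on $\partial\Omega_R$: one must check that replacing $\nu$, $|\nabla r|$ and $dS$ by their Euclidean counterparts introduces only $\mathcal{O}(r^{-\tau})$ multiplicative corrections to the leading flux, and that both the expansion remainders (of size $r^{2-n-\varepsilon}$, with gradient of size $r^{1-n-\varepsilon}$) and these metric corrections yield boundary integrals tending to $0$ as $R\to\infty$. All of this is immediate from the ALE condition $(\phi_i)^*g_{eucl}-g\in\mathcal{O}_\infty(r^{-\tau})$ together with the decay exponents, so there is no genuine analytic obstacle; the real content is simply the flux computation that identifies the coefficient $A_i$ with the weight $1/\mathrm{Card}(\Gamma_i)$ appearing in (i).
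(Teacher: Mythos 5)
Your proof of (i) is essentially the paper's argument: the authors also apply the Green/divergence theorem on the exhausting domains $D(R)$ and extract $\sum_i A_i/\mathrm{Card}(\Gamma_i)$ from the flux of $r^{2-n}$ through the cross-sections $S^{n-1}/\Gamma_i$; they merely package the computation by splitting $du=df+\eta$ with $f=c_i+A_ir^{2-n}$ in each end and $\eta=\mathcal{O}_\infty(r^{1-n-\epsilon})$, which is the same error bookkeeping you describe. For (ii), however, you take a genuinely different and arguably more elementary route. The paper normalizes $u$ so that $\inf_j c_j=0$, invokes the maximum principle to get $u>0$, and then uses the theory of positive harmonic functions of minimal growth at infinity (via \cite[Proposition 6.1]{DFP} together with the Green function estimate of Lemma \ref{lem:min_growth}) to conclude $u\gtrsim r^{2-n}$ in the end where the infimum is attained, hence $A_i\neq 0$ there. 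You instead argue by contraposition with an energy identity: if all $A_i$ vanish, the expansion improves to $u=c_i+\mathcal{O}_\infty(r^{2-n-\varepsilon})$, the boundary term $\int_{\partial\Omega_R}u\,\partial_\nu u\,dS$ is $O(R^{-\varepsilon})$, and Green's identity forces $\int_M|\nabla u|^2\dV=0$, so $u$ is constant. Your argument is correct (the decay exponents and the connectedness of $M$ are exactly what is needed) and avoids the minimal-growth machinery entirely; what the paper's approach buys in exchange is slightly more information, namely that $A_i$ is nonzero (indeed of a definite sign) at the specific end where $u$ attains its infimum, which is in the spirit of the barrier techniques used elsewhere in their Section 3.
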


\begin{proof}

The proof of (i) follows from arguments similar to \cite[Proposition 4.5]{D}. Denote $\omega=du$, and let $f\in C^\infty(M)$ be such that for every $i=1,\cdots,N$ and every $x$ in the end $E_i$, $f(x)=c_i+A_ir^{2-n}$. Then,

$$\omega=df+\eta,$$
with $\eta\in \mathcal{O}_\infty(r^{1-n-\epsilon})$ for some $\epsilon>0$. Let $R\geq1$ and let $D(R)$ be the smooth open set defined by
$$D(R)=K\sqcup_{i=1}^N \phi_i^{-1}(B^{\R^n/\Gamma_i}(0,R)\setminus \overline{B^{\R^n/\Gamma_i}(0,1)}).$$
By the Green formula,

$$\int_{D(R)}\Delta f\dv=\int_{\partial D(R)}\frac{\partial f}{\partial \nu}\dS.$$
Given the asymptotic of the metric on $M$, and the definition of $f$, one has, as $R\to \infty$,

$$\int_{\partial D(R)}\frac{\partial f}{\partial \nu}\dS=-(n-2)\left(\sum_{i=1}^N\frac{A_i}{\mathrm{Card}(\Gamma_i)}\right)\omega_n+\varepsilon(R),$$
where $\lim_{R\to \infty}\varepsilon(R)=0$ ands $\omega_n$ denotes the $(n-1)$-dimensional volume of the unit sphere in $\R^n$ (we have used here that the volume of the unit sphere $S^{n-1}/\Gamma_i$ is equal to $\frac{\mathrm{Vol}(S^{n-1})}{\mathrm{Card}(\Gamma_i)}$). On the other hand, since $d^*\omega=\Delta u=0$, one has $\Delta f=-d^*\eta$. Let $X=\eta^\flat$ the vector field canonically associated to the $1$-form $\eta$, then $-d^*\eta=\div(X)$, so Gauss' theorem implies that, as $R\to\infty$,

\begin{eqnarray*}
\int_{D(R)}\Delta f\dv &=& \int_{D(R)}\div(X)\dv\\
&=&\int_{\partial D(R)}\langle X,\nu\rangle\dS\\
&=& O(R^{-\epsilon}),
\end{eqnarray*}
since $X\in O(r^{1-n-\epsilon})$. Comparing the two estimates, one obtains that

$$\lim_{R\to\infty} \int_{\partial D(R)}\frac{\partial f}{\partial \nu}\dS=0,$$
which implies (i).

\medskip

For the proof of (ii), we choose $i\in \{1,\cdots,N\}$ such that $c_i=\inf_{j=1,\cdots,N}c_j$. Up to changing $u$ into $u-c_i$, we can --and will-- assume that $c_i=0$. Then, 

$$\liminf_{x\to \infty} u(x)=0,$$
which implies, by the maximum principle and the assumption that $u$ is non-constant, that $u>0$ on $M$. On the end $E_i$, one thus has a positive harmonic function $u$, whose limit is zero at infinity in $E_i$. According to \cite[Proposition 6.1]{DFP}, it follows that $u$ has minimal growth at infinity in the end $E_i$. But the metric in $E_i$ being ALE, the minimal growth at infinity for positive harmonic functions on $E_i$ is $\simeq r^{2-n}$ (Lemma \ref{lem:min_growth}). Therefore, one obtains 

$$u(x)\gtrsim r(x)^{2-n},\quad x\in E_i,\,x\to\infty.$$
Hence, $A_i\neq 0$, which concludes the proof of (ii).
\end{proof}
\begin{cor}\label{cor:second_term_harmonic_expansion}
	For each tuple $B=(B_1,\ldots,B_N)$ with  $\sum_{i=1}^N\frac{B_i}{\mathrm{Card}(\Gamma_i)}=0$, there exists a function $u\in \ker_0(\Delta_0)$,
	unique up to the addition of a constant, such that for every $i=1,\cdots,N$, $A_i=B_i$, where $A_i$ are the coefficients in the expansion \eqref{eq:expansion_u} of $u$ in the end $E_i$.
\end{cor}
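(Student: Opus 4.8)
The plan is to read the statement as a linear-algebra fact about the map that sends a bounded harmonic function to its vector of subleading coefficients. First I would define
$$\Phi:\ker_0(\Delta_0)\to \R^N,\qquad \Phi(u)=(A_1,\ldots,A_N),$$
where the $A_i$ are the coefficients appearing in the expansion \eqref{eq:expansion_u} of $u$ in each end $E_i$. This is well-defined because Lemma \ref{lem:expansion_fn} guarantees that such an expansion exists, and the $A_i$ are uniquely determined by $u$, being the coefficients of the $r^{2-n}$ terms. Since the Laplace equation and the extraction of the $r^{2-n}$ coefficient are both linear operations, $\Phi$ is a linear map.

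Next I would locate the image and the kernel of $\Phi$. By Lemma \ref{lem:Ai}(i), the image of $\Phi$ is contained in the hyperplane
$$H:=\left\{B\in\R^N\ :\ \sum_{i=1}^N\frac{B_i}{\mathrm{Card}(\Gamma_i)}=0\right\},$$
which has dimension $N-1$. For the kernel, the constant functions obviously lie in $\ker\Phi$, and conversely Lemma \ref{lem:Ai}(ii) asserts that any nonconstant $u\in\ker_0(\Delta_0)$ has at least one $A_i\neq 0$; hence $\ker\Phi$ is exactly the one-dimensional space of constant functions.

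Then I would invoke the rank-nullity theorem together with $\dim\ker_0(\Delta_0)=N$ from Lemma \ref{lem:dimension_bounded_harmonics}: this yields $\dim\,\im\Phi=N-\dim\ker\Phi=N-1=\dim H$. Combined with the inclusion $\im\Phi\subset H$, we conclude $\im\Phi=H$. Surjectivity of $\Phi$ onto $H$ gives, for each prescribed tuple $B\in H$, a function $u\in\ker_0(\Delta_0)$ with $\Phi(u)=B$, that is $A_i=B_i$ for every $i$; and since $\ker\Phi$ consists precisely of the constants, any two such solutions differ by a constant, which is exactly the asserted uniqueness up to addition of a constant.

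I do not expect a genuine obstacle here, since all the analytic content has already been secured by the preceding lemmas; the corollary is essentially the ``dual counting'' to Lemma \ref{lem:Ai}, packaged as a statement about $\Phi$. The only point that requires a little care is the well-definedness and linearity of $\Phi$, which rest on the uniqueness of the asymptotic expansion in Lemma \ref{lem:expansion_fn}; once that is granted, the remaining argument is purely formal linear algebra.
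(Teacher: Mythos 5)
Your proposal is correct and follows essentially the same route as the paper: both define the linear map $\Phi(u)=(A_1,\ldots,A_N)$ on $\ker_0(\Delta_0)$, identify its kernel with the constants via Lemma \ref{lem:Ai}, and conclude surjectivity onto the hyperplane by rank--nullity using $\dim\ker_0(\Delta_0)=N$ from Lemma \ref{lem:dimension_bounded_harmonics}. Your added remarks on well-definedness and linearity of $\Phi$ are a harmless (and slightly more careful) elaboration of what the paper leaves implicit.
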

\begin{proof}
	Denote $\gamma_i=\mathrm{Card}(\Gamma_i)^{-1}$, and consider the linear map 
	\begin{align*}\Phi: \ker_0(\Delta_0)&\to \left\{(x_1,\ldots x_N)\in \R^N\mid \gamma_1x_1+\ldots \gamma_Nx_N=0\right\}\\
		u&\mapsto (A_1,\ldots,A_N),
		\end{align*}
	where the $A_i$ are as in \eqref{eq:expansion_u}. By Lemma \ref{lem:Ai}, $\Phi$ is well-defined and $\ker(\Phi)$ is exactly given by the constant functions, hence is 1-dimensional. Therefore, since the domain of $\Phi$ is N-dimensional by Lemma \ref{lem:dimension_bounded_harmonics}, $\im(\Phi)$ is (N-1)-dimensional and hence, $\Phi$ is surjective.
\end{proof}

We will also need later the following result concerning the asymptotic behaviour of harmonic $1$-forms:

\begin{Lem}\label{lem:asym_forms}
Let $\omega\in \ker_{1-n}(\Delta_1)$. Then, there are real numbers $(B_j)_{j=1,\cdots,N}$ such that, in each end $E_i$,
$$\omega=B_jr^{1-n}dr+\mathcal{O}_\infty(r^{1-n-\epsilon})$$
for some $\epsilon>0$.
Furthermore, $\sum_{j=1}^N \frac{B_j}{\mathrm{Card}(\Gamma_i)}=0$.
\end{Lem}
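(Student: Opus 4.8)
The expansion is immediate: since $n\geq 3$ we have $1-n<0$, so $\omega\in\ker_{-\alpha}(\Delta_1)$ with $\alpha=n-1>0$, and Lemma \ref{lem:expansion_forms} provides, in each end $E_i$, a constant $B_i\in\R$ and an $\epsilon>0$ with $\omega=B_ir^{1-n}dr+\mathcal{O}_\infty(r^{1-n-\epsilon})$. The substance of the statement is therefore the linear relation $\sum_{i=1}^N B_i/\mathrm{Card}(\Gamma_i)=0$, and I would obtain it by a flux computation on large spheres, following the scheme used in the proof of Lemma \ref{lem:Ai}(i).

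First I would record that $\omega$ is coclosed. Because $\omega=\mathcal{O}_\infty(r^{1-n})$ and $(n-1)p>n$ for every $p>\frac{n}{n-1}$, we have $\omega\in L^p$; by Proposition \ref{prop:decay_harmonic_forms}(d), $\omega\in\ker_{L^p}(\Delta_1)$, and part (a) of the same proposition gives $\ker_{L^p}(\Delta_1)=\ker_{L^p}(d_1)\cap\ker_{L^p}(d^*_1)$, so in particular $d^*\omega=0$. Setting $X=\omega^\sharp$, the vector field canonically associated with $\omega$, this reads $\div(X)=-d^*\omega=0$.

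Next I would integrate over the exhausting domains $D(R)$ introduced in the proof of Lemma \ref{lem:Ai}. Gauss' theorem gives, for every $R\geq 1$,
$$\int_{\partial D(R)}\langle X,\nu\rangle\dS=\int_{D(R)}\div(X)\dV=0,$$
where $\nu$ is the outward unit normal. The boundary $\partial D(R)$ is the disjoint union over $i=1,\dots,N$ of the spheres $\{r=R\}\subset E_i$. On each such sphere the ALE condition gives $\nu=\partial_r+\mathcal{O}_\infty(r^{-\tau})$ and an $(n-1)$-dimensional volume equal to $\frac{\omega_n}{\mathrm{Card}(\Gamma_i)}R^{n-1}(1+o(1))$ as $R\to\infty$, where $\omega_n=\mathrm{Vol}(S^{n-1})$. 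Plugging in $\langle X,\nu\rangle=\omega(\partial_r)+\mathcal{O}_\infty(r^{1-n-\tau})=B_iR^{1-n}+\mathcal{O}(R^{1-n-\epsilon'})$ for a suitable $\epsilon'>0$, the contribution of $E_i$ tends to $\omega_n B_i/\mathrm{Card}(\Gamma_i)$, while all error terms are $\mathcal{O}(R^{-\epsilon'})+\mathcal{O}(R^{-\tau})=o(1)$. Letting $R\to\infty$ in the displayed identity then yields $\omega_n\sum_{i=1}^N B_i/\mathrm{Card}(\Gamma_i)=0$, which is the desired relation.

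The only delicate point is the boundary estimate: one must check that the metric corrections of order $r^{-\tau}$ — affecting the normal, the induced measure, and the relation $dr(\partial_r)=1$ — together with the remainder $\mathcal{O}_\infty(r^{1-n-\epsilon})$ integrate to a quantity that vanishes in the limit. Since each of these contributes an integrand of size $o(r^{1-n})$ over a sphere of volume $\asymp R^{n-1}$, this is exactly the bookkeeping already carried out in Lemma \ref{lem:Ai}(i), and no new idea is required.
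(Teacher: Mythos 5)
Your proposal is correct and follows essentially the same route as the paper: the expansion is quoted from Lemma \ref{lem:expansion_forms}, coclosedness comes from Proposition \ref{prop:decay_harmonic_forms}, and the relation $\sum_j B_j/\mathrm{Card}(\Gamma_j)=0$ is a flux computation over the large coordinate spheres, which is exactly the mechanism of Lemma \ref{lem:Ai}(i) that the paper invokes. The only cosmetic difference is that the paper first subtracts an explicit $df$ with the same leading term and then cites the bookkeeping of Lemma \ref{lem:Ai}(i), whereas you apply the divergence theorem directly to $\omega^\sharp$; the estimates involved are identical.
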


\begin{proof}

The asymptotic expansion follows directly from Lemma \ref{lem:expansion_forms}.
Let $f\in C^\infty(M)$ be such that $f=-\frac{B_j}{n-2}r^{2-n}$ outside a compact set in the end $E_j$, then
$$\omega=df+\mathcal{O}_\infty(r^{1-n-\epsilon})$$
for some $\epsilon>0$.
Since $d^*\omega=0$ by Proposition \ref{prop:decay_harmonic_forms}, the proof of part (i) in Lemma \ref{lem:Ai} now implies that $\sum_{j=1}^N \frac{B_j}{\mathrm{Card}(\Gamma_i)}=0$.
\end{proof}
\begin{cor}\label{cor:decomp_kernel_1}
We have
\begin{align*}\ker_{1-n}(\Delta_1)=d(\ker_0(\Delta_0))\oplus \ker_{-n}(\Delta_1)
	\end{align*}
and this sum is $L^2$-orthogonal.
\end{cor}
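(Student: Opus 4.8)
The plan is to first check that the right-hand side is contained in $\ker_{1-n}(\Delta_1)$, then to separate the two summands by reading off their leading asymptotic terms, and finally to prove orthogonality by an integration by parts. Write $\gamma_i=\mathrm{Card}(\Gamma_i)$. For $u\in\ker_0(\Delta_0)$, Lemma \ref{lem:expansion_fn} gives $u=c_i+A_ir^{2-n}+\mathcal{O}_\infty(r^{2-n-\epsilon})$ in each end $E_i$, so differentiating yields $du=(2-n)A_i\,r^{1-n}\,dr+\mathcal{O}_\infty(r^{1-n-\epsilon})$. Since $\Delta_1\circ d=d\circ\Delta_0$, the form $du$ is harmonic, and as it is $\mathcal{O}_\infty(r^{1-n})$ we get $du\in\ker_{1-n}(\Delta_1)$; together with the obvious inclusion $\ker_{-n}(\Delta_1)\subset\ker_{1-n}(\Delta_1)$ this shows $d(\ker_0(\Delta_0))+\ker_{-n}(\Delta_1)\subset\ker_{1-n}(\Delta_1)$.

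To get the reverse inclusion and directness, I would introduce the ``leading coefficient'' map
$$\Psi:\ker_{1-n}(\Delta_1)\longrightarrow V:=\Big\{(B_1,\dots,B_N)\in\R^N\ \Big|\ \sum_{j=1}^N\frac{B_j}{\gamma_j}=0\Big\},\qquad \Psi(\omega)=(B_1,\dots,B_N),$$
which is well defined by Lemma \ref{lem:asym_forms}. Its kernel is exactly $\ker_{-n}(\Delta_1)$: if $\Psi(\omega)=0$ then $\omega=\mathcal{O}_\infty(r^{1-n-\epsilon})$, and Corollary \ref{cor:decay_harmonic} upgrades this to $\omega=\mathcal{O}_\infty(r^{-n})$, while conversely any element of $\ker_{-n}(\Delta_1)$ has vanishing leading term. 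Next I would show that $\Psi$ restricted to $d(\ker_0(\Delta_0))$ is onto $V$: given $(B_j)\in V$, set $A_j=B_j/(2-n)$, which still satisfies the constraint $\sum_jA_j/\gamma_j=0$, and Corollary \ref{cor:second_term_harmonic_expansion} produces $u\in\ker_0(\Delta_0)$ whose expansion has these coefficients $A_j$, so that $\Psi(du)=(B_j)$ by the computation above. Consequently, for an arbitrary $\omega\in\ker_{1-n}(\Delta_1)$ I can choose $u$ with $\Psi(du)=\Psi(\omega)$; then $\omega-du\in\ker\Psi=\ker_{-n}(\Delta_1)$, which gives $\omega\in d(\ker_0(\Delta_0))+\ker_{-n}(\Delta_1)$. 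The sum is direct: if $du\in\ker_{-n}(\Delta_1)$ then $\Psi(du)=0$, so all $A_j=0$, and Lemma \ref{lem:Ai}(ii) forces $u$ to be constant, whence $du=0$.

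Finally, for $L^2$-orthogonality I would fix $u\in\ker_0(\Delta_0)$ and $\eta\in\ker_{-n}(\Delta_1)$; by Proposition \ref{prop:decay_harmonic_forms}(a) we have $d^*\eta=0$. Applying the divergence theorem to the vector field $u\,\eta^\flat$ on the exhausting domains $D(R)$ (as in the proof of Lemma \ref{lem:Ai}), and using $\div(\eta^\flat)=-d^*\eta=0$, gives $\int_{D(R)}\langle du,\eta\rangle\dv=\int_{\partial D(R)}u\,\eta(\nu)\dS$. On $\partial D(R)$ one has $u=\mathcal{O}(1)$, $|\eta|=\mathcal{O}(r^{-n})$ and surface area $\mathcal{O}(R^{n-1})$, so the boundary term is $\mathcal{O}(R^{-1})\to0$; since $\langle du,\eta\rangle=\mathcal{O}(r^{1-2n})$ is integrable on $M$, the left-hand side converges to $\langle du,\eta\rangle_{L^2}$, which is therefore $0$. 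I expect the main obstacle to be the careful identification $\ker\Psi=\ker_{-n}(\Delta_1)$ (where the decay improvement of Corollary \ref{cor:decay_harmonic} is essential) and the verification that the boundary term vanishes; the surjectivity of $\Psi$ on $d(\ker_0(\Delta_0))$, which relies on matching the constraints in Corollary \ref{cor:second_term_harmonic_expansion}, is the other place where something genuine happens.
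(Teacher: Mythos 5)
Your proof is correct and follows essentially the same route as the paper's: subtract off $du$ for a bounded harmonic function $u$ whose expansion coefficients match the leading coefficients of $\omega$ (via Lemma \ref{lem:asym_forms} and Corollary \ref{cor:second_term_harmonic_expansion}), upgrade the decay of the remainder with Corollary \ref{cor:decay_harmonic}, and establish orthogonality by the same integration by parts over $D(R)$. Your explicit bookkeeping of the factor $2-n$ (taking $A_j=B_j/(2-n)$) is in fact slightly more careful than the paper's wording, which identifies the coefficients $A_i$ and $B_i$ directly.
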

\begin{proof}
	 Let $\omega\in \ker_{1-n}(\Delta_1)$. By Lemma \ref{lem:asym_forms}, we may write 
	$$\omega=B_jr^{1-n}dr+\mathcal{O}_\infty(r^{1-n-\epsilon})$$
	for some $\epsilon>0$ and a tuple $(B_j)_{j=1,\cdots,N}$ with $\sum_{j=1}^N \frac{B_j}{\mathrm{Card}(\Gamma_i)}=0$. By Corollary \ref{cor:second_term_harmonic_expansion}, there exists a harmonic function $u\in\ker_0(\Delta_0)$,
	 such that the terms $A_i$ in the expansion \eqref{eq:expansion_u} of $u$ are at each end equal to $B_i$. Because $u$ is unique up to a constant, $du\in \ker_{1-n}(\Delta_1)$ is uniquely determined. We obtain the expansion
	 \begin{align*}
	 	du=B_jr^{1-n}dr+\mathcal{O}_\infty(r^{1-n-\epsilon}).
	 \end{align*}
	 Therefore, $\eta:=\omega-du=\mathcal{O}_\infty(r^{1-n-\epsilon})$ and $\eta$ is uniquely determined. Because $\eta$ is a harmonic form since $u$ and $\omega$ are, we conclude from Corollary \ref{cor:decay_harmonic} that $\eta =\mathcal{O}_\infty(r^{-n})$. This finishes the first part of the proof.
	 
	To show $L^2$-orthogonality of this decomposition, let $u\in \ker_0(\Delta_0)$ and $\omega\in \ker_{-n}(\Delta_1)$. Furthermore, let $D(R)$ as in the proof of Lemma \ref{lem:Ai} be given by $$D(R)=K\sqcup_{i=1}^N \phi_i^{-1}(B^{\R^n/\Gamma_i}(0,R)\setminus \overline{B^{\R^n/\Gamma_i}(0,1)}).$$ Because $d_1^*\omega=0$ (Proposition \ref{prop:decay_harmonic_forms}), we get
	 \begin{align*}
	 	\int_{D(R)} \langle du,\omega\rangle \dv=\int_{\partial D(R)}u(*\omega)=O(R^{-1}),
	 \end{align*}
	 since $\omega=O(r^{-n})$ and $u$ is bounded. Letting $R\to\infty$ yields
	 \begin{align*}
	 	(du,\omega)_{L^2}=0
	 \end{align*}
 which is exactly what we needed to show.
\end{proof}
Proposition \ref{prop:decay_harmonic_forms}, Corollary \ref{cor:dimension_d_harmonic functions}, Corollary \ref{cor:decomp_kernel_1} and Hodge duality imply:

\begin{Cor}\label{cor:missing_ker}

Assume that $M$ is ALE with only one end; let $k\in \{1,n-1\}$ and $p\in (1,\infty)$. Then,

$$\ker_{L^p}(\Delta_k)=\ker_{-n}(\Delta_k).$$

\end{Cor}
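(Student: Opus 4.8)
The plan is to reduce everything to the already-established description of $\ker_{1-n}(\Delta_1)$ and $\ker_{-n}(\Delta_1)$, exploiting that a single end forces the ``extra piece'' $d(\ker_0(\Delta_0))$ to vanish. The degree $n-1$ case will then be obtained from the degree $1$ case by Hodge duality.

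First I would treat $k=1$. Since $M$ has a single end, $N=1$, so Corollary \ref{cor:dimension_d_harmonic functions} gives $\dim d_0(\ker_0(\Delta_0))=N-1=0$, i.e. $d(\ker_0(\Delta_0))=\{0\}$. Feeding this into Corollary \ref{cor:decomp_kernel_1} yields
$$\ker_{1-n}(\Delta_1)=d(\ker_0(\Delta_0))\oplus \ker_{-n}(\Delta_1)=\ker_{-n}(\Delta_1).$$
Now I distinguish the two ranges of $p$ covered by Proposition \ref{prop:decay_harmonic_forms}: for $p\leq \frac{n}{n-1}$, part (c) gives directly $\ker_{L^p}(\Delta_1)=\ker_{-n}(\Delta_1)$; for $p>\frac{n}{n-1}$, part (d) gives $\ker_{L^p}(\Delta_1)=\ker_{1-n}(\Delta_1)$, which we just identified with $\ker_{-n}(\Delta_1)$. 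As these two ranges exhaust $(1,\infty)$, I obtain $\ker_{L^p}(\Delta_1)=\ker_{-n}(\Delta_1)$ for every $p\in(1,\infty)$.

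Next, for $k=n-1$ I would transport the degree $1$ result through the Hodge star $*\colon \Lambda^{n-1}M\to \Lambda^1M$. Since $*$ is a fibrewise isometry commuting with the Hodge Laplacian, it preserves $L^p$ norms and the weak equation $\Delta\omega=0$, hence restricts to a bijection $\ker_{L^p}(\Delta_{n-1})\to \ker_{L^p}(\Delta_1)$; and because it preserves the pointwise norm it likewise maps $\ker_{-n}(\Delta_{n-1})$ bijectively onto $\ker_{-n}(\Delta_1)$. Given any $\omega\in \ker_{L^p}(\Delta_{n-1})$, the form $*\omega$ lies in $\ker_{L^p}(\Delta_1)=\ker_{-n}(\Delta_1)$ by the previous step, hence is smooth, harmonic and $\mathcal{O}_\infty(r^{-n})$; applying $*$ once more (using $**=\pm\,\id$) returns $\omega$ with the same properties, so $\omega\in \ker_{-n}(\Delta_{n-1})$. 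The reverse inclusion $\ker_{-n}(\Delta_{n-1})\subset \ker_{L^p}(\Delta_{n-1})$ is immediate, since a smooth harmonic form that is $\mathcal{O}_\infty(r^{-n})$ lies in $L^p$ for every $p>1$.

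The only point requiring a little care — the main, and rather minor, obstacle — is verifying that $*$ genuinely preserves the asymptotic class $\mathcal{O}_\infty(r^\alpha)$, that is the decay of all derivatives and not merely of the pointwise norm. This is where the ALE hypothesis enters: in each end the coefficients of $*$ in the Euclidean frame differ from the flat ones by terms in $\mathcal{O}_\infty(r^{-\tau})$, so the Leibniz rule propagates the decay of $\omega$ together with all its derivatives to $*\omega$. Once this is recorded, combining the two cases gives $\ker_{L^p}(\Delta_k)=\ker_{-n}(\Delta_k)$ for $k\in\{1,n-1\}$, which is the assertion.
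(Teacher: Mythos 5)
Your argument is correct and follows exactly the route the paper indicates: the paper states this corollary as an immediate consequence of Proposition \ref{prop:decay_harmonic_forms}, Corollary \ref{cor:dimension_d_harmonic functions}, Corollary \ref{cor:decomp_kernel_1} and Hodge duality, which is precisely the chain you assemble ($N=1$ kills $d(\ker_0(\Delta_0))$, so $\ker_{1-n}(\Delta_1)=\ker_{-n}(\Delta_1)$, and parts (c)--(d) of the decay proposition cover all $p\in(1,\infty)$). Your extra care about $*$ preserving the class $\mathcal{O}_\infty(r^\alpha)$ on an ALE end is a reasonable point that the paper leaves implicit.
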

We conclude this section with the following result:

\begin{Pro}\label{pro:im_p>n}

Let $p\in [n,+\infty)$. Then,
$$d(\ker_0(\Delta_0))\subset \im_{L^p}(d_0)\cap \ker_{1-n}(\Delta_1).$$
\end{Pro}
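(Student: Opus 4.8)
The plan is to prove the two inclusions $d(\ker_0(\Delta_0))\subset\ker_{1-n}(\Delta_1)$ and $d(\ker_0(\Delta_0))\subset\im_{L^p}(d_0)$ separately. The first is essentially formal: it is immediate from Corollary \ref{cor:decomp_kernel_1}, or directly, for $u\in\ker_0(\Delta_0)$ one has $\Delta_1 du=d\Delta_0 u=0$ while Lemma \ref{lem:expansion_fn} gives in each end the expansion $u=c_i+A_ir^{2-n}+\mathcal{O}_\infty(r^{2-n-\epsilon})$, whose differential is $\mathcal{O}_\infty(r^{1-n})$. In particular every element of $\ker_{1-n}(\Delta_1)$ satisfies $|du|=O(r^{1-n})$, so $du\in L^p$: the end integral $\int_{r\geq 1}r^{(1-n)p}\,r^{n-1}\,dr$ converges precisely when $p>\tfrac{n}{n-1}$, which is the case for all $p\geq n$ since $n\geq 3$. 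This membership $du\in L^p$ is what the second inclusion requires.

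For the second inclusion I would approximate $du$ in $L^p$ by differentials of compactly supported functions. Fix a family of radial cutoffs $\chi_R\in C_c^\infty(M)$ with $\chi_R\equiv 1$ on $\{r\leq R\}$. Since $u$ is smooth, $\chi_R u\in C_c^\infty(M)$ and $d(\chi_R u)=\chi_R\,du+u\,d\chi_R\in d(C_c^\infty(M))$. Writing $du-d(\chi_R u)=(1-\chi_R)\,du-u\,d\chi_R$, the first term tends to $0$ in $L^p$ by dominated convergence (dominated by $|du|\in L^p$), so the whole argument reduces to driving the error term $u\,d\chi_R$ to $0$ in $L^p$.

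The main obstacle is the borderline exponent $p=n$. With a linear cutoff, for which $|d\chi_R|\lesssim R^{-1}$ on $\{R\leq r\leq 2R\}$, one only gets $\|u\,d\chi_R\|_{L^p}^p\lesssim R^{-p}R^n=R^{n-p}$, which decays for $p>n$ but merely stays bounded at $p=n$: the nonzero limits $c_i$ of $u$ at the ends are exactly the obstruction. The fix is a logarithmic cutoff, equal to $1$ on $\{r\leq R\}$, to $\frac{\log(R^2/r)}{\log R}$ on $\{R\leq r\leq R^2\}$ (corners smoothed, which does not affect the estimate), and to $0$ on $\{r\geq R^2\}$, so that $|d\chi_R|\lesssim\frac{1}{r\log R}$ on the transition region. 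Since $u$ is bounded, for $p\geq n$ one is led to
\begin{equation*}
\|u\,d\chi_R\|_{L^p}^p\lesssim\frac{1}{(\log R)^p}\int_R^{R^2}r^{n-1-p}\,dr.
\end{equation*}
At $p=n$ the integral equals $\log R$, giving the bound $(\log R)^{1-n}\to 0$; for $p>n$ it is $O(R^{n-p})$, giving $O\!\big(R^{n-p}(\log R)^{-p}\big)\to 0$. Thus $u\,d\chi_R\to 0$ in $L^p$ for every $p\geq n$, hence $d(\chi_R u)\to du$ in $L^p$ and $du\in\im_{L^p}(d_0)$. Combined with the first inclusion, this gives the proposition.
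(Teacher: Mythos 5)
Your proof is correct and follows essentially the same route as the paper: the first inclusion via the expansion of Lemma \ref{lem:expansion_fn}, and the second by approximating $du$ with $d(\chi u)$ for cutoffs $\chi$ whose gradients are small in $L^p$. The only difference is that the paper obtains such cutoffs abstractly by citing the $p$-parabolicity of $M$ for $p\geq n$ (via \cite{CHS}), whereas you construct the logarithmic cutoffs explicitly and verify $\|u\,d\chi_R\|_{L^p}\to 0$ by hand, including the borderline case $p=n$; both arguments are valid and yours is self-contained.
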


\begin{proof}
Clearly, the expansion \eqref{eq:expansion_u} for bounded harmonic functions provides that $d(\ker_0(\Delta_0))\subset \ker_{1-n}(\Delta_1)$, so in order to finish the proof of the proposition, it is enough to prove that
$$d(\ker_0(\Delta_0))\subset \im_{L^p}(d_0).$$
Since $p\geq n$, $M$ is $p$-parabolic, hence there is a sequence $(\chi_n)_{n\in\N}\subset C_c^\infty(M)$ of cut-off functions such that for every $n\geq0$, $0\leq \chi_n\leq 1$, $\chi_n\rightarrow 1$ pointwise and $\int_M|\nabla \chi_n|^p\rightarrow 0$ (see \cite[Section 3.2]{CHS}). Let $\omega=d u\in d(\ker_0(\Delta_0))$, and denote $\omega_n=d(\chi_nu)\in dC_c^\infty(M)$. We are going to prove that $(\omega_n)_{n\in\N}$ converges to $\omega$ in $L^p$. One has

$$||\omega-\omega_n||_p\leq ||(1-\chi_n)du||_p+||u(d\chi_n)||_p.$$
Given that $u\in L^\infty$ and $||\nabla \chi_n||_p\rightarrow 0$, the second term on the right hand side converges to zero. The first one also converges to zero, as follows from the Dominated Convergence theorem, since $(1-\chi_n)du$ converges pointwise to zero and $du\in L^p$. Finally, we have $||\omega-\omega_n||_p\rightarrow 0$, which implies that $\omega\in\im_{L^p}(d_0)$. This is what was needed to achieve the proof.

%
%
%
%

\end{proof}

\section{Hodge projectors}

In this section, we introduce the Hodge projectors and make the connection with the $L^p$-Hodge decomposition. In part of this section, we will work with general complete Riemannian manifolds, which are not ALE unless explicitly stated. We first recall the $L^2$-Hodge decomposition for $k$-forms, which holds true for any complete Riemannian manifold:

\begin{equation}\label{eq:HodgeL2}\tag{$\mathscr{H}_2$}
L^2(\Lambda^kM)=\im_{L^2}(d_{k-1})\oplus \im_{L^2}(d^*_{k+1})\oplus \ker_{L^2}(\Delta_{k}),
\end{equation}
and the sum is orthogonal. We will denote by $\Pi_{k,d}$, $\Pi_{k,d^*}$ and $\Pi_{k,0}$ the three orthogonal projectors from $L^2(\Lambda^kM)$ onto $\im_{L^2}(d_{k-1}),$ $\im_{L^2}(d^*_{k+1}),$ $\ker_{L^2}(\Delta_{k})$ respectively. In addition to the linear algebra equality \eqref{eq:HodgeL2}, the three Hodge projectors defined above are bounded operators on $L^2(\Lambda^*M)$. If now $p\in (1,\infty)$, we say that the {\em $L^p$-Hodge decomposition} holds for $k$-forms, provided $\im_{L^p}(d_{k-1})+\im_{L^p}(d^*_{k+1})$ is {\em closed} and

\begin{equation}\label{eq:HodgeLp}\tag{$\mathscr{H}_p$}
L^p(\Lambda^kM)=\im_{L^p}(d_{k-1})\oplus \im_{L^p}(d^*_{k+1})\oplus \ker_{L^p}(\Delta_{k}).
\end{equation}
Note that the above equation \eqref{eq:HodgeLp} is {\em a priori} only meant as a linear algebra equality, and $\im_{L^p}(d_{k-1})+\im_{L^p}(d^*_{k+1})$ is {\em closed} if and only if \eqref{eq:HodgeLp} is a {\em topological} equality, i.e. the linear projections onto each factor are continuous. Note that this is always the case for $p=2$, as has been recalled above. We chose to stress the difference between the linear algebra equality and the topological equality by singling out the key assumption that $\im_{L^p}(d_{k-1})+\im_{L^p}(d^*_{k+1})$ is closed. If the $L^p$ Hodge decomposition holds in the above sense, we will denote by $\Pi^p_{k,d}$, $\Pi^p_{k,d^*}$ and $\Pi^p_{k,0}$ the three projectors from $L^p(\Lambda^kM)$ onto the three subspaces $\im_{L^p}(d_{k-1})$, $\im_{L^p}(d^*_{k+1})$, and $\ker_{L^p}(\Delta_{k})$ respectively. To stress again, the assumption that $\im_{L^p}(d_{k-1})+\im_{L^p}(d^*_{k+1})$ is closed together with the linear algebra equality \eqref{eq:HodgeLp}, are equivalent to the three $L^p$-Hodge projectors being bounded on $L^p$. We warn here the reader that in general, despite $\im_{L^p}(d_{k-1})$ and $\im_{L^p}(d_{k+1})$ being closed subspaces, their sum $\im_{L^p}(d_{k-1})+\im_{L^p}(d^*_{k+1})$ is not necessarily closed.  As will be apparent, sometimes it will be also useful to consider the following {\em ``modified'' $L^p$- Hodge decomposition}, which is said to hold provided $\im_{L^p}(d_{k-1})+\im_{L^p}(d^*_{k+1})$ is closed and the following linear algebra equality holds:

\begin{equation}\label{eq:mod-HodgeLp}\tag{$\tilde{\mathscr{H}}_p$}
L^p(\Lambda^kM)=\im_{L^p}(d_{k-1})\oplus \im_{L^p}(d^*_{k+1})\oplus \ker_{-n}(\Delta_{k}).
\end{equation}
In case \eqref{eq:mod-HodgeLp} holds, we will call $\tilde{\Pi}^p_{k,d}$, $\tilde{\Pi}^p_{k,d^*}$ and $\tilde{\Pi}^p_{k,0}$ the three ``modified'' Hodge projectors, that is the projectors asociated with the decomposition \eqref{eq:mod-HodgeLp}.

It will be interesting to compare the Hodge projectors on $L^2$ and on $L^p$. However, in order to be able to do this, one implicitly needs that

$$\ker_{L^p}(\Delta_k)=\ker_{L^2}(\Delta_k),$$
and we know from Proposition \ref{prop:decay_harmonic_forms} that this does not always hold. We now introduce an assumption that will allow us to compare the kernels $\ker_{L^p}(\Delta_k)$ for different values of $p$. Recall first that there is a Bochner formula:

$$\Delta_k=\nabla^*\nabla+\mathscr{R}_k,$$
where $\nabla$ is the connection induced on $\Lambda^*M$ by the Levi-Civita connection, and $\mathscr{R}_k\in \mathrm{End}(\Lambda^k M)$ is self-adjoint and is related to the curvature operator. See \cite{GM}. We say that $M$ satisfies Assumption ($H_k$), provided $||\mathscr{R}_k||_{L^\infty(M)}<\infty$, and for every $1\leq p\leq q\leq +\infty$, $e^{-\Delta_0} : L^p\to L^q$ is bounded. Note that the latter holds in particular if the Sobolev inequality holds true on $M$ (see \cite{S}), a property which, as we have already recalled, holds true for ALE manifolds of dimension $n\geq 3$, see equation \eqref{eq:Sob}. If ($H_k$) holds for any $k=0,\cdots,n$, we say that $M$ satisfies Assumption (H). As a consequence of the fact that the Riemann curvature tensor of an ALE manifold tends to $0$ at infinity, it follows that ALE manifolds of dimension $n\geq 3$ satisfy Assumption (H). The relevance of assumption (H) stands from the following result, whose proof (in the case $k=1$) can be found in \cite[p.87]{CCH}:

\begin{Lem}\label{lem:ker}

Let $M$ be a complete Riemannian manifold satisfying ($H_k$) for some $k\in \{0,\cdots,n\}$. Then, for every $1\leq q\leq p\leq +\infty$, one has

$$\ker_{L^q}(\Delta_k)\subset \ker_{L^p}(\Delta_k).$$ 

\end{Lem}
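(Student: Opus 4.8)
The plan is to reduce the problem to a scalar heat-semigroup domination estimate for $|\omega|$, exactly as in the case $k=1$ treated in \cite[p.87]{CCH}. Fix $\omega\in\ker_{L^q}(\Delta_k)$; by elliptic regularity $\omega$ is smooth, and the goal is to upgrade its integrability from $L^q$ to $L^p$. First I would combine the Bochner formula $\Delta_k=\nabla^*\nabla+\mathscr{R}_k$ with the Kato inequality $|\nabla|\omega||\le|\nabla\omega|$. Pairing $\nabla^*\nabla\omega=-\mathscr{R}_k\omega$ with $\omega$ and using the identity $\Delta_0|\omega|^2=2\langle\nabla^*\nabla\omega,\omega\rangle-2|\nabla\omega|^2$ (where $\Delta_0=\nabla^*\nabla$ denotes the nonnegative scalar Laplacian) yields, on the set where $\omega\neq 0$, the pointwise inequality
$$\Delta_0|\omega|\le -\frac{\langle\mathscr{R}_k\omega,\omega\rangle}{|\omega|}\le \norm{\mathscr{R}_k}_{L^\infty(M)}\,|\omega|.$$
A routine regularization (replacing $|\omega|$ by $\sqrt{|\omega|^2+\varepsilon^2}$ and letting $\varepsilon\to0$) promotes this to the distributional inequality $\Delta_0|\omega|\le C|\omega|$ on all of $M$, with $C:=\norm{\mathscr{R}_k}_{L^\infty(M)}<\infty$ by hypothesis $(H_k)$.

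The second step is to convert this subsolution property into a comparison with the heat semigroup. I would consider, for fixed $t>0$, the function $H(s):=e^{-Cs}\,e^{-(t-s)\Delta_0}|\omega|$ for $s\in[0,t]$ and differentiate:
$$H'(s)=e^{-Cs}\,e^{-(t-s)\Delta_0}\bigl(\Delta_0|\omega|-C|\omega|\bigr)\le 0,$$
where the inequality uses that $e^{-(t-s)\Delta_0}$ is positivity-preserving together with $\Delta_0|\omega|-C|\omega|\le0$. Hence $H$ is nonincreasing, so $H(t)\le H(0)$, i.e.
$$|\omega|\le e^{Ct}\,e^{-t\Delta_0}|\omega|,\qquad t>0.$$
Taking $t=1$ gives $|\omega|\le e^{C}\,e^{-\Delta_0}|\omega|$ pointwise.

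Finally, since $\omega\in L^q$ we have $|\omega|\in L^q$, and Assumption $(H_k)$ guarantees that $e^{-\Delta_0}\colon L^q\to L^p$ is bounded for $q\le p$; therefore $e^{-\Delta_0}|\omega|\in L^p$, and the pointwise domination forces $|\omega|\in L^p$, that is, $\omega\in L^p$. Combined with $\Delta_k\omega=0$ this shows $\omega\in\ker_{L^p}(\Delta_k)$, as desired. I expect the main obstacle to be the rigorous justification of the second step: the differentiation of $H$ under the semigroup and the passage from the \emph{distributional} inequality $\Delta_0|\omega|\le C|\omega|$ to the monotonicity of $H$ require care, and are most cleanly handled by testing against nonnegative functions in the domain of $\Delta_0$ and exploiting the positivity and $L^q$-contractivity of $e^{-t\Delta_0}$; throughout, all manipulations involving the unbounded operator $\Delta_0$ applied to the merely $L^q$ function $|\omega|$ must be read in the weak sense.
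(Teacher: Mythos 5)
Your argument is correct and is essentially the one the paper relies on: the lemma is quoted from \cite[p.~87]{CCH}, and both that reference and the paper's own use of the hypothesis (in the proof of Lemma \ref{lem:dense}) rest on exactly the domination $|\omega|\le e^{Ct}e^{-t\Delta_0}|\omega|$ coming from the Bochner formula with $\mathscr{R}_k$ bounded, followed by the $L^q\to L^p$ boundedness of $e^{-\Delta_0}$ from $(H_k)$. The only cosmetic difference is that you derive this pointwise bound from the Kato-type subsolution inequality $\Delta_0|\omega|\le C|\omega|$, while the paper invokes the equivalent semigroup domination $|e^{-t\Delta_k}\eta|\le e^{Ct}e^{-t\Delta_0}|\eta|$ together with $e^{-t\Delta_k}\omega=\omega$ for harmonic $\omega$.
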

We have the following consequence of the $L^p$ Hodge decomposition:

\begin{proposition}\label{pro:Hodge}

Let $p\in (1,\infty)$ and $k\in\{0,\cdots,n\}$ such that the $L^p$ Hodge-De Rham decomposition \eqref{eq:HodgeLp} holds. Assume moreover that 

$$\ker_{L^p}(\Delta_k)=\ker_{L^2}(\Delta_k).$$
Then, each one of the three $L^2$ Hodge projectors $\Pi_{k,d}$, $\Pi_{k,d^*}$ and $\Pi_{k,0}$, in restriction to $dC_c^\infty(\Lambda^{k-1}T^*M)\oplus d^*C_c^\infty(\Lambda^{k+1}T^*M)\oplus \ker_{L^2}(\Delta_k)$, extends uniquely to a bounded operator on $L^p(\Lambda^k M)$.

\end{proposition}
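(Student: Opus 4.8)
The plan is to compare the $L^2$ and $L^p$ Hodge decompositions on the common algebraic core
$$\mathcal{C}:=dC_c^\infty(\Lambda^{k-1}T^*M)\oplus d^*C_c^\infty(\Lambda^{k+1}T^*M)\oplus \ker_{L^2}(\Delta_k),$$
on which both families of projectors are defined and, as I will check, agree; the conclusion then follows by extension by density, using that the projectors furnished by \eqref{eq:HodgeLp} are bounded on $L^p$ (the closedness hypothesis built into \eqref{eq:HodgeLp} being equivalent to this boundedness). Note first that $\mathcal{C}\subset L^2\cap L^p$: the first two summands consist of compactly supported smooth forms, hence lie in every $L^q$, while the third equals $\ker_{L^p}(\Delta_k)\subset L^p$ by the standing assumption $\ker_{L^2}(\Delta_k)=\ker_{L^p}(\Delta_k)$. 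Thus the $L^2$ projectors $\Pi_{k,d},\Pi_{k,d^*},\Pi_{k,0}$ and the $L^p$ projectors $\Pi^p_{k,d},\Pi^p_{k,d^*},\Pi^p_{k,0}$ all make sense on $\mathcal{C}$.

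Next I would verify that $\mathcal{C}$ is dense in $L^p(\Lambda^kM)$. Its $L^p$-closure is a closed subspace containing each summand, hence it contains $\overline{dC_c^\infty(\Lambda^{k-1}T^*M)}^{L^p}=\im_{L^p}(d_{k-1})$ and $\overline{d^*C_c^\infty(\Lambda^{k+1}T^*M)}^{L^p}=\im_{L^p}(d^*_{k+1})$ by the very definition of these spaces, as well as $\ker_{L^2}(\Delta_k)=\ker_{L^p}(\Delta_k)$. By the decomposition \eqref{eq:HodgeLp} the sum of these three subspaces is all of $L^p(\Lambda^kM)$, so $\overline{\mathcal{C}}^{L^p}=L^p(\Lambda^kM)$. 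The role of the hypothesis $\ker_{L^p}(\Delta_k)=\ker_{L^2}(\Delta_k)$ is precisely to ensure that this common core is $L^p$-dense and that its third summand is simultaneously the harmonic part for both decompositions.

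Finally I would check the compatibility of the two decompositions on $\mathcal{C}$. Fix $\omega=\alpha+\beta+\gamma\in\mathcal{C}$ with $\alpha\in dC_c^\infty(\Lambda^{k-1}T^*M)$, $\beta\in d^*C_c^\infty(\Lambda^{k+1}T^*M)$ and $\gamma\in\ker_{L^2}(\Delta_k)$. Then $\alpha\in\im_{L^2}(d_{k-1})$, $\beta\in\im_{L^2}(d^*_{k+1})$ and $\gamma\in\ker_{L^2}(\Delta_k)$, so uniqueness of the decomposition \eqref{eq:HodgeL2} forces $\Pi_{k,d}\omega=\alpha$, $\Pi_{k,d^*}\omega=\beta$, $\Pi_{k,0}\omega=\gamma$. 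On the other hand $\alpha\in\im_{L^p}(d_{k-1})$, $\beta\in\im_{L^p}(d^*_{k+1})$ and $\gamma\in\ker_{L^p}(\Delta_k)$, so uniqueness of \eqref{eq:HodgeLp} forces $\Pi^p_{k,d}\omega=\alpha$, $\Pi^p_{k,d^*}\omega=\beta$, $\Pi^p_{k,0}\omega=\gamma$. Hence $\Pi_{k,d}$, $\Pi_{k,d^*}$ and $\Pi_{k,0}$ agree on $\mathcal{C}$ with $\Pi^p_{k,d}$, $\Pi^p_{k,d^*}$ and $\Pi^p_{k,0}$ respectively. Since the $L^p$ projectors are bounded on $L^p$ and $\mathcal{C}$ is $L^p$-dense, each $L^2$ projector, restricted to $\mathcal{C}$, extends uniquely to a bounded operator on $L^p(\Lambda^kM)$, namely the corresponding $L^p$ projector. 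No genuine obstacle arises here; the only point requiring care is the twofold use of the hypothesis $\ker_{L^p}(\Delta_k)=\ker_{L^2}(\Delta_k)$, which is exactly what makes the two decompositions share the dense common core $\mathcal{C}$.
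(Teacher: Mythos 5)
Your proof is correct and follows essentially the same route as the paper's: identify the common core $E=dC_c^\infty(\Lambda^{k-1}M)\oplus d^*C_c^\infty(\Lambda^{k+1}M)\oplus\ker_{L^2}(\Delta_k)$, observe that the $L^2$ and $L^p$ projectors agree there by uniqueness of both decompositions, note that $E$ is $L^p$-dense by \eqref{eq:HodgeLp}, and extend by the $L^p$-boundedness of the projectors. You merely spell out the density and agreement steps in more detail than the paper does.
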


\begin{proof}

Denote

$$E=dC_c^\infty(\Lambda^{k-1}M)\oplus d^*C_c^\infty(\Lambda^{k+1}M)\oplus \ker(\Delta_k),$$
where $\ker(\Delta_k):=\ker_{L^2}(\Delta_k)=\ker_{L^p}(\Delta_k)$ by assumption. Then, by definition of the $L^2$ Hodge projectors, the restriction to $E$ of the three projectors $\Pi^p_{k,d}$, $\Pi^p_{k,d^*}$ and $\Pi^p_{k,0}$ coincide with the restriction to $E$ of their $L^2$ counterparts; indeed, this is an easy consequence of the facts that $dC_c^\infty(\Lambda^{k-1}M)\subset \im_{L^2}(d_{k-1})\cap \im_{L^p}(d_{k-1})$, $d^*C_c^\infty(\Lambda^{k+1}M)\subset \im_{L^2}(d^*_{k+1})\cap \im_{L^p}(d^*_{k+1})$ and the assumption on the $L^2$ and $L^p$ kernel. Since $E$ is dense in $L^p(\Lambda^k M)$ by \eqref{eq:HodgeLp}, we conclude that $\Pi^p_{k,d}$, $\Pi^p_{k,d^*}$ and $\Pi^p_{k,0}$ are $L^p$ bounded extensions of their $L^2$ counterparts defined on $E$.

\end{proof}
There is a subtlety in the extent to which by Proposition \ref{pro:Hodge} the $L^2$ projectors have a unique extension to $L^p$, which we now explain. By Proposition \ref{pro:Hodge}, starting from one of the three $L^2$ Hodge projectors $\rho$, we have first restricted $\rho$ to a projector $\bar{\rho}$ on

$$E=dC_c^\infty(\Lambda^{k-1}M)\oplus d^*C_c^\infty(\Lambda^{k+1}M)\oplus \ker(\Delta_k),$$
then we have extended $\bar{\rho}$ by density to a projector $\hat{\rho}$ on $L^p(\Lambda^k M)$. However, it is desirable but not clear {\em a priori} that the extended projector $\hat{\rho}$ agrees with the original projector $\rho$ on $L^2\cap L^p$, that is: 

$$\hat{\rho}|_{L^2\cap L^p}=\rho|_{L^2\cap L^p},$$
Of course, $L^2(\Lambda^kM)\cap L^p(\Lambda^kM)$ is dense both in $L^2(\Lambda^kM)$ and in $L^p(\Lambda^kM)$ separately, but this is not enough in order that the above equality holds. Consider the following diagram in which each {\em black} arrow is a dense inclusion:

\begin{equation}
\begin{tikzcd}
L^2(\Lambda^kM) & & L^p(\Lambda^kM) \\
& L^2(\Lambda^kM) \cap  L^p(\Lambda^kM) \ar{ul} \ar{ur} & \\
& E \ar[uul,bend left=20] \ar[uur,bend right=20] \ar[u, red,"?" red] & 
\end{tikzcd}
\end{equation}
It turns out that the above uniqueness issue is related to the question whether the middle red arrow in the above diagram is a dense inclusion. In appendix (see Lemma \ref{lem:dense} and Corollary \ref{cor:extension} therein), we give conditions so that this is indeed the case, and we prove that indeed,

$$\hat{\rho}|_{L^2\cap L^p}=\rho|_{L^2\cap L^p}.$$
In particular, we have the following criterion for the uniqueness of the $L^p$ extension of $L^2$ Hodge projectors:

\begin{Cor}\label{cor:extension_hodgeLp}

Let $M$ be a complete manifold satisfying (H); let $p\in (1,\infty)$ and $q=p'$, and assume that

$$\ker_{L^p}(\Delta_k)=\ker_{L^q}(\Delta_k)=\ker_{L^2}(\Delta_k).$$
Suppose that \eqref{eq:HodgeLp}, the $L^p$ Hodge decomposition for forms of degree $k$ holds on $M$. Then, each one of the three $L^2$ Hodge projectors $\Pi_{k,d}$, $\Pi_{k,d^*}$ and $\Pi_{k,0}$ coincide with its $L^p$ counterpart given by Proposition \ref{pro:Hodge}, in restriction to $L^2(\Lambda^kM)\cap L^p(\Lambda^kM)$. In short, we will say that the $L^2$ projectors extend uniquely to $L^p$ bounded projectors.

\end{Cor}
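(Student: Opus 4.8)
The plan is to establish the required uniqueness by invoking the machinery announced in the excerpt: namely, the density of $E$ in $L^2(\Lambda^kM)\cap L^p(\Lambda^kM)$ (equipped with the intersection norm $\|\cdot\|_{L^2}+\|\cdot\|_{L^p}$), as provided by Lemma \ref{lem:dense} and Corollary \ref{cor:extension}. First I would observe that under the stated hypothesis $\ker_{L^p}(\Delta_k)=\ker_{L^q}(\Delta_k)=\ker_{L^2}(\Delta_k)$, together with Assumption (H), the hypotheses of Lemma \ref{lem:dense} are met for both exponents $p$ and its conjugate $q=p'$; this is precisely the content needed to conclude that the middle red arrow in the diagram is a dense inclusion, i.e.\ that $E$ is dense in $L^2\cap L^p$ for the intersection topology.

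Next I would spell out the extension argument concretely. Fix one of the three $L^2$ projectors, say $\rho=\Pi_{k,d}$, and let $\hat\rho$ denote the $L^p$-bounded operator produced by Proposition \ref{pro:Hodge}, i.e.\ the unique continuous extension to $L^p(\Lambda^kM)$ of $\rho|_E$. By construction $\hat\rho=\rho$ on $E$. Both $\rho$ and $\hat\rho$ are continuous for their respective norms, so $\hat\rho-\rho$ is continuous from $L^2\cap L^p$ (with the intersection norm) into $L^p$, being the difference of the $L^p$-continuous $\hat\rho$ and the map $\rho$, which is $L^2$-continuous hence continuous from the stronger intersection norm into $L^2$ and, by the stated $L^p$-boundedness of the $L^2$ Hodge projectors (which follows once \eqref{eq:HodgeLp} holds and the kernels agree), also into $L^p$. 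Since $\hat\rho-\rho$ vanishes on the dense subspace $E$ and is continuous into $L^p$ on $L^2\cap L^p$, it vanishes identically on $L^2\cap L^p$. Therefore $\hat\rho|_{L^2\cap L^p}=\rho|_{L^2\cap L^p}$.

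Finally I would note that the identical argument applies verbatim to $\Pi_{k,d^*}$ and $\Pi_{k,0}$, since each of the three $L^2$ projectors agrees with its $L^p$ extension on $E$ (this agreement on $E$ is exactly what Proposition \ref{pro:Hodge} establishes, using $dC_c^\infty\subset \im_{L^2}(d_{k-1})\cap\im_{L^p}(d_{k-1})$ and the analogous inclusions for $d^*$ and for the common kernel). Thus all three $L^2$ Hodge projectors coincide with their $L^p$ counterparts on $L^2\cap L^p$, which is the assertion.

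\medskip

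The main obstacle here is entirely deferred to the appendix: the heart of the matter is the density of $E$ in $L^2\cap L^p$ for the intersection topology (Lemma \ref{lem:dense}), whose proof requires the full strength of Assumption (H) and the coincidence of kernels for $p$, $q$ and $2$. Granting that density result, the present Corollary is a soft functional-analytic consequence, the only subtle point being to invoke the intersection-norm topology rather than the two $L^2$- and $L^p$-densities separately, as emphasized in the discussion preceding the statement.
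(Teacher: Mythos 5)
Your overall strategy matches the paper's: both rest on the density of $E=\mathscr{G}_k$ in $L^2(\Lambda^kM)\cap L^p(\Lambda^kM)$ for the intersection norm (Lemma \ref{lem:dense}), followed by a limit-passing argument. However, the limit-passing step as you wrote it is circular. You assert that the $L^2$ projector $\rho$ is continuous from $L^2\cap L^p$ \emph{into $L^p$}, justifying this by ``the $L^p$-boundedness of the $L^2$ Hodge projectors (which follows once \eqref{eq:HodgeLp} holds and the kernels agree).'' But \eqref{eq:HodgeLp} only gives $L^p$-boundedness of the $L^p$ projectors $\Pi^p_{k,d}$, $\Pi^p_{k,d^*}$, $\Pi^p_{k,0}$; transferring that to the $L^2$ projectors requires knowing precisely that $\rho$ and $\hat\rho$ agree on $L^2\cap L^p$, which is the statement you are proving. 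A priori there is no reason why $\rho(\omega)$ should even lie in $L^p$ for $\omega\in L^2\cap L^p$, so the operator $\hat\rho-\rho$ is not known to map into $L^p$, and your continuity-plus-density conclusion does not go through as stated.

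The repair is the standard one the paper uses in Corollary \ref{cor:extension}: take $\omega_n\in E$ with $\omega_n\to\omega$ simultaneously in $L^2$ and in $L^p$ (possible by Lemma \ref{lem:dense}). Then $\rho(\omega_n)=\hat\rho(\omega_n)$ for all $n$, the left-hand side converges to $\rho(\omega)$ in $L^2$, and the right-hand side converges to $\hat\rho(\omega)$ in $L^p$. Since convergence in $L^2$ and in $L^p$ each yield almost-everywhere convergence along a subsequence, the two limits coincide almost everywhere, whence $\rho(\omega)=\hat\rho(\omega)$. This avoids any claim about $\rho$ being continuous into $L^p$ and only uses the separate continuity of $\rho$ on $L^2$ and of $\hat\rho$ on $L^p$. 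With this substitution your argument is correct and coincides with the paper's.
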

Another situation that we will encounter in the case of ALE manifolds is that of a modified $L^p$ Hodge decomposition \eqref{eq:mod-HodgeLp}. This case will happen only for $k\in \{1,n-1\}$, $p\geq n$ and $N\geq 2$ ($N$ being the number of ends of $M$). The case $k=n-1$ will follow from the case $k=1$ by Hodge duality, so in the remaining part of this section, we assume that $M$ is an ALE manifold, that $p\in [n,\infty)$, that $k=1$ and that $N\geq 2$. Recall from Proposition \ref{prop:decay_harmonic_forms} and Corollary \ref{cor:decomp_kernel_1} that for $k=1$, we have the orthogonal decomposition:

$$\ker_{L^2}(\Delta_1)=d(\ker_0(\Delta_0))\oplus \ker_{-n}(\Delta_1).$$
Denote by $\Pi_{d\ker_0}$ and $\Pi_{-n}$ the $L^2$ orthogonal projectors onto $d(\ker_0(\Delta_0))$ and $\ker_{-n}(\Delta_1)$ respectively. Denote $q=p'$ the conjugate exponent, then according to Proposition \ref{prop:decay_harmonic_forms}, one has

$$\ker_{L^p}(\Delta_1)=\ker_{L^2}(\Delta_1)$$
and

$$\ker_{L^q}(\Delta_1)=\ker_{-n}(\Delta_1)\varsubsetneqq \ker_{L^2}(\Delta_1).$$
We now have the following modified version of Corollary \ref{cor:extension_hodgeLp}:

\begin{Cor}\label{cor:extension_HodgeLp-mod}

Let $M$ be ALE, $p\in [n,\infty)$; suppose that the modified Hodge decomposition \eqref{eq:mod-HodgeLp}
 for forms of degree $1$ holds on $M$. Then, each one of the three $L^2$ projectors $\Pi_{1,d}+\Pi_{d\ker_0}$, $\Pi_{1,d^*}$ and $\Pi_{-n}$ coincides with its $L^p$ counterpart $\tilde{\Pi}_{1,d}^p$, $\tilde{\Pi}_{1,d^*}^p$, $\tilde{\Pi}_{1,0}^p$ in restriction to $L^2(\Lambda^kM)\cap L^p(\Lambda^kM)$. Hence, these three $L^2$ projectors extend uniquely to $L^p$ bounded projectors.

\end{Cor}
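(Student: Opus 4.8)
The plan is to produce a single subspace $\tilde E\subset L^2(\Lambda^1M)\cap L^p(\Lambda^1M)$ on which the two families of projectors visibly agree, and then promote this agreement to all of $L^2\cap L^p$ by density. Concretely, I would take
$$\tilde E=dC_c^\infty(\Lambda^0M)\oplus d^*C_c^\infty(\Lambda^2M)\oplus d(\ker_0(\Delta_0))\oplus \ker_{-n}(\Delta_1).$$
Each summand lies in $L^2\cap L^p$: the first two consist of compactly supported smooth forms, while $d(\ker_0(\Delta_0))=\mathcal O_\infty(r^{1-n})$ by the expansion \eqref{eq:expansion_u} and $\ker_{-n}(\Delta_1)=\mathcal O_\infty(r^{-n})$, both integrable to every power $s>\frac{n}{n-1}$, in particular for $s=2$ and $s=p$ (recall $p\geq n$). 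The sum is $L^2$-orthogonal by the ordinary $L^2$ Hodge decomposition together with Corollary \ref{cor:decomp_kernel_1}. Write (A) for the asserted coincidence of the projectors on $L^2\cap L^p$ and (B) for the resulting unique $L^p$-bounded extension. Note that (B) is a formal consequence of (A): the modified projectors are $L^p$-bounded because \eqref{eq:mod-HodgeLp} is assumed to hold, and a bounded operator on $L^p$ is determined by its values on the $L^p$-dense subspace $L^2\cap L^p$.

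Next I would check pointwise agreement on $\tilde E$ summand by summand. On $dC_c^\infty(\Lambda^0M)$ and $d^*C_c^\infty(\Lambda^2M)$ the computation is identical to the unmodified situation: such forms sit inside $\im_{L^2}(d_0)$ (resp.\ $\im_{L^2}(d^*_2)$) and inside $\im_{L^p}(d_0)$ (resp.\ $\im_{L^p}(d^*_2)$), so both $\Pi_{1,d}+\Pi_{d\ker_0}$ (resp.\ $\Pi_{1,d^*}$) and $\tilde{\Pi}^p_{1,d}$ (resp.\ $\tilde{\Pi}^p_{1,d^*}$) act as the identity there while the remaining projectors vanish (the term $\Pi_{d\ker_0}$ kills $\im_{L^2}(d_0)$ since it is an $L^2$-projector onto the harmonic space). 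On $\ker_{-n}(\Delta_1)$ both $\Pi_{-n}$ and $\tilde{\Pi}^p_{1,0}$ act as the identity and the others vanish, as this is literally the harmonic summand of both decompositions. The one genuinely new case is $d(\ker_0(\Delta_0))$, where Proposition \ref{pro:im_p>n} is the decisive input: it gives $d(\ker_0(\Delta_0))\subset \im_{L^p}(d_0)$ for $p\geq n$, so $\tilde{\Pi}^p_{1,d}$ is the identity on this summand; on the $L^2$ side the same form is $L^2$-harmonic and lies in $d(\ker_0(\Delta_0))$, hence is annihilated by $\Pi_{1,d}$ and fixed by $\Pi_{d\ker_0}$, so $\Pi_{1,d}+\Pi_{d\ker_0}$ is also the identity, with the other two projectors vanishing on both sides. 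This is exactly the bookkeeping that makes $\Pi_{1,d}+\Pi_{d\ker_0}$, rather than $\Pi_{1,d}$ alone, the correct $L^2$ counterpart of $\tilde{\Pi}^p_{1,d}$.

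The main obstacle is the density step: I must show $\tilde E$ is dense in $L^2\cap L^p$ for the intersection norm $\|\cdot\|_2+\|\cdot\|_p$, since density in $L^2$ and in $L^p$ separately (both immediate from the definitions of $\im_{L^2},\im_{L^p}$ as the respective closures of the $C_c^\infty$ images, the finite-dimensionality of the harmonic pieces, the decomposition \eqref{eq:mod-HodgeLp}, and Proposition \ref{pro:im_p>n} to absorb $d(\ker_0(\Delta_0))$ into $\im_{L^p}(d_0)$) does \emph{not} suffice; this is precisely the ``red arrow'' subtlety discussed before Corollary \ref{cor:extension_hodgeLp}. I would handle it by adapting the appendix machinery (Lemma \ref{lem:dense} and Corollary \ref{cor:extension}) to the modified setting, the crucial change being that here $\ker_{L^q}(\Delta_1)=\ker_{-n}(\Delta_1)$ is strictly smaller than $\ker_{L^2}(\Delta_1)$ (with $q=p'\leq\frac{n}{n-1}$), so the hypotheses of Corollary \ref{cor:extension_hodgeLp} do not apply verbatim; the point is that the ``missing'' piece $d(\ker_0(\Delta_0))$ is recovered from $\im_{L^p}(d_0)$ rather than from the harmonic summand. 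Once intersection density is in hand, continuity of $\Pi_{1,d}+\Pi_{d\ker_0}$ for $\|\cdot\|_2$ and of $\tilde{\Pi}^p_{1,d}$ for $\|\cdot\|_p$ (and likewise for the other two pairs), together with the fact that $L^2$- and $L^p$-convergence each force a.e.\ convergence along a subsequence, lets me pass to the limit from $\tilde E$ and conclude that the two projectors agree a.e.\ on all of $L^2\cap L^p$. This is (A), and (B) follows as explained above.
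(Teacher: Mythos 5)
Your proof is correct and follows essentially the same route as the paper: the agreement of the two families of projectors is checked on $\mathscr{G}=dC_c^\infty(M)\oplus d^*C_c^\infty(\Lambda^2M)\oplus\ker_{L^2}(\Delta_1)$ (which equals your $\tilde E$ by Corollary \ref{cor:decomp_kernel_1}), with Proposition \ref{pro:im_p>n} supplying the decisive fact on the $d(\ker_0(\Delta_0))$ summand, and the identity is then extended by density in the intersection norm. The one simplification you missed is that the density step requires no adaptation at all: Lemma \ref{lem:dense} only assumes the \emph{inclusion} $\ker_{L^q}(\Delta_1)\subset\ker_{L^2}(\Delta_1)$ (not equality of kernels, which is what fails here), and since $\ker_{L^q}(\Delta_1)=\ker_{-n}(\Delta_1)\subset\ker_{L^2}(\Delta_1)$ for $q=p'\leq\frac{n}{n-1}$, the lemma applies verbatim to give density of $\tilde E$ in $L^2\cap L^p$.
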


\begin{proof}

Let 

$$\mathscr{G}=dC_c^\infty(M)\oplus d^*C_c^\infty(\Lambda^{2}M)\oplus \ker_{L^2}(\Delta_1).$$
According to the remarks above, one can also write

$$\mathscr{G}=\left(dC_c^\infty(M)\oplus d(\ker_0(\Delta_0))\right)\oplus d^*C_c^\infty(\Lambda^{2}M)\oplus \ker_{-n}(\Delta_1).$$
On other hand, the modified $L^p$ Hodge decomposition, which holds by assumption, writes as the following topological equality:

$$L^p(\Lambda^1M)=\im_{L^p}(d_0)\oplus \im_{L^p}(d_2^*)\oplus \ker_{-n}(\Delta_1).$$
We first notice that in restriction to $\mathscr{G}$, one has

$$\Pi_{1,d}+\Pi_{d\ker_0}=\tilde{\Pi}_{1,d}^p,\,\, \Pi_{1,d^*}=\tilde{\Pi}_{1,d^*}^p,\,\,\Pi_{-n}=\tilde{\Pi}_{1,0}^p.$$
Indeed, the only difference with Proposition \ref{pro:Hodge} is that, according to Proposition \ref{pro:im_p>n} we have

$$d(\ker_0(\Delta_0))\subset \im_{L^p}(d_0),$$
so the modified $L^p$ Hodge decomposition of $\omega\in d(\ker_0(\Delta_0))$ writes

$$\omega=\omega\oplus 0\oplus 0.$$
This entails that

$$\Pi_{1,d}+\Pi_{d\ker_0}=\tilde{\Pi}_{1,d}^p,$$
and the other equalities between Hodge projectors are easy to prove and left to the reader. The result now follows from Lemma \ref{lem:dense}.

\end{proof}
Finally, we conclude this section with an interpolation result that will be useful later in order to prove the closedness of $\im_{L^p}(d_{k-1})+\im_{L^p}(d_{k+1}^*)$. One will see in the next section that on an ALE manifold, for all $k\in \{1,\cdots,n-1\}$ one can find a finite dimensional space $E_{k}\subset L^n(\Lambda^kM)$ such that the following {\em weak} $L^n$ Hodge decomposition holds:

\begin{equation}\label{eq:weak_Hs}\tag{$\mathrm{w}\mathscr{H}_n$}
L^n(\Lambda^kM)=\overline{\im_{L^n}(d_{k-1})\oplus \im_{L^n}(d_{k+1}^*)}^{L^n}\oplus E_{k}.
\end{equation}
Moreover, $E_k=\ker_{L^2}(\Delta_k)$ if $k\notin\{1,n-1\}$ or if $M$ has only one end, while $E_k=\ker_{-n}(\Delta_k)$ if $k\in \{1,n-1\}$ and $M$ has at least two ends. Here, the term ``weak'' refers to the fact that one needs a closure on the exact/co-exact factor. Compare with the $L^p$-Hodge decomposition \eqref{eq:HodgeLp}  and the modified $L^p$-Hodge decomposition \eqref{eq:mod-HodgeLp}, which we may sometimes call {\em strong} decomposition to emphasize the contrast with \eqref{eq:weak_Hs}.
With this settled, we show:
\begin{Pro}\label{prop:interpolation}

Let $M$ be an ALE manifold, $1< p< n< q<+\infty$, $k\in \{1,\cdots,n-1\}$. Assume that the weak $L^n$-Hodge decomposition \eqref{eq:weak_Hs} holds, and moreover assume one of the following:

\begin{enumerate}

\item[(a)] the $L^p$- and $L^q$-Hodge decomposition hold;

\item[(b)] or $k\in\{1,n-1\}$, $2\leq p<n$, the $L^p$ Hodge decomposition holds, and the $L^q$ modified Hodge decomposition holds. 
 
\end{enumerate} 
Then, the direct sum

$$\im_{L^n}(d_{k-1})\oplus \im_{L^n}(d_{k+1}^*)$$
is closed in $L^n(\Lambda^kM)$, and the following (strong) $L^n$-Hodge decomposition holds:

$$L^n(\Lambda^kM)=\im_{L^n}(d_{k-1})\oplus \im_{L^n}(d_{k+1}^*)\oplus E_{k}.$$

\end{Pro}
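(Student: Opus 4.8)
The plan is to prove closedness of the sum $\im_{L^n}(d_{k-1})\oplus \im_{L^n}(d_{k+1}^*)$ via complex interpolation, using the weak $L^n$ decomposition \eqref{eq:weak_Hs} to reduce the problem to the boundedness of a single projector. The starting observation is that \eqref{eq:weak_Hs} already splits off the finite-dimensional piece $E_k$ topologically, so it suffices to show that the closed subspace $F_n := \overline{\im_{L^n}(d_{k-1})\oplus \im_{L^n}(d_{k+1}^*)}^{L^n}$ actually equals the (a priori smaller, not necessarily closed) algebraic sum $\im_{L^n}(d_{k-1})+ \im_{L^n}(d_{k+1}^*)$, and that this algebraic sum is a direct sum with closed summands whose intersection is trivial. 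Equivalently, I would show that there is a bounded projector on $L^n(\Lambda^k M)$ whose range is $\im_{L^n}(d_{k-1})$ along $\im_{L^n}(d_{k+1}^*)\oplus E_k$ (and symmetrically), since boundedness of such a projector is exactly what upgrades the algebraic direct sum into a topological one with closed summands.

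First I would set up the interpolation scale. In case (a), the $L^p$- and $L^q$-Hodge decompositions give, by definition, bounded Hodge projectors $\Pi^p_{k,d}, \Pi^p_{k,d^*}, \Pi^p_{k,0}$ on $L^p$ and their analogues on $L^q$, with $p<n<q$. The key compatibility fact, supplied by Corollary \ref{cor:extension_hodgeLp} (after checking the kernel-stability hypotheses $\ker_{L^p}(\Delta_k)=\ker_{L^q}(\Delta_k)=\ker_{L^2}(\Delta_k)$, which hold under the stated restrictions on $k$ since we are away from the degrees and exponents where the kernel jumps), is that these $L^p$ and $L^q$ projectors agree on $L^p\cap L^q$ with the common $L^2$ projectors. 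Because they agree on the dense-in-both overlap, the family $\{\Pi^s_{k,d}\}$ is a consistent (compatible) family of operators on the interpolation couple $(L^p,L^q)$. By the Riesz--Thorin/Stein complex interpolation theorem, the common operator is then bounded on $L^n(\Lambda^k M)$, since $n$ lies strictly between $p$ and $q$ and $\frac{1}{n}$ is a convex combination of $\frac1p$ and $\frac1q$. The same argument applies to $\Pi_{k,d^*}$ and $\Pi_{k,0}$. This produces bounded projectors on $L^n$ whose ranges are the three factors; their boundedness forces the sum $\im_{L^n}(d_{k-1})\oplus \im_{L^n}(d_{k+1}^*)$ to be closed and the decomposition to be the strong one, with $E_k$ identified as the range of the (now bounded on $L^n$) harmonic projector.

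In case (b), the scale is the same but the upper endpoint now carries only a \emph{modified} $L^q$ Hodge decomposition, so the relevant projectors are the modified ones $\tilde\Pi^q_{1,d}, \tilde\Pi^q_{1,d^*}, \tilde\Pi^q_{1,0}$ of Corollary \ref{cor:extension_HodgeLp-mod}, whose harmonic factor is $\ker_{-n}(\Delta_1)$ rather than all of $\ker_{L^2}(\Delta_1)$. The matching point is that, by Corollary \ref{cor:extension_HodgeLp-mod}, the $L^2$ projector paired with the $d$-factor at the upper endpoint is $\Pi_{1,d}+\Pi_{d\ker_0}$, i.e.\ the bounded harmonic piece $d(\ker_0(\Delta_0))$ has been absorbed into the exact factor (this is exactly the content of Proposition \ref{pro:im_p>n}, which places $d(\ker_0(\Delta_0))\subset \im_{L^q}(d_0)$ for $q\geq n$). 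So on $L^p\cap L^q$ the $L^p$ exact-projector $\Pi^p_{1,d}$ and the modified $L^q$ exact-projector $\tilde\Pi^q_{1,d}$ do \emph{not} literally coincide; they differ by $\Pi_{d\ker_0}$, which is finite rank. I would therefore interpolate the genuinely compatible part (projecting onto $\im_{L^s}(d_0)$ along the rest, with harmonic factor $\ker_{-n}$) and add back the finite-rank correction $\Pi_{d\ker_0}$, which is automatically bounded on $L^n$ because it maps into the finite-dimensional space $d(\ker_0(\Delta_0))\subset\ker_{1-n}(\Delta_1)$ consisting of smooth forms lying in every $L^s$, $s>\tfrac{n}{n-1}$ (by the expansion \eqref{eq:expansion_u} and Lemma \ref{lem:asym_forms}). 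Assembling the interpolated bounded projectors with this finite-rank term again yields boundedness on $L^n$ and hence closedness and the strong decomposition, with $E_k=\ker_{-n}(\Delta_k)$ absorbing the harmonic factor while $d(\ker_0(\Delta_0))$ sits inside $\im_{L^n}(d_0)$.

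The main obstacle I anticipate is the compatibility (consistency) verification required before interpolation can be applied: one must ensure the endpoint projectors agree on the overlap $L^p\cap L^q$ so that they define a single operator on the interpolation couple, and in case (b) this agreement genuinely fails on the nose and must be repaired by the finite-rank bookkeeping of $\Pi_{d\ker_0}$. Getting the kernel-stability hypotheses of Corollaries \ref{cor:extension_hodgeLp} and \ref{cor:extension_HodgeLp-mod} to hold at both endpoints simultaneously (especially the dual condition $\ker_{L^q}(\Delta_k)=\ker_{L^2}(\Delta_k)$, which by Proposition \ref{prop:decay_harmonic_forms} constrains $q$) is where the hypotheses $2\le p<n<q$ and the degree restriction $k\in\{1,n-1\}$ in (b) are really used, and I would be careful to check that the assumed $L^p$ and modified $L^q$ decompositions indeed supply projectors that are consistent up to the explicit finite-rank term before invoking Stein interpolation.
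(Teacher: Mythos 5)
Your overall strategy -- splitting off the finite-dimensional factor via the weak $L^n$ decomposition and obtaining bounded projectors on $L^n$ by interpolating between the $L^p$ and $L^q$ endpoints after checking consistency on the overlap -- is exactly the route the paper takes, and your treatment of case (a) matches the paper's (which it leaves to the reader). However, in case (b), which is the case the paper actually writes out, your repair of the endpoint mismatch contains a genuine error. You propose to interpolate a ``compatible part'' and then add back the finite-rank operator $\Pi_{d\ker_0}$, asserting that this correction ``is automatically bounded on $L^n$ because it maps into the finite-dimensional space $d(\ker_0(\Delta_0))$.'' That is false: a rank-one operator $(\theta,\cdot)\,\theta$ is bounded on $L^n$ only if $\theta\in L^n\cap L^{n/(n-1)}$, and by Lemma \ref{lem:asym_forms} the nonzero elements of $d(\ker_0(\Delta_0))$ decay exactly like $r^{1-n}$ (with not all leading coefficients vanishing), so they fail -- borderline, by a logarithmic divergence -- to lie in $L^{n/(n-1)}$. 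Hence $\Pi_{d\ker_0}$ is \emph{not} bounded on $L^n$. The same computation shows it is not bounded on $L^q$ for $q>n$ either (since $q'<\tfrac{n}{n-1}$), so the ``compatible part'' $\Pi_{1,d}=\tilde{\Pi}^q_{1,d}-\Pi_{d\ker_0}$ is itself not bounded at the upper endpoint and cannot be interpolated on its own. Your proposed decomposition into an interpolatable piece plus a harmless finite-rank piece therefore breaks down at both places where it is needed.

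The paper's fix is to never separate the two terms: one interpolates the single operator $\Pi:=\Pi_{1,d}+\Pi_{d\ker_0}$, which \emph{is} consistent and bounded at both endpoints -- on $L^q$ because it coincides with the modified projector $\tilde{\Pi}^q_{1,d}$ of Corollary \ref{cor:extension_HodgeLp-mod}, and on $L^p$ because $\Pi_{1,d}$ extends by Corollary \ref{cor:extension_hodgeLp} while $\Pi_{d\ker_0}$ is bounded on $L^p$ precisely thanks to the hypothesis $2\le p<n$, which puts $d(\ker_0(\Delta_0))\subset L^p\cap L^{p'}$. Riesz--Thorin then bounds $\Pi$ on $L^n$ even though neither summand is individually bounded there, and no correction is needed afterwards: by Proposition \ref{pro:im_p>n} one has $d(\ker_0(\Delta_0))\subset \im_{L^n}(d_0)$, so the range of the interpolated projector on $L^n$ is already $\im_{L^n}(d_0)$ (this identification of ranges and kernels, carried out in Lemma \ref{lem:proj_id} using the weak $L^n$ decomposition, is also the step you gloss over when asserting that boundedness of the projectors ``forces'' closedness). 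With that modification your argument becomes the paper's proof.
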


\begin{proof}

We treat only the more complicated case (b) for $k=1$. The case (b) for $k=n-1$ follows by Hodge duality, and the other case (a) is similar and is left to the reader. By assumption, we have the $L^p$ Hodge decomposition, which can be written, using Proposition \ref{prop:decay_harmonic_forms} and Corollary \ref{cor:decomp_kernel_1},

\begin{eqnarray*}
L^p(\Lambda^1M)&=& \im_{L^p}(d_0)\oplus \im_{L^p}(d_2^*)\oplus \ker_{L^p}(\Delta_1)\\
&=&\left(\im_{L^p}(d_0)\oplus d(\ker_0(\Delta_0))\right) \oplus \im_{L^p}(d_2^*)\oplus \ker_{-n}(\Delta_1).
\end{eqnarray*}
Also, by assumption, one has the modified $L^q$ Hodge decomposition:

$$L^q(\Lambda^1M)=\im_{L^q}(d_0)\oplus \im_{L^p}(d_2^*)\oplus \ker_{-n}(\Delta_1),$$
and the weak $L^n$ Hodge decomposition:

$$L^n(\Lambda^kM)=\overline{\im_{L^n}(d_{k-1})\oplus \im_{L^n}(d_{k+1}^*)}^{L^n}\oplus \ker_{-n}(\Delta_1).$$
Recall also that according to Proposition \ref{pro:im_p>n},

$$d(\ker_0(\Delta_0))\subset \im_{L^n}(d_0).$$
Consider the $L^2$ Hodge projector

$$\Pi:=\Pi_{1,d}+\Pi_{d\ker_0}$$
onto

$$\im_{L^2}(d_0)\oplus d(\ker_0(\Delta_0)).$$
According to Corollary \ref{cor:extension_HodgeLp-mod}, $\Pi|_{L^2\cap L^q}$ extends uniquely to a bounded $L^q$ projector, which we will denote $\Pi^q$. Furthermore, according to Corollary \ref{cor:extension_hodgeLp}, the $L^2$ projector $\Pi_{1,d}$, in restriction to $L^2\cap L^p$, extends uniquely to a bounded $L^p$. Moreover, by writing

$$\Pi_{d\ker_0}=\sum_{i=1}^{N-1}\langle \omega_i,\cdot\rangle \omega_i$$
for an $L^2$ orthonormal basis $(\omega_1,\cdots,\omega_{N-1})$ of $d(\ker_0(\Delta_0))$, one checks easily that $\Pi_{d\ker_0}$ is bounded from $L^p$ to $L^p$ provided

$$\omega_i\subset L^p\cap L^{p'},\quad \forall i\in\{1,\cdots,N-1\},$$
and the latter holds since $d(\ker_0(\Delta_0))\subset \ker_{1-n}(\Delta)$ (Lemma \ref{lem:expansion_fn}) and $2\leq p<n$. So, $\Pi|_{L^2\cap L^p}$, extends uniquely to a bounded projector $\Pi^p$ on $L^p$. If $\omega\in L^p(\Lambda^1M)\cap L^q(\Lambda^1M)$, there exists a sequence $(\omega_n)_{n\in \N}$ with $\omega_n\in L^2\cap L^p\cap L^q$ such that

$$L^p-\lim_{n\to\infty}\omega_n=\omega,\quad L^q-\lim_{n\to\infty}\omega_n=\omega.$$
Indeed, just take an non-decreasing exhaustion $(\Omega_n)_{n\in \N}$ of $M$, and define $\omega_n=\omega\cdot \mathbf{1}_{\Omega_n}$. Since

$$\Pi^p|_{L^2\cap L^p\cap L^q}=\Pi^q|_{L^2\cap L^p\cap L^q}=\Pi|_{L^2\cap L^p\cap L^q},$$
one has for every $n\in \N$,

$$\Pi^p(\omega_n)=\Pi^q(\omega_n),$$
and passing to the limit as $n\to\infty$, one gets

$$\Pi^p(\omega)=\Pi^q(\omega).$$
Thus,

$$\Pi^p|_{L^p\cap L^q}=\Pi^q|_{L^p\cap L^q},$$
and extends uniquely to a bounded operator on $L^p$ and $L^q$. The Riesz-Thorin interpolation theorem now implies that $\Pi$ extends uniquely to a bounded projector on $L^n$.  

A completely similar interpolation argument shows that $\Pi_{1,d^*}$ extends uniquely to a bounded projector on $L^n$. We have the following lemma, whose proof is postponed to the end of the proof of the proposition:

\begin{Lem}\label{lem:proj_id}

The following identities hold:

$$\im_{L^n}(\Pi)=\im_{L^n}(d_0),\quad \im_{L^n}(\Pi_{1,d^*})=\im_{L^n}(d_2^*),$$
and

$$\im_{L^n}(d_2^*)\subset \ker_{L^n}(\Pi),\quad \im_{L^n}(d_0)\subset \ker_{L^n}(\Pi_{1,d^*}).$$

\end{Lem}
Now, let $\omega\in \overline{\im_{L^n}(d_0)\oplus \im_{L^n}(d_2^*)}^{L^n}$. One finds a sequence $(\omega_j)_{j\in \N}\subset \im_{L^n}(d_0)\oplus \im_{L^n}(d_2^*) $ and forms $\eta_j\in \im_{L^n}(d_0)$, $\nu_j\in \im_{L^n}(d_2^*)$ such that for all $j\in \N$, $\omega_j=\eta_j+\nu_j$ and

$$||\omega_j-\omega||_n\to  0,\quad \mbox{as }j\to\infty.$$
By Lemma \ref{lem:proj_id}, since the projectors $\Pi$ and $\Pi_{1,d^*}$ are bounded on $L^n$,

$$\eta_j=\Pi(\omega_j)\underset{L^n}{\longrightarrow}  \Pi(\omega)=:\eta\in \im_{L^n}(d_0) \quad \mbox{as }j\to\infty,$$
and

 $$\nu_j=\Pi_{1,d^*}(\omega_j)\underset{L^n}{\longrightarrow} \Pi_{1,d^*}(\omega)=:\nu\in \im_{L^n}(d_2^*) \quad \mbox{as }j\to\infty.$$
Passing to the limit in the identity $\omega_j=\eta_j+\nu_j$ as $j\to\infty$, one gets

$$\omega=\eta+\nu\in \im_{L^n}(d_0)\oplus \im_{L^n}(d_2^*).$$
Thus, 

$$\overline{\im_{L^n}(d_0)\oplus \im_{L^n}(d_2^*)}^{L^n}=\im_{L^n}(d_0)\oplus \im_{L^n}(d_2^*),$$
which shows that $\im_{L^n}(d_0)\oplus \im_{L^n}(d_2^*)$ is closed. The strong $L^n$ Hodge decomposition now follows directly from the weak one.

\end{proof}

\begin{proof}[Proof of Lemma \ref{lem:proj_id}:]

We first show that $\im_{L^n}(\Pi)=\im_{L^n}(d_0)$. Since the projector $\Pi$ is bounded on $L^n$, $\im_{L^n}(\Pi)=\ker_{L^n}(I-\Pi)$ is a closed subspace of $L^n$. Furthermore, since $\Pi$ coincides with $\Pi_{1,d}+\Pi_{d\ker_0}$ on $L^2\cap L^n$,

$$dC_c^\infty(M)\oplus d(\ker_0(\Delta_0))\subset \im_{L^n}(\Pi),$$
thus by taking the closure in $L^n$ and using the fact that according to Proposition \ref{pro:im_p>n},

$$d(\ker_0(\Delta_0))\subset \im_{L^n}(d_0),$$
one concludes that

$$\im_{L^n}(d_0)\subset \im_{L^n}(\Pi).$$
We now prove the converse inclusion. Denote 

$$E:=dC_c^\infty(M)\oplus d(\ker_0(\Delta_0)),\quad F=d^*C_c^\infty(\Lambda^2M)\oplus \ker_{-n}(\Delta_1).$$
By definition of $\Pi$, 

$$\Pi(E\oplus F)=E\subset \im_{L^n}(d_0).$$
Since by definition $\im_{L^n}(d_0)$ is closed in $L^n$, by taking the closure in $L^n$ of the above inclusion one gets

$$\overline{\Pi(E\oplus F)}^{L^n}\subset \im_{L^n}(d_0).$$
However, since $\Pi$ is bounded,

$$\Pi\left(\overline{E\oplus F}^{L^n}\right)\subset\overline{\Pi(E\oplus F)}^{L^n},$$
hence

$$\Pi\left(\overline{E\oplus F}^{L^n}\right)\subset \im_{L^n}(d_0).$$
But the weak $L^n$ Hodge decomposition implies that $L^n(\Lambda^1M)=\overline{E\oplus F}^{L^n}$, so the above inclusion yields

$$\im_{L^n}(\Pi)\subset \im_{L^n}(d_0).$$
Therefore, we get the equality $\im_{L^n}(\Pi)= \im_{L^n}(d_0)$ which ends the first part of the proof.

The proof that $\im_{L^n}(\Pi_{1,d^*})=\im_{L^n}(d_2^*)$ is completely similar and is skipped. Concerning the last two inclusions: we have

$$d_0C_c^\infty(M)\subset \ker_{L^n}(\Pi_{1,d^*}),\quad d^*_2C_c^\infty(\Lambda^2M)\subset \ker_{L^n}(\Pi),$$
and since the two projectors $\Pi$, $\Pi_{1,d^*}$ are bounded on $L^n$, their kernels are closed, so by taking the closure of the above inclusions, one gets

$$\im_{L^n}(d_2^*)\subset \ker_{L^n}(\Pi),\quad \im_{L^n}(d_0)\subset \ker_{L^n}(\Pi_{1,d^*}),$$
and this completes the proof of Lemma \ref{lem:proj_id}.

\end{proof}

\section{$L^p$ cohomology and Hodge decomposition}

Recall that the reduced $L^p$-cohomology vector spaces are defined as
\begin{align*}
	H^k_p(M):=\frac{\ker_{L^p}(d_{k-1})}{\im_{L^p}(d_k)},
\end{align*}
where $k\in\{0,\cdots,n\}$ is an integer. In the following, we wish to compute these $L^p$ cohomology spaces, and more precisely we wish to relate $H^k_p(M)$ to $L^p$-harmonic $k$-forms. We will first establish {\em weak} $L^p$-Hodge decompositions, where the term ``weak'' refers to the fact that the factor $\im_{L^p}(d_{k-1})\oplus \im_{L^p}(d^*_{k+1})$ is replaced by $\overline{\im_{L^p}(d_{k-1})\oplus \im_{L^p}(d^*_{k+1})}^{L^p}$. Then, we will show how to turn these into {\em strong} $L^p$-Hodge decomposition, and study the consequences for the $L^p$-cohomology.

First we recall some notations: for a subspace $V$ of a Banach space $X$, its {\em annihilator} is defined by
\begin{align*}
	\mathrm{Ann}(V)=\left\{x^*\in X^*\mid x^*(v)=0\text{ for all } v\in V\right\}\subset X^*.
\end{align*}
If $p\in (1,\infty)$, $X$ is an $L^p$ space, $q=p'$ is the conjugate exponent and $V\subset X$ is a subspace, we use for the sake of clarity the notation
\begin{align*}
		\mathrm{Ann}_{L^q}(V):=\mathrm{Ann}(V)\subset L^q
\end{align*}
for the annihilator, in order to emphasise that it is a subspace of $L^q$.
If $p,q\in (1,\infty)$ with $\frac{1}{p}+\frac{1}{q}=1$, we clearly have
\[
\mathrm{Ann}_{L^q}(\im_{L^p}(d_{k-1}))=\ker_{L^q}(d^*_k),\qquad \mathrm{Ann}_{L^q}(\im_{L^p}(d^*_{k+1}))=\ker_{L^q}(d_k).
\]
In this section, we will prove both weak and strong $L^p$ Hodge decompositions; the proofs will rely in particular on $L^q-L^p$ duality. In order to prove weak $L^p$ Hodge decompositions, one will often use the following well-known lemma:

\begin{lem}\label{lem:sum_finite_dim}

Let $\mathscr{B}$ be a Banach space, $E$ and $F$ two closed subspaces of $\mathscr{B}$ such that $E\cap F=\{0\}$ and $F$ is finite dimensional. Then, the sum $E\oplus F$ is closed.

\end{lem}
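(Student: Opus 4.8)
The plan is to prove that $E \oplus F$ is closed by exhibiting it as the preimage of a closed set under a continuous map, using the finite-dimensionality of $F$ in an essential way. The cleanest route I would take is to pass to the quotient Banach space $\mathscr{B}/E$, which is a genuine Banach space precisely because $E$ is closed, and let $\pi \colon \mathscr{B} \to \mathscr{B}/E$ denote the canonical projection, which is continuous. The key observation is that the image $\pi(F)$ is a finite-dimensional subspace of $\mathscr{B}/E$ (its dimension is at most $\dim F$), and finite-dimensional subspaces of any normed space are automatically closed. Moreover, since $E \cap F = \{0\}$ and $E = \ker \pi$, the restriction $\pi|_F$ is injective, so $\dim \pi(F) = \dim F$.

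With these facts in hand, the main step is to verify the set-theoretic identity
\begin{equation*}
E \oplus F = \pi^{-1}\bigl(\pi(F)\bigr).
\end{equation*}
The inclusion $E \oplus F \subseteq \pi^{-1}(\pi(F))$ is immediate, since $\pi(e+f) = \pi(f) \in \pi(F)$ for $e \in E$, $f \in F$. For the reverse inclusion, suppose $x \in \pi^{-1}(\pi(F))$, so $\pi(x) = \pi(f)$ for some $f \in F$; then $x - f \in \ker \pi = E$, whence $x = (x-f) + f \in E + F = E \oplus F$. This establishes equality. Since $\pi(F)$ is closed in $\mathscr{B}/E$ and $\pi$ is continuous, the preimage $\pi^{-1}(\pi(F)) = E \oplus F$ is closed in $\mathscr{B}$, which is exactly the claim.

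The only genuinely non-trivial ingredient is the closedness of the finite-dimensional subspace $\pi(F) \subseteq \mathscr{B}/E$; everything else is formal bookkeeping about quotient maps. This closedness is a standard fact from functional analysis: every finite-dimensional subspace of a normed vector space is complete (being linearly homeomorphic to a Euclidean space via the equivalence of all norms in finite dimensions) and hence closed. I do not anticipate any real obstacle here, as the argument is purely topological-algebraic and uses no structure of $\mathscr{B}$ beyond its being a Banach space; the finite-dimensionality of $F$ is what makes the quotient image automatically closed, which is precisely where the hypothesis enters.
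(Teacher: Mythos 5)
Your proof is correct, but it takes a genuinely different route from the paper's. The paper proves the claim by showing that the projection $\pi\colon E\oplus F\to F$ along $E$ is bounded: its operator norm is the reciprocal of the distance from $E$ to the unit sphere of $F$, which is positive because that sphere is compact (finite-dimensionality of $F$) and disjoint from the closed set $E$; closedness of the sum then follows by passing to the limit in $z_n=\pi(z_n)+(\mathrm{id}-\pi)(z_n)$. You instead pass to the quotient $\mathscr{B}/E$, observe that $\pi(F)$ is finite-dimensional and hence closed there, and identify $E+F=\pi^{-1}(\pi(F))$ as the preimage of a closed set under the continuous quotient map. Both arguments are standard and complete; the compactness fact you invoke (finite-dimensional subspaces of a normed space are closed) plays the same role as the paper's distance estimate. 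Two small remarks on what each approach buys. First, your argument never actually uses the hypothesis $E\cap F=\{0\}$ — the identity $E+F=\pi^{-1}(\pi(F))$ and the closedness of $\pi(F)$ hold regardless — so you in fact prove the slightly more general statement that $E+F$ is closed whenever $E$ is closed and $F$ is finite-dimensional; directness only enters to write the sum as $E\oplus F$. Second, the paper's proof yields as a byproduct the boundedness of the projectors associated with the decomposition, which is the form in which the lemma is actually exploited elsewhere in the article (e.g.\ in the Hodge-projector arguments); from your version one recovers that boundedness afterwards via the closed graph theorem, so nothing is lost, but it is worth being aware that the quantitative content lives in the paper's norm estimate.
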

We point out that in general, the result of the lemma is {\em false} if $F$ is not finite dimensional. Since Lemma \ref{lem:sum_finite_dim} is instrumental in the present article, for the sake of completeness we give a sketch of its proof.

\begin{proof}[Proof of Lemma \ref{lem:sum_finite_dim}]

It is enough to prove that the projection

$$\pi : E\oplus F\to F$$
which is defined by $\pi(z)=y$ for all $z=x+y\in E\oplus F$, is continuous. Indeed, if it is the case, then $\pi$ and $id-\pi$ both extend to bounded projectors on $\mathscr{B}=\overline{E\oplus F}$, which satisfy $\im(\pi)=F$, $\im(id-\pi)=E$ since $E$ and $F$ are closed; thus, if $\{z_n\}_{n\in\mathbb{N}}$ is a sequence of $E\oplus F$ which converges to $z$, then $\pi(z_n)$ and $(id-\pi)(z_n)$ converge respectively to $e\in E$ and $f\in F$, and passing to the limit in the equality $z_n=\pi(z_n)+(id-\pi)(z_n)$, we have $z=e+f\in E\oplus F$, which concludes the proof.

To prove that $\pi$ is continuous on $E\oplus F$, we estimate its operator norm:

$$|||\pi|||=\sup_{x+y\in E\oplus F\setminus \{0\}}\frac{||y||}{||x+y||},$$
and by homogeneity and the symmetry $y\mapsto -y$ (which preserves $F$), it is also equal to

$$|||\pi|||=\sup_{x+y\in E\oplus F,\,||y||=1}\frac{1}{||x-y||}=\left(\inf_{x\in E,\,y\in F,\,||y||=1}||x-y||\right)^{-1}.$$
The above infimum is precisely equal to the distance from $E$ to the unit sphere in $F$, which is compact since $F$ is finite dimensional. But the distance between a compact set and a closed set that are disjoint being $>0$, we conclude that $|||\pi|||<+\infty$, and $\pi$ is bounded.







\end{proof}
Let us now continue with a useful lemma:

\begin{lem}\label{lem : de Rham intersection}
	Let $p,q\in (1,\infty)$ such that $\frac{1}{p}+\frac{1}{q}=1$. Then,
	\begin{align}\label{eq : de Rham intersection 1}
		\overline{\im_{L^p}(d_{k-1})+\im_{L^p}(d^*_{k+1})}^{L^p}\cap \ker_{L^p}(\Delta_{k})\cap \ker_{L^q}(\Delta_{k})=\left\{0\right\}
	\end{align}
	Moreover, if $\ker_{L^p}(\Delta_{k})\subset \ker_{L^q}(\Delta_{k})$, we have
	\begin{align}\label{eq : de Rham intersection 2}
		\im_{L^p}(d_{k-1})\cap \im_{L^p}(d^*_{k+1})
	=\left\{0\right\}.
	\end{align}
\end{lem}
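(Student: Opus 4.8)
The plan is to derive both identities from $L^p$--$L^q$ duality, the two annihilator identities recorded just above the lemma, and a self-pairing trick; the second identity will then fall out of the first almost for free.

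For the first identity, I would take any $\omega$ in the triple intersection and show $\omega=0$. By hypothesis $\omega\in\ker_{L^q}(\Delta_k)$, so Proposition \ref{prop:decay_harmonic_forms}(a) (applied with exponent $q$) gives $\omega\in\ker_{L^q}(d_k)\cap\ker_{L^q}(d^*_k)$. Reading $\omega$ as an element of $L^q=(L^p)^*$ and invoking the two identities $\mathrm{Ann}_{L^q}(\im_{L^p}(d_{k-1}))=\ker_{L^q}(d^*_k)$ and $\mathrm{Ann}_{L^q}(\im_{L^p}(d^*_{k+1}))=\ker_{L^q}(d_k)$, I get that $\omega$ annihilates both $\im_{L^p}(d_{k-1})$ and $\im_{L^p}(d^*_{k+1})$, hence their algebraic sum, and by continuity of the functional $\omega$ also the $L^p$-closure $\overline{\im_{L^p}(d_{k-1})+\im_{L^p}(d^*_{k+1})}^{L^p}$. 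But $\omega$ is itself a member of that closure (now read as an element of $L^p$), so pairing $\omega$ against itself gives $\int_M|\omega|^2=0$ (this self-pairing integral is finite because $\omega\in L^p\cap L^q$ with $\frac{1}{p}+\frac{1}{q}=1$, so H\"older applies). Therefore $\omega=0$.

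For the second identity, under the extra hypothesis $\ker_{L^p}(\Delta_k)\subset\ker_{L^q}(\Delta_k)$, I would take $\omega\in\im_{L^p}(d_{k-1})\cap\im_{L^p}(d^*_{k+1})$. The chain of inclusions recorded before the lemma gives $\omega\in\ker_{L^p}(d_k)\cap\ker_{L^p}(d^*_k)=\ker_{L^p}(\Delta_k)$, and the assumption then places $\omega$ in $\ker_{L^q}(\Delta_k)$ as well. Since moreover $\omega\in\im_{L^p}(d_{k-1})\subset\overline{\im_{L^p}(d_{k-1})+\im_{L^p}(d^*_{k+1})}^{L^p}$, the form $\omega$ lies in exactly the triple intersection shown to be trivial in the first part, whence $\omega=0$.

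The whole argument is short once the mechanism is in place, and I do not expect a genuine obstacle; the only point that needs care is the bookkeeping of which copy of $\omega$ is being used as an element of $L^p$ and which as a functional in $L^q$. The self-pairing step is what makes it work: the element is forced to be simultaneously inside the closed subspace (via $L^p$) and inside its annihilator (via $L^q$), and only the zero form can belong to both. Note also that the hypothesis $\ker_{L^p}(\Delta_k)\subset\ker_{L^q}(\Delta_k)$ in the second part is precisely what upgrades $L^p$-harmonicity to membership in both kernels, so that the first part becomes applicable.
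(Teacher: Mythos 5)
Your proof is correct and follows essentially the same route as the paper: the paper also shows that an element of the triple intersection is closed and co-closed (via Proposition \ref{prop:decay_harmonic_forms}(a)), pairs it against an approximating sequence $d\alpha_i+d^*\beta_i$, and concludes $\norm{\omega}_{L^2}^2=0$ by H\"older, which is exactly your self-pairing argument phrased with explicit sequences instead of annihilators. The deduction of the second identity from the first is also identical.
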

\begin{proof}
	Let $\omega \in \ker_{L^p}(\Delta_{k})\cap \ker_{L^q}(\Delta_{k})$,  and suppose that there exist sequences  $\alpha_i\in C^{\infty}_{c}(\Lambda^{k-1}M)$, $\beta_i\in C^{\infty}_{c}(\Lambda^{k+1}M)$, $i\in\N$  such that we have $d\alpha_i+d^*\beta_i\to\omega$ in $L^p$. By integration by parts and Proposition \ref{prop:decay_harmonic_forms},
	\begin{align*}
	 (d\alpha_i,\omega)_{L^2}=(\alpha_i,d^*\omega)_{L^2}=0,\qquad (d^*\beta_i,\omega)_{L^2}=(\beta_i,d\omega)_{L^2}=0.
	 \end{align*}
Therefore,
	$$
	\norm{\omega}_{L^2}^2=(\omega-d\alpha_i-d^*\beta_i,\omega)_{L^2}\leq\norm{\omega-(d\alpha_i+d^*\beta_i)}_{L^p}\norm{\omega}_{L^q}\to0.
	$$
	Thus, $\omega=0$, which proves \eqref{eq : de Rham intersection 1}.  Concerning \eqref{eq : de Rham intersection 2}, we notice that as a consequence of the fact that $d^2=0$, $(d^*)^2=0$, we have the inclusions $\im_{L^p}(d_{k-1})\subset \ker_{L^p}(d_k)$ and $\im_{L^p}(d^*_{k+1})\subset \ker_{L^p}(d^*_k)$. Indeed, if $\eta\in \im_{L^p}(d_{k-1})$, $\eta=L^p-\lim_{j\to\infty} (d\psi_j)$, $\psi_j\in C_c^\infty(\Lambda^{k-1}M)$, and $\varphi\in C_c^\infty(\Lambda^{k-1})$, then

$$\langle \eta,d^*\varphi\rangle=\lim_{j\to\infty}\langle d\psi_j,d^*\varphi\rangle=\lim_{j\to\infty}\langle (d^2)\psi_j,\varphi\rangle=0,$$
so $d\eta=0$ in the weak sense, and thus $\eta \in \ker_{L^p}(d_k)$. Similarly, one shows that 	$\im_{L^p}(d^*_{k+1})\subset \ker_{L^p}(d^*_k)$. Hence we conclude that
$$\im_{L^p}(d_{k-1})\cap \im_{L^p}(d^*_{k+1})\subset \ker_{L^p}(\Delta_k),$$
therefore
$$\im_{L^p}(d_{k-1})\cap \im_{L^p}(d^*_{k+1})\subset (\im_{L^p}(d_{k-1})+\im_{L^p}(d^*_{k+1}))\cap \ker_{L^p}(\Delta_{k})\cap \ker_{L^q}(\Delta_{k}),$$
and the right hand side is $\{0\}$ according to \eqref{eq : de Rham intersection 1}.
\end{proof}

{The following theorem provides a weak Hodge-De Rham decomposition, for some values of $p$ and $k$:}
\begin{Lem}\label{lem : de Rham sum}
Let $p\in (1,\infty)$ and $1\leq k\leq n-1$ be such that one of the following holds:

\begin{enumerate}
\item[(i)] $2\leq k\leq n-2$.

\item[(ii)] $k\in\left\{1,n-1\right\}$ and $p\in (\frac{n}{n-1},n)$.

\item[(iii)] $k\in\left\{1,n-1\right\}$, $p\in (1,\infty)$. and $M$ has only one end.

\end{enumerate}
Then we have
	\begin{align}\label{eq : de Rham sum}
		L^p(\Lambda^kM)=\overline{\im_{L^p}(d_{k-1})\oplus \im_{L^p}(d^*_{k+1})}^{L^p}\oplus \ker_{L^p}(\Delta_{k}).
	\end{align}
\end{Lem}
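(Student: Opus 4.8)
The plan is to exploit $L^p$--$L^q$ duality (with $q=p'$) together with the annihilator identities recorded just before the statement, reducing everything to a finite-dimensional codimension count. The crucial preliminary observation, which is exactly what singles out the three listed cases, is that in each of (i), (ii), (iii) one has the \emph{equality} of kernels
$$\ker_{L^p}(\Delta_k)=\ker_{L^q}(\Delta_k),$$
and that this common space is finite dimensional. In case (i) this holds because $k\notin\{1,n-1\}$, so Proposition~\ref{prop:decay_harmonic_forms}(c) gives $\ker_{L^p}(\Delta_k)=\ker_{-n}(\Delta_k)=\ker_{L^q}(\Delta_k)$. In case (ii) the interval $(\tfrac{n}{n-1},n)$ is stable under $p\mapsto p'$, so both $p,q>\tfrac{n}{n-1}$ and Proposition~\ref{prop:decay_harmonic_forms}(d) yields $\ker_{L^p}(\Delta_k)=\ker_{1-n}(\Delta_k)=\ker_{L^q}(\Delta_k)$. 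In case (iii), Corollary~\ref{cor:missing_ker} gives $\ker_{L^r}(\Delta_k)=\ker_{-n}(\Delta_k)$ for every $r\in(1,\infty)$, in particular for $r=p$ and $r=q$. Finite dimensionality follows since $\ker_{L^p}(\Delta_k)\subset\ker_{1-n}(\Delta_k)\subset L^2(\Lambda^kM)$ is a space of $L^2$ harmonic forms (compare Proposition~\ref{prop:decay_harmonic_forms}(e) for $k\in\{1,n-1\}$).

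Next I would compute the annihilator of the sum, which is direct by \eqref{eq : de Rham intersection 2} (valid here since $\ker_{L^p}(\Delta_k)\subset\ker_{L^q}(\Delta_k)$). Using $\mathrm{Ann}_{L^q}(\im_{L^p}(d_{k-1}))=\ker_{L^q}(d^*_k)$ and $\mathrm{Ann}_{L^q}(\im_{L^p}(d^*_{k+1}))=\ker_{L^q}(d_k)$, together with Proposition~\ref{prop:decay_harmonic_forms}(a), one obtains
$$\mathrm{Ann}_{L^q}\big(\im_{L^p}(d_{k-1})+\im_{L^p}(d^*_{k+1})\big)=\ker_{L^q}(d^*_k)\cap\ker_{L^q}(d_k)=\ker_{L^q}(\Delta_k).$$
Since the bi-annihilator of a subspace is its norm closure, setting $W:=\overline{\im_{L^p}(d_{k-1})\oplus\im_{L^p}(d^*_{k+1})}^{L^p}$ gives
$$W=\mathrm{Ann}_{L^p}\big(\ker_{L^q}(\Delta_k)\big).$$
As $\ker_{L^q}(\Delta_k)$ is finite dimensional, $W$ is the common kernel of finitely many linearly independent functionals, hence a closed subspace of $L^p(\Lambda^kM)$ of codimension $\dim\ker_{L^q}(\Delta_k)$.

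Finally I would assemble the decomposition by a dimension count. By \eqref{eq : de Rham intersection 1} and the equality $\ker_{L^p}(\Delta_k)=\ker_{L^q}(\Delta_k)$ established above, one has $W\cap\ker_{L^p}(\Delta_k)=\{0\}$. Thus $\ker_{L^p}(\Delta_k)$ is transverse to $W$ and its dimension equals the codimension of $W$; therefore it is a complement of $W$, and
$$L^p(\Lambda^kM)=W\oplus\ker_{L^p}(\Delta_k)=\overline{\im_{L^p}(d_{k-1})\oplus\im_{L^p}(d^*_{k+1})}^{L^p}\oplus\ker_{L^p}(\Delta_k),$$
which is the desired weak Hodge decomposition. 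The step that really needs care is the preliminary one: the whole argument hinges on the \emph{symmetric} identification $\ker_{L^p}(\Delta_k)=\ker_{L^q}(\Delta_k)$, and it is precisely the failure of this identity (for $k\in\{1,n-1\}$ with $p\ge n$ or $p\le\frac{n}{n-1}$ on a manifold with at least two ends) that both excludes those cases from this lemma and forces the modified decomposition treated separately.
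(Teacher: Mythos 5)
Your proof is correct and follows essentially the same route as the paper's: both hinge on the equality $\ker_{L^p}(\Delta_k)=\ker_{L^q}(\Delta_k)$ in each of the three cases, the annihilator identities, and Proposition \ref{prop:decay_harmonic_forms}(a). The only (cosmetic) difference is in the final assembly: the paper shows the full direct sum is closed via Lemma \ref{lem:sum_finite_dim} and then checks that its $L^q$-annihilator vanishes, whereas you identify the closure of the image part as $\mathrm{Ann}_{L^p}(\ker_{L^q}(\Delta_k))$ and conclude by a codimension count against the transversal finite-dimensional kernel.
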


\begin{proof}
	By Proposition \ref{prop:decay_harmonic_forms}, Corollary \ref{cor:decomp_kernel_1} and Corollary \ref{cor:dimension_d_harmonic functions}, if one lets $q=p'$ the conjugate exponent, we have
	$\ker_{L^p}(\Delta_{k})=\ker_{L^q}(\Delta_{k})$ by the assumptions on $p$, $k$ and $M$. 
	By Lemma \ref{lem : de Rham intersection}, the sum on the right hand side of \eqref{eq : de Rham sum} is indeed direct. Moreover, according to Lemma \ref{lem:sum_finite_dim}, since $\ker_{L^p}(\Delta_k)$ is finite dimensional the direct sum of closed subspaces $\overline{\im_{L^p}(d_{k-1})\oplus \im_{L^p}(d^*_{k+1})}^{L^p}\oplus \ker_{L^p}(\Delta_{k})$ is closed. Hence, in order to finish the proof, it suffices to show that the annihilator (in $L^q$) of the direct sum vanishes. 
	We have
	\begin{align*}
	&\mathrm{Ann}_{L^q}(\overline{\im_{L^p}(d_{k-1})\oplus \im_{L^p}(d^*_{k+1})}^{L^p}\oplus \ker_{L^p}(\Delta_{k}))\\
		&\mathrm{Ann}_{L^q}(\im_{L^p}(d_{k-1})\oplus \im_{L^p}(d^*_{k+1})\oplus \ker_{L^p}(\Delta_{k}))\\
		&\qquad=\ker_{L^q}(d^*_k)\cap \ker_{L^q}(d_k)\cap\mathrm{Ann}_{L^q}( \ker_{L^p}(\Delta_{k}))\\
		&\qquad= \ker_{L^q}(\Delta_{k})\cap\mathrm{Ann}_{L^q}(\ker_{L^p}(\Delta_{k}))
	\end{align*}
	However, the right hand side is zero as $\ker_{L^p}(\Delta_{k})=\ker_{L^q}(\Delta_{k})$. This finishes the proof of the lemma.
\end{proof}
Lemma \ref{lem : de Rham sum} indicates that the hardest task for proving the (strong) $L^p$ Hodge decomposition is to prove that the direct sum $\im_{L^p}(d_{k-1})\oplus \im_{L^p}(d^*_{k+1})$ is closed.

We now focus on one of the remaining cases: $k\in \{1,n-1\}$, $p\in [n,\infty)$ (and $M$ has $N\geq 2$ ends). Note that, as a consequence of Proposition \ref{prop:decay_harmonic_forms}, we have 
\begin{equation}\label{eq : decay kernel}
	\begin{split}
		\ker_{L^p}(\Delta_{1})&=\ker_{1-n}(\Delta_{1}),\text{ if }p\in \left(\frac{n}{n-1},\infty\right),\\
		\ker_{L^p}(\Delta_{1})&=\ker_{-n}(\Delta_{1}),\text{ if }p\in \left(1,\frac{n}{n-1}\right].
	\end{split}
\end{equation}
The same is true for $\ker_{L^p}(\Delta_{n-1})$ by Hodge duality. Similarly to the previous lemma, one has:

\begin{lem}\label{lem de Rham sum large p}
	Let $p\in [n,\infty)$. Then we have, for $k\in \{1,n-1\}$:
	\begin{align}\label{eq : de Rham sum large p}
		L^p(\Lambda^kM)=\overline{\im_{L^p}(d_{k-1})+ \im_{L^p}(d^*_{k+1})}^{L^p}\oplus \ker_{-n}(\Delta_{k}).
	\end{align}
\end{lem}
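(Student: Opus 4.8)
The plan is to mimic the proof of Lemma \ref{lem : de Rham sum}, with the key difference that the harmonic factor $\ker_{-n}(\Delta_k)$ is now a \emph{proper} subspace of $\ker_{L^p}(\Delta_k)$, and to replace the identity $\ker_{L^p}(\Delta_k)=\ker_{L^q}(\Delta_k)$ (which fails here) by the inclusion $\ker_{-n}(\Delta_k)=\ker_{L^q}(\Delta_k)\subsetneq\ker_{L^p}(\Delta_k)$, where $q=p'$. Indeed, since $p\in[n,\infty)$ we have $q=p'\in(1,\frac{n}{n-1}]$, and so by \eqref{eq : decay kernel} we get $\ker_{L^q}(\Delta_k)=\ker_{-n}(\Delta_k)$ while $\ker_{L^p}(\Delta_k)=\ker_{1-n}(\Delta_k)$. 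The three ingredients are: directness of the sum, closedness of the sum, and vanishing of its annihilator in $L^q$.

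For \textbf{directness}, I first observe that $\ker_{-n}(\Delta_k)$ is a space of harmonic forms, so it certainly lies in $\ker_{L^p}(\Delta_k)\cap\ker_{L^q}(\Delta_k)$. Hence by \eqref{eq : de Rham intersection 1} of Lemma \ref{lem : de Rham intersection}, its intersection with $\overline{\im_{L^p}(d_{k-1})+\im_{L^p}(d^*_{k+1})}^{L^p}$ is trivial, which gives directness of the sum in \eqref{eq : de Rham sum large p}. For \textbf{closedness}, I note that $\ker_{-n}(\Delta_k)$ is finite dimensional (by Proposition \ref{prop:decay_harmonic_forms}(e) together with Hodge duality for the case $k=n-1$), so Lemma \ref{lem:sum_finite_dim} applies directly, giving that the direct sum of the closed subspace $\overline{\im_{L^p}(d_{k-1})+\im_{L^p}(d^*_{k+1})}^{L^p}$ and the finite-dimensional subspace $\ker_{-n}(\Delta_k)$ is closed.

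The \textbf{main obstacle}, just as in Lemma \ref{lem : de Rham sum}, is the \textbf{vanishing of the annihilator} in $L^q$. Using the standard duality identities $\mathrm{Ann}_{L^q}(\im_{L^p}(d_{k-1}))=\ker_{L^q}(d^*_k)$ and $\mathrm{Ann}_{L^q}(\im_{L^p}(d^*_{k+1}))=\ker_{L^q}(d_k)$, together with part (a) of Proposition \ref{prop:decay_harmonic_forms}, I compute
\begin{align*}
&\mathrm{Ann}_{L^q}\!\left(\overline{\im_{L^p}(d_{k-1})+\im_{L^p}(d^*_{k+1})}^{L^p}\oplus\ker_{-n}(\Delta_k)\right)\\
&\qquad=\ker_{L^q}(d^*_k)\cap\ker_{L^q}(d_k)\cap\mathrm{Ann}_{L^q}(\ker_{-n}(\Delta_k))\\
&\qquad=\ker_{L^q}(\Delta_k)\cap\mathrm{Ann}_{L^q}(\ker_{-n}(\Delta_k)).
\end{align*}
Here the crucial point, and the reason this case is genuinely different from Lemma \ref{lem : de Rham sum}, is that $\ker_{L^q}(\Delta_k)=\ker_{-n}(\Delta_k)$ by \eqref{eq : decay kernel}, so the right-hand side becomes $\ker_{-n}(\Delta_k)\cap\mathrm{Ann}_{L^q}(\ker_{-n}(\Delta_k))$. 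A form $\omega$ in this intersection is an $L^q$ harmonic form annihilating itself in the $L^q$--$L^p$ pairing; since $\ker_{-n}(\Delta_k)\subset L^2\cap L^q$, the pairing $\mathrm{Ann}_{L^q}(\ker_{-n}(\Delta_k))$ restricted to $\ker_{-n}(\Delta_k)$ coincides with the $L^2$ inner product, which is nondegenerate on the finite-dimensional space $\ker_{-n}(\Delta_k)$. Thus $\omega$ is $L^2$-orthogonal to itself, forcing $\omega=0$, and the annihilator vanishes. Combining directness, closedness and vanishing of the annihilator via $L^q$--$L^p$ duality yields the decomposition \eqref{eq : de Rham sum large p}.
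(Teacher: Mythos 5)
Your proof is correct and follows essentially the same route as the paper's: directness via Lemma \ref{lem : de Rham intersection} using $\ker_{-n}(\Delta_k)=\ker_{L^q}(\Delta_k)\subset\ker_{L^p}(\Delta_k)\cap\ker_{L^q}(\Delta_k)$, closedness via Lemma \ref{lem:sum_finite_dim} and finite-dimensionality of $\ker_{-n}(\Delta_k)$, and then the annihilator computation reducing to $\ker_{-n}(\Delta_k)\cap\mathrm{Ann}_{L^q}(\ker_{-n}(\Delta_k))=\{0\}$. The only cosmetic difference is that the paper first reduces to $k=1$ by Hodge duality, whereas you treat both degrees at once; your explicit justification of the final vanishing via nondegeneracy of the $L^2$ pairing is a welcome elaboration of a step the paper leaves implicit.
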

\begin{proof}
	By Hodge duality, it suffices to look at the case $k=1$. Let $q\in (1,\frac{n}{n-1}]$ be such that $1=\frac{1}{p}+\frac{1}{q}$. Then, 
	$\ker_{{-n}}(\Delta_{1})=\ker_{L^q}(\Delta_{1})= \ker_{L^p}(\Delta_{1})\cap \ker_{L^q}(\Delta_{1})$ and from Lemma \ref{lem : de Rham intersection}, we get that
	\[
	\overline{\im_{L^p}(d_{0})+ \im_{L^p}(d^*_{2})}^{L^p}\cap \ker_{-n}(\Delta_{1})=\left\{0\right\},
	\]
	so that the sum on the right hand side of \eqref{eq : de Rham sum large p} is indeed direct; it is also closed, according to Lemma \ref{lem:sum_finite_dim} since $\ker_{-n}(\Delta_1)$ is finite dimensional (cf point (e) in Proposition \ref{prop:decay_harmonic_forms}). According to Proposition \ref{prop:decay_harmonic_forms} (a), the annihilator of the sum is given by
	\begin{align*}
	&\mathrm{Ann}_{L^q}(\overline{\im_{L^p}(d_{0})+ \im_{L^p}(d^*_{2})}^{L^p}\oplus \ker_{-n}(\Delta_{1}))\\
		&\mathrm{Ann}_{L^q}((\im_{L^p}(d_{0})+ \im_{L^p}(d^*_{2}))\oplus \ker_{-n}(\Delta_{1}))\\
		&\qquad=\ker_{L^q}(d_1)\cap\ker_{L^q}(d^*_1)\cap \mathrm{Ann}_{L^q}(\ker_{-n}(\Delta_{1}))\\
		&\qquad=\ker_{L^q}(\Delta_{1})\cap \mathrm{Ann}_{L^q}(\ker_{-n}(\Delta_{1}))\\
		&\qquad=\ker_{{-n}}(\Delta_{1})\cap \mathrm{Ann}_{L^q}(\ker_{-n}(\Delta_{1}))=\left\{0\right\}.
	\end{align*}
	The latter implies that \eqref{eq : de Rham sum large p} holds
\end{proof}
We wish to prove now that for $k\in \{1,n-1\}$, the sum $\im_{L^p}(d_{k-1})+ \im_{L^p}(d^*_{k+1})$ is direct, so that \eqref{eq : de Rham sum large p} is a weak modified $L^p$ Hodge decomposition. It is done in points (iii) and (iv) of the next:

\begin{Pro}\label{pro:Lp1}

Let $p\in[ n,+\infty)$. Then, the following hold:

\begin{enumerate}

\item[(i)] $\ker_{1-n}(\Delta_1)\cap \im_{L^p}(d_2^*)\subset \ker_{-n}(\Delta_1)$.

\item[(ii)] $\ker_{L^p}(\Delta_1)\cap \im_{L^p}(d_2^*)=\{0\}$.

\item[(iii)] $\im_{L^p}(d_2^*)\cap \im_{L^p}(d_0)=\{0\}.$

\item[(iv)] $\im_{L^p}(d_{n-2})\cap \im_{L^p}(d_n^*)=\{0\}$.
\end{enumerate}

\end{Pro}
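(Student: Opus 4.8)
The four statements are tightly linked, and I would organise the proof around (i), from which (ii)--(iv) follow quickly. Throughout set $q=p'$, so that $p\in[n,\infty)$ forces $q\in(1,\tfrac{n}{n-1}]$; by Proposition~\ref{prop:decay_harmonic_forms}(c)--(d) this gives $\ker_{L^p}(\Delta_1)=\ker_{1-n}(\Delta_1)$ and $\ker_{L^q}(\Delta_1)=\ker_{-n}(\Delta_1)$.

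Assuming (i), here is how the rest goes. For (ii), take $\omega\in\ker_{L^p}(\Delta_1)\cap\im_{L^p}(d_2^*)=\ker_{1-n}(\Delta_1)\cap\im_{L^p}(d_2^*)$. By (i), $\omega\in\ker_{-n}(\Delta_1)=\ker_{L^q}(\Delta_1)\subset\ker_{L^q}(d_1)=\mathrm{Ann}_{L^q}(\im_{L^p}(d_2^*))$; since $\omega$ lies simultaneously in $\im_{L^p}(d_2^*)$ and in its $L^q$-annihilator, and $\omega\in L^2\cap L^q$, evaluating the annihilator relation on $\omega$ itself gives $\|\omega\|_{L^2}^2=0$, so $\omega=0$ (one may equally cite \eqref{eq : de Rham intersection 1}). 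For (iii), if $\omega\in\im_{L^p}(d_0)\cap\im_{L^p}(d_2^*)$, then $\omega\in\ker_{L^p}(d_1)\cap\ker_{L^p}(d_1^*)=\ker_{L^p}(\Delta_1)$ by Proposition~\ref{prop:decay_harmonic_forms}(a), whence $\omega=0$ by (ii). Finally (iv) is the Hodge dual of (iii): the Hodge star is an $L^p$-isometry taking $\im_{L^p}(d_{n-2})$ to $\im_{L^p}(d_2^*)$ and $\im_{L^p}(d_n^*)$ to $\im_{L^p}(d_0)$ (cf.\ Proposition~\ref{prop:duality}), so the intersection in (iv) is carried isomorphically onto that in (iii).

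The crux is (i). Let $\omega\in\ker_{1-n}(\Delta_1)\cap\im_{L^p}(d_2^*)$; since $\im_{L^p}(d_2^*)\subset\ker_{L^p}(d_1^*)$ we have $d^*\omega=0$. By Lemma~\ref{lem:asym_forms}, in each end $E_i$ there is $B_i\in\R$ with $\omega=B_ir^{1-n}\,dr+\mathcal{O}_\infty(r^{1-n-\epsilon})$, and by Corollary~\ref{cor:decay_harmonic} it suffices to show that every $B_i$ vanishes: that improves the decay to $\mathcal{O}_\infty(r^{-n})$, i.e.\ $\omega\in\ker_{-n}(\Delta_1)$. The idea is that $B_i$ is, up to the positive factor $\omega_n/\mathrm{Card}(\Gamma_i)$, the flux $\int_{\{r=R\}\cap E_i}\langle\omega,dr\rangle\,dS$, which the equation $d^*\omega=0$ renders independent of $R$, and which co-exactness forces to vanish. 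To turn this into a statement about $L^p$-duality I would, for each fixed $i$, choose $\theta_i\in C^\infty(M)$ equal to $1$ on $M\setminus E_i$ and on $\{r\le a\}\cap E_i$ and equal to $0$ on $\{r\ge b\}\cap E_i$, so that $d\theta_i\in C_c^\infty(\Lambda^1M)$ is supported in a compact annulus of $E_i$. Integrating by parts over $\{r\ge a\}\cap E_i$ and using $d^*\omega=0$ yields $(\omega,d\theta_i)_{L^2}=-\int_{\{r=a\}\cap E_i}\langle\omega,dr\rangle\,dS=-\tfrac{\omega_n}{\mathrm{Card}(\Gamma_i)}\,B_i$. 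On the other hand $d\theta_i$ is a smooth, compactly supported \emph{closed} $1$-form, hence $d\theta_i\in\ker_{L^q}(d_1)=\mathrm{Ann}_{L^q}(\im_{L^p}(d_2^*))$, so $(\omega,d\theta_i)_{L^2}=0$ because $\omega\in\im_{L^p}(d_2^*)$. Comparing the two evaluations gives $B_i=0$ for every $i$, proving (i).

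I expect the only genuinely delicate point to be this last comparison, and it is instructive to see why a naive approach fails: at the borderline exponent $p=n$ the forms one would most naturally pair against---namely $du$ for a bounded harmonic function $u$, whose leading term $\sim r^{1-n}\,dr$ detects exactly the coefficients $B_i$---do \emph{not} belong to $L^q$, so they cannot be used in the $L^q$-$L^p$ duality with $\im_{L^p}(d_2^*)$. Replacing $du$ by the compactly supported closed form $d\theta_i$, which does lie in $L^q$ and still reads off the flux through the single end $E_i$, is exactly what circumvents the borderline obstruction. The remaining work is the bookkeeping in the boundary-term identity---checking that the flux is radius-independent, that its value at infinity is $\tfrac{\omega_n}{\mathrm{Card}(\Gamma_i)}B_i$ using the ALE normalisation of the sphere areas $S^{n-1}/\Gamma_i$, and that the $\mathcal{O}_\infty(r^{1-n-\epsilon})$ remainder contributes nothing in the limit---all of which is routine given Lemma~\ref{lem:asym_forms}.
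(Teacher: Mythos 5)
Your proof is correct, and parts (ii)--(iv) follow the same reductions as the paper's (with your explicit pairing computation in (ii) being just an unwinding of Lemma \ref{lem : de Rham intersection}). The interesting divergence is in (i). The paper first invokes Poincar\'e duality for noncompact oriented manifolds to show that $*\omega$ is globally exact, $*\omega=d\eta$, and then applies Stokes' theorem on the large spheres $\phi_i^{-1}(S^{n-1}(0,R)/\Gamma_i)$ to kill each flux coefficient $B_i$. You instead test $\omega$ directly against the compactly supported closed $1$-forms $d\theta_i$: since $d\theta_i\in\ker_{L^q}(d_1)=\mathrm{Ann}_{L^q}(\im_{L^p}(d_2^*))$, the pairing $(\omega,d\theta_i)_{L^2}$ vanishes, while the divergence theorem (using $d^*\omega=0$ to make the flux radius-independent) identifies it with $-\tfrac{\omega_n}{\mathrm{Card}(\Gamma_i)}B_i$. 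The two arguments detect exactly the same quantity --- in effect the paper's Poincar\'e-duality step amounts to the statement that $\omega$ pairs to zero against \emph{all} compactly supported closed $1$-forms, of which your $d\theta_i$ is the one that reads off the flux through $E_i$ --- but your route is more self-contained: it avoids the duality pairing $H^{n-1}(M)\times H^1_c(M)\to\R$ and the construction of a global primitive $\eta$, at the cost of the (routine, and explicitly acknowledged) bookkeeping in the boundary-term identity. Your closing remark about why $du$ for $u\in\ker_0(\Delta_0)$ fails to lie in $L^{n/(n-1)}$ at the borderline exponent, and why the compactly supported representative $d\theta_i$ circumvents this, is accurate and is precisely the reason the statement is nontrivial at $p=n$.
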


\begin{proof}
Let $\omega\in \ker_{1-n}(\Delta_1)\cap \im_{L^p}(d_2^*)$ (in particular $\omega$ is smooth). 
%
By Hodge duality, we have $*\omega\in \im_{L^p}(d_{n-2})$, and according to Proposition \ref{prop:decay_harmonic_forms}, $\omega$ is closed and co-closed, hence $*\omega$ too. According to the proof of \cite[Lemma 1.1]{C6}, there exists $\eta\in C^\infty(\Lambda^{n-2}M)$ such that $*\omega=d\eta$. For the sake of completeness, let us detail this point. Recall the Poincar\'e duality map for oriented manifolds:

$$
\begin{array}{rrcl}
F \,: & H^k(M)\times H_c^{n-k}(M) & \rightarrow & \R\\
& ([\alpha],[\beta]) & \mapsto & \int_M \alpha\wedge \beta
\end{array}
$$
Here, by definition

$$H^j(M)=\frac{\{\alpha\in C^\infty(\Lambda^jM)\,;\,d\alpha=0\}}{dC^\infty(\Lambda^{j-1}M)},$$
is the usual cohomology space of $M$, and

$$H_{c}^{j}(M)=\frac{\{\alpha\in C_c^\infty(\Lambda^jM)\,;\,d\alpha=0\}}{dC_c^\infty(\Lambda^{j-1}M)},$$
is the cohomology space with compact support. Poincar\'e duality for non-compact manifolds (see \cite[p.248]{H} and \cite[Section 1.1.2]{C4}) asserts that if $M$ is oriented then $F$ is well-defined and is a duality pairing between $H^k(M)$ and $H_{c}^{n-k}(M)$. By hypothesis, there is a sequence $(\eta_j)_{j\in\N}$ of forms in $C_c^\infty(\Lambda^{2}M)$ such that $d^*\eta_j$ converges in $L^p$ to $\omega$. Let $\beta\in C_c^\infty(\Lambda^1M)$ such that $d\beta=0$, then

$$
\begin{array}{rcl}
F(*\omega,\beta)&=&\int_M*\omega\wedge \beta\\
&=& \langle \omega,\beta\rangle_{L^2}\\
&=& \lim_{j\to\infty} \langle d^*\eta_j,\beta\rangle_{L^2}\\
&=& \lim_{j\to\infty} \langle \eta_j,d\beta\rangle_{L^2}\\
&=&0,
\end{array}
$$
where we have used the fact that since $p\geq 2$, $L^p$ convergence on a compact set implies $L^2$ convergence. Thus, one concludes that $*\omega$ is zero in $H^{n-1}(M)$, that is there exists $\eta\in C^\infty(\Lambda^{n-2}M)$ such that $*\omega=d\eta$. 

\medskip

Let $\Sigma^{n-1}\subset M$ be a {\em closed}, smooth hypersurface, then by Stokes' theorem
$$\int_{\Sigma^{n-1}}*\omega=\int_{\Sigma^{n-1}}d\eta=\int_{\partial \Sigma^{n-1}}\eta=0.$$
By Lemma \ref{lem:asym_forms}, we can at each end $E_i$ expand $\omega$ as
$$\omega=B_jr^{1-n}dr+\mathcal{O}_\infty(r^{1-n-\epsilon}).$$
Thus, if we fix an end $E_i$, and we take the hypersurface

$$\Sigma^{n-1}=\phi_i^{-1}(S^{n-1}(0,R)/\Gamma_i),$$
 for some large enough $R>0$ so that this is well-defined inside $E_i$, then 
%
we get, using the asymptotic of the metric $g$ and the form $\omega$,
$$0=\int_{\Sigma^{n-1}}*\omega=B_i\cdot\mathrm{Vol}({\bf S}^{n-1}/\Gamma_i)+O(R^{-\epsilon}).$$
Consequently, by letting $R\to \infty$, one concludes that $B_i=0$ for every $i\in\{1,\cdots,N\}$, and therefore $\omega\in \ker_{1-n-\epsilon}(\Delta_1)$. According to Corollary \ref{cor:decay_harmonic}, we can conclude that $\omega\in \ker_{-n}(\Delta_1)$. Thus, part (i) is proved.

\medskip

In order to prove part (ii), we first notice that (i) implies that
$$\ker_{1-n}(\Delta_1)\cap \im_{L^p}(d_2^*)\subset \im_{L^p}(d_2^*)\cap \ker_{L^p}(\Delta_1)\cap \ker_{L^q}(\Delta_1),$$
with $q$ the conjugate exponent of $p$. But according to Lemma \ref{lem : de Rham intersection}, the intersection on the right hand side is $\{0\}$, and part (ii) of the lemma follows.

Concerning point (iii), 
we clearly have
$$\im_{L^p}(d_2^*)\cap \im_{L^p}(d_0)\subset \ker_{L^p}(\Delta_1)\cap \im_{L^p}(d_2^*),$$
and the right hand side is zero by part (ii). 

Finally, for point (iv), we rely on (iii) and Hodge duality: indeed, using that $d^*=\pm * d*$ and $*^2=\pm \mathrm{id}$ (where the signs depend on the degree of the form), one easily sees that

$$*\im_{L^p}(d_0)=\im_{L^p}(d_n^*),\quad *\im_{L^p}(d_2^*)=\im_{L^p}(d_{n-2}),$$
and since the Hodge star is an isometry, we see that (iii) is equivalent to (iv).
\end{proof}

Note that, according to Corollary \ref{cor:missing_ker}, if $M$ has only one end and $k\in\{1,n-1\}$ then

$$\ker_{-n}(\Delta_k)=\ker_{L^p}(\Delta_k)$$
for any $p\in (1,\infty)$. According to Lemma \ref{lem : de Rham intersection}, this implies
\begin{align*}
		\im_{L^p}(d_{0})\cap \im_{L^p}(d^*_{2})
	=\left\{0\right\}
	\end{align*}	
for $p>\frac{n}{n-1}$.	
 And moreover, by Corollaries \ref{cor:dimension_d_harmonic functions} and \ref{cor:decomp_kernel_1}, if $M$ has at least two ends and $p\geq n$, then

$$\ker_{-n}(\Delta_k) \varsubsetneqq \ker_{L^p}(\Delta_k),$$
so the weak modified $L^p$ Hodge decomposition is not a weak $L^p$ Hodge decomposition. Therefore, we can summarize the results of Lemmas \ref{lem de Rham sum large p} and \ref{lem : de Rham sum} and Proposition \ref{pro:Lp1} as follows:

\begin{Cor}\label{cor:weak_hodge}

Let $M$ be an ALE manifold, then, the weak $L^p$-Hodge decomposition
	\begin{align*}
		L^p(\Lambda^kM)=\overline{\im_{L^p}(d_{k-1})\oplus \im_{L^p}(d^*_{k+1})}^{L^p}\oplus \ker_{L^p}(\Delta_{k}).
	\end{align*}
 for forms of degree $k$ on $M$ holds if either

\begin{enumerate}

\item[(a)] $p\in (1,\infty)$, $k\in \{2,\cdots,n-2\}$,

\item[(b)] or $p\in (\frac{n}{n-1},n)$, $k\in \{1,n-1\}$,

\item[(c)] or $p\in [n,\infty)$, $k\in \{1,n-1\}$ and $M$ has only one end.

\end{enumerate}
Moreover, if $p\in [n,\infty)$, $k\in \{1,n-1\}$, and $M$ has at least two ends, then the weak modified $L^p$-Hodge decomposition
	\begin{align*}
		L^p(\Lambda^kM)=\overline{\im_{L^p}(d_{k-1})\oplus \im_{L^p}(d^*_{k+1})}^{L^p}\oplus \ker_{-n}(\Delta_{k}).
	\end{align*}
 holds, while the weak $L^p$-Hodge decomposition does not.

\end{Cor}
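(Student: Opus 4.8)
The plan is to assemble the statement from the three results that immediately precede it, each of which already carries out the analytic work; the corollary amounts to matching hypotheses and, in the two-ended case, upgrading a sum to a direct sum while exhibiting the obstruction to the non-modified decomposition.

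First I would dispatch cases (a), (b) and (c) by a direct appeal to Lemma \ref{lem : de Rham sum}: case (a) is exactly hypothesis (i) of that lemma, case (b) is its hypothesis (ii), and case (c) ($p\in[n,\infty)$, $k\in\{1,n-1\}$, one end) is the subcase of its hypothesis (iii) obtained by restricting the allowed range of $p$. In each situation the lemma yields precisely the asserted weak $L^p$-Hodge decomposition with harmonic factor $\ker_{L^p}(\Delta_k)$, so nothing further is needed here.

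For the ``moreover'' part I assume $p\in[n,\infty)$, $k\in\{1,n-1\}$ and $N\geq 2$. Lemma \ref{lem de Rham sum large p} already provides
$$L^p(\Lambda^kM)=\overline{\im_{L^p}(d_{k-1})+\im_{L^p}(d^*_{k+1})}^{L^p}\oplus \ker_{-n}(\Delta_k),$$
so it remains only to see that the inner sum $+$ is in fact direct. This is exactly the content of Proposition \ref{pro:Lp1}: part (iii) gives $\im_{L^p}(d_0)\cap\im_{L^p}(d_2^*)=\{0\}$ for $k=1$, and part (iv) gives $\im_{L^p}(d_{n-2})\cap\im_{L^p}(d_n^*)=\{0\}$ for $k=n-1$. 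Hence the weak modified $L^p$-Hodge decomposition holds in the stated form.

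The one genuinely new point --- and the step I expect to require the most care --- is showing that the non-modified weak decomposition \emph{fails} when $N\geq 2$. The strategy is to produce a nonzero vector lying both in the closed exact/co-exact summand and in $\ker_{L^p}(\Delta_k)$, which immediately destroys directness of any putative $\overline{\im_{L^p}(d_{k-1})\oplus\im_{L^p}(d^*_{k+1})}^{L^p}\oplus\ker_{L^p}(\Delta_k)$. For $k=1$ the natural candidate is $d(\ker_0(\Delta_0))$: by Proposition \ref{pro:im_p>n} it sits inside $\im_{L^p}(d_0)\cap\ker_{1-n}(\Delta_1)$, and since \eqref{eq : decay kernel} gives $\ker_{L^p}(\Delta_1)=\ker_{1-n}(\Delta_1)$ for $p\geq n$, it therefore lies in $\im_{L^p}(d_0)\cap\ker_{L^p}(\Delta_1)$. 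By Corollary \ref{cor:dimension_d_harmonic functions} this space has dimension $N-1\geq 1$, so it is nonzero; the two summands thus meet nontrivially and the weak $L^p$-Hodge decomposition cannot hold. The case $k=n-1$ follows by Hodge duality. Alternatively one may note that if the weak decomposition also held, then comparing it with the weak modified one --- both being complements of the same closed subspace, with one harmonic factor contained in the other --- would force $\ker_{-n}(\Delta_k)=\ker_{L^p}(\Delta_k)$, contradicting the strict inclusion $\ker_{-n}(\Delta_k)\varsubsetneqq\ker_{L^p}(\Delta_k)$ recorded just before the statement.
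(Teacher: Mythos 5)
Your proposal is correct and follows essentially the same route as the paper, which likewise obtains (a)--(c) from Lemma \ref{lem : de Rham sum}, the modified decomposition from Lemma \ref{lem de Rham sum large p} together with parts (iii)--(iv) of Proposition \ref{pro:Lp1}, and the failure of the unmodified decomposition from the strict inclusion $\ker_{-n}(\Delta_k)\varsubsetneqq\ker_{L^p}(\Delta_k)$ (your ``alternative'' argument is exactly the paper's implicit one, and your primary argument via the nonzero intersection $d(\ker_0(\Delta_0))\subset\im_{L^p}(d_0)\cap\ker_{L^p}(\Delta_1)$ is an equally valid, slightly more explicit variant of it).
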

Note that the case $k\in \{1,n-1\}$, $p\in (1\frac{n}{n-1}]$ is not covered by the above corollary. The next result, which is key, will allow us to get corresponding {\em strong} Hodge decompositions from the weak ones:

\begin{Thm}\label{thm:closed sum}

For every $p\in (1,\infty)$ and $k\in \{1,\cdots,n-1\}$, the space $\im_{L^p}(d_{k-1})+\im_{L^p}(d_{k+1})$ is closed in $L^p(\Lambda^kM)$.

\end{Thm}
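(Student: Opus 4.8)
The plan is to reduce the statement to the closed-range of a single first-order operator, dispose of the generic exponents by the weighted-Sobolev Fredholm theory of $\Delta_k$, and then treat the one genuinely critical exponent $p=n$ by interpolation.

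First I would record two reductions and a reformulation. By Hodge duality the star operator is an $L^p$-isometry with $*\,\im_{L^p}(d_{k-1})=\im_{L^p}(d^*_{n-k+1})$ and $*\,\im_{L^p}(d^*_{k+1})=\im_{L^p}(d_{n-k-1})$, so closedness in degree $k$ is equivalent to closedness in degree $n-k$; in particular $k=n-1$ reduces to $k=1$. Next, the subspace in question is exactly the set of degree-$k$ forms of the shape $d\phi+d^*\psi$, i.e. the (a priori non-closed) range of the first-order operator $A\colon(\phi,\psi)\mapsto d\phi+d^*\psi$, whose composition with its formal adjoint $\omega\mapsto(d^*\omega,d\omega)$ is $\Delta_k$. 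By the weak decompositions of Corollary \ref{cor:weak_hodge} its $L^p$-closure is the explicitly complemented space $\overline{\im_{L^p}(d_{k-1})\oplus\im_{L^p}(d^*_{k+1})}^{L^p}$, so the entire content is to show that taking this closure adds nothing.

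For the generic exponents I would invoke the Fredholm theory of $\Delta_k$ in weighted Sobolev spaces on ALE manifolds (the tool already underlying Proposition \ref{prop:decay_harmonic_forms} and the appendix). For every weight outside the discrete set of indicial roots, $\Delta_k$ has closed range of finite codimension; since ordinary $L^p(\Lambda^kM)$ is a weighted $L^p$ space whose weight varies monotonically with $p$, the critical values of $p$ form a finite set. For each non-critical $p$, and $C^\infty_c$ being dense in the relevant weighted Sobolev space, the operator $A$ has closed range, and $\im A=\im_{L^p}(d_{k-1})\oplus\im_{L^p}(d^*_{k+1})$; thus the sum is closed. For interior degrees $k\in\{2,\dots,n-2\}$ one has $\ker_{1-n}(\Delta_k)=\ker_{-n}(\Delta_k)$ (Proposition \ref{prop:decay_harmonic_forms}(c)), so the codimension-$(N-1)$ phenomenon is absent and the only possibly critical exponent is $p=n$; for $k\in\{1,n-1\}$ the critical exponents are precisely $p=\tfrac{n}{n-1}$ (the indicial weight attached to the harmonic forms $\sim r^{1-n}$) and its dual $p=n$.

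It remains to treat the critical exponents. For $p=n$ I would argue by interpolation: by the previous step closedness, hence (with the weak decomposition, the directness of Lemma \ref{lem : de Rham intersection} and Proposition \ref{pro:Lp1}) the strong $L^{p'}$ Hodge decomposition holds for $p'\in[2,n)$, and the strong modified $L^{q'}$ decomposition for $q'>n$; Proposition \ref{prop:interpolation}(b) together with the projector identities of Lemma \ref{lem:proj_id} then yields that $\im_{L^n}(d_{k-1})\oplus\im_{L^n}(d^*_{k+1})$ is closed. For $p\le\tfrac{n}{n-1}$ (where no weak decomposition is even available) I would use closed-range duality: $\im_{L^p}(d_{k-1})+\im_{L^p}(d^*_{k+1})$ is closed in $L^p$ if and only if its annihilator $\ker_{L^q}(d^*_k)+\ker_{L^q}(d_k)$ is closed in $L^q$ with $q=p'\ge n$; modulo the finite-dimensional reduced cohomology, which is absorbed by Lemma \ref{lem:sum_finite_dim}, this sum of kernels reduces to the image sum at the exponent $q\ge n$ already settled. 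Finally, when $M$ has a single end Corollary \ref{cor:missing_ker} gives $\ker_{L^p}(\Delta_k)=\ker_{-n}(\Delta_k)$ for all $p$, removing the criticality altogether.

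The main obstacle is the passage through $p=n$: exactly at the borderline weight (equivalently, where $M$ becomes $p$-parabolic and $\ker_{-n}(\Delta_k)\subsetneq\ker_{1-n}(\Delta_k)$ by Corollary \ref{cor:decomp_kernel_1}) the Fredholm map $\Delta_k$ loses closed range, so one cannot conclude directly and must interpolate between the two sides $p'<n<q'$; it is precisely this that forces the introduction of the modified decomposition on the $q'$-side and ties the theorem to Proposition \ref{prop:interpolation}. A secondary, purely technical point is the bookkeeping identifying ordinary $L^p$ with a weighted $L^p$ and checking that the pieces $d\phi,d^*\psi$ of the solution genuinely lie in the closures $\im_{L^p}(d_{k-1})$ and $\im_{L^p}(d^*_{k+1})$, and not merely in the larger kernels $\ker_{L^p}(d^*_k)$ and $\ker_{L^p}(d_k)$.
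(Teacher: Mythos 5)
Your overall strategy --- weighted-Sobolev Fredholm theory away from a critical exponent, then Proposition \ref{prop:interpolation} at $p=n$ --- is the paper's strategy, and your treatment of $p=n$ coincides with the paper's. But the step you defer to the very end as ``a secondary, purely technical point'' is in fact the core of the argument, and your proposal does not close it. For $p\neq n$ the paper proves (Proposition \ref{prop:closed_subspace}) the identity $\im_{L^p}(d_{k-1})+\im_{L^p}(d^*_{k+1})=\mathrm{Ann}_{L^p}(\ker_{L^q}(\Delta_k))$, whence closedness since annihilators are closed; the hard inclusion is exactly the identification you wave at. Given $\omega$ in the annihilator one writes $\omega=\lim_i\mathcal{D}\omega_i$ with $\omega_i\in C_c^\infty(\Lambda^*M)$, replaces $\omega_i$ by $\tilde\omega_i$ lying in a complement of the kernel inside $W^{1,p}_{1-\frac{n}{p}}$, and must then know that $d\omega_i=d\tilde\omega_i$ and $d^*\omega_i=d^*\tilde\omega_i$, so that the exact and co-exact pieces converge \emph{separately} to elements of $\im_{L^p}(d_{k-1})$ and $\im_{L^p}(d^*_{k+1})$ rather than merely of $\ker_{L^p}(d^*_k)$ and $\ker_{L^p}(d_k)$. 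This requires Lemma \ref{lem:bounded_kernel} --- every element of $\ker_\delta(\mathcal{D})$ with $\delta<1$ is closed and co-closed --- a genuinely new result whose proof occupies a page of degree-by-degree analysis with Stokes-type integrations. Nothing in your outline supplies this, and without it you only obtain closedness of the closure of the sum, i.e.\ nothing.

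Two further points. First, your operator $A:(\phi,\psi)\mapsto d\phi+d^*\psi$ is not elliptic (its principal symbol $(\xi\wedge\cdot,\iota_\xi\cdot)$ is surjective but far from injective), so the weighted Fredholm theory you invoke does not apply to $A$ directly; the paper instead runs the argument for the genuinely elliptic Hodge--Dirac operator $\mathcal{D}=d+d^*$ on all of $\Lambda^*M$ at the weight $\delta=1-\frac{n}{p}$, which is exceptional only at $p=n$. In particular your claimed critical set $\left\{\frac{n}{n-1},n\right\}$ does not match the indicial computation, and the separate closed-range duality argument you propose for $p\le\frac{n}{n-1}$ is unnecessary --- and risky, since the finite-dimensionality of $\ker_{L^q}(d_k)/\im_{L^q}(d_{k-1})$ that you want to ``absorb'' by Lemma \ref{lem:sum_finite_dim} is in the paper a consequence of this very theorem, so one must take care not to argue circularly.
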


\begin{proof}

The proof is split into two parts. First, we prove the result for $p\neq n$. In this case, the result follows from:

\begin{proposition}\label{prop:closed_subspace}
We have, for $p\in (1,\infty)$ and $p\neq n$ that
\begin{align*}
\mathrm{Ann}_{L^p}(\ker_{L^q}(\Delta_k))=\im_{L^p}(d_{k-1})+ \im_{L^p}(d^*_{k+1}).
\end{align*}
\end{proposition}

The proof of Proposition \ref{prop:closed_subspace} (which is new) uses the theory of weighted Sobolev spaces; in order not to disturb the flow of the presentation, we chose to postpone it to Appendix B, where the theory of weighted Sobolev spaces on ALE manifolds will also be briefly recalled for the convenience of the reader. Since an annihilator is always closed, the result follows in the case $p\neq n$.

Let us now prove the result for $p=n$. Note that Proposition \ref{prop:closed_subspace} implies strong $L^p$ Hodge decompositions and modified Hodge decompositions in any case of Corollary \ref{cor:weak_hodge} if $p\neq n$. Using Proposition \ref{prop:interpolation} with $2\leq p<n<q<\infty$, one concludes that 

$$\im_{L^n}(d_{k-1})+\im_{L^n}(d_{k+1}^*)$$
is also closed in $L^n$, for every $k\in \{1,\cdots,n-1\}$. This concludes the proof.

\end{proof}

\begin{Cor}\label{cor:strong_hodge}

Let $M$ be an ALE manifold, then, the strong $L^p$ Hodge decompositions for an ALE manifold \eqref{eq:HodgeLp} holds if either

\begin{enumerate}

\item[(a)] $p\in (1,\infty)$, $k\in \{2,\cdots,n-2\}$,

\item[(b)] or $p\in (\frac{n}{n-1},n)$, $k\in \{1,n-1\}$,

\item[(c)] or $p\in (1,\infty)$, $k\in \{1,n-1\}$ and $M$ has only one end.

\end{enumerate}
Moreover, if $k\in\{1,n-1\}$, $p\in [n,\infty)$, and $M$ has at least two ends, then the strong modified $L^p$ Hodge decomposition \eqref{eq:mod-HodgeLp} holds.

\end{Cor}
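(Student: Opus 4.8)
The plan is to recognize that this corollary is the assembly step that converts the weak decompositions already established into strong ones, the sole difference being the closure on the exact/co-exact factor. That closure is precisely what Theorem \ref{thm:closed sum} removes: for every $p\in(1,\infty)$ and $k\in\{1,\dots,n-1\}$ it asserts that $\im_{L^p}(d_{k-1})+\im_{L^p}(d^*_{k+1})$ is already closed, so $\overline{\im_{L^p}(d_{k-1})+\im_{L^p}(d^*_{k+1})}^{L^p}=\im_{L^p}(d_{k-1})+\im_{L^p}(d^*_{k+1})$. Combined with directness of this sum, the weak decomposition becomes the strong one verbatim.

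First I would treat the three cases (a),(b),(c) of the ordinary decomposition. In each, writing $q=p'$, one has $\ker_{L^p}(\Delta_k)=\ker_{L^q}(\Delta_k)$: for (a) this is Proposition \ref{prop:decay_harmonic_forms}(c); for (b) it follows from Proposition \ref{prop:decay_harmonic_forms}(d), since $p\in(\frac{n}{n-1},n)$ forces $q\in(\frac{n}{n-1},n)$ as well, whence both kernels equal $\ker_{1-n}(\Delta_k)$; and for (c) it is Corollary \ref{cor:missing_ker}. In particular $\ker_{L^p}(\Delta_k)\subset\ker_{L^q}(\Delta_k)$, so Lemma \ref{lem : de Rham intersection}\eqref{eq : de Rham intersection 2} gives directness, $\im_{L^p}(d_{k-1})\cap\im_{L^p}(d^*_{k+1})=\{0\}$. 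The corresponding weak decomposition of Lemma \ref{lem : de Rham sum} then reads $L^p(\Lambda^kM)=\overline{\im_{L^p}(d_{k-1})\oplus\im_{L^p}(d^*_{k+1})}^{L^p}\oplus\ker_{L^p}(\Delta_k)$, and applying Theorem \ref{thm:closed sum} to drop the closure yields exactly the strong Hodge decomposition \eqref{eq:HodgeLp}. (Note that Lemma \ref{lem : de Rham sum}(iii) already covers the one-end case for the full range $p\in(1,\infty)$, so no range is missed in (c).)

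Next I would handle the modified decomposition: $k\in\{1,n-1\}$, $p\in[n,\infty)$, and $N\geq2$ ends. Here Lemma \ref{lem de Rham sum large p} supplies the weak modified decomposition $L^p(\Lambda^kM)=\overline{\im_{L^p}(d_{k-1})+\im_{L^p}(d^*_{k+1})}^{L^p}\oplus\ker_{-n}(\Delta_k)$, but directness of the exact/co-exact sum is no longer automatic, since now $\ker_{L^p}(\Delta_k)\neq\ker_{L^q}(\Delta_k)$. The missing directness is provided by Proposition \ref{pro:Lp1}: part (iii) gives $\im_{L^p}(d_0)\cap\im_{L^p}(d^*_2)=\{0\}$ for $k=1$, and part (iv) gives $\im_{L^p}(d_{n-2})\cap\im_{L^p}(d^*_n)=\{0\}$ for $k=n-1$. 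Once more removing the closure via Theorem \ref{thm:closed sum} promotes this to the strong modified decomposition \eqref{eq:mod-HodgeLp}.

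I do not expect a genuine obstacle here: all the hard analysis has been isolated upstream, in Theorem \ref{thm:closed sum} (resting on the weighted-Sobolev Proposition \ref{prop:closed_subspace} and the interpolation Proposition \ref{prop:interpolation}) and in the directness statements. The only point demanding care is bookkeeping, namely checking that each listed range of $(p,k)$ falls under the correct weak decomposition and identifying whether $\ker_{L^p}(\Delta_k)$ equals $\ker_{L^q}(\Delta_k)$ or merely contains $\ker_{-n}(\Delta_k)$, so that the appropriate directness result (Lemma \ref{lem : de Rham intersection}\eqref{eq : de Rham intersection 2} in the ordinary case, Proposition \ref{pro:Lp1}(iii)--(iv) in the modified case) is the one invoked.
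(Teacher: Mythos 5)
Your proposal is correct and follows exactly the route the paper intends: the corollary is stated without a separate proof precisely because it is the combination of the weak decompositions (Lemma \ref{lem : de Rham sum}, Lemma \ref{lem de Rham sum large p}, Corollary \ref{cor:weak_hodge}), the directness statements (Lemma \ref{lem : de Rham intersection} and Proposition \ref{pro:Lp1}(iii)--(iv)), and the closedness of $\im_{L^p}(d_{k-1})+\im_{L^p}(d^*_{k+1})$ from Theorem \ref{thm:closed sum}. Your bookkeeping of which kernel identity and which directness result applies in each range of $(p,k)$ is accurate, including the observation that the one-end case for $p\leq\frac{n}{n-1}$ is covered by Lemma \ref{lem : de Rham sum}(iii) rather than by Corollary \ref{cor:weak_hodge} as stated.
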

From this result, one can already compute the $L^p$ cohomology spaces in most of the cases:

\begin{theorem}\label{mainthm:part_1}
	Let $M$ be ALE, and let $p\in (1,\infty)$ and $k\in\{1,\cdots,n-1\}$ such that one of the following holds:
\begin{enumerate}

\item[(a)] $p\in (1,\infty)$, $k\in \{2,\cdots,n-2\}$,

\item[(b)] or $p\in (\frac{n}{n-1},n)$, $k\in \{1,n-1\}$,

\item[(c)] or $p\in [n,\infty)$, $k\in \{1,n-1\}$ and $M$ has only one end.

\end{enumerate}	
Then,
	\[
	\overline{H}_p^k(M)\cong \mathcal{H}_k(M),
	\]
(recall that this latter space is by definition $\ker_{L^2}(\Delta_k)$).
\end{theorem}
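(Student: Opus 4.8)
The plan is to read the cohomology directly off the strong $L^p$ Hodge decomposition furnished by Corollary \ref{cor:strong_hodge}, which holds precisely in the three cases (a), (b), (c). The first step is to record that in each of these cases the harmonic forms are insensitive to the exponent. Writing $q:=p'$ for the conjugate exponent, I would argue as follows: in case (a), Proposition \ref{prop:decay_harmonic_forms}(c)--(d) identifies $\ker_{L^p}(\Delta_k)$ with either $\ker_{-n}(\Delta_k)$ or $\ker_{1-n}(\Delta_k)$, and these coincide for $k\notin\{1,n-1\}$; in case (b) both $p$ and $q$ lie in $(\frac{n}{n-1},n)$, so Proposition \ref{prop:decay_harmonic_forms}(d) applies to each; and in case (c), Corollary \ref{cor:missing_ker} gives $\ker_{L^p}(\Delta_k)=\ker_{-n}(\Delta_k)$ for all $p$, which equals $\ker_{1-n}(\Delta_k)$ when $M$ has one end. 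Recalling that $\mathcal{H}_k(M)=\ker_{L^2}(\Delta_k)=\ker_{1-n}(\Delta_k)$, one concludes in all three cases that
$$\ker_{L^p}(\Delta_k)=\ker_{L^q}(\Delta_k)=\ker_{L^2}(\Delta_k)=\mathcal{H}_k(M).$$

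The heart of the argument is then the identification
$$\ker_{L^p}(d_k)=\im_{L^p}(d_{k-1})\oplus\ker_{L^p}(\Delta_k).$$
The inclusion $\supseteq$ is immediate, since $\im_{L^p}(d_{k-1})\subset\ker_{L^p}(d_k)$ and, by Proposition \ref{prop:decay_harmonic_forms}(a), $\ker_{L^p}(\Delta_k)=\ker_{L^p}(d_k)\cap\ker_{L^p}(d^*_k)\subset\ker_{L^p}(d_k)$. For $\subseteq$, I would take $\omega\in\ker_{L^p}(d_k)$ and decompose it via Corollary \ref{cor:strong_hodge} as $\omega=\alpha+\beta+\gamma$ with $\alpha\in\im_{L^p}(d_{k-1})$, $\beta\in\im_{L^p}(d^*_{k+1})$, $\gamma\in\ker_{L^p}(\Delta_k)$. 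Since $\alpha$ and $\gamma$ are closed, so is $\beta=\omega-\alpha-\gamma$; as $\beta\in\im_{L^p}(d^*_{k+1})\subset\ker_{L^p}(d^*_k)$, Proposition \ref{prop:decay_harmonic_forms}(a) yields $\beta\in\ker_{L^p}(\Delta_k)$. Invoking the kernel equality from the first step, $\beta\in\ker_{L^p}(\Delta_k)\cap\ker_{L^q}(\Delta_k)\cap\im_{L^p}(d^*_{k+1})$, and since $\im_{L^p}(d^*_{k+1})$ is contained in $\overline{\im_{L^p}(d_{k-1})+\im_{L^p}(d^*_{k+1})}^{L^p}$, equation \eqref{eq : de Rham intersection 1} of Lemma \ref{lem : de Rham intersection} forces $\beta=0$. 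Hence $\omega=\alpha+\gamma\in\im_{L^p}(d_{k-1})\oplus\ker_{L^p}(\Delta_k)$, as desired.

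Finally I would pass to the quotient. Since $\im_{L^p}(d_{k-1})$ is closed (it is defined as an $L^p$-closure) and the sum above is direct, the quotient map restricts to an isomorphism on the second summand, giving
$$H_p^k(M)=\frac{\ker_{L^p}(d_k)}{\im_{L^p}(d_{k-1})}\cong\ker_{L^p}(\Delta_k)=\mathcal{H}_k(M),$$
which is exactly the claim. The genuinely hard work is all upstream---in Corollary \ref{cor:strong_hodge} (that is, the closedness Theorem \ref{thm:closed sum}) and in the decay analysis of Proposition \ref{prop:decay_harmonic_forms}---so the only real subtlety remaining here is the vanishing of the co-exact component $\beta$. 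This is precisely the point where the hypotheses (a)--(c) enter, through the equality $\ker_{L^p}(\Delta_k)=\ker_{L^q}(\Delta_k)$; outside these ranges the two kernels differ and the co-exact part need not vanish, which is what produces the jump in dimension for $k\in\{1,n-1\}$.
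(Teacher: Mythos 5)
Your proof is correct and follows essentially the same route as the paper: both establish $\ker_{L^p}(\Delta_k)=\ker_{L^q}(\Delta_k)=\ker_{L^2}(\Delta_k)$ from Proposition \ref{prop:decay_harmonic_forms} (and Corollary \ref{cor:missing_ker} in case (c)), then intersect the strong $L^p$ Hodge decomposition with $\ker_{L^p}(d_k)$ and kill the co-exact component via Lemma \ref{lem : de Rham intersection}. Your explicit decomposition $\omega=\alpha+\beta+\gamma$ is just a spelled-out version of the paper's observation that $\ker_{L^p}(d_k)\cap\im_{L^p}(d^*_{k+1})=\{0\}$, so there is nothing to add.
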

\begin{proof}By the assumptions on $p$ and $k$,
	\begin{align*}
	\mathcal{H}_k(M)=\ker_{L^2}(\Delta_k)=\ker_{L^p}(\Delta_k).	
	\end{align*}
    Thus by definition of the $L^p$-cohomology, it suffices to show
    \begin{align}\label{eq : closed forms decomp}
    	\ker_{L^p}(d_{k})=\im_{L^p}(d_{k-1})\oplus \ker_{L^p}(\Delta_{k}).
    \end{align}
    We clearly have $\im_{L^p}(d_{k-1})\oplus \ker_{L^p}(\Delta_{k})\subset \ker_{L^p}(d_{k})$. By Theorem \ref{thm:closed sum} and \eqref{eq : de Rham sum}, we have 
    \begin{align*}
		L^p(\Lambda^kM)=\im_{L^p}(d_{k-1})\oplus \im_{L^p}(d^*_{k+1})\oplus \ker_{L^p}(\Delta_{k})   , 
\end{align*}    
    which, intersected with $\ker_{L^p}(d_{k})$ yields \eqref{eq : closed forms decomp}, provided that $
    \ker_{L^p}(d_k)\cap\im_{L^p}(d_{k+1}^*)=\{0\}$.
	To show the latter, observe that 
	\[
	\ker_{L^p}(d_k)\cap\im_{L^p}(d_{k+1}^*)\subset \ker_{L^p}(d_k)\cap\ker_{L^p}(d_{k}^*)\subset \ker_{L^p}(\Delta_{k}).
	\]
	But again by the assumptions on $p$ and $k$, $\ker_{L^p}(\Delta_{k})=\ker_{L^q}(\Delta_{k})$, where $q$ is the conjugate H\"{o}lder exponent.
	Thus,
	\[
\ker_{L^p}(d_k)\cap\im_{L^p}(d_{k+1}^*)=\ker_{L^p}(d_k)\cap\im_{L^p}(d_{k+1}^*)\cap \ker_{L^q}(\Delta_{k})\cap \ker_{L^p}(\Delta_k),
\]	
	which is $\{0\}$ by  \eqref{eq : de Rham intersection 1}. This finishes the proof of the theorem.
	\end{proof}

The rest of the section is devoted to the computation of $H^k_p(M)$ and the proof of the $L^p$ Hodge decompositions in the remaining cases. Concerning the cohomology spaces, by Hodge duality one sees that it is enough to focus on the case $k=1$. Indeed, one has the following proposition, which is well-known (see the main result in \cite{GolT} as well as Lemma 4 therein):

\begin{Pro}\label{prop:duality}

Let $p\in (1,\infty)$, $q=p'$ the conjugate exponent, and $k\in \{0,\cdots,n\}$. Then, 

$$\overline{H}^k_p(M)\simeq \overline{H}^{n-k}_q(M).$$ 

\end{Pro}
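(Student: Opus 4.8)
The plan is to identify the Banach-space dual of $H^k_p(M)$ with an $L^q$ cohomology group of complementary degree, by combining the $L^p$–$L^q$ duality of the de Rham complex with the Hodge star, and then to upgrade this to a genuine isomorphism using finite-dimensionality. Since $H^0_p(M)=H^n_p(M)=0$, I may assume $k\in\{1,\dots,n-1\}$, and I write $q=p'$ throughout.

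First I would compute $(H^k_p(M))^*$ abstractly. Writing $H^k_p(M)=V/W$ with $V=\ker_{L^p}(d_k)$ and $W=\im_{L^p}(d_{k-1})$, both closed subspaces of $L^p(\Lambda^kM)$ and $W\subset V$, the standard description of the dual of a quotient of a subspace (via Hahn–Banach) gives $(V/W)^*\cong W^\perp/V^\perp$, where $\perp$ denotes the annihilator taken in $L^q(\Lambda^kM)$. The annihilator identities recalled at the beginning of this section yield at once $W^\perp=\mathrm{Ann}_{L^q}(\im_{L^p}(d_{k-1}))=\ker_{L^q}(d^*_k)$. For the denominator, the same identities with the roles of $p$ and $q$ exchanged give $\ker_{L^p}(d_k)=\mathrm{Ann}_{L^p}(\im_{L^q}(d^*_{k+1}))$, so $V$ is the pre-annihilator of the closed subspace $\im_{L^q}(d^*_{k+1})$; since $L^p$ and $L^q$ are reflexive, the bipolar theorem forces $V^\perp=\big({}^\perp\im_{L^q}(d^*_{k+1})\big)^\perp=\im_{L^q}(d^*_{k+1})$. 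Hence
$$(H^k_p(M))^*\cong \frac{\ker_{L^q}(d^*_k)}{\im_{L^q}(d^*_{k+1})}.$$

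Next I would transport this $d^*$-quotient to an ordinary $L^q$ cohomology group using the Hodge star. As recalled in the proof of Proposition \ref{pro:Lp1}, $*$ is a fibrewise isometry, hence an isometric isomorphism $L^q(\Lambda^kM)\to L^q(\Lambda^{n-k}M)$ for every $q$, and it intertwines $d^*$ with $\pm d$ (because $d^*=\pm*d*$ and $*^2=\pm\,\mathrm{id}$). Consequently $*\ker_{L^q}(d^*_k)=\ker_{L^q}(d_{n-k})$ and $*\im_{L^q}(d^*_{k+1})=\im_{L^q}(d_{n-k-1})$, so $*$ descends to an isomorphism of the above quotient onto $H^{n-k}_q(M)$. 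Combining the two steps gives the unconditional isomorphism $(H^k_p(M))^*\cong H^{n-k}_q(M)$.

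Finally I would conclude $H^k_p(M)\cong H^{n-k}_q(M)$ by invoking finite-dimensionality, since a finite-dimensional space is isomorphic to its dual. For every admissible pair, at least one of the two complementary degrees is covered by Corollary \ref{cor:strong_hodge}: whenever a (possibly modified) strong $L^q$ Hodge decomposition is available, the cohomology group is realized as a subspace of $\ker_{L^q}(\Delta_{n-k})$ and is thus finite-dimensional; the only degree not directly covered, namely $k\in\{1,n-1\}$ with $N\ge2$ and $p\le\frac{n}{n-1}$, has complementary data $n-k\in\{1,n-1\}$, $q\ge n$, which falls under the modified decomposition, so $H^{n-k}_q(M)$ is finite-dimensional and the isomorphism $(H^k_p(M))^*\cong H^{n-k}_q(M)$ then forces $H^k_p(M)$ to be finite-dimensional as well. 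In all cases both spaces are finite-dimensional, so $H^k_p(M)\cong(H^k_p(M))^*\cong H^{n-k}_q(M)$. I expect the main obstacle to be the abstract duality step — in particular identifying $V^\perp$ with $\im_{L^q}(d^*_{k+1})$ via the bipolar theorem and the $p\leftrightarrow q$ annihilator identities — together with the bookkeeping of finite-dimensionality in the single uncovered degree, where one must feed the duality isomorphism itself back in.
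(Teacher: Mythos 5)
Your proposal is correct and follows essentially the same route as the paper's proof: the paper also combines the Hodge star identities $*\ker_{L^p}(d_k)=\ker_{L^p}(d^*_{n-k})$, $*\im_{L^p}(d_{k-1})=\im_{L^p}(d^*_{n-k+1})$ with the annihilator identities and the bipolar theorem $\mathrm{Ann}_{L^q}(\mathrm{Ann}_{L^p}(V))=V$, merely performing the two steps in the opposite order (star first, then duality). Your explicit treatment of the quotient--dual identification $(V/W)^*\cong W^\perp/V^\perp$ and of the finite-dimensionality needed to pass from $(H^k_p(M))^*\cong H^{n-k}_q(M)$ to an actual isomorphism $H^k_p(M)\cong H^{n-k}_q(M)$ is a point the paper leaves implicit, and is a welcome clarification rather than a different method.
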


\begin{proof}

Using the fact that $d^*=\pm *d*$ and $*^2=\pm \mathrm{id}$ (where the signs depend on the degree of the forms), one finds that

$$*\ker_{L^p}(d_k)=\ker_{L^p}(d^*_{n-k}),$$
and

$$*\im_{L^p}(d_{k-1})=\im_{L^p}(d^*_{n-k+1}).$$
Therefore, since the Hodge star is an isomorphism,

$$\overline{H}^k_p(M)\simeq \frac{\ker_{L^p}(d^*_{n-k})}{\im_{L^p}(d^*_{n-k+1})}.$$
Using $L^p-L^q$ duality, one has

$$\frac{\ker_{L^p}(d^*_{n-k})}{\im_{L^p}(d^*_{n-k+1})}=\frac{\mathrm{Ann}_{L^q}\left(\im_{L^p}(d^*_{n-k+1})\right)}{\mathrm{Ann}_{L^q}\left(\ker_{L^p}(d^*_{n-k})\right)},$$
and the result now follows from the two following facts:

$$\mathrm{Ann}_{L^q}\left(\im_{L^p}(d^*_{n-k+1})\right)=\ker_{L^q}(d_{n-k}),$$
and

$$ \mathrm{Ann}_{L^q}\left(\ker_{L^p}(d^*_{n-k})\right)=\mathrm{Ann}_{L^q}\left(\mathrm{Ann}_{L^p}\left(\im_{L^q}(d_{n-k-1})\right)\right)=\im_{L^q}(d_{n-k-1}),$$
where for the last equality we have used the fact that 

$$\mathrm{Ann}_{L^q}\left(\mathrm{Ann}_{L^p}(V)\right)=V,$$
for a closed subspace $V\subset L^q$.

\end{proof}
Let 
\begin{align*}
Z:=\im_{L^p}(d_{0})\cap \im_{L^p}(d^*_{2})\subset \ker_{L^p}(\Delta_{1})	
\end{align*}
Furthermore, we let
\begin{itemize}
	\item[(i)] $X_1$ be a complement of $Z$ in $\im_{L^p}(d_{0})\cap \ker_{L^p}(\Delta_{1})$,
	\item[(ii)] $X_2$ be a complement of $Z\oplus X_1$ in $\im_{L^p}(d_{0})$,
	\item[(iii)] $Y_1$ be a complement of $Z$ in $\im_{L^p}(d^*_{2})\cap \ker_{L^p}(\Delta_{1})$ and
    \item[(iv)] $Y_2$ be a complement of $Z\oplus Y_1$ in $\im_{L^p}(d^*_{2})$. 
\end{itemize}
Here, by {\em complement} of closed subspace $V\subset E$ where $E$ is a Banach space, we mean a {\em closed} vector subspace $W\subset E$ such that $V\oplus W=E$. Recall that finite-dimensional subspaces of Banach spaces can always be complemented. Therefore, since $\ker_{L^p}(\Delta_{1})$ is finite-dimensional, all the above complements do actually exist.

\begin{proposition}\label{lem : de Rham decomp large p}
	Let $p\in [n,\infty)$ and the vector spaces $X_i,Y_i,Z\subset L^p(\Lambda^1M)$, $i=1,2$ defined as above. Then we have
	\begin{align}
		\label{eq : decomp1} L^p(\Lambda^1M)&=X_2\oplus X_1\oplus Z\oplus Y_1\oplus Y_2\oplus \ker_{-n}(\Delta_{1}),\\
		\label{eq : decomp2}\im_{L^p}(d_{0})&=X_2\oplus X_1\oplus Z,\\
		\label{eq : decomp3}\im_{L^p}(d^*_{2})&=Z\oplus Y_1\oplus Y_2,\\
		\label{eq : decomp4}\ker_{L^p}(d_1)&=X_2\oplus X_1\oplus Z\oplus Y_1\oplus \ker_{-n}(\Delta_{1}),\\
		\label{eq : decomp5}\ker_{L^p}(d^*_1)&= X_1\oplus Z\oplus Y_1\oplus Y_2\oplus \ker_{-n}(\Delta_{1}),\\
		\label{eq : decomp6}\ker_{1-n}(\Delta_{1})&=X_1\oplus Z\oplus Y_1\oplus \ker_{-n}(\Delta_{1}).
	\end{align}
\end{proposition}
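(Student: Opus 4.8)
The plan is to deduce all six identities from the strong modified $L^p$ Hodge decomposition of Corollary \ref{cor:strong_hodge}, combined with the purely formal properties of the complements $X_i$, $Y_i$ and with the characterization $\ker_{L^p}(\Delta_1)=\ker_{L^p}(d_1)\cap\ker_{L^p}(d_1^*)$ from Proposition \ref{prop:decay_harmonic_forms}(a). I would start with \eqref{eq : decomp2} and \eqref{eq : decomp3}, which are immediate from the definitions: since by construction $Z\subset\ker_{L^p}(\Delta_1)$ and $Z\subset\im_{L^p}(d_0)$, the space $X_1\oplus Z$ is exactly $\im_{L^p}(d_0)\cap\ker_{L^p}(\Delta_1)$, and $X_2$ is by definition a complement of it in $\im_{L^p}(d_0)$, giving \eqref{eq : decomp2}; the identity \eqref{eq : decomp3} is obtained symmetrically from $Y_1$, $Y_2$.

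For \eqref{eq : decomp1} I would invoke Corollary \ref{cor:strong_hodge}: for $p\in[n,\infty)$, $k=1$ and $N\geq 2$ ends, the strong modified Hodge decomposition $L^p(\Lambda^1M)=\im_{L^p}(d_0)\oplus\im_{L^p}(d_2^*)\oplus\ker_{-n}(\Delta_1)$ holds. Since this sum is direct, $\im_{L^p}(d_0)\cap\im_{L^p}(d_2^*)=\{0\}$, i.e. $Z=\{0\}$ (this is also Proposition \ref{pro:Lp1}(iii)). Substituting \eqref{eq : decomp2} and \eqref{eq : decomp3} into this decomposition and using $Z=\{0\}$ to avoid double counting immediately yields \eqref{eq : decomp1}.

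The decompositions of the two kernels, \eqref{eq : decomp4} and \eqref{eq : decomp5}, I would obtain by intersecting the global splitting \eqref{eq : decomp1} with $\ker_{L^p}(d_1)$ and $\ker_{L^p}(d_1^*)$ respectively. For \eqref{eq : decomp4}, note that $X_2,X_1,Z\subset\im_{L^p}(d_0)\subset\ker_{L^p}(d_1)$ while $Y_1,\ker_{-n}(\Delta_1)\subset\ker_{L^p}(\Delta_1)\subset\ker_{L^p}(d_1)$, so every summand except possibly $Y_2$ already lies in $\ker_{L^p}(d_1)$. The key transversality fact is $Y_2\cap\ker_{L^p}(d_1)=\{0\}$: an element of this intersection lies in $\im_{L^p}(d_2^*)\cap\ker_{L^p}(d_1)\cap\ker_{L^p}(d_1^*)=\im_{L^p}(d_2^*)\cap\ker_{L^p}(\Delta_1)=Z\oplus Y_1$ by Proposition \ref{prop:decay_harmonic_forms}(a), and $Y_2$ meets $Z\oplus Y_1$ only in $0$. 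Writing $\omega$ via \eqref{eq : decomp1} one has $d_1\omega=d_1 y_2$, so $\omega\in\ker_{L^p}(d_1)$ if and only if $y_2=0$, which is exactly \eqref{eq : decomp4}. The same argument with the roles of $d_1$ and $d_1^*$, of $\im_{L^p}(d_0)$ and $\im_{L^p}(d_2^*)$, and of $X_2$ and $Y_2$ swapped, together with $X_2\cap\ker_{L^p}(d_1^*)=\{0\}$, gives \eqref{eq : decomp5}.

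Finally, \eqref{eq : decomp6} follows by intersecting \eqref{eq : decomp4} and \eqref{eq : decomp5}: as both are sub-sums of the single direct sum \eqref{eq : decomp1}, their intersection is the sum of the common summands $X_1\oplus Z\oplus Y_1\oplus\ker_{-n}(\Delta_1)$, which equals $\ker_{L^p}(d_1)\cap\ker_{L^p}(d_1^*)=\ker_{L^p}(\Delta_1)$ by Proposition \ref{prop:decay_harmonic_forms}(a); since $p\geq n>\frac{n}{n-1}$, Proposition \ref{prop:decay_harmonic_forms}(d) identifies $\ker_{L^p}(\Delta_1)=\ker_{1-n}(\Delta_1)$, proving \eqref{eq : decomp6}. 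I expect the only genuine content beyond the already-established Hodge decomposition to be the two transversality identities $Y_2\cap\ker_{L^p}(d_1)=\{0\}$ and $X_2\cap\ker_{L^p}(d_1^*)=\{0\}$; everything else is formal manipulation of direct sums, so this is where I would be most careful.
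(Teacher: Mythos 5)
Your proposal is correct and follows essentially the same route as the paper: (2) and (3) from the construction, (1) from the strong modified Hodge decomposition of Corollary \ref{cor:strong_hodge}, (4) and (5) by intersecting (1) with the kernels after checking the transversality facts $Y_2\cap\ker_{L^p}(d_1)=\{0\}$ and $X_2\cap\ker_{L^p}(d_1^*)=\{0\}$ via Proposition \ref{prop:decay_harmonic_forms}(a), and (6) by identifying $\ker_{1-n}(\Delta_1)=\ker_{L^p}(\Delta_1)=\ker_{L^p}(d_1)\cap\ker_{L^p}(d_1^*)$ and reading off the common summands. The only cosmetic difference is that you invoke $Z=\{0\}$ up front (which the paper defers to Proposition \ref{pro:Lp2}), whereas the paper carries $Z$ formally through the argument; the content is identical.
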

\begin{proof}
	The equations \eqref{eq : decomp2} and \eqref{eq : decomp3} follow directly from construction.
	Then, \eqref{eq : decomp1} follows from the strong Hodge decompositions in Corollary \ref{cor:strong_hodge}. Now we are going to show \eqref{eq : decomp4}. First, notice that if $q=p'$ is the conjugate exponent, then $\ker_{L^q}(\Delta_1)=\ker_{-n}(\Delta_1)$, so
	
$$\ker_{L^p}(\Delta_1)\cap \ker_{L^q}(\Delta_1)=\ker_{-n}(\Delta_1).$$
It then follows from Lemma \ref{lem : de Rham intersection} that

$$Z\cap \ker_{-n}(\Delta_1)=\{0\}.$$
Next, note that
	\begin{align*}
		X_2\subset \im_{L^p}(d_{0})&\subset \ker_{L^p}(d_1),\\
		X_1\oplus Z\oplus Y_1\oplus\ker_{-n}(\Delta_{1})&\subset
		\ker_{L^p}(\Delta_{1})\subset \ker_{L^p}(d_1)
	\end{align*}
	(according to (a) in Proposition \ref{prop:decay_harmonic_forms}). Because $Y_2\subset  \im_{L^p}(d_{2}^*)\subset \ker_{L^p}(d^*_1)$, we have 
	\[
	Y_2\cap \ker_{L^p}(d_1)=
	Y_2\cap \ker_{L^p}(d_1)\cap\ker_{L^p}(d^*_1)=
	Y_2\cap \ker_{L^p}(\Delta_{1})=\{0\},
	\]
	because $Y_2$ complements $Z\oplus Y_1=\im_{L^p}(d_{0})\cap \ker_{L^p}(\Delta_{1})$ in $\im_{L^p}(d_{0})$. Therefore, we get \eqref{eq : decomp4} from intersecting $\ker_{L^p}(d_1)$ with \eqref{eq : decomp1}. The proof of \eqref{eq : decomp5} is completely analogous. It remains to show \eqref{eq : decomp6}. At first, we have
	\[
	X_1\oplus Z\oplus Y_1\oplus \ker_{-n}(\Delta_{1})\subset \ker_{1-n}(\Delta_{1})
	\]
	by construction and since $\ker_{1-n}(\Delta_{1})=\ker_{L^p}(\Delta_1)$. To show equality, it suffices by \eqref{eq : decomp1} to show
	\[
	(X_2\oplus Y_2)\cap \ker_{1-n}(\Delta_{1})=\{0\}.
	\]
	From Proposition \ref{prop:decay_harmonic_forms}, we obtain
	\begin{align*}
	(X_2\oplus Y_2)\cap \ker_{1-n}(\Delta_{1})&=
	(X_2\oplus Y_2)\cap \ker_{L^p}(\Delta_{1})\\&=
	(X_2\oplus Y_2)\cap \ker_{L^p}(d_1)\cap\ker_{L^p}(d^*_1)
	\end{align*}
	and \eqref{eq : decomp1}, \eqref{eq : decomp4} and \eqref{eq : decomp5} show that the intersection on the right hand side is the zero space. This finishes the proof. 
\end{proof}
This proposition allows us to make the remaining cohomology spaces more explicit:

\begin{cor}\label{cor:cohomology_decomposition}
	Let $p\in [n,\infty)$ and $q\in (1,\frac{n}{n-1}]$ such that $1=\frac{1}{p}+\frac{1}{q}$. Let $X_i$ and $Y_i$, $i=1,2$ be defined by Proposition \ref{lem : de Rham decomp large p}, then we have identifications
	\begin{align*}
		\overline{H}^1_p(M)\cong Y_1\oplus \ker_{-n}(\Delta_{1})\cong \overline{H}^{n-1}_q(M),\\
		\overline{H}^1_q(M)\cong X_1\oplus  \ker_{-n}(\Delta_{1})\cong \overline{H}^{n-1}_p(M).
	\end{align*}
\end{cor}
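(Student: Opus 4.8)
The plan is to read off $H^1_p(M)$ directly from the splittings established in Proposition \ref{lem : de Rham decomp large p}, and then to obtain the three remaining identifications purely formally, using the duality isomorphism of Proposition \ref{prop:duality} together with the Hodge star. The only structural input needed is that each of the subspaces $X_i$, $Y_i$, $Z$, $\ker_{-n}(\Delta_1)$ is closed and that the sums in \eqref{eq : decomp1}--\eqref{eq : decomp6} are topological direct sums; consequently, whenever a closed summand $V$ sits inside a larger space $W=V\oplus C$ with $C$ a complementary closed summand, the quotient $W/V$ is isomorphic to $C$.

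For the first line, recall that $H^1_p(M)=\ker_{L^p}(d_1)/\im_{L^p}(d_0)$. By \eqref{eq : decomp4} and \eqref{eq : decomp2}, for $p\in[n,\infty)$ we may regroup
\[
\ker_{L^p}(d_1)=\bigl(X_2\oplus X_1\oplus Z\bigr)\oplus Y_1\oplus \ker_{-n}(\Delta_1)=\im_{L^p}(d_0)\oplus\bigl(Y_1\oplus\ker_{-n}(\Delta_1)\bigr),
\]
so that $H^1_p(M)\cong Y_1\oplus\ker_{-n}(\Delta_1)$. Applying Proposition \ref{prop:duality} with $k=1$ gives $H^1_p(M)\cong H^{n-1}_q(M)$, which completes the first line.

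For the second line, I would first use Proposition \ref{prop:duality} with $k=n-1$ to obtain $H^{n-1}_p(M)\cong H^1_q(M)$, so it suffices to compute $H^{n-1}_p(M)$. Since $*$ is an $L^p$ isometry with $*\ker_{L^p}(d_{n-1})=\ker_{L^p}(d^*_1)$ and $*\im_{L^p}(d_{n-2})=\im_{L^p}(d^*_2)$ (the same relations used in the proof of Proposition \ref{prop:duality}), the Hodge star induces an isomorphism
\[
H^{n-1}_p(M)=\frac{\ker_{L^p}(d_{n-1})}{\im_{L^p}(d_{n-2})}\cong\frac{\ker_{L^p}(d^*_1)}{\im_{L^p}(d^*_2)}.
\]
Now \eqref{eq : decomp5} and \eqref{eq : decomp3} give $\ker_{L^p}(d^*_1)=\im_{L^p}(d^*_2)\oplus\bigl(X_1\oplus\ker_{-n}(\Delta_1)\bigr)$, whence $H^{n-1}_p(M)\cong X_1\oplus\ker_{-n}(\Delta_1)$, and combining with the duality isomorphism yields the second line.

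Since every step reduces to identifying a complementary summand in one of the decompositions of Proposition \ref{lem : de Rham decomp large p}, there is no genuine analytic obstacle here: the real content has already been absorbed into that proposition and into the duality statement. The only point requiring a little care is the bookkeeping in passing between degree $n-1$ and degree $1$ via the Hodge star, namely matching $\ker_{L^p}(d_{n-1})$ with $\ker_{L^p}(d^*_1)$ and $\im_{L^p}(d_{n-2})$ with $\im_{L^p}(d^*_2)$, so that the quotient is transported to the correct summand.
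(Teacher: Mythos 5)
Your argument is correct and follows the paper's route: both proofs read off $H^1_p(M)\cong Y_1\oplus\ker_{-n}(\Delta_1)$ from \eqref{eq : decomp2} and \eqref{eq : decomp4}, and both reduce the second line to the quotient $\ker_{L^p}(d^*_1)/\im_{L^p}(d^*_2)\cong X_1\oplus\ker_{-n}(\Delta_1)$ obtained from \eqref{eq : decomp3} and \eqref{eq : decomp5}. The only (harmless) difference is in how that quotient is linked to $H^1_q(M)$: you apply the Hodge star to compute $H^{n-1}_p(M)$ directly and then invoke Proposition \ref{prop:duality}, whereas the paper identifies $H^1_q(M)$ with the dual of that quotient via annihilators and then uses finite-dimensionality to discard the dual --- these are just two factorizations of the same duality argument.
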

\begin{proof}
We treat only the case $k=1$, the case $k=n-1$ following from Proposition \ref{prop:duality}. The description of $\overline{H}^1_p(M)$ follows from \eqref{eq : decomp2} and \eqref{eq : decomp4}. For $\overline{H}^1_q(M)$, recall that
	\[
	\ker_{L^q}(d_1)=\mathrm{Ann}_{L^q}(\im_{L^p}(d^*_2)),\qquad
	\im_{L^q}(d_0)=\mathrm{Ann}_{L^q}(\ker_{L^p}(d^*_1)),
	\]
	which allows us to identify
	\[
	\overline{H}^1_q(M)=\frac{\ker_{L^q}(d_1)}{\im_{L^q}(d_0)}
	=\frac{\mathrm{Ann}_{L^q}(\im_{L^p}(d^*_2))}{\mathrm{Ann}_{L^q}(\ker_{L^p}(d^*_1))}\cong \left(\frac{\ker_{L^p}(d^*_1)}{\im_{L^p}(d^*_2)}\right)^*,
	\]
	where ${}^{*}$ denotes the dual space. From \eqref{eq : decomp3} and \eqref{eq : decomp5},
	 we get
	\[
	\frac{\ker_{L^p}(d^*_1)}{\im_{L^p}(d^*_2)} \cong X_1\oplus  \ker_{-n}(\Delta_{1})
	\]
According to Proposition \ref{pro:Hodge}, $X_1\oplus \ker_{-n}(\Delta_1)\subset \ker_{1-n}(\Delta_1)$ is finite dimensional, hence isomorphic to its dual space. This finishes the proof.
\end{proof}
Let us now analyse in more detail the spaces $X_i,Y_i,Z$ which appeared in Proposition \ref{lem : de Rham decomp large p}.

\begin{Pro}\label{pro:Lp2}

Let $p\in [n,+\infty)$. Then, the following hold:

\begin{enumerate}

\item[(i)] $Z=\{0\}$.

\item[(ii)] $Y_1=\{0\}$.

\item [(iii)] $X_1$ is given by the following equalities:

$$X_1=\im_{L^p}(d_0)\cap \ker_{1-n}(\Delta_1)=d(\ker_0(\Delta_0)).$$

\end{enumerate}
\end{Pro}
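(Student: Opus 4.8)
The plan is to derive all three assertions from the intersection results already packaged in Proposition~\ref{pro:Lp1}, combined with the structural splitting of $\ker_{1-n}(\Delta_1)$ furnished by Corollary~\ref{cor:decomp_kernel_1} and the kernel identifications in \eqref{eq : decay kernel}. Parts (i) and (ii) should be essentially immediate, and the only real work is the second equality in (iii).

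First I would dispose of (i) and (ii). By definition $Z=\im_{L^p}(d_0)\cap\im_{L^p}(d_2^*)$, which is exactly the intersection shown to vanish in part (iii) of Proposition~\ref{pro:Lp1}; hence $Z=\{0\}$. For (ii), since $Z=\{0\}$ and $Z\subset \im_{L^p}(d_2^*)\cap\ker_{L^p}(\Delta_1)$, the complement $Y_1$ of $Z$ in $\im_{L^p}(d_2^*)\cap\ker_{L^p}(\Delta_1)$ coincides with that whole space; and this is $\{0\}$ by part (ii) of Proposition~\ref{pro:Lp1}. So $Y_1=\{0\}$.

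The substance is in (iii). Again using $Z=\{0\}$, we have $X_1=\im_{L^p}(d_0)\cap\ker_{L^p}(\Delta_1)$. Since $p\geq n$ lies in $(\frac{n}{n-1},\infty)$, equation \eqref{eq : decay kernel} gives $\ker_{L^p}(\Delta_1)=\ker_{1-n}(\Delta_1)$, which yields the first equality $X_1=\im_{L^p}(d_0)\cap\ker_{1-n}(\Delta_1)$. For the second equality I would prove two inclusions. The inclusion $d(\ker_0(\Delta_0))\subseteq \im_{L^p}(d_0)\cap\ker_{1-n}(\Delta_1)$ is precisely Proposition~\ref{pro:im_p>n}. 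For the reverse inclusion, take $\omega\in \im_{L^p}(d_0)\cap\ker_{1-n}(\Delta_1)$ and write, via Corollary~\ref{cor:decomp_kernel_1}, $\omega=du+\eta$ with $u\in\ker_0(\Delta_0)$ and $\eta\in\ker_{-n}(\Delta_1)$; the goal is to show $\eta=0$. Since $du\in d(\ker_0(\Delta_0))\subseteq \im_{L^p}(d_0)$ by Proposition~\ref{pro:im_p>n}, we get $\eta=\omega-du\in\im_{L^p}(d_0)$. On the other hand, with $q=p'\in(1,\frac{n}{n-1}]$, equation \eqref{eq : decay kernel} gives $\ker_{-n}(\Delta_1)=\ker_{L^q}(\Delta_1)$, so $\eta\in\ker_{L^q}(\Delta_1)$, while also $\eta\in\ker_{-n}(\Delta_1)\subseteq\ker_{1-n}(\Delta_1)=\ker_{L^p}(\Delta_1)$. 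Since $\im_{L^p}(d_0)\subseteq \overline{\im_{L^p}(d_0)+\im_{L^p}(d_2^*)}^{L^p}$, the element $\eta$ lies in
$$\overline{\im_{L^p}(d_0)+\im_{L^p}(d_2^*)}^{L^p}\cap\ker_{L^p}(\Delta_1)\cap\ker_{L^q}(\Delta_1),$$
which is $\{0\}$ by \eqref{eq : de Rham intersection 1} of Lemma~\ref{lem : de Rham intersection}. Hence $\eta=0$ and $\omega=du\in d(\ker_0(\Delta_0))$, completing the inclusion.

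The only step demanding genuine care is this reverse inclusion in (iii): the point is to strip off the $d(\ker_0(\Delta_0))$ part of $\omega$ and feed the remainder $\eta$ — which simultaneously belongs to $\im_{L^p}(d_0)$, to $\ker_{L^p}(\Delta_1)$, and (after identifying $\ker_{-n}$ with $\ker_{L^q}$) to $\ker_{L^q}(\Delta_1)$ — into the duality-based vanishing of Lemma~\ref{lem : de Rham intersection}. Everything else is bookkeeping built on top of Proposition~\ref{pro:Lp1} and the kernel identifications \eqref{eq : decay kernel}.
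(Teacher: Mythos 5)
Your proof is correct, and for parts (i) and (ii) it matches the paper's argument (the paper reaches $Y_1=\im_{L^p}(d_2^*)\cap\ker_{L^p}(\Delta_1)$ by intersecting \eqref{eq : decomp3} with \eqref{eq : decomp6}, whereas you get it straight from the definition of $Y_1$ as a complement of $Z=\{0\}$; either way Proposition \ref{pro:Lp1}(ii) finishes it). Where you diverge is the second equality in (iii). The paper deduces from \eqref{eq : decomp6} (with $Z=Y_1=\{0\}$) that $\ker_{1-n}(\Delta_1)=X_1\oplus\ker_{-n}(\Delta_1)$, compares this with the decomposition $\ker_{1-n}(\Delta_1)=d(\ker_0(\Delta_0))\oplus\ker_{-n}(\Delta_1)$ of Corollary \ref{cor:decomp_kernel_1}, and concludes $X_1=d(\ker_0(\Delta_0))$ from the single inclusion $d(\ker_0(\Delta_0))\subset X_1$ of Proposition \ref{pro:im_p>n}, i.e.\ by matching two complements of the same subspace. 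You instead prove the reverse inclusion element-by-element: splitting $\omega=du+\eta$ via Corollary \ref{cor:decomp_kernel_1}, using Proposition \ref{pro:im_p>n} to place $\eta$ in $\im_{L^p}(d_0)$, and then killing $\eta$ with the duality vanishing \eqref{eq : de Rham intersection 1} after identifying $\ker_{-n}(\Delta_1)$ with $\ker_{L^q}(\Delta_1)$ via \eqref{eq : decay kernel}. Your route has the mild advantage of not invoking \eqref{eq : decomp6} (hence less of Proposition \ref{lem : de Rham decomp large p}) for this step, at the cost of one more application of Lemma \ref{lem : de Rham intersection}; both arguments rest on the same two essential inputs, Proposition \ref{pro:im_p>n} and Corollary \ref{cor:decomp_kernel_1}.
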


\begin{proof}
The fact that $Z=\{0\}$ follows immediately from point (iii) in Proposition \ref{pro:Lp1}. Concerning (ii), by intersecting \eqref{eq : decomp3} and \eqref{eq : decomp6}, we get
\begin{align*}
\ker_{L^p}(\Delta_1)\cap \im_{L^p}(d_2^*)=	\ker_{1-n}(\Delta_1)\cap \im_{L^p}(d_2^*)=Z\oplus Y_1=Y_1
	\end{align*}
However, point (ii) in Proposition \ref{pro:Lp1} states that 

$$\ker_{L^p}(\Delta_1) \cap \im_{L^p}(d_2^*)=\{0\},$$
hence $Y_1=\{0\}$.

Let us now prove (iii). The first equality in (iii) follows from intersecting \eqref{eq : decomp2} and \eqref{eq : decomp6} and using that $Y_1=\{0\}$ and $Z=\{0\}$. Additionally, \eqref{eq : decomp6} and the conditions $Y_1=\{0\}$ and $Z=\{0\}$ directly yield
\begin{align*}
\ker_{1-n}(\Delta_1)=X_1\oplus  \ker_{-n}(\Delta_1).	
\end{align*}
On the other hand, we know from Corollary \ref{cor:decomp_kernel_1} that 
\begin{align*}
	\ker_{1-n}(\Delta_1)=d(\ker_0(\Delta_0))\oplus  \ker_{-n}(\Delta_1).	
\end{align*}
Therefore, it suffices to show the inclusion
\begin{align*}
d(\ker_0(\Delta_0))\subset X_1= \im_{L^p}(d_0)\cap \ker_{1-n}(\Delta_1),
\end{align*}
and this follows from Proposition \ref{pro:im_p>n}.

\end{proof}
We are now ready to state and prove the result concerning the remaining $L^p$ cohomology spaces:

\begin{Thm}\label{mainthm:part_2}

Let $p\in [n,+\infty)$ and let $q\in (1,\frac{n}{n-1}]$ be the conjugate exponent. Then,
\begin{enumerate}
\item[(i)] $$\overline{H}_p^1(M)\cong \ker_{-n}(\Delta_1)\cong \overline{H}_q^{n-1}(M),$$
\item[(ii)] $$\overline{H}_q^1(M)\cong \ker_{1-n}(\Delta_1)\cong \overline{H}_p^{n-1}(M),$$
\end{enumerate}

\end{Thm}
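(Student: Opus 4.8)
The plan is to read off both isomorphisms directly from the structural results already assembled, namely Corollary \ref{cor:cohomology_decomposition} together with the explicit identification of the complementary spaces $X_1,Y_1$ carried out in Proposition \ref{pro:Lp2}. Recall that Corollary \ref{cor:cohomology_decomposition} gives, for $p\in[n,\infty)$ and $q$ its conjugate, the identifications $H^1_p(M)\cong Y_1\oplus\ker_{-n}(\Delta_1)\cong H^{n-1}_q(M)$ and $H^1_q(M)\cong X_1\oplus\ker_{-n}(\Delta_1)\cong H^{n-1}_p(M)$. So everything reduces to computing $Y_1$ and $X_1$, which is exactly what Proposition \ref{pro:Lp2} does. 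The whole proof is therefore a matter of substitution, and I expect no genuine obstacle at this stage: the analytic content has been front-loaded into the decay estimates of Proposition \ref{prop:decay_harmonic_forms}, the decomposition of Corollary \ref{cor:decomp_kernel_1}, and Proposition \ref{pro:Lp1}, all of which feed into Proposition \ref{pro:Lp2}.

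For part (i), I would invoke Proposition \ref{pro:Lp2}(ii), which asserts $Y_1=\{0\}$. Feeding this into the first line of Corollary \ref{cor:cohomology_decomposition} collapses $Y_1\oplus\ker_{-n}(\Delta_1)$ to $\ker_{-n}(\Delta_1)$, yielding at once
$$H^1_p(M)\cong\ker_{-n}(\Delta_1)\cong H^{n-1}_q(M).$$

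For part (ii), I would use Proposition \ref{pro:Lp2}(iii), which identifies $X_1=d(\ker_0(\Delta_0))$, and then combine it with Corollary \ref{cor:decomp_kernel_1}, which states $\ker_{1-n}(\Delta_1)=d(\ker_0(\Delta_0))\oplus\ker_{-n}(\Delta_1)$. Consequently $X_1\oplus\ker_{-n}(\Delta_1)=d(\ker_0(\Delta_0))\oplus\ker_{-n}(\Delta_1)=\ker_{1-n}(\Delta_1)$, and plugging this into the second line of Corollary \ref{cor:cohomology_decomposition} gives
$$H^1_q(M)\cong\ker_{1-n}(\Delta_1)\cong H^{n-1}_p(M),$$
which completes the argument. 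If anything merits care it is purely bookkeeping: one should make sure that the direct sum $X_1\oplus\ker_{-n}(\Delta_1)$ appearing in Corollary \ref{cor:cohomology_decomposition} is literally the same internal direct sum as the one in Corollary \ref{cor:decomp_kernel_1}, so that the identification of $X_1$ with $d(\ker_0(\Delta_0))$ really does reconstitute $\ker_{1-n}(\Delta_1)$ rather than merely an abstractly isomorphic space; this is immediate here since both decompositions take place inside $\ker_{1-n}(\Delta_1)\subset L^p(\Lambda^1M)$ and share the summand $\ker_{-n}(\Delta_1)$.
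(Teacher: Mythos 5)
Your proposal is correct and follows exactly the paper's own proof: both arguments read off the identifications from Corollary \ref{cor:cohomology_decomposition} and then substitute $Y_1=\{0\}$ and $X_1=d(\ker_0(\Delta_0))$ from Proposition \ref{pro:Lp2}, using Corollary \ref{cor:decomp_kernel_1} to reassemble $\ker_{1-n}(\Delta_1)$. Your closing remark that the two internal direct sums live in the same ambient space is a sensible (if brief) sanity check that the paper leaves implicit.
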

\begin{proof}As before, we only prove the case $k=1$. By Corollary \ref{cor:cohomology_decomposition}, we have
	\begin{align*}
		\overline{H}^1_p(M)\cong Y_1\oplus \ker_{-n}(\Delta_{1})
		\end{align*}
	but $Y_1=\{0\}$ by Proposition \ref{pro:Lp1}. This proves (i). Again by Corollary \ref{cor:cohomology_decomposition}, we have
	\begin{align*}
		\overline{H}^1_q(M)\cong X_1\oplus \ker_{-n}(\Delta_{1}).	
	\end{align*}
Combining this with Proposition \ref{pro:Lp2} and Corollary \ref{cor:decomp_kernel_1} directly yields
\begin{align*}
X_1\oplus \ker_{-n}(\Delta_{1})=d(\ker_0(\Delta_0))\oplus\ker_{-n}(\Delta_{1})=\ker_{1-n}(\Delta_{1}),
\end{align*}
which shows (ii).

\end{proof}
This completes the study of $L^p$ cohomology spaces. It remains to look at $L^q$ Hodge decomposition for $q\in (1,\frac{n}{n-1}]$. In this respect, we can show:

\begin{proposition}\label{pro:small_exp}
Let $k\in\{1,n-1\}$ and $q\in (1,\frac{n}{n-1}]$ . Then we have direct sums
	\begin{align*}
	\im_{L^q}(d_2^*)\oplus \im_{L^q}(d_{0})\oplus \ker_{L^q}(\Delta_1)\subset L^q(\Lambda^1M)
\end{align*}
and 
\begin{align*}
	\im_{L^q}(d_n^*)\oplus \im_{L^q}(d_{n-2})\oplus \ker_{L^q}(\Delta_{n-1})\subset L^q(\Lambda^{n-1}M)
\end{align*}
which are closed subspaces of codimension $N-1$, $N$ being the number of ends.
\end{proposition}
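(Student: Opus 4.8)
The plan is to reduce to the case $k=1$ and then to identify the closed subspace $\im_{L^q}(d_0)+\im_{L^q}(d_2^*)$ with an annihilator, from which directness, closedness, and the codimension will all follow. Throughout I set $p=q'\in[n,\infty)$; since $q\le\frac{n}{n-1}<n$ and $p\ge n>\frac{n}{n-1}$, Proposition \ref{prop:decay_harmonic_forms}(c)--(d) gives $\ker_{L^q}(\Delta_1)=\ker_{-n}(\Delta_1)$ and $\ker_{L^p}(\Delta_1)=\ker_{1-n}(\Delta_1)$, so in particular $\ker_{L^q}(\Delta_1)\subset\ker_{L^p}(\Delta_1)$. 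For the degree $n-1$ statement I would simply apply the Hodge star: since $*$ is an $L^q$-isometry with $*\im_{L^q}(d_0)=\im_{L^q}(d_n^*)$, $*\im_{L^q}(d_2^*)=\im_{L^q}(d_{n-2})$ (as in the proof of Proposition \ref{pro:Lp1}(iv)) and $*\ker_{L^q}(\Delta_1)=\ker_{L^q}(\Delta_{n-1})$, the $(n-1)$-form decomposition is the isometric image of the $1$-form one and inherits all three properties.

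For directness, the pairwise intersection $\im_{L^q}(d_0)\cap\im_{L^q}(d_2^*)=\{0\}$ follows from \eqref{eq : de Rham intersection 2} of Lemma \ref{lem : de Rham intersection} (applied with the roles of $p$ and $q$ exchanged), whose hypothesis $\ker_{L^q}(\Delta_1)\subset\ker_{L^p}(\Delta_1)$ was just checked. For the remaining condition $(\im_{L^q}(d_0)+\im_{L^q}(d_2^*))\cap\ker_{-n}(\Delta_1)=\{0\}$ I would invoke \eqref{eq : de Rham intersection 1} (again with $p,q$ exchanged), which gives
$$\overline{\im_{L^q}(d_0)+\im_{L^q}(d_2^*)}^{L^q}\cap\ker_{L^q}(\Delta_1)\cap\ker_{L^p}(\Delta_1)=\{0\}.$$
Since $\ker_{L^q}(\Delta_1)\cap\ker_{L^p}(\Delta_1)=\ker_{-n}(\Delta_1)\cap\ker_{1-n}(\Delta_1)=\ker_{-n}(\Delta_1)$, and since by Theorem \ref{thm:closed sum} the sum $\im_{L^q}(d_0)+\im_{L^q}(d_2^*)$ is already closed (so the closure bar is superfluous), this is exactly the desired triviality; hence the three-fold sum is direct.

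It remains to treat closedness and the codimension. By Theorem \ref{thm:closed sum}, $V:=\im_{L^q}(d_0)+\im_{L^q}(d_2^*)$ is closed; as $\ker_{-n}(\Delta_1)$ is finite dimensional (Proposition \ref{prop:decay_harmonic_forms}(e)) and meets $V$ trivially, Lemma \ref{lem:sum_finite_dim} yields that $V\oplus\ker_{-n}(\Delta_1)$ is closed. For the codimension I would use Proposition \ref{prop:closed_subspace} with exponent $q\ne n$ and conjugate $p$, which identifies
$$V=\mathrm{Ann}_{L^q}(\ker_{L^p}(\Delta_1))=\mathrm{Ann}_{L^q}(\ker_{1-n}(\Delta_1)).$$
As $\ker_{1-n}(\Delta_1)$ is a finite-dimensional subspace of $L^p=(L^q)^*$, the standard fact that the annihilator of an $m$-dimensional subspace of the dual has codimension $m$ gives $\mathrm{codim}\,V=\dim\ker_{1-n}(\Delta_1)$, which by Corollary \ref{cor:decomp_kernel_1} and Corollary \ref{cor:dimension_d_harmonic functions} equals $(N-1)+\dim\ker_{-n}(\Delta_1)$. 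Adjoining the summand $\ker_{-n}(\Delta_1)$, which meets $V$ trivially, drops the codimension by exactly $\dim\ker_{-n}(\Delta_1)$, leaving codimension $N-1$.

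The genuine content is already packaged in the earlier results, namely Theorem \ref{thm:closed sum} and Proposition \ref{prop:closed_subspace}, both proved via weighted Sobolev and Fredholm theory in the appendix; given those, the present argument is essentially bookkeeping with finite-dimensional complements and $L^p$--$L^q$ duality. I expect the only delicate point to be matching the $L^q$- and $L^p$-kernels correctly, so that Lemma \ref{lem : de Rham intersection} and Proposition \ref{prop:closed_subspace} apply and the annihilator codimension count uses $\ker_{1-n}(\Delta_1)$ (the $L^p$-kernel) rather than $\ker_{-n}(\Delta_1)$; getting this identification right is what produces the clean count $N-1$.
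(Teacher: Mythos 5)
Your proof is correct and follows essentially the same strategy as the paper: directness via Lemma \ref{lem : de Rham intersection}, closedness via Theorem \ref{thm:closed sum} together with Lemma \ref{lem:sum_finite_dim}, and the codimension via $L^p$--$L^q$ duality. The one place you diverge is the codimension count: the paper computes the annihilator in $L^p$ of the \emph{full} triple sum using only the elementary identities $\mathrm{Ann}_{L^p}(\im_{L^q}(d_0))=\ker_{L^p}(d_1^*)$, $\mathrm{Ann}_{L^p}(\im_{L^q}(d_2^*))=\ker_{L^p}(d_1)$, arriving at $\ker_{1-n}(\Delta_1)\cap\mathrm{Ann}_{L^p}(\ker_{-n}(\Delta_1))$, which it then identifies with $d(\ker_0(\Delta_0))$ by the $L^2$-orthogonality of the splitting in Corollary \ref{cor:decomp_kernel_1}; you instead invoke the stronger Proposition \ref{prop:closed_subspace} to realize $\im_{L^q}(d_0)+\im_{L^q}(d_2^*)$ itself as $\mathrm{Ann}_{L^q}(\ker_{1-n}(\Delta_1))$ and then do dimension arithmetic. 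Your route uses a heavier input than strictly necessary for this step (though Proposition \ref{prop:closed_subspace} underlies Theorem \ref{thm:closed sum} anyway, so nothing new is really being assumed), but in exchange it does not need the $L^2$-orthogonality of the decomposition of $\ker_{1-n}(\Delta_1)$, only its dimension count. Both arguments are valid.
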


\begin{proof}
As usual, it is enough to prove the result for $k=1$. We have $\ker_{-n}(\Delta_1)=\ker_{L^q}(\Delta_1)\subset\ker_{L^p}(\Delta_1)=\ker_{1-n}(\Delta_1)$, where  $p\in [n,\infty)$ is the conjugate exponent. Therefore, the sum on the left hand side is direct by Lemma \ref{lem : de Rham intersection}. According to Theorem \ref{thm:closed sum}, and Lemma \ref{lem:sum_finite_dim}, $\ker_{L^q}(\Delta_1)$ being finite dimensional, it is closed.
 Its codimension equals the dimension of its annihilator in $L^p$, which we are going to compute for the remainder of this proof. We have
\begin{align*}
	\mathrm{Ann}_{L^p}&(\im_{L^q}(d_2^*)\oplus \im_{L^q}(d_0)\oplus \ker_{-n}(\Delta_1))\\&=
	\ker_{L^p}(d_1^*)\cap \ker_{L^p}(d_1)\cap \mathrm{Ann}_{L^p}(\ker_{-n}(\Delta_1))\\
	&=\ker_{L^p}(\Delta_1)\cap \mathrm{Ann}_{L^p}(\ker_{-n}(\Delta_1))\\
	&=\ker_{1-n}(\Delta_1)\cap \mathrm{Ann}_{L^p}(\ker_{-n}(\Delta_1)).
\end{align*}
(where we have used Proposition \ref{prop:decay_harmonic_forms} to pass from the second to the third line).
Because $L^p$ and $L^q$ are dual one to another by the $L^2$-pairing, the $L^2$-orthogonal decomposition in Corollary \ref{cor:decomp_kernel_1} directly implies
 \begin{align*}
 \ker_{1-n}(\Delta_1)\cap \mathrm{Ann}_{L^p}(\ker_{-n}(\Delta_1))=d(\ker_0(\Delta_0)).
 \end{align*}

%
By Corollary \ref{cor:dimension_d_harmonic functions}, the space on the right hand side is $N-1$-dimensional, which concludes the proof.
\end{proof}
Finally, we can conclude this section with the proof of two main results of the present article, Theorem \ref{thm:Hodge} and Theorem \ref{thm:main1}:

\begin{proof}[Proof of Theorem \ref{thm:Hodge}:]

The cases $2\leq k\leq n-2$, or $k\in\{1,n-1\}$ and $\frac{n}{n-1}<p<n$, or $k\in \{1,n-1\}$, $p\in (1,\infty)$ and $M$ has only one end, or $k\in \{1,n-1\}$, $p\geq n$ and $M$ has more than two ends follow from Corollary \ref{cor:strong_hodge}. The case $k\in\{1,n-1\}$, $p\in (1,\frac{n}{n-1})$ and $M$ has more than two ends follows from Proposition \ref{pro:small_exp}. It remains to see the cases $k=0$ and $k=n$. One can obtain the $k=n$ from the $k=0$ case, by Hodge duality, so let us prove the latter. For $k=0$, one wishes to show that
\begin{align}\label{eq: Hodge decomp 0-form}
L^p(M)=\im_{L^p}(d_1^*),
\end{align}
since according to \cite[Theorem 3]{Y}, $\ker_{L^p}(\Delta_0)=\{0\}$ on any complete manifold with infinite volume. 
Let $q$ be the conjugate exponent to $p$. Then we have
$$\mathrm{Ann}_{L^q}( \im_{L^p}(d_1^*))=\ker_{L^q}(d_0).$$
However if $f\in \ker_{L^q}(d_0)$, then first $f$ is constant (since $M$ is connected), and because $f\in L^q(M)$, $f$ must vanish identically. Thus,
$$\mathrm{Ann}_{L^q}( \im_{L^p}(d_1^*))=\{0\},$$
and \eqref{eq: Hodge decomp 0-form} follows by duality.
This concludes the proof.
\end{proof}

\begin{proof}[ Proof of Theorem \ref{thm:main1}:]

According to Theorem \ref{mainthm:part_1}, if either $k\notin\{1,n-1\}$ or $p\in \left(\frac{n}{n-1},n\right)$, then $\overline{H}_p^k(M)\simeq \mathcal{H}_k(M)\simeq \overline{H}_2^k(M)$. Let us now assume that $k=1$; then, if $p\leq \frac{n}{n-1}$, part (ii) of Theorem \ref{mainthm:part_2} yields

$$\overline{H}_p^1(M)\simeq \ker_{1-n}(\Delta)=\ker_{L^2}(\Delta_1)=\mathcal{H}_1(M).$$
Now, if $p\geq n$, then according to Theorem \ref{mainthm:part_2},

$$\overline{H}_p^1(M)\simeq \ker_{-n}(\Delta_1),$$
which has codimension $N-1$ in $\ker_{1-n}(\Delta_1)$ by Corollaries \ref{cor:dimension_d_harmonic functions} and \ref{cor:decomp_kernel_1}. According to Theorem \ref{mainthm:part_2}, if $\frac{1}{p}+\frac{1}{q}=1$, then

$$\overline{H}_p^1(M)\cong \overline{H}_q^{n-1}(M), $$
and this easily implies the result for $\overline{H}^{n-1}_q(M)$.

\end{proof}

\section{Boundedness of Riesz transforms on forms}

In this section, we explain how the Hodge projectors can also be expressed by means of Riesz transforms on forms (see also \cite[Section 2]{AMR}), and how this fact allows us to recover some known results concerning the Riesz transforms on manifolds with Euclidean ends. It is convenient for explaining this to consider the vector bundle of forms of any degree:

$$\Lambda^* M=\oplus_{k=0}^n\Lambda^k M$$
Recall that we denote $\Delta=dd^*+d^*d$ the Hodge Laplacian on $\Lambda^*M$. We will also denote $\Pi_>=I-\Pi_0$, where $\Pi_0$ is the orthogonal projector onto $\ker_{L^2}(\Delta)$; and $\Pi_d=\oplus \Pi_{k,d}$, $\Pi_{d^*}=\oplus\Pi_{k,d^*}$ the Hodge projectors onto closed and co-closed forms of any degree respectively. It is well-known (and due to Gaffney) that if $M$ is complete, then the Hodge-Dirac operator $\mathcal{D}=d+d^*$, defined as a self-adjoint unbounded operator on $L^2(\Lambda T^*M)$ whose domain is given as the domain of the closed quadratic form

$$q(\omega,\omega)=\int_M|d\omega|^2+|d^*\omega|^2,$$
has $C_c^\infty (\Lambda^*M)$ as a core. In fact, if one assumes that $\omega$ is smooth, then $\mathcal{D}\omega$ is easily shown to be the limit in $L^2$ as $n\to\infty$ of $\mathcal{D}(\chi_n\omega)$, where $0\leq \chi_n\leq 1$, $\chi_n$ is $1$ in restriction to $B(o,n)$, $0$ in restriction to $M\setminus B(o,2n)$, and $||\nabla \chi_n||_\infty \lesssim 1$. The same result holds for general $\omega$ in the domain of $\mathcal{D}$ by first approximating $\omega$ by smooth forms on compact sets, using standard partition of unity and mollifiers arguments in local charts. As a consequence, the {\em range} of $(d+d^*)$, namely

$$\mathrm{Rg}(d+d^*):=\{(d+d^*)\omega\,;\,\omega\in \mathcal{D}om(d+d^*)\}$$
is closed for the graph norm (however it is {\em not} a closed subspace of $L^2$), and is furthermore equal to the closure of $(d+d^*)C_c^\infty( \Lambda^*M)$ for the graph norm. Moreover, since $\left(\mathrm{Rg}(d+d^*)\right)^\perp=\ker_{L^2}(d+d^*)$, one has that

$$L^2(\Lambda^*M)=\overline{\mathrm{Rg}(d+d^*)}\oplus_\perp \ker_{L^2}(d+d^*).$$
Define $\mathcal{D}om(d)=\{\omega\in L^2(\Lambda^*M)\,;d\omega\in L^2\}$, and analogously $\mathcal{D}om(d^*)$. Similar arguments to the ones presented above for $\mathcal{D}$ show that $C_c^\infty(\Lambda^*M)$ is a core, both for $d$ and for $d^*$, and this yields that

\begin{equation}\label{eq:Rg1}
\im_{L^2}(d_{k-1})=\overline{dC_c^\infty(\Lambda^{k-1}M)}^{L^2}=\overline{\mathrm{Rg}(d_{k-1})},
\end{equation}
and 

\begin{equation}\label{eq:Rg2}
\im_{L^2}(d^*_{k+1})=\overline{d^*C_c^\infty(\Lambda^{k+1} M)}^{L^2}=\overline{\mathrm{Rg}(d^*_{k+1})},
\end{equation}
where the first equalities are by definition, and the range $\mathrm{Rg}(A)$ of an unbounded operator $A$ on $L^2$ is defined in general as $\{A\omega\,;\,\omega\in \mathcal{D}om(A)\}$. The fact that $d^2=0$ and \eqref{eq:Rg1}, \eqref{eq:Rg2} yield that the vector spaces $\overline{\mathrm{Rg}(d_{k-1})}$ and $\overline{\mathrm{Rg}(d^*_{k+1})}$ are orthogonal one to another. Note moreover that if $\omega\in \mathrm{Rg}(d+d^*)$, then one can write 

$$\omega=(d+d^*)\alpha,$$
with $\alpha\in \mathcal{D}om(d+d^*)$ such that $\alpha\in (\ker_{L^2}(d+d^*))^\perp=\mathrm{Rg}(d+d^*)$. Define

$$(d+d^*)^{-1}\omega:=\alpha$$
The definition of the quadratic form $q$ implies immediately that 

$$\mathcal{D}om(d+d^*)\subset \mathcal{D}om(d)\cap \mathcal{D}om(d^*),$$
so 

$$\alpha\in \mathcal{D}om(d)\cap \mathcal{D}om(d^*),$$
and as a consequence,

$$d(d+d^*)^{-1}\omega=d\alpha\in \mathrm{Rg}(d),\quad d^*(d+d^*)^{-1}\omega=d^*\alpha\in \mathrm{Rg}(d^*).$$
Since $\omega=(d+d^*)(d+d^*)^{-1}\omega$ for $\omega\in \mathrm{Rg}(d+d^*)=\left(\ker_{L^2}(d+d^*)\right)^\perp$, one gets that

$$\mathrm{Rg}(d+d^*)\subset \mathrm{Rg}(d)\oplus_\perp\mathrm{Rg}(d^*).$$

Using this, it is not hard to see that the $L^2$ Hodge projectors $\Pi_d$, $\Pi_{d^*}$ on exact and coexact forms introduced previously are given by the following formulae:

$$\Pi_d\omega=d(d+d^*)^{-1}\omega,\quad \Pi_{d^*}\omega=d^*(d+d¨^*)^{-1}\omega,\quad \omega\in\mathrm{Rg}(d+d^*).$$
We want to obtain a similar formula, where $(d+d^*)^{-1}$ has been replaced by a function of the Laplacian. Noticing that $\Delta=(d+d^*)^2$ (at least on $C_c^\infty(\Lambda^*M)$), one can define a self-adjoint extension of $\Delta|_{C_c^\infty(\Lambda^*M)}$ as $(d+d^*)^2$ (this is the operator associated with the multiplier $|\lambda|^2$ in a spectral resolution of $(d+d^*)$). However, it is well-known that $\Delta$ is essentially self-adjoint on $C_c^\infty(\Lambda^*M)$ on any complete Riemannian manifold (see a proof for scalar Schr\"odinger operators in \cite[Theorem 3.13]{PRS}; this proof adapts without difficulty to the case of the Hodge Laplacian, upon noticing that thanks to the Bochner-Weitzenb\"ock formula, one can write $\Delta_k=\nabla^*\nabla+\mathscr{R}_k$ with a certain symmetric curvature tensor $\mathscr{R}_k$). Hence, the above self-adjoint operator $(d+d^*)^2$ is the self-adjoint Hodge Laplacian $\Delta$ that we have considered throughout this article. From now on we thus will use freely the formula $\Delta=(d+d^*)^2$ as self-adjoint operators. One can then define $(\Delta\Pi_>)^{-1/2}$ as $\varphi(d+d^*)$, for the spectral multiplier $\varphi(x)=|x|^{-1}\mathbf{1}_{\R^*}(x).$ We consider the {\em Riesz transform on forms}:

$$R=(d+d^*)(\Delta\Pi_>)^{-1/2},$$
as well as its exact and co-exact parts:

$$R_{d}=\Pi_ dR=d(\Delta\Pi_>)^{-1/2},\quad R_{d^*}=\Pi_{d^*}R=d^*(\Delta\Pi_>)^{-1/2}.$$
 Then, $R$ is $L^2$ bounded (it is the operator associated with the bounded spectral multiplier $\mathrm{sgn}(x)\mathbf{1}_{\R^*}(x)$ in a spectral resolution of $(d+d^*)$), and thus $R_{d}$, $R_{d^*}$ are $L^2$ bounded as well. The spectral theorem entails that $R^2=\Pi_>$, and since $R^2=(R_d+R_{d^*})^2$, developping the square and considering the degrees yield:
 
 $$(R_d)^2=0,\quad (R_{d^*})^2=0,$$
 and
 
 $$\Pi_>=R_dR_{d^*}+R_{d^*}R_{d}.$$
From the fact that $R^*=R$, it is also easily seen that $(R_d)^*=R_{d^*}$. Moreover, if $\eta\in L^2(\Lambda^*M)$ and $\varphi\in L^2(\Lambda^*M)$, self-adjointess of $R$ and the fact that $(d+d^*)$ and $(\Delta\Pi_>)^{-1/2}$ commute (by the functional calculus) yields that

$$((d+d^*)(\Delta\Pi_>)^{-1/2}\eta,\varphi)=(\eta,(\Delta\Pi_>)^{-1/2}(d+d^*)\varphi).$$
Taking $\eta$ of degree $k$ and $\varphi$ of degree $k+1$ then gives

$$(d(\Delta\Pi_>)^{-1/2}\eta,\varphi)=(\eta,(\Delta\Pi_>)^{-1/2}d^*\varphi),$$
hence $(R_d)^*=(\Delta\Pi_>)^{-1/2}d^*$; similarly, $(R_{d^*})^*=(\Delta\Pi_>)^{-1/2}d$.

If $\omega=d\eta+d^*\varphi\in L^2$ and $\eta,\varphi\in L^2$, then

$$R_d\omega=d(\Delta\Pi_>)^{-1/2}(d\eta+d^*\varphi)=d\left[(R_{d^*})^*\eta+(R_d)^*\varphi\right]\in \im_{L^2}(d),$$
hence $R_d$ sends $\im_{L^2}(d^*)\oplus \im_{L^2}(d)$ into $\im_{L^2}(d)$, and therefore $\im_{L^2}(R_d)\subset \im_{L^2}(d)$. Similarly, $\im_{L^2}R_{d^*}\subset \im_{L^2}(d^*)$.
As a consequence, for $\omega\in L^2(\Lambda^*M)$,

$$\Pi_>\omega=R_dR_{d^*}\omega+R_{d^*}R_{d}\omega$$
is necessarily the Hodge decomposition of $\Pi_>\omega$. It follows that 

$$\Pi_d=R_dR_{d^*},\quad \Pi_{d^*}=R_{d^*}R_{d}.$$
Hence, the boundedness on $L^p$ of the Hodge projectors is closely related to the boundedness on $L^p$ of the Riesz transforms on forms. In this direction, let us also mention that it is proved in \cite[Theorem 2.1]{AC} that for $k=1$, $R_{d^*}$ and $\Pi_d$ bounded on $L^p$ imply that $R_d$ is bounded on $L^p$; since $R_{d^*}$ is bounded on $L^p(\Lambda^1 M)$ for any $p\in [2,+\infty)$ on an ALE manifold (as a consequence of \cite[Theorem 1.1]{CD} and the fact that by duality, $R_{d^*}$ is the adjoint of the scalar Riesz transform), on such a manifold the boundedness on $L^p(M)$ of $R_d$ is equivalent to the boundedness on $L^p(\Lambda^1 M)$ of $\Pi_d$ for any $p\in [2,+\infty)$. Thus we get as a corollary:

\begin{Cor}\label{cor:Riesz-functions}

Let $M$ be an ALE manifold with dimension $n\geq 3$. Then, the Riesz transform on functions $R_d$ is bounded from $L^p(M)\to L^p(\Lambda^1 M)$ for every $p\in (1,p^*)$, where $p^*=+\infty$ if $M$ has only one end, $p^*=n$ otherwise.

\end{Cor}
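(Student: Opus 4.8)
The plan is to split the range $(1,p^*)$ at the exponent $2$ and treat the two halves by genuinely different mechanisms, since the machinery of this section has already been set up for exactly this. Recall first that on functions $\ker_{L^2}(\Delta_0)=\{0\}$ by Proposition \ref{pro:kernel_0}, so $\Pi_>$ acts as the identity on $0$-forms and $R_d=d\Delta_0^{-1/2}$ there.

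First I would handle $p\in[2,p^*)$. Here I would invoke the equivalence recalled just above the statement: on an ALE manifold $R_{d^*}$ on $1$-forms is bounded on $L^p$ for every $p\in[2,\infty)$ by \cite{CD}, and by \cite[Theorem 2.1]{AC} boundedness of $R_{d^*}$ together with boundedness of $\Pi_d$ on $L^p$ forces $R_d$ to be bounded on $L^p$. Thus it suffices to know that the Hodge projector $\Pi_d$ onto exact $1$-forms is bounded on $L^p(\Lambda^1M)$, and this is precisely the content of Corollary \ref{cor:strong_hodge}: if $M$ has a single end the strong $L^p$ Hodge decomposition for $1$-forms holds for all $p\in(1,\infty)$, whereas if $M$ has $N\ge 2$ ends it holds exactly for $p\in(\tfrac{n}{n-1},n)$. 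Since $2\in(\tfrac{n}{n-1},n)$ for every $n\ge 3$, in both cases $\Pi_d$ is bounded on $L^p(\Lambda^1M)$ for all $p\in[2,p^*)$ with $p^*$ as in the statement, and hence $R_d$ is bounded on those $L^p$.

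Next I would deal with $p\in(1,2]$ by duality. This half cannot be reduced to $\Pi_d$: the equivalence of \cite{AC} is only available for $p\ge 2$, and in fact for $N\ge 2$ ends and $p\le\tfrac{n}{n-1}$ the strong Hodge decomposition for $1$-forms fails altogether (Theorem \ref{thm:Hodge}(e)). Instead, writing $q=p'\in[2,\infty)$, I would use that $R_d\colon L^p(M)\to L^p(\Lambda^1M)$ is bounded if and only if its $L^2$-adjoint is bounded as an operator $L^q(\Lambda^1M)\to L^q(M)$. The self-adjointness computation carried out in this section gives $(R_d)^*=R_{d^*}$ in degree $1$, and $R_{d^*}$ on $1$-forms is bounded on $L^q$ for every $q\in[2,\infty)$ by \cite{CD}. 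Hence $R_d$ is bounded on $L^p$ for every $p\in(1,2]$, irrespective of the number of ends.

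Combining the two ranges yields boundedness on $(1,2]\cup[2,p^*)=(1,p^*)$ (using $p^*\ge 2$), which is the assertion. The point requiring the most care — and the conceptual obstacle — is the passage across $p=2$ for manifolds with several ends: the argument for $p\ge 2$ cannot be run for $p<2$, because there $\Pi_d$ is genuinely unbounded, so one is forced to switch to the dual operator $R_{d^*}$, whose boundedness for large exponents is insensitive to the number of ends. It is exactly this asymmetry between $R_d$ and $R_{d^*}$, inherited from the failure of the Hodge decomposition on one side only, that produces the asymmetric threshold $p^*$.
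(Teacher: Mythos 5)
Your proposal is correct and follows essentially the same route as the paper: the case $p\in(1,2]$ is just the Coulhon--Duong theorem of \cite{CD} (your duality step merely undoes the dualization by which the paper states that result as boundedness of $R_{d^*}$ on $L^q$, $q\ge 2$), and the case $p\in[2,p^*)$ is obtained exactly as in the text from \cite[Theorem 2.1]{AC} together with the $L^p$ boundedness of $\Pi_d$ coming from the strong Hodge decomposition of Corollary \ref{cor:strong_hodge} and the kernel identifications needed for Proposition \ref{pro:Hodge}.
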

Of course, this result is well-known, at least if $M$ is AE (asymptotically Euclidean): if $M$ has Euclidean ends, this is the famous result of \cite{CCH}; and the result for a merely AE manifold can be deduced from this by using the perturbation result \cite{CDun}. In the case where $M$ has Euclidean ends, there are several proofs of this result: apart from the original proof in \cite{CCH}, it is also a consequence of the combination of \cite{C5} and \cite{D2}; and in \cite{C6} there is yet another proof (which also applies to manifolds that are locally Euclidean outside a compact set). However, our proof in the present paper is arguably the shortest, and most elementary one of all these proofs.

We also mention that the $L^p$ boundedness of $\Pi_>$ is easily characterized:

\begin{Pro}\label{pro:Riesz-Hodge}

Let $k\in\{0,\cdots,n\}$, $p\in (1,\infty)$, and let $q=p'$ be the conjugate exponent. Assume that $M$ satisfies assumption (H). Then, the following are equivalent:

\begin{itemize}

\item[(a)] $\Pi_>$, defined on $L^2(\Lambda^k M)\cap L^p(\Lambda^k M)$, extends uniquely to a bounded operator on $L^p(\Lambda^k M)$.

\item[(b)] $\ker_{L^2}(\Delta_k)=\ker_{L^{\min(q,p)}}(\Delta_k)$.

\end{itemize}

\end{Pro}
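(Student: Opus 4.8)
The plan is to reduce the whole statement to the boundedness of the complementary projector $\Pi_0=I-\Pi_>$, which is the $L^2$-orthogonal projection onto $\ker_{L^2}(\Delta_k)=\mathcal{H}_k(M)$, and to exploit the finite dimensionality of this space (which holds on the ALE manifolds under consideration). Since $\Pi_0$ is self-adjoint on $L^2$, I would first record a duality remark: if $\Pi_0|_{L^2\cap L^p}$ extends to a bounded operator $T$ on $L^p$, then its Banach adjoint $T^*$ is bounded on $L^q$ and agrees with $\Pi_0$ on the dense subspace $L^q\cap L^2$ (test $\langle Tf,g\rangle=\langle \Pi_0 f,g\rangle=\langle f,\Pi_0 g\rangle$ against $f\in L^p\cap L^2$), so $\Pi_0$ is bounded on $L^p$ iff it is bounded on $L^q$. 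As condition (b) is symmetric under $p\leftrightarrow q$, this lets me assume without loss of generality that $p\le 2$, so that $\min(p,q)=p=:r$ and $\max(p,q)=q=:s\ge 2$.

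For the implication (b) $\Rightarrow$ (a): assuming $\ker_{L^2}(\Delta_k)=\ker_{L^r}(\Delta_k)$ gives $\mathcal{H}_k(M)\subset L^r$, while Lemma \ref{lem:ker} supplies $\mathcal{H}_k(M)\subset L^s$ for free (since $s\ge 2$); interpolating, $\mathcal{H}_k(M)\subset L^a$ for every $a\in[r,s]$, in particular $\mathcal{H}_k(M)\subset L^p\cap L^q$. Choosing a finite $L^2$-orthonormal basis $e_1,\dots,e_m$ of $\mathcal{H}_k(M)$ with each $e_j\in L^p\cap L^q$, and writing $\Pi_0\phi=\sum_{j=1}^m (\phi,e_j)_{L^2}\,e_j$, a single application of H\"older's inequality, namely $|(\phi,e_j)_{L^2}|\le \|\phi\|_{L^p}\|e_j\|_{L^q}$, bounds $\|\Pi_0\phi\|_{L^p}$ by $C\|\phi\|_{L^p}$ on the dense subspace $L^2\cap L^p$. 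Hence $\Pi_0$, and therefore $\Pi_>=I-\Pi_0$, extends boundedly to $L^p$.

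For the converse (a) $\Rightarrow$ (b): by (a) together with the duality remark, $\Pi_0$ is bounded on $L^r$. Since $C_c^\infty(\Lambda^k M)\subset L^2\cap L^r$, each $\Pi_0\phi$ with $\phi\in C_c^\infty$ then lies in $L^r$; moreover $\Pi_0(C_c^\infty)$ is dense in $\Pi_0(L^2)=\mathcal{H}_k(M)$ for the $L^2$-topology, and a dense linear subspace of the finite dimensional space $\mathcal{H}_k(M)$ must be all of it. Thus $\mathcal{H}_k(M)=\Pi_0(C_c^\infty)\subset L^r$, so every $L^2$-harmonic $k$-form lies in $\ker_{L^r}(\Delta_k)$; combined with the reverse inclusion $\ker_{L^r}(\Delta_k)\subset \ker_{L^2}(\Delta_k)$ furnished by Lemma \ref{lem:ker} (valid as $r\le 2$), this yields $\ker_{L^2}(\Delta_k)=\ker_{L^r}(\Delta_k)$, which is (b).

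The main obstacle, and the reason the finite dimensionality of $\mathcal{H}_k(M)$ is indispensable, is a genuine circularity: all the natural duality/H\"older estimates relating $\Pi_0$ on $L^r$ and on $L^s$ are mutually dependent, so one cannot bootstrap $L^r$-boundedness from the bare inclusion $\mathcal{H}_k(M)\subset L^r$ without a concrete finite basis representation of $\Pi_0$. I would therefore emphasise that finiteness of $\dim\mathcal{H}_k(M)$ is exactly what converts the abstract inclusion into the quantitative H\"older bound in the second paragraph and makes $\Pi_0(C_c^\infty)=\mathcal{H}_k(M)$ in the third, while assumption (H) enters only through Lemma \ref{lem:ker}, supplying the automatic inclusion into the large-exponent space $L^s$.
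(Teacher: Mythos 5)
Your argument is correct and follows essentially the same route as the paper's proof: reduce by self-adjointness/duality to one side of $2$, use the finite orthonormal-basis formula $\Pi_0=\sum_j(\cdot,e_j)e_j$ with H\"older for (b)$\Rightarrow$(a), and use $\Pi_0(C_c^\infty)=\ker_{L^2}(\Delta_k)$ (finite dimensionality plus density) together with Lemma \ref{lem:ker} for (a)$\Rightarrow$(b). The only difference is that you spell out the duality remark and the H\"older estimate that the paper leaves implicit.
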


\begin{proof}

Since $\Pi_>$ is self-adjoint, it is bounded on $L^p$ if and only if it is bounded on $L^q$, so without loss of generality we can assume that $q\leq 2\leq p$. Take $(\omega_i)_{i=1,\cdots,N}$ an orthonormal basis of $\ker_{L^2}(\Delta_k)$. Recall from Lemma \ref{lem:ker} that since $M$ satisfies assumption (H), then $\ker_{L^q}(\Delta_k)\subset \ker_{L^2}(\Delta_k)\subset \ker_{L^p}(\Delta_k)$. Next, $\Pi_0:=I-\Pi_>$ writes:

$$\Pi_0=\sum_{i=1}^N(\omega_i,\cdot)\omega_i.$$
From this formula, it is clear that (b)$\Rightarrow$(a). Let us show the converse. Since $C_c^\infty(\Lambda^k M)$ is dense in $L^2(\Lambda^k M)$ and $\ker_{L^2}(\Delta_k)$ is finite dimensional, it follows that $\Pi_0(C_c^\infty(\Lambda^k M))=\ker_{L^2}(\Delta_k)$. Thus, for any $i\in\{1,\cdots,N\}$, there exists $\varphi_i\in C_c^\infty(\Lambda^k M)\subset L^2\cap L^q(\Lambda^k M)$ such that

$$\Pi_0(\varphi_i)=\omega_i.$$
But since $\Pi_>$ is bounded on $L^q$, $\Pi_0(L^2\cap L^q(\Lambda^k M))\subset L^q(\Lambda^k M)$, hence $\omega_i\in L^q$, therefore $\ker_{L^2}(\Delta_k)\subset L^q$. Thus, we conclude that $\ker_{L^q}(\Delta_k)=\ker_{L^2}(\Delta_k)$.

\end{proof}
The boundedness on $L^p$ Riesz transform on forms on manifolds that are conical at infinity (and, in particular, on ALE manifolds) has also been studied in \cite{GS}, using the tools of pseudo-differential calculus on manifolds with corners. More precisely, in \cite[Corollary 9]{GS}, the set of $p\in (1,+\infty)$ such that $R$ is bounded on $L^p$ is characterized in terms of the decay at infinity of $L^2$ harmonic forms. However, in \cite{GS} the authors do not characterize the decay rates, in terms of the degree of the form and/or the topology at infinity of the manifold.

In contrast, Proposition \ref{prop:decay_harmonic_forms} and Corollary \ref{cor:decomp_kernel_1} yield the complete characterization of the set of $p$'s such that $R$ is bounded on $L^p$:

\begin{Cor}\label{cor:Riesz}

Let $M$ be an ALE manifold with dimension $n\geq 3$, and let $k\in\{0,\cdots,n\}$. Then the following holds:

\begin{itemize}

\item[(a)] If $k\notin \{0,1,n-1,n\}$ then $R$ is bounded on $L^p(\Lambda^k M)$ for every $p\in (1,+\infty)$.

\item[(b)] If $k=0$ or $k=n$, then $R$ is bounded on $L^p(\Lambda^k M)$ if and only if $p\in (1,p^*)$ where $p^*=+\infty$ if $M$ has only one end, $p^*=n$ otherwise.

\item[(c)] if $k=1$ or $k=n-1$, then $R$ is bounded on $L^p(\Lambda^k M)$ if and only if $p\in (p_*,+\infty)$ where $p_*=1$ if $M$ has only one end, $p_*=\frac{n}{n-1}$ otherwise.

\end{itemize}

\end{Cor}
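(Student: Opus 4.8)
The plan is to combine the characterization of $L^p$-boundedness of the Riesz transform on $k$-forms from \cite[Corollary 9]{GS} with the sharp decay rates of $L^2$-harmonic forms established in Proposition \ref{prop:decay_harmonic_forms} and Corollary \ref{cor:decomp_kernel_1}. The result of \cite{GS} reduces the boundedness of $R$ on $L^p(\Lambda^k M)$ to an integrability property of the harmonic forms obstructing it; the only ingredient missing there is the explicit decay rate, which is exactly what Proposition \ref{prop:decay_harmonic_forms} supplies on an ALE manifold.

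First I would exploit the two symmetries of $R$ to cut down the cases. Writing $R=R_d+R_{d^*}$, on a $k$-form the two summands take values in the distinct degree components $\Lambda^{k+1}M$ and $\Lambda^{k-1}M$ of $\Lambda^*M$, so pointwise $|R\omega|^2=|R_d\omega|^2+|R_{d^*}\omega|^2$ and the $L^p$-norm splits; hence $R$ is bounded on $L^p(\Lambda^k M)$ iff both half-transforms $R_d\colon L^p(\Lambda^k)\to L^p(\Lambda^{k+1})$ and $R_{d^*}\colon L^p(\Lambda^k)\to L^p(\Lambda^{k-1})$ are bounded. Next, the Hodge star is an $L^p$-isometry intertwining $R_d$ on $\Lambda^k$ with $R_{d^*}$ on $\Lambda^{n-k}$, while $(R_d)^*=R_{d^*}$ gives by $L^p$--$L^q$ duality ($q=p'$) that $R_d\colon\Lambda^k\to\Lambda^{k+1}$ is bounded on $L^p$ iff $R_{d^*}\colon\Lambda^{k+1}\to\Lambda^k$ is bounded on $L^q$. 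These two symmetries reduce the degrees $k\in\{n-1,n\}$ to $k\in\{1,0\}$ and localize everything to the obstruction coming from slowly-decaying harmonic forms.

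The heart of the matter is the decay dichotomy. By Proposition \ref{prop:decay_harmonic_forms} and the Remark after Theorem \ref{thm:main1}, $\ker_{L^2}(\Delta_k)=\ker_{1-n}(\Delta_k)$, and this coincides with $\ker_{-n}(\Delta_k)$ — forms decaying like $r^{-n}$, hence lying in every $L^s$, $s\in(1,\infty)$ — unless $k\in\{1,n-1\}$ and $M$ has $N\geq2$ ends. In that exceptional case Corollaries \ref{cor:decomp_kernel_1} and \ref{cor:dimension_d_harmonic functions} exhibit an $(N-1)$-dimensional subspace $d(\ker_0(\Delta_0))\subset\ker_{L^2}(\Delta_1)$ of forms decaying only like $r^{1-n}$, lying in $L^s$ precisely for $s>\frac{n}{n-1}$; these are de Rham exact, and their Hodge duals $*d(\ker_0(\Delta_0))\subset\ker_{L^2}(\Delta_{n-1})$ are co-exact. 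Feeding this into \cite{GS}: for $k\notin\{0,1,n-1,n\}$ every obstructing harmonic form decays like $r^{-n}$ and lies in all $L^p$, giving boundedness for all $p$, which is (a). For $k=0$ one has $R=R_d\colon\Lambda^0\to\Lambda^1$ (the classical Riesz transform on functions), obstructed by the exact slow space in degree $1$ at the conjugate exponent, so boundedness holds iff $q=p'>\frac{n}{n-1}$, i.e. iff $p<n$ (and all $p$ if $N=1$); this is (b), and $k=n$ follows by the Hodge-star reduction. For $k=1$ the exact slow space obstructs $R_{d^*}\colon\Lambda^1\to\Lambda^0$ and forces $p>\frac{n}{n-1}$, whereas $R_d\colon\Lambda^1\to\Lambda^2$ carries no slow obstruction (the only slow harmonic $2$-forms occur when $n=3$, where they are co-exact and so do not obstruct the exact-valued $R_d$), giving the range $(\frac{n}{n-1},\infty)$; this is (c), and $k=n-1$ again follows by Hodge duality.

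The main obstacle is the third step: extracting from \cite{GS} the precise dictionary between boundedness of each half-transform and integrability of the obstructing harmonic forms, and in particular tracking whether a given slow form is exact or co-exact, so that it obstructs $R_d$ rather than $R_{d^*}$ and at the exponent $p$ rather than the conjugate $q$. The exact/co-exact bookkeeping of Corollary \ref{cor:decomp_kernel_1} together with Hodge duality is exactly what separates the two thresholds $n$ and $\frac{n}{n-1}$ in (b) and (c); conflating the two slow spaces would wrongly collapse the range for $k\in\{1,n-1\}$ to the bounded interval $(\frac{n}{n-1},n)$, which is the range of boundedness of $\Pi_>$ (Proposition \ref{pro:Riesz-Hodge}) and not of $R$ itself.
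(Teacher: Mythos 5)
Your proposal is correct and takes essentially the same route as the paper, which derives the corollary by feeding the decay rates of Proposition \ref{prop:decay_harmonic_forms} and Corollary \ref{cor:decomp_kernel_1} into the characterization of \cite[Corollary 9]{GS}; the paper in fact gives no more detail than that one-line justification, so your expansion (the splitting $R=R_d+R_{d^*}$, the Hodge-star and duality reductions, and the exact/co-exact bookkeeping of the slow $(N-1)$-dimensional space) is a faithful and more explicit account of the intended argument.
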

Remembering that $R^2=\Pi_>$, Corollary \ref{cor:Riesz} can also be used to recover the fact, which is also a consequence of our results (Theorem \ref{thm:main1} and Corollary \ref{cor:extension_hodgeLp}), that $\Pi_>$ is bounded on $L^p(\Lambda^k M)$ if either $k\notin \{1,n-1\}$, or $k\in \{1,n-1\}$ and $M$ has only end, or $p\in \left(\frac{n}{n-1},n\right)$. The boundedness on $L^p$ of the Hodge projectors could be used to establish an $L^p$ Hodge decomposition, and thus this provides an alternative approach to some of our results; however, this is far less direct than the approach we took in the present paper, and moreover one cannot recover in this way the results of Theorem \ref{mainthm:part_2}.

\section{$L^p$ Hodge-Sobolev decompositions}\label{Section:Hodge-Sob}

In this section, we consider in some sense the strongest possible $L^p$ Hodge decompositions, that we call $L^p$ {\em Hodge-Sobolev} decompositions. For that, we denote $\dot{W}^{1,p}(\Lambda^kM)$ the homogeneous Sobolev space, that is the closure of $C_c^\infty(\Lambda^kM)$ under the norm $||\nabla \omega||_{L^p}$. Here, $\nabla$ is the natural Levi-Civita connection on tensors.

\begin{Def}
{\em 
We say that the $L^p$ {\em Hodge-Sobolev decomposition} holds for forms of degree $k$, if every $\omega\in L^p(\Lambda^kM)$ writes in a unique way

$$\omega=d\alpha+d^*\beta+\eta,$$
with $\alpha\in \dot{W}^{1,p}(\Lambda^{k-1}M)$, $\beta\in \dot{W}^{1,p}(\Lambda^{k-1}M)$ and $\eta\in\ker_{L^p}(\Delta_k)$, and moreover the following estimates hold:

$$||\alpha||_{\dot{W}^{1,p}}\lesssim ||\omega||_p,\quad ||\beta||_{\dot{W}^{1,p}}\lesssim ||\omega||_p.$$
}
\end{Def}
Analogously, we define a {\em modified $L^p$ Hodge-Sobolev decomposition}, by replacing in the above definition the condition $\eta\in \ker_{L^p}(\Delta_k)$ by $\eta\in\ker_{-n}(\Delta_k)$. T. Iwaniec and G. Martin have proved in \cite{IM} that for the Euclidean space itself, the $L^p$ Hodge-Sobolev decomposition holds in all degree and for every $p\in (1,\infty)$. We prove:

\begin{Thm}\label{thm:Hodge-Sob}

Let $M$ be a connected, oriented ALE manifold of dimension $n\geq 3$. Let $p\in (1,+\infty)$, $p\neq n$ and $k\in \{0,\cdots,n\}$. Then, in cases (a), (b), (c) of Theorem \ref{thm:Hodge}, the $L^p$ Hodge-Sobolev decomposition for forms of degree $k$ holds. And in case (d) of Theorem \ref{thm:Hodge}, the modified $L^p$ Hodge-Sobolev decomposition for forms of degree $k$ holds.

\end{Thm}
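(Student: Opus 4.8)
The plan is to deduce the Hodge--Sobolev decomposition from the (possibly modified) strong $L^p$ Hodge decomposition already established in Corollary \ref{cor:strong_hodge}, upgrading the mere $L^p$ splitting of $\omega$ to one with $\dot W^{1,p}$ potentials. Throughout I write $\Pi_>=I-\Pi_0$ with $\Pi_0$ the $L^2$-projection onto $\ker_{L^2}(\Delta_k)$, and use the functional calculus for $(\Delta\Pi_>)^{-1}$ and the intertwining of $d,d^*$ with functions of $\Delta$ from Section 6. First I would record the reduction. Any $\alpha,\beta\in\dot W^{1,p}$ satisfy $d\alpha\in\im_{L^p}(d_{k-1})$ and $d^*\beta\in\im_{L^p}(d^*_{k+1})$, being $L^p$-limits of $d$ and $d^*$ of compactly supported smooth forms; hence, by directness of the Hodge sum, the three summands $d\alpha$, $d^*\beta$, $\eta$ in $\omega=d\alpha+d^*\beta+\eta$ are uniquely determined, which is the natural reading of ``unique way''. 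Thus the only genuine content is the construction of the potentials together with the bound $\|\alpha\|_{\dot W^{1,p}}+\|\beta\|_{\dot W^{1,p}}\lesssim\|\omega\|_p$.

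For cases (a),(b),(c) I would set $\alpha=d^*(\Delta\Pi_>)^{-1}\omega$, $\beta=d(\Delta\Pi_>)^{-1}\omega$, $\eta=\Pi_0\omega$. From $\Delta(\Delta\Pi_>)^{-1}=\Pi_>$, the commutation of $d,d^*$ with $(\Delta\Pi_>)^{-1}$, and $d^2=(d^*)^2=0$, one checks directly that $d\alpha+d^*\beta=\Pi_>\omega$, that $d^*\alpha=0$ and $d\beta=0$, and that $d\alpha$, $d^*\beta$ lie in the exact, respectively co-exact, factor; by uniqueness $d\alpha=\Pi_d\omega$, $d^*\beta=\Pi_{d^*}\omega$, $\eta=\Pi_0\omega$. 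In case (d) the $L^2$-kernel $\ker_{L^2}(\Delta_k)$ strictly contains the harmonic factor $\ker_{-n}(\Delta_k)$ of the modified decomposition, the difference being the finite-dimensional space $d(\ker_0(\Delta_0))$ (Corollary \ref{cor:decomp_kernel_1}); I would peel this piece off first, taking as its potential the bounded harmonic function $u$ itself, which lies in $\dot W^{1,p}$ for $p>n$ exactly by the $p$-parabolicity argument of Proposition \ref{pro:im_p>n} (the same cut-offs $\chi_m$ giving $d(\chi_m u)\to du$ in $L^p$ also give $\chi_m u\to u$ in $\dot W^{1,p}$). The resolvent construction is then applied to the remainder, whose harmonic part now sits in $\ker_{-n}(\Delta_k)$.

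The crux is the Sobolev bound, i.e.\ the $L^p$-boundedness of the two second-order Riesz transforms $\nabla d^*(\Delta\Pi_>)^{-1}$ and $\nabla d(\Delta\Pi_>)^{-1}$ acting on $k$-forms, which are operators of order zero. Since $\alpha$ is co-closed and $\beta$ is closed, the Weitzenb\"ock identity $\Delta=\nabla^*\nabla+\mathscr{R}_k$ heuristically reduces $\|\nabla\alpha\|_p$ (resp.\ $\|\nabla\beta\|_p$) to $\|d\alpha\|_p=\|\Pi_d\omega\|_p$ (resp.\ $\|d^*\beta\|_p$) up to the bounded, fast-decaying curvature term $\mathscr{R}_k$. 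Locally these operators are of Calder\'on--Zygmund type and bounded for all $p\in(1,\infty)$, so the whole issue is the global estimate in the Euclidean ends together with the kernel of $\Delta$. The decisive point --- and the reason the first-order obstructions of Corollary \ref{cor:Riesz} are invisible here --- is that $\nabla\alpha$ only sees \emph{Hessian-type} decay $r^{-n}$, which lies in $L^p$ for every $p>1$, rather than the gradient decay $r^{1-n}$ responsible for the failure of the scalar Riesz transform at large $p$. The global $L^p$ estimate for $p\neq n$ is then supplied by the weighted-Sobolev Fredholm theory for $\Delta$ on ALE manifolds --- the same apparatus underlying Proposition \ref{prop:closed_subspace} --- in which $p=n$ is precisely the critical indicial exponent, explaining its exclusion.

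I expect this global second-order elliptic estimate to be the main obstacle: one must reconcile the local Calder\'on--Zygmund bounds with the behaviour in the ends and the contribution of $\ker(\Delta)$, and verify that the single forbidden exponent is exactly $p=n$. Once it is in place, assembling $\omega=d\alpha+d^*\beta+\eta$ with the required norm control, together with the trivial cases $k\in\{0,n\}$ (where only a co-exact, resp.\ exact, term survives), is routine.
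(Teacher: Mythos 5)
Your overall architecture (reduce to the strong or modified $L^p$ Hodge decomposition, then produce $\dot W^{1,p}$ potentials with norm control, with the case (d) correction term $d(\ker_0(\Delta_0))$ handled by the $p$-parabolic cut-offs of Proposition \ref{pro:im_p>n}) is sound, but the proof has a genuine gap at exactly the point you flag as ``the main obstacle'': the $L^p$-boundedness of $\nabla d^*(\Delta\Pi_>)^{-1}$ and $\nabla d(\Delta\Pi_>)^{-1}$ is asserted, not proved. Two concrete problems. First, well-definedness: on an ALE manifold $0$ lies in the essential spectrum of $\Delta$, so $(\Delta\Pi_>)^{-1}$ is an unbounded operator even on $L^2$, and nothing in your argument shows that $d^*(\Delta\Pi_>)^{-1}\omega$ makes sense for a general $\omega\in L^p$, $p\neq 2$. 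Second, the estimate itself: the natural factorization $\nabla d^*(\Delta\Pi_>)^{-1}=\bigl(\nabla(\Delta\Pi_>)^{-1/2}\bigr)\bigl((\Delta\Pi_>)^{-1/2}d^*\bigr)$ runs straight into the first-order obstructions of Corollary \ref{cor:Riesz} (e.g.\ for $k=1$ the first factor acts on functions and is unbounded for $p\geq n$ when $N\geq 2$), so the claim that these obstructions are ``invisible'' for the second-order operator needs an actual argument, not the heuristic that harmonic obstructions have Hessian decay $r^{-n}$; the decay of harmonic forms says nothing directly about the $L^p\to L^p$ norm of an operator applied to arbitrary data. Invoking ``the weighted-Sobolev Fredholm theory for $\Delta$'' does not close this: that theory gives solvability of $\Delta u=f$ between weighted spaces, and identifying the weighted solution with $(\Delta\Pi_>)^{-1}\omega$ and controlling the kernel contribution is precisely the missing work.

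The paper avoids second-order operators entirely and works at first order: the Hodge--Dirac operator $\mathcal{D}=d+d^*$ is Fredholm from $W^{1,p}_{1-n/p}(\Lambda^*M)$ to $L^p(\Lambda^*M)$ for $p\neq n$ (this is why $p=n$ is excluded: $1-\tfrac{n}{p}$ must be non-exceptional), and restricts to an isomorphism from a closed complement of its kernel onto its closed range. Given $\omega$ in the exact-plus-coexact factor, one solves $\mathcal{D}\omega_\infty=\omega$ with $\|\omega_\infty\|_{W^{1,p}_{1-n/p}}\lesssim\|\omega\|_p$, and the elementary pointwise bound $\|\nabla\gamma\|_p\lesssim\|\gamma\|_{W^{1,p}_{1-n/p}}$ (a one-line computation with the ALE Christoffel symbols) converts this into the $\dot W^{1,p}$ estimate; splitting $\omega_\infty$ by degree gives $\alpha$ and $\beta$. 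This is Corollary \ref{Cor:Hodge-Sob}, extracted from the proof of Proposition \ref{prop:closed_subspace}, and Theorem \ref{thm:Hodge-Sob} then follows by combining it with Theorem \ref{thm:Hodge}. If you want to salvage your route, you would essentially have to reprove this first-order Fredholm statement anyway, so I recommend adopting it directly.
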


\begin{proof}

The proof follows from Corollary \ref{Cor:Hodge-Sob} and the $L^p$ (modified) Hodge decomposition of Theorem \ref{thm:Hodge}.

\end{proof}

\begin{Rem}
{\em 
It is possible that the interpolation arguments used in Section 4 can be adapted to show that the result of Theorem \ref{thm:Hodge-Sob} also holds for $p=n$. One difficulty is that the homogeneous Sobolev spaces are not known to interpolate by the real method on manifolds that do not support Poincar\'e inequalities (e.g. ALE manifolds with $N\geq 2$ ends); see \cite{Badr}. We leave this question for future work.
}
\end{Rem}

%
%
%
%
%
%
%
%
%

\appendix

\section{A uniqueness lemma for Hodge projectors}

Let $p\in (1,\infty)$, $q=p'$ its conjugate exponent, and let $\mathscr{E}=L^2(\Lambda T^*M)\cap L^p(\Lambda T^*M)$, $\mathscr{F}=L^2(\Lambda T^*M)+ L^q(\Lambda T^*M)$, endowed with the norms

$$||\omega||_\mathscr{E}:=\max(||\omega||_2,||\omega||_p),$$
and

$$||\omega||_\mathscr{F}:=\inf\{||\varphi||_2+||\eta||_q\,;\,\omega=\varphi+\eta\}.$$
Then, (see \cite[Section 3, Exercise 6]{BCS}), $\mathscr{E}$ and $\mathscr{F}$ are Banach spaces, and $\mathscr{F}$ is the dual of $\mathscr{E}$. We will denote $\mathscr{E}_k$ and $\mathscr{F}_k$ the subsets of $\mathscr{E}$ and $\mathscr{F}$ consisting of forms of degree $k$. We let $\mathscr{G}_k=dC_c^\infty(\Lambda^{k-1}T^*M)\oplus d^*C_c^\infty(\Lambda^{k+1}T^*M)\oplus \ker_{L^2}(\Delta_k)$. If $\ker_{L^2}(\Delta_k)\subset \ker_{L^p}(\Delta_k)$ then it follows that $\mathscr{G}_k\subset \mathscr{E}_k$. We have the following lemma:

\begin{Lem}\label{lem:dense}

Let $M$ be a complete manifold satisfying assumption ($H_k$), and assume that $\ker_{L^q}(\Delta_k)\subset \ker_{L^2}(\Delta_k)$. Then, $\mathscr{G}_k$ is a dense subspace of $\mathscr{E}_k$.

\end{Lem}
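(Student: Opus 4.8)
The plan is to prove density by Hahn--Banach duality. Since $\mathscr{F}$ is the dual of $\mathscr{E}$, restricting to degree $k$ identifies $\mathscr{F}_k$ with $(\mathscr{E}_k)^*$, and a subspace of $\mathscr{E}_k$ is dense if and only if its annihilator in $\mathscr{F}_k$ is trivial. So I would take $\psi\in \mathscr{F}_k=L^2(\Lambda^kM)+L^q(\Lambda^kM)$ vanishing on $\mathscr{G}_k$ for the pairing $\langle \omega,\psi\rangle=\int_M\langle\omega,\psi\rangle\dV$, and show that $\psi=0$. Note that each $d\alpha,d^*\beta$ with $\alpha,\beta\in C_c^\infty$ lies in $\mathscr{E}_k$, so the pairings below are legitimate.

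The first step is to extract harmonicity. Because $\psi$ annihilates $dC_c^\infty(\Lambda^{k-1}M)$ and $d^*C_c^\infty(\Lambda^{k+1}M)$, integrating by parts against test forms gives $d^*\psi=0$ and $d\psi=0$ in the weak sense, hence $\Delta_k\psi=0$ weakly; by elliptic regularity $\psi$ is a smooth harmonic $k$-form that still lies in $\mathscr{F}_k$.

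The main step, and the point I expect to be the main obstacle, is to upgrade the integrability of $\psi$ to $\psi\in L^2$. I would write $\psi=\varphi+\eta$ with $\varphi\in L^2$ and $\eta\in L^q$, and use the heat semigroup $e^{-t\Delta_k}$. Under assumption $(H_k)$ the bound $\|\mathscr{R}_k\|_{L^\infty}<\infty$ yields a domination estimate $|e^{-t\Delta_k}\zeta|\lesssim e^{-t\Delta_0}|\zeta|$, which combined with the $L^a\to L^b$ smoothing of $e^{-t\Delta_0}$ allows one to argue, exactly as in the proof of Lemma \ref{lem:ker}, that $e^{-t\Delta_k}\psi=\psi$ for the harmonic form $\psi$. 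Then $\psi=e^{-t\Delta_k}\varphi+e^{-t\Delta_k}\eta$. If $q\le 2$, smoothing gives $e^{-t\Delta_k}\eta\in L^2$ while $e^{-t\Delta_k}\varphi\in L^2$ automatically, so $\psi\in L^2$. If instead $q>2$, then $2\le q$, so $e^{-t\Delta_k}\varphi\in L^q$ and $e^{-t\Delta_k}\eta\in L^q$, whence $\psi\in L^q$; thus $\psi\in\ker_{L^q}(\Delta_k)\subset \ker_{L^2}(\Delta_k)$ by the hypothesis of the lemma, and again $\psi\in L^2$. In either case $\psi\in\ker_{L^2}(\Delta_k)$, and since the standing assumption $\ker_{L^2}(\Delta_k)\subset\ker_{L^p}(\Delta_k)$ (which is what makes $\mathscr{G}_k\subset\mathscr{E}_k$) gives $\psi\in L^p$ as well, we conclude $\psi\in\mathscr{E}_k\cap\ker_{L^2}(\Delta_k)\subset\mathscr{G}_k$.

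The conclusion is then immediate: $\psi$ annihilates $\mathscr{G}_k$ and now $\psi\in\mathscr{G}_k$, so evaluating the pairing on $\psi$ itself gives $\|\psi\|_{L^2}^2=\langle\psi,\psi\rangle=0$, hence $\psi=0$. This shows the annihilator of $\mathscr{G}_k$ in $\mathscr{F}_k$ is trivial, so $\mathscr{G}_k$ is dense in $\mathscr{E}_k$. The one delicate ingredient to pin down carefully is the identity $e^{-t\Delta_k}\psi=\psi$ for the sum-space element $\psi\in L^2+L^q$, which is precisely where $(H_k)$ and the semigroup reasoning of Lemma \ref{lem:ker} do the work.
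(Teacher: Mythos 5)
Your proposal is correct and follows essentially the same route as the paper's proof: Hahn--Banach duality with $\mathscr{F}_k=(\mathscr{E}_k)^*$, weak harmonicity of the annihilating functional, invariance $e^{-t\Delta_k}\psi=\psi$ via the semigroup argument under $(H_k)$, and the mapping property of $e^{-t\Delta_k}$ on $L^2+L^q$ to land in $L^{\max(2,q)}$ (your two cases $q\le 2$ and $q>2$ are exactly this), followed by the hypothesis $\ker_{L^q}(\Delta_k)\subset\ker_{L^2}(\Delta_k)$ to conclude $\psi\in\ker_{L^2}(\Delta_k)\subset\mathscr{G}_k$ and hence $\psi=0$.
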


\begin{proof}

Let $f\in \mathscr{E}_k^*$ such that $f|_{\mathscr{G}_k}\equiv0$. It is enough to show that $f\equiv0$. Since $\mathscr{E}_k^*=\mathscr{F}_k$, there exists $\omega\in \mathscr{F}_k$ such that

$$f(\eta)=(\eta,\omega),\quad \forall \eta\in \mathscr{E}_k.$$
Hence, $d\omega=d^*\omega=0$ in the distribution sense. Thus, $\omega\in \ker_{L^2+L^q}(\Delta_k)$. But, since the curvature term $\mathscr{R}_k$ in the Bochner formula for $k$-forms is bounded, there exists a constant $C>0$ such that one as the domination:

$$|e^{-t\Delta_k}\eta|\leq e^{Ct}e^{-t\Delta_0}|\eta|.$$
As a consequence of assumption ($H_k$), the scalar heat semi-group is ultracontractive, therefore, for all $s\in [1,\infty]$ and all $t>0$,

$$e^{-t\Delta_0} : L^s\to L^s\cap L^\infty.$$
Hence, $e^{-t\Delta_k}: L^s(\Lambda T^*M)\to L^s(\Lambda T^*M)\cap L^\infty(\Lambda T^*M)$ on $M$. In particular,

$$e^{-\Delta_k}: L^2(\Lambda T^*M)+L^q(\Lambda T^*M)\to L^{\max(2,q)}(\Lambda T^*M).$$
But since $\omega$ is harmonic, we have $e^{-t\Delta_k}\omega=\omega$, for all $t\geq0$: indeed, for all $\varphi\in \mathscr{E}_k=\mathscr{F}_k^*$, one has 

$$\frac{d}{dt}(e^{-t\Delta_k}\omega,\varphi)=(e^{-t\Delta}\Delta\omega,\varphi)=0.$$
Thus, we conclude that 

$$\ker_{L^2+L^q}(\Delta_k)\subset \ker_{L^{\max(2,q)}}(\Delta_k).$$
But by assumption,

$$\ker_{L^{\max(2,q)}}(\Delta_k)\subset \ker_{L^2}(\Delta_k).$$
Thus, we conclude that $\omega\in \ker_{L^2}(\Delta_k)$. However, $\ker_{L^2}(\Delta_k)\subset \mathscr{G}_k$, and $f$ is assumed to be orthogonal to $\mathscr{G}_k$, therefore we finally conclude that $\omega=0$, and so $f\equiv 0$. This achieves the proof.

\end{proof}

\begin{Cor}\label{cor:extension}

Let $M$ be a complete manifold satisfying assumption (H), and let $k\in\{0,\cdots, n\}$ and $p\in (1,\infty)$, $q=p'$ be such that $\ker_{L^q}(\Delta_k)=\ker_{L^p}(\Delta_k)=\ker_{L^2}(\Delta_k)$. Let $\mathscr{L}$ be a bounded operator on $L^2(\Lambda^k M)$. Assume that $\mathscr{L}|_{\mathscr{G}_k}$, the restriction to $\mathscr{G}_k$ of $\mathscr{L}$, extends uniquely to a bounded operator $\tilde{\mathscr{L}}$ on $L^p(\Lambda^k M)$. Then, the restrictions to $L^2(\Lambda^k M)\cap L^p(\Lambda^k M)$ of $\mathscr{L}$ and $\tilde{\mathscr{L}}$ coincide.

\end{Cor}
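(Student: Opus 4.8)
The plan is to lean on the density statement of Lemma~\ref{lem:dense} together with the fact that, by construction, $\mathscr{L}$ and $\tilde{\mathscr{L}}$ agree on $\mathscr{G}_k$, and then to reconcile the two limiting procedures in a single Banach space large enough to contain both $L^2$ and $L^p$. First I would verify that Lemma~\ref{lem:dense} is applicable in the present situation: Assumption (H) in particular yields ($H_k$), and the standing hypothesis $\ker_{L^q}(\Delta_k)=\ker_{L^2}(\Delta_k)$ gives in particular the inclusion $\ker_{L^q}(\Delta_k)\subset \ker_{L^2}(\Delta_k)$ required by the lemma. Hence $\mathscr{G}_k$ is a dense subspace of $\mathscr{E}_k=L^2(\Lambda^k M)\cap L^p(\Lambda^k M)$, equipped with the norm $\|\cdot\|_{\mathscr{E}}=\max(\|\cdot\|_2,\|\cdot\|_p)$.

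Next I would fix an arbitrary $\omega\in L^2(\Lambda^k M)\cap L^p(\Lambda^k M)=\mathscr{E}_k$ and choose, by density, a sequence $(\omega_j)_{j\in\N}\subset \mathscr{G}_k$ with $\omega_j\to\omega$ in $\mathscr{E}_k$, that is, $\omega_j\to\omega$ simultaneously in $L^2$ and in $L^p$. Since $\tilde{\mathscr{L}}$ is by hypothesis the bounded $L^p$-extension of $\mathscr{L}|_{\mathscr{G}_k}$, the two operators coincide on $\mathscr{G}_k$, so I may set $v_j:=\mathscr{L}\omega_j=\tilde{\mathscr{L}}\omega_j$ for every $j$. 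Boundedness of $\mathscr{L}$ on $L^2$ then gives $v_j\to \mathscr{L}\omega$ in $L^2$, while boundedness of $\tilde{\mathscr{L}}$ on $L^p$ gives $v_j\to \tilde{\mathscr{L}}\omega$ in $L^p$.

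The only remaining point is to identify these two limits. For this I would consider the Banach space $L^2(\Lambda^k M)+L^p(\Lambda^k M)$ with its usual sum norm (as in the definition of $\mathscr{F}$ above), into which both $L^2(\Lambda^k M)$ and $L^p(\Lambda^k M)$ inject continuously. Consequently $v_j\to \mathscr{L}\omega$ and $v_j\to \tilde{\mathscr{L}}\omega$ both hold in $L^2(\Lambda^k M)+L^p(\Lambda^k M)$, and uniqueness of limits in this Banach space forces $\mathscr{L}\omega=\tilde{\mathscr{L}}\omega$; equivalently, one may extract from $(v_j)$ a subsequence converging a.e.\ to $\mathscr{L}\omega$ and a further subsequence converging a.e.\ to $\tilde{\mathscr{L}}\omega$, whence the two agree pointwise almost everywhere. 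As $\omega\in \mathscr{E}_k$ was arbitrary, $\mathscr{L}$ and $\tilde{\mathscr{L}}$ coincide on $L^2(\Lambda^k M)\cap L^p(\Lambda^k M)$.

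I expect the genuine content of the argument to be entirely concentrated in the density assertion, i.e.\ the applicability of Lemma~\ref{lem:dense}, which is where Assumption (H) and the coincidence of the various harmonic kernels are used; once density is in hand, the passage to the limit is the only delicate step, and it is resolved simply by observing that $L^2$- and $L^p$-convergence of the common sequence $(v_j)$ both take place in the ambient space $L^2+L^p$, so the two candidate limits must coincide.
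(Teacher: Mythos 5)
Your proposal is correct and follows essentially the same route as the paper's proof: apply Lemma \ref{lem:dense} to approximate $\omega\in L^2\cap L^p$ by a sequence in $\mathscr{G}_k$ converging simultaneously in $L^2$ and $L^p$, use that $\mathscr{L}$ and $\tilde{\mathscr{L}}$ agree on $\mathscr{G}_k$, and pass to the limit in each norm. The only difference is that you spell out the identification of the two limits (via $L^2+L^p$ or a.e.\ convergent subsequences), a step the paper leaves implicit.
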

This applies in particular to the Hodge projectors, under the assumptions of Proposition \ref{pro:Hodge}.

\begin{proof}

Let $\omega\in L^2(\Lambda^kM)\cap L^p(\Lambda^kM)$. According to Lemma \ref{lem:dense}, there is a sequence $\{\omega_n\}_{n\in\mathbb{N}}$ of elements of $\mathscr{G}_k$ which converges to $\omega$ in the $||\cdot||_{\mathscr{E}}$ norm; in particular, by definition of $||\cdot||_{\mathscr{E}}$, $\omega_n\to\omega$ both in $L^2$ and in $L^p$ norm. Since $\mathscr{L}|_{\mathscr{G}_k}=\tilde{\mathscr{L}}|_{\mathscr{G}_k}$, we have for every $n\in\N$ that

$$\mathscr{L}(\omega_n)=\tilde{\mathscr{L}}(\omega_n).$$
Using the convergence of $\{\omega_n\}_{n\in\mathbb{N}}$ to $\omega$ in $L^2$ (resp. in $L^p$) and the continuity of $\mathscr{L}$ in $L^2$ (resp. of $\tilde{\mathscr{L}}$ in $L^p$), we get by passing to the limit $n\to\infty$ in the above equality:

$$\mathscr{L}(\omega)=\tilde{\mathscr{L}}(\omega).$$
This concludes the proof.

\end{proof}

\section{Weighted Sobolev spaces and decay of harmonic forms}

In this appendix, we briefly present the theory of weighted Sobolev spaces on ALE manifolds, and use them to prove a few results that are used in the present paper. In particular, we will give a proof of Propositions \ref{prop:decay_harmonic_forms} and Proposition \ref{prop:closed_subspace}. For more details on weigthed Sobolev spaces, we refer to the presentation in the paper \cite{Bar86}. In what follows, we denote $\R^n_*=\R^n\setminus \{0\}$ and $r=|x|$, $\sigma=(1+|x|^2)^{1/2}$.

\begin{definition}

Let $\delta\in \R$ and $p\in (1,+\infty)$, then the weighted Lebesgue space $L^p_\delta$ (resp. $L'^p_\delta$) is defined as the set of functions $u$ in $L^p_{loc}(\R^n)$ (resp. $L^p_{loc}(\R^n_*)$), whose norms $||u||_{p,\delta}$ (resp. $||u||_{p,\delta}'$) are finite. Here,

$$||u||_{p,\delta}=\left(\int_{\R^n} |u|^p\sigma^{-\delta p-n}\,dx\right)^{1/p},$$
and

$$||u||'_{p,\delta}=\left(\int_{\R^n} |u|^p r^{-\delta p-n}\,dx\right)^{1/p}.$$
Then, for $k\in \N$, $k\geq 1$, the Sobolev spaces $W_\delta^{k,p}(\R^n)$ and $W'^{k,p}_\delta(\R^n_*)$ are defined as using the norms

$$||u||_{k,p,\delta}=\sum_{j=0}^k\sum_{|\alpha|=j}||\partial^\alpha u||_{p,\delta-j},$$
and

$$||u||'_{k,p,\delta}=\sum_{j=0}^k\sum_{|\alpha|=j}||\partial^\alpha u||'_{p,\delta-j}.$$

\end{definition}
If $M$ is ALE, then using coordinate systems at infinity and a smooth weight function which agrees with $\sigma$ in the coordinate neighbourhoods, one can define weighted Sobolev spaces $W^{k,p}_\delta$ on $M$. Because we require any asymptotic chart $\phi$ to fulfill the condition $\phi^*g_{eucl}-g\in \mathcal{O}_k(r^{-\tau-k})$ for all $k\in \N$,
 the definition of the weighted Sobolev spaces is independent of the chosen coordinates and the chosen weight function. One can also define weighted Sobolev spaces $W^{k,p}_\delta(\Lambda^*M)$ since the vector bundle $\Lambda^*M$ can be trivialized at infinity and one can thus look at the regularity componentwise. These weigthed Sobolev spaces satisfy properties that are analogous to classical properties of the usual Sobolev spaces on $\R^n$, for instance Sobolev embeddings, Rellich compactness theorem, etc. See \cite{Bar86} for more details. For the moment, we limit ourselves to point out the following regularity result, which is a consequence of the Sobolev embeddings (see \cite[Equation (1.10)]{Bar86}):

\begin{proposition}\label{prop:Sob}

Let $M$ be ALE, $\delta\in \R$ and $p\in (1,\infty)$. Let $\omega \in W^{k,p}_\delta(\Lambda^*M)$ for all $k\in \N$, then $\omega=o_\infty(r^\delta)$.

\end{proposition}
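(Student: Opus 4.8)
The plan is to reduce everything to the ends of $M$ and there run a dilation argument that upgrades weighted integrability into pointwise decay. Since $r$ is bounded on the compact core $K$, the assertion $\omega=o_\infty(r^\delta)$ concerns only the behaviour as $r\to\infty$, so I may argue separately in each end $E_i$ using the Euclidean coordinates supplied by $\phi_i$ (working, if one wishes, $\Gamma_i$-equivariantly on the covering $\R^n\setminus\overline{B}$). There $\Lambda^*M$ is trivialised and $\omega$ is identified with a tuple of functions, so it suffices to prove the scalar statement: if $u\in W^{k,p}_\delta$ for all $k$, then $\partial^j u=o(r^{\delta-j})$ as $r\to\infty$ for every $j\in\N$. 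The reason for assuming membership in \emph{all} the spaces $W^{k,p}_\delta$ is exactly this: to control $\partial^j u$ through a Sobolev embedding into $C^j$ one needs $k>n/p+j$, and this must be available for every $j$.

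First I would fix a reference annulus $A=\{1/2<|y|<4\}\subset\R^n$ and a smaller one $A'=\{1<|y|<2\}$, and for each large $R$ introduce the rescaled function $u_R(y):=u(Ry)$. The chain rule $\partial^\alpha u_R(y)=R^{|\alpha|}(\partial^\alpha u)(Ry)$ together with the change of variables $x=Ry$ gives, for every multi-index $\alpha$,
\begin{equation}
\int_A|\partial^\alpha u_R|^p\,dy = R^{|\alpha|p-n}\int_{A_R}|\partial^\alpha u|^p\,dx, \qquad A_R:=\{R/2<r<4R\}.
\end{equation}
On $A_R$ one has $\sigma\simeq r\simeq R$, so the weight $\sigma^{-(\delta-|\alpha|)p-n}$ is comparable to $R^{-(\delta-|\alpha|)p-n}$ there; combining this with the displayed identity yields
\begin{equation}
\|u_R\|_{W^{k,p}(A)}^p \lesssim R^{\delta p}\sum_{|\alpha|\le k}\int_{A_R}|\partial^\alpha u|^p\,\sigma^{-(\delta-|\alpha|)p-n}\,dx = R^{\delta p}\,\varepsilon_R^p,
\end{equation}
where $\varepsilon_R:=\|u\|_{W^{k,p}_\delta(\{r>R/2\})}$ is the tail of the (finite) weighted norm. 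Because the end is ALE of order $\tau>0$, the pullbacks of $g$ under the dilations $y\mapsto Ry$ differ from the Euclidean metric by $\mathcal{O}_\infty(R^{-\tau})$ on $A$, hence converge in $C^\infty_{\mathrm{loc}}(A)$ to the flat metric; consequently the ordinary Sobolev embedding $W^{k,p}(A)\hookrightarrow C^m(A')$, valid for $k>n/p+m$, holds with a constant uniform in $R$.

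It then remains to read off the decay. Choosing $k>n/p+j$ and using $\|u_R\|_{C^j(A')}\ge R^{j}\sup_{A_R'}|\partial^j u|$ with $A_R':=\{R<r<2R\}$ (again by $\partial^\alpha u_R=R^{|\alpha|}\partial^\alpha u(R\,\cdot)$), the uniform embedding and the previous estimate give
\begin{equation}
\sup_{A_R'}|\partial^j u| \le R^{-j}\|u_R\|_{C^j(A')} \lesssim R^{-j}\|u_R\|_{W^{k,p}(A)} \lesssim R^{\delta-j}\,\varepsilon_R.
\end{equation}
Since $u\in W^{k,p}_\delta$, the tail satisfies $\varepsilon_R\to0$ as $R\to\infty$, so $\sup_{A_R'} r^{j-\delta}|\partial^j u|\to0$; as the annuli $A_R'$ exhaust a neighbourhood of infinity, this is precisely $\partial^j u=o(r^{\delta-j})$. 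Running the argument for every $j$ gives $u=o_\infty(r^\delta)$ in each end, and therefore $\omega=o_\infty(r^\delta)$ on $M$.

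The only genuinely delicate point is the uniformity of the Sobolev constant across the family of dilated annuli, and this is exactly where the ALE hypothesis enters through the $C^\infty_{\mathrm{loc}}$ convergence of the rescaled metrics to the Euclidean one; everything else is bookkeeping of powers of $R$. I note that the passage from the big-$\mathcal{O}$ bound furnished directly by the weighted Sobolev embedding to the sharp little-$o$ statement comes entirely from the vanishing of the tail $\varepsilon_R$, which is why the conclusion holds with the \emph{same} weight $\delta$ rather than with a slightly worse one.
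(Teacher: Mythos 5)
Your proof is correct, and it is essentially the argument the paper relies on: the paper omits the proof of this proposition, citing Bartnik's weighted Sobolev embedding theorem, whose proof is exactly this dilation-to-a-fixed-annulus scaling argument, with the little-$o$ (rather than $O$) conclusion coming from the vanishing of the tail $\varepsilon_R$ of the weighted norm. The bookkeeping of the powers of $R$ and the uniformity of the Sobolev constant on the rescaled annuli are both handled correctly.
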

Recall from \cite[Def. 1.5]{Bar86} and \cite[Def. 1.5]{KP} the notion of operator {\em asymptotic to the Euclidean Laplacian or to the Euclidean Dirac operator}. We won't write down explictly the definition (it is quite intuitive) but the main examples are as follows: if $M$ is ALE then the Hodge-De Rham Laplacian is asymptotic to the Euclidean Hodge Laplacian $\Delta_{\R^n}$, while $\mathcal{D}:=d+d^*$ acting on $\Lambda^*M$ is asymptotic to the Euclidean Dirac operator $\mathcal{D}_{\R^n}=d_{\R^n}+d_{\R^n}^*$. If $M$ is ALE of order $\tau$, then this implies that for any $k\geq 2$, $\delta\in \R$ and $p\in (1,\infty)$,

\begin{equation}\label{eq:extra_decay_L1}
\Delta-\Delta_{\R^n}:W^{k,p}_\delta(\Lambda^*M)\rightarrow W^{k-2}_{\delta-2-\tau}(\Lambda^*M),
\end{equation}
and for $k\geq 1$,

\begin{equation}\label{eq:extra_decay_D1}
\mathcal{D}-\mathcal{D}_{\R^n}:W^{k,p}_\delta(\Lambda^*M)\rightarrow W^{k-1}_{\delta-1-\tau}(\Lambda^*M).
\end{equation}
Also, similarly,

\begin{equation}\label{eq:extra_decay_L2}
\Delta-\Delta_{\R^n}: \mathcal{O}_\infty(r^\delta)\to \mathcal{O}_\infty(r^{\delta-2-\tau}),
\end{equation}
and

\begin{equation}\label{eq:extra_decay_D2}
\mathcal{D}-\mathcal{D}_{\R^n}:\mathcal{O}_\infty(r^\delta)\to \mathcal{O}_\infty(r^{\delta-1-\tau}).
\end{equation}
In fact, all this relies on

\begin{equation}\label{eq:extra_decay_d*1}
d^*-d^*_{\R^n}:W^{k,p}_\delta(\Lambda^*M)\rightarrow W^{k-1}_{\delta-1-\tau}(\Lambda^*M).
\end{equation}
and
\begin{equation}\label{eq:extra_decay_d*2}
d^*-d^*_{\R^n}:\mathcal{O}_\infty(r^\delta)\to \mathcal{O}_\infty(r^{\delta-1-\tau}).
\end{equation}
We will use the following elliptic regularity result (see \cite[Prop. 2.10]{KP}):

\begin{proposition}\label{prop:ell-reg}

Suppose that $M$ is an ALE manifold, $p\in (1,\infty)$ and $k\geq 2$. Then, for every $\omega\in C^\infty(\Lambda^*M)$,

\begin{equation}\label{eq:ell-reg-lapl}
||\omega ||_{k,p,\delta} \leq C(||\Delta \omega||_{k-2,p,\delta-2}+||u||_{k-2,p,\delta}),
\end{equation}
and

\begin{equation}\label{eq:ell-reg-dir}
||\omega ||_{k,p,\delta} \leq C(||\mathcal{D}\omega||_{k-1,p,\delta-1}+||u||_{k-1,p,\delta}).
\end{equation}

\end{proposition}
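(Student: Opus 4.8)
The plan is to reduce the estimate to the corresponding inequality for the constant-coefficient model operators $\Delta_{\R^n}$ and $\mathcal{D}_{\R^n}$ on a Euclidean end, and then to absorb the difference as a lower-order perturbation, using the extra decay recorded in \eqref{eq:extra_decay_L1} and \eqref{eq:extra_decay_D1}. I will describe the scheme for \eqref{eq:ell-reg-lapl} in detail; the Dirac estimate \eqref{eq:ell-reg-dir} follows the same lines, using that $\mathcal{D}=d+d^*$ acting on $\Lambda^*M$ is first-order elliptic (its principal symbol is Clifford multiplication, invertible away from the zero covector) and that $\mathcal{D}_{\R^n}$ governs the leading behaviour at infinity.

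First I would establish the model inequality on a Euclidean end, namely for $\Delta_{\R^n}$ on $(\R^n\setminus\overline{B})/\Gamma_i$:
$$\|\omega\|_{k,p,\delta}\le C\big(\|\Delta_{\R^n}\omega\|_{k-2,p,\delta-2}+\|\omega\|_{k-2,p,\delta}\big).$$
The weighted norms are designed precisely to be scale covariant, so the natural tool is a dyadic decomposition. Cover the end by annuli $A_R=\{R\le r\le 2R\}$, rescale $x=Ry$ so that $A_R$ becomes a fixed annulus $A_1$, and apply the standard unweighted interior Calder\'on--Zygmund (Agmon--Douglis--Nirenberg) $L^p$ elliptic estimate on $A_1$ with constants uniform in $R$. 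Since $\Delta_{\R^n}$ scales by $R^{-2}$ and $j$-fold differentiation by $R^{-j}$, the powers of $R$ produced by the rescaling together with the Jacobian of the measure are matched exactly by the weights $\sigma^{-\delta p-n}$ and the index shift $\delta\mapsto\delta-j$ on derivatives. Undoing the scaling and summing over the dyadic scales reassembles the weighted norms. The finite quotients by $\Gamma_i\subset SO(n)$ cause no difficulty, as they act by isometries commuting with $\Delta_{\R^n}$, so one may work $\Gamma_i$-equivariantly.

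Next I would transfer the estimate to $M$. Split $M=K\cup\bigcup_iE_i$ into the compact core and the ends. On the core, ordinary interior elliptic regularity for the smooth elliptic operator $\Delta$ gives the bound, the weights being harmless on a bounded region. On each end I write $\Delta_{\R^n}\omega=\Delta\omega-(\Delta-\Delta_{\R^n})\omega$ and insert this into the model inequality, getting
$$\|\Delta_{\R^n}\omega\|_{k-2,p,\delta-2}\le\|\Delta\omega\|_{k-2,p,\delta-2}+\|(\Delta-\Delta_{\R^n})\omega\|_{k-2,p,\delta-2}.$$
By \eqref{eq:extra_decay_L1} the perturbation maps into the better-weighted space $W^{k-2,p}_{\delta-2-\tau}$, and since $\tau>0$ one has on the far region $\{r\ge R_0\}$ the gain $\|v\|_{k-2,p,\delta-2}\le R_0^{-\tau}\|v\|_{k-2,p,\delta-2-\tau}$; hence the perturbation term is bounded by $C\,R_0^{-\tau}\|\omega\|_{k,p,\delta}$ there. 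Choosing $R_0$ large enough that $C\,R_0^{-\tau}$ is smaller than the reciprocal of the model constant, this term is absorbed into the left-hand side, and what survives is supported in the bounded collar $\{r\le 2R_0\}$ and controlled by $\|\omega\|_{k-2,p,\delta}$, which is exactly the lower-order term in \eqref{eq:ell-reg-lapl}. Patching the core and ends with a partition of unity, and commuting the cutoffs through $\Delta$ (which only produces further lower-order, compactly supported terms), completes the argument.

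The main obstacle is the scale-invariant model estimate: one must check that the unweighted interior elliptic constants are genuinely uniform across all dyadic annuli and that the bookkeeping of $R$-powers against the weights $\sigma^{-\delta p-n}$ is exact, so that summation over scales loses no factor growing with $R$. Once this scaling calculus is in place, the passage to $M$ is the soft absorption argument above, which works precisely because $\tau>0$ renders $\Delta-\Delta_{\R^n}$ (resp.\ $\mathcal{D}-\mathcal{D}_{\R^n}$) negligible at infinity relative to the model operator. The Dirac case is identical, with the order shift $2\mapsto 1$ and \eqref{eq:extra_decay_D1} in place of \eqref{eq:extra_decay_L1}.
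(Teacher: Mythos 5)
Your argument is correct and is essentially the standard proof: the paper itself does not prove Proposition \ref{prop:ell-reg} but cites it from \cite[Prop. 2.10]{KP}, whose proof (following Bartnik's weighted elliptic theory in \cite{Bar86}) is exactly the scheme you describe — scale-invariant interior Calder\'on--Zygmund estimates on dyadic annuli reassembled into the weighted norms, followed by absorption of $\Delta-\Delta_{\R^n}$ (resp. $\mathcal{D}-\mathcal{D}_{\R^n}$) via the gain $\sigma^{-\tau}$ from \eqref{eq:extra_decay_L1} (resp. \eqref{eq:extra_decay_D1}) for $R_0$ large. The only detail worth spelling out is that the cutoff commutators produce terms of order $k-1$ on a compact collar, which are reduced to the stated $\|\omega\|_{k-2,p,\delta}$ remainder by interpolation (or a further application of interior regularity) before absorbing.
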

These estimates together with the Sobolev embeddings (\ref{prop:Sob}) imply easily that:
\begin{corollary}\label{cor:ell_reg}

Assume that $M$ is ALE, and $\omega\in L^p_\delta$, $p\in (1,\infty)$, $\delta\in\R$, such that $\Delta\omega=0$ or $\mathcal{D}\omega=0$. Then,

$$\omega=o_\infty(r^\delta).$$

\end{corollary}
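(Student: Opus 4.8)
The plan is to bootstrap the regularity of $\omega$ in the weighted Sobolev scale using the elliptic a priori estimates of Proposition \ref{prop:ell-reg}, and then to invoke the weighted Sobolev embedding of Proposition \ref{prop:Sob}.

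First I would observe that $\omega$ is smooth. In the case $\Delta\omega=0$ this is immediate from interior elliptic regularity, since $\Delta$ is a second order elliptic operator; in the case $\mathcal{D}\omega=0$, applying $\mathcal{D}$ once more gives $\Delta\omega=\mathcal{D}^2\omega=0$, so smoothness follows as well (alternatively, $\mathcal{D}=d+d^*$ is itself elliptic of Dirac type). In particular all local Sobolev norms of $\omega$ are finite, so the only issue in establishing membership in $W^{k,p}_\delta(\Lambda^*M)$ for all $k$ is the behaviour at infinity, which is exactly what the weighted norms measure.

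Next I would run the induction on the number of derivatives. In the Laplacian case, estimate \eqref{eq:ell-reg-lapl} combined with the harmonicity $\Delta\omega=0$ reads
$$||\omega||_{k,p,\delta}\leq C\,||\omega||_{k-2,p,\delta},\qquad k\geq 2,$$
so, starting from the hypothesis $\omega\in L^p_\delta=W^{0,p}_\delta$, one obtains $\omega\in W^{2,p}_\delta$, then $\omega\in W^{4,p}_\delta$, and inductively $\omega\in W^{2m,p}_\delta$ for every $m$; since $W^{k,p}_\delta\subset W^{k',p}_\delta$ whenever $k'\leq k$, this yields $\omega\in W^{k,p}_\delta$ for all $k\in\N$. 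In the Dirac case, estimate \eqref{eq:ell-reg-dir} together with $\mathcal{D}\omega=0$ gives the one-step gain
$$||\omega||_{k,p,\delta}\leq C\,||\omega||_{k-1,p,\delta},\qquad k\geq 1,$$
and the same induction again produces $\omega\in W^{k,p}_\delta$ for all $k$. Once $\omega$ lies in all the weighted Sobolev spaces, Proposition \ref{prop:Sob} applies verbatim and gives $\omega=o_\infty(r^\delta)$, which is the assertion.

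The one point requiring care, which I expect to be the main technical obstacle, is the legitimacy of feeding $\omega$ into the a priori estimate at each inductive step: Proposition \ref{prop:ell-reg} bounds $||\omega||_{k,p,\delta}$ by finite quantities, but since one does not yet know that the left-hand side is finite, the inequality must be read as a genuine regularity gain rather than a vacuous bound. I would make this rigorous in the standard way, by a truncation and scaling argument: apply the corresponding \emph{unweighted} interior elliptic estimate on the fixed annulus $\{1\leq r\leq 4\}$ to the rescalings $\omega_R(x)=\omega(Rx)$ living on the dyadic annuli $\{R\leq r\leq 4R\}$, then multiply by the power of $R$ dictated by the weight $\sigma^{-\delta p-n}$ and sum over $R=2^j$. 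Because $\Delta\omega$ (respectively $\mathcal{D}\omega$) vanishes identically, the inhomogeneous term drops out at every scale, and one recovers directly the finiteness of $||\omega||_{k,p,\delta}$ controlled by $||\omega||_{0,p,\delta}<\infty$. This justifies the bootstrap, and the remaining steps are then purely formal.
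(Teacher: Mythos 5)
Your proof is correct and follows exactly the route the paper intends (the paper merely states that Propositions \ref{prop:ell-reg} and \ref{prop:Sob} "imply easily" the corollary): bootstrap the weighted elliptic estimates with vanishing right-hand side to place $\omega$ in $W^{k,p}_\delta$ for all $k$, then apply the weighted Sobolev embedding. Your additional remark on justifying the a priori estimate via rescaled interior estimates on dyadic annuli is the standard and correct way to make the "regularity gain" reading rigorous.
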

The elliptic regularity estimates of Proposition \ref{prop:ell-reg} can further be refined if the weight $\delta$ is {\em non-exceptional}. More precisely, when considering the Hodge Laplacian, we say that $\delta$ is {\em exceptional} if $\delta\in \{2-n,1-n,-n,\cdots\}\cup \N$, while considering the Hodge-Dirac operator, $\delta$ is exceptional if $\delta\in \{1-n,-n,\cdots\}\cup \N$. The result is as follows: if $\delta$ is non-exceptional for the Hodge Laplacian (resp. for the Hodge-Dirac operator), then there exists $R>0$ such that \eqref{eq:ell-reg-lapl} (resp. \eqref{eq:ell-reg-dir}) can be refined into

\begin{equation}\label{eq:ell-reg-lapl2}
||\omega ||_{k,p,\delta} \leq C(||\Delta \omega||_{k-2,p,\delta-2}+||u||_{L^p(B_R)}),
\end{equation}
resp.

\begin{equation}\label{eq:ell-reg-dir2}
||\omega ||_{k,p,\delta} \leq C(||\mathcal{D}\omega||_{k-1,p,\delta-1}+||u||_{L^p(B_R)}).
\end{equation}
See \cite[Prop. 2.7]{KP}.

Another corollary of these estimates concerns the Fredholmness of $\Delta$ and $\mathcal{D}$:

\begin{corollary}\label{cor:fredholm}

Assume that $M$ is ALE, $p\in (1,\infty)$ and $\delta$ is non exceptional for the Hodge Laplacian (resp. for the Hodge-Dirac operator). Then, 

$$\Delta:W^{2,p}_\delta(\Lambda^*M)\rightarrow W_{\delta-2}^{0,p}(\Lambda^*M),$$
resp.

$$\mathcal{D}:W^{1,p}_\delta(\Lambda^*M)\rightarrow W_{\delta-1}^{0,p}(\Lambda^*M),$$
is a Fredholm operator.

\end{corollary}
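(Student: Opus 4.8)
The plan is to obtain the three Fredholm properties---finite-dimensional kernel, closed range, and finite-dimensional cokernel---from the refined elliptic estimates \eqref{eq:ell-reg-lapl2} and \eqref{eq:ell-reg-dir2}, which hold precisely \emph{because} $\delta$ is non-exceptional, combined with a weighted Rellich compactness argument and $L^p$--$L^q$ duality. I describe the argument for the Hodge Laplacian $\Delta\colon W^{2,p}_\delta(\Lambda^*M)\to W^{0,p}_{\delta-2}(\Lambda^*M)$; the Hodge--Dirac case is identical upon replacing the order shift $2$ by $1$ and \eqref{eq:ell-reg-lapl2} by \eqref{eq:ell-reg-dir2}.

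\emph{Finite kernel and closed range.} For non-exceptional $\delta$, \eqref{eq:ell-reg-lapl2} provides $C>0$ and $R>0$ such that
$$\|\omega\|_{2,p,\delta}\le C\big(\|\Delta\omega\|_{0,p,\delta-2}+\|\omega\|_{L^p(B_R)}\big)$$
for all $\omega\in W^{2,p}_\delta(\Lambda^*M)$. The embedding $W^{2,p}_\delta(\Lambda^*M)\hookrightarrow L^p(B_R)$ is compact, since on the fixed compact set $B_R$ the weighted norm is equivalent to the ordinary $W^{2,p}(B_R)$ norm and one invokes the classical Rellich theorem (see \cite{Bar86}). An a priori estimate of the shape $\|\omega\|_X\le C(\|T\omega\|_Y+\|\omega\|_Z)$ with $X\hookrightarrow Z$ compact is the hypothesis of the standard Peetre lemma: restricting it to $\ker\Delta$ gives $\|\omega\|_{2,p,\delta}\le C\|\omega\|_{L^p(B_R)}$, so a bounded sequence in $\ker\Delta$ has an $L^p(B_R)$-convergent subsequence which is then Cauchy in $W^{2,p}_\delta$, whence the unit ball of $\ker\Delta$ is compact and $\ker\Delta$ is finite dimensional; choosing a closed complement of $\ker\Delta$ and absorbing the local term by a normalized-sequence contradiction shows that $\Delta$ is bounded below on the complement, hence has closed range.

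\emph{Finite cokernel via duality.} Writing $\|u\|_{p,\mu}=\|u\,\sigma^{-\mu-n/p}\|_{L^p}$, one sees that under the unweighted $L^2$ pairing the dual of $W^{0,p}_{\mu}$ is $W^{0,q}_{-n-\mu}$, where $q=p'$. Taking $\mu=\delta-2$, the cokernel of $\Delta$ is identified with the space of $v\in W^{0,q}_{2-n-\delta}$ satisfying $\Delta v=0$ weakly. By Corollary \ref{cor:ell_reg} and Proposition \ref{prop:Sob}, any such $v$ is smooth with $v=o_\infty(r^{2-n-\delta})$, hence $v\in W^{2,q}_{2-n-\delta}$; thus the cokernel is the kernel of the dual operator $\Delta\colon W^{2,q}_{2-n-\delta}\to W^{0,q}_{-n-\delta}$. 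Now the exceptional set $\{2-n,1-n,-n,\dots\}\cup\N$ is invariant under the involution $\delta\mapsto 2-n-\delta$ (it is symmetric about $1-\tfrac{n}{2}$), so $2-n-\delta$ is again non-exceptional, and the previous step applied to the dual operator shows its kernel---i.e.\ the cokernel of the original $\Delta$---is finite dimensional. Together with closed range and finite-dimensional kernel, this proves Fredholmness. For $\mathcal D$ the relevant involution is $\delta\mapsto 1-n-\delta$, under which $\{1-n,-n,\dots\}\cup\N$ is invariant.

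\emph{Main obstacle.} The delicate point is the matching between the duality weight map and the exceptional set: one must verify that $\delta\mapsto 2-n-\delta$ (resp.\ $\delta\mapsto 1-n-\delta$) preserves non-exceptionality and that the cokernel is genuinely realized inside the correctly weighted dual space, which forces the elliptic bootstrap of Corollary \ref{cor:ell_reg} to upgrade a merely distributional weighted-$L^q$ solution of $\Delta v=0$ into an element of $W^{2,q}_{2-n-\delta}$ with the sharp decay $o_\infty(r^{2-n-\delta})$. The refined estimates \eqref{eq:ell-reg-lapl2}--\eqref{eq:ell-reg-dir2}---the very ingredient that turns the Peetre argument into closed range rather than mere semi-Fredholmness on one side---are the technically hard input, and are quoted from \cite{KP}.
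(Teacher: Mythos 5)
Your argument is correct and follows essentially the same route as the paper's (deliberately terse) proof: the refined estimates \eqref{eq:ell-reg-lapl2}--\eqref{eq:ell-reg-dir2} plus Rellich compactness give semi-Fredholmness via the standard Peetre argument, and self-adjointness together with the duality $\left(W^{0,p}_{\mu}\right)^*\cong W^{0,q}_{-n-\mu}$ identifies the cokernel with the finite-dimensional kernel of the same operator at the dual weight. Your explicit verification that the exceptional set is invariant under $\delta\mapsto 2-n-\delta$ (resp.\ $\delta\mapsto 1-n-\delta$) is exactly the detail the paper delegates to \cite[Prop.~1.14]{Bar86}, and it checks out.
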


\begin{proof}[Sketch of the proof]
The estimates \eqref{eq:ell-reg-lapl2} and \eqref{eq:ell-reg-dir2} together with Rellich compactness imply that $\mathcal{D}$ and $\Delta$ are semi-Fredholm, i.e. have finite dimensional kernel and closed range. Since these two operators are also self-adjoint, using duality of the weighted Sobolev spaces we see that their adjoint is also semi-Fredholm, hence they also have finite dimensional cokernel. Therefore, they are Fredholm. See the proof of \cite[Prop. 1.14]{Bar86} for more details.

\end{proof}

A key property for investigating the decay of harmonic forms is that the behaviour as $r\to\infty$ of functions on $\R^n$ which are harmonic outside a compact set is known; for $\delta\in \R$, we denote by $k_-(\delta)$ the largest exceptional weight $k$ for the Laplacian such that $k\leq \delta$. Then, one has:

\begin{proposition}\label{prop:decay-Rn}

Let $n\geq 3$, and $f$ be a harmonic function on $\R^n\setminus B_R$, $R>0$. Assume that $f=\mathcal{O}_\infty(r^\delta)$. Then, actually $f= \mathcal{O}_\infty(r^{k_-(\delta)})$.

\end{proposition}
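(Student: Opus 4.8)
The plan is to argue by separation of variables in the Euclidean region $\{|x|>R\}$ where $f$ is harmonic, and then to control the resulting infinite series by means of a Kelvin transform. I would fix an orthonormal basis $\{Y_{\ell,m}\}_{\ell\geq 0,\,1\leq m\leq d_\ell}$ of $L^2(S^{n-1})$ consisting of spherical harmonics, where $Y_{\ell,m}$ has eigenvalue $\ell(\ell+n-2)$ for $-\Delta_{S^{n-1}}$, and write $f(r\theta)=\sum_{\ell,m}f_{\ell,m}(r)\,Y_{\ell,m}(\theta)$ with $f_{\ell,m}(r)=\int_{S^{n-1}}f(r\theta)Y_{\ell,m}(\theta)\,d\theta$. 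Projecting $\Delta f=0$ onto $Y_{\ell,m}$ gives the Euler equation $f_{\ell,m}''+\tfrac{n-1}{r}f_{\ell,m}'-\tfrac{\ell(\ell+n-2)}{r^2}f_{\ell,m}=0$, whose indicial roots $\lambda=\ell$ and $\lambda=2-n-\ell$ are distinct for $n\geq 3$; hence $f_{\ell,m}(r)=a_{\ell,m}r^{\ell}+b_{\ell,m}r^{2-n-\ell}$, with no logarithmic terms. Note that $\{\ell:\ell\geq 0\}\cup\{2-n-\ell:\ell\geq 0\}$ is precisely the set of exceptional weights, so every admissible homogeneity is an exceptional weight.

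Next I would use the growth hypothesis to discard the modes whose homogeneity exceeds $\delta$. From $|f(x)|\leq Cr^{\delta}$ and Cauchy--Schwarz on $S^{n-1}$ one gets $|f_{\ell,m}(r)|\leq C'r^{\delta}$ for large $r$; since $2-n-\ell<\ell$, the dominant term as $r\to\infty$ forces $a_{\ell,m}=0$ whenever $\ell>\delta$ and $b_{\ell,m}=0$ whenever $2-n-\ell>\delta$. The surviving growing modes form a finite sum $P:=\sum_{\ell\leq\delta}a_{\ell,m}r^{\ell}Y_{\ell,m}$, a harmonic polynomial of degree at most $\lfloor\delta\rfloor$ (empty if $\delta<0$); writing $g:=f-P$ leaves a harmonic function, still $\mathcal{O}_\infty(r^\delta)$, all of whose Fourier modes are of the purely decaying type $b_{\ell,m}r^{2-n-\ell}$, with $\ell$ ranging over the indices for which $2-n-\ell\leq\delta$.

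The key step, and the main technical obstacle, is to show that $g=\mathcal{O}_\infty(r^{k_-(\delta)})$, i.e. that the infinite decaying series genuinely has the size of its leading surviving term rather than merely $\mathcal{O}_\infty(r^\delta)$. Here I would apply the Kelvin transform $\tilde g(y):=|y|^{2-n}g(y/|y|^2)$, which is harmonic on a punctured ball $B_{1/R}\setminus\{0\}$ and whose modes are, by the explicit computation, $\tilde g_{\ell,m}(\rho)=b_{\ell,m}\rho^{\ell}$ --- that is, purely regular, with no singular $\rho^{2-n-\ell}$ part. Consequently $\rho\mapsto\|\tilde g(\rho\,\cdot)\|_{L^2(S^{n-1})}^2=\sum b_{\ell,m}^2\rho^{2\ell}$ is non-decreasing, so $\tilde g$ stays bounded in $L^2$ of the spheres as $\rho\to 0$; integrating over dyadic annuli and invoking interior elliptic estimates then bounds $\tilde g$ pointwise near the origin, and the removable-singularity theorem for bounded harmonic functions shows $\tilde g$ extends smoothly across $0$. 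Its Taylor expansion begins at the order $\ell_{\min}$ of the lowest surviving mode, whence $\tilde g(y)=\mathcal{O}_\infty(|y|^{\ell_{\min}})$ and, undoing the transform, $g(x)=|x|^{2-n}\tilde g(x/|x|^2)=\mathcal{O}_\infty(r^{2-n-\ell_{\min}})=\mathcal{O}_\infty(r^{k_-(\delta)})$, since $2-n-\ell_{\min}$ is exactly the largest exceptional weight $\leq\delta$.

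Finally I would assemble the two pieces: when $\delta\geq 0$ the polynomial $P$ has degree $\lfloor\delta\rfloor=k_-(\delta)\geq 2-n$ and dominates, while $g=\mathcal{O}_\infty(r^{2-n})$ is lower order, so $f=P+g=\mathcal{O}_\infty(r^{k_-(\delta)})$; when $\delta<0$ one has $P=0$ and $f=g=\mathcal{O}_\infty(r^{k_-(\delta)})$ directly. The passage from pointwise bounds to the full $\mathcal{O}_\infty$ statement is automatic from the smoothness of $\tilde g$ at the origin and of $P$ everywhere, and could alternatively be obtained throughout by interior elliptic estimates on dyadic annuli together with scaling. The only delicate point is the convergence and boundedness of the decaying series, which is precisely what the Kelvin transform circumvents by converting it into the regular Taylor expansion of a function harmonic across the origin.
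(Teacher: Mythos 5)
Your proof is correct and follows essentially the same route as the paper, which justifies Proposition \ref{prop:decay-Rn} precisely by decomposing $f$ into spherical harmonics and observing that the admissible homogeneities $r^{\ell}$ and $r^{2-n-\ell}$ are exactly the exceptional weights. The one point the paper leaves implicit --- that the infinite tail of decaying modes sums to $\mathcal{O}_\infty(r^{2-n-\ell_{\min}})$ rather than merely $\mathcal{O}_\infty(r^\delta)$ --- is handled cleanly by your Kelvin-transform argument, and your closing remark that the full $\mathcal{O}_\infty$ (all-derivatives) statement follows from interior estimates on dyadic annuli is the right way to finish.
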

Indeed, this follows by decomposing the function $f$ into spherical harmonics, and using the fact that the exceptional weights are precisely the powers of $r$ appearing in this decomposition. Note that the (scalar) components of a harmonic differential form on $\R^n$ are harmonic functions, so Proposition \ref{prop:decay-Rn} also applies to differential forms that are harmonic on $\R^n\setminus B_R$. 

Now, let us prepare for the proof of Proposition \ref{prop:decay_harmonic_forms}; we will need the following two lemmas. The first one is obtained by a minor variation on the proof of \cite[Lemma 4.2]{KP}:

\begin{lemma}\label{lem:decay-harmo-Rn}

Let $n\geq 3$, $\delta<0$ and $\omega\in \Lambda^k(\R^n\setminus B_R)$, $R>0$, $k\in\{0,\cdots,n\}$ be such that $\omega=\mathcal{O}_\infty(r^\delta)$ and $\Delta_{\R^n}\omega=0$. Then,

\begin{enumerate}

\item[(i)] $\omega\in \mathcal{O}_\infty(r^{2-n})$.

\item[(ii)] if $k\in \{1,\cdots,n-1\}$ and if moreover $d_{\R^n}^*\omega\in \mathcal{O}_\infty(r^{1-n-\epsilon})$ for some $\epsilon>0$, then $\omega\in \mathcal{O}_\infty(r^{1-n})$.

\item[(iii)] if $k\in\{2,\cdots,n-2\}$, and if moreover $d\omega,\,d_{\R^n}^*\omega\in \mathcal{O}_\infty(r^{-n-\epsilon})$ for some $\epsilon>0$, then $\omega\in \mathcal{O}_\infty(r^{-n})$.

\end{enumerate}

\end{lemma}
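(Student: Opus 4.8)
The plan is to reduce everything to the Euclidean solid-harmonic expansion of the Cartesian components and then read off constraints on the leading homogeneous coefficients from the hypotheses on $d$ and $d^*$. Since $\Delta_{\R^n}$ acts diagonally on the components of a form written in the constant frame $dx^I$, each component $\omega_I$ is a harmonic function on $\R^n\setminus B_R$ with $\omega_I=\mathcal{O}_\infty(r^\delta)$, $\delta<0$. By Proposition \ref{prop:decay-Rn} each $\omega_I$ has a convergent decaying solid-harmonic expansion $\sum_{\ell\ge0}r^{-(n-2+\ell)}Y_{I,\ell}(\theta)$, so one may peel off leading terms with remainders controlled by the next exceptional weight. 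For (i) this already suffices: since $\delta<0$ there is no exceptional weight in $(2-n,0)$, so the largest exceptional weight $\le\delta$ is $\le 2-n$, whence $\omega=\mathcal{O}_\infty(r^{k_-(\delta)})\subset\mathcal{O}_\infty(r^{2-n})$.

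For (ii), I would write $\omega=r^{2-n}\omega_0+\mathcal{O}_\infty(r^{1-n})$ with $\omega_0$ a constant-coefficient $k$-form (the $\ell=0$ part). Using $d^*_{\R^n}(f\omega_0)=-\iota_{\nabla f}\omega_0$ for constant $\omega_0$ and $\nabla r^{2-n}=(2-n)r^{-n}X$, where $X=\sum_m x_m\partial_m=r\partial_r$, one computes $d^*_{\R^n}(r^{2-n}\omega_0)=(n-2)r^{1-n}\iota_\theta\omega_0$ with $\theta=x/r$. Since differentiating the $\mathcal{O}_\infty(r^{1-n})$ remainder produces only $\mathcal{O}_\infty(r^{-n})$, this is exactly the homogeneous degree-$(1-n)$ part of $d^*_{\R^n}\omega$. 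The hypothesis $d^*_{\R^n}\omega=\mathcal{O}_\infty(r^{1-n-\epsilon})$ forces it to vanish, i.e. $\iota_v\omega_0=0$ for all $v$, hence $\omega_0=0$ (as $k\ge1$) and $\omega=\mathcal{O}_\infty(r^{1-n})$.

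For (iii), part (ii) gives $\omega=\omega^{(1)}+\mathcal{O}_\infty(r^{-n})$, where $\omega^{(1)}$ is the $\ell=1$ part, homogeneous of degree $1-n$; explicitly $\omega^{(1)}=r^{-n}\sum_m x_m A_m$ for constant $k$-forms $A_m$, and I set $A(v)=\sum_m v_m A_m$. As before, $d_{\R^n}\omega^{(1)}$ and $d^*_{\R^n}\omega^{(1)}$ are precisely the homogeneous degree-$(-n)$ parts of $d_{\R^n}\omega$ and $d^*_{\R^n}\omega$, which are $\mathcal{O}_\infty(r^{-n-\epsilon})$ by hypothesis; being homogeneous of degree exactly $-n$ they must therefore vanish. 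A direct computation turns $d^*_{\R^n}\omega^{(1)}=0$ and $d_{\R^n}\omega^{(1)}=0$ into the polarized pointwise identities
\[
\iota_uA(v)+\iota_vA(u)=\tfrac{2}{n}\langle u,v\rangle B,\qquad u^\flat\wedge A(v)+v^\flat\wedge A(u)=\tfrac{2}{n}\langle u,v\rangle\tilde A,
\]
valid for all $u,v\in\R^n$, where $B=\sum_m\iota_{\partial_m}A_m$ and $\tilde A=\sum_m dx^m\wedge A_m$.

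The core of the argument — and the step I expect to be the main obstacle — is to deduce $A\equiv0$ from these two identities when $2\le k\le n-2$. Contracting the second identity with $\iota_w$, setting $w=u$, using $\iota_uA(u)=\tfrac{1}{n}|u|^2B$, and summing the resulting relation over $u$ in an orthonormal basis of $v^\perp$ (with $|v|=1$) yields $(n-1-k)A(v)=\tfrac{n-2}{n}\,v^\flat\wedge B$; since $k\le n-2$ this gives $A(v)=c\,v^\flat\wedge B$ with $c=\tfrac{n-2}{n(n-1-k)}$, so $A$ lies entirely in the ``$\Lambda^{k-1}$'' piece. Feeding this back into the first identity, taking $u=v$ and summing over an orthonormal basis, and using $\sum_i e_i^\flat\wedge\iota_{e_i}B=(k-1)B$, produces the scalar relation $c(n-k+1)B=B$, i.e. $(n-2)(n-k+1)=n(n-k-1)$, whose difference of sides equals $2(k-1)$. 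Hence for $k\ge2$ one must have $B=0$, so $A\equiv0$ and $\omega^{(1)}=0$, giving $\omega=\mathcal{O}_\infty(r^{-n})$. The delicate points to get right are the bookkeeping that the leading homogeneous pieces of $d_{\R^n}\omega$ and $d^*_{\R^n}\omega$ are genuinely isolated at their respective exceptional weights, and the purely algebraic manipulation of the $\iota$ and $\wedge$ identities leading to the numerology that singles out $k=1$.
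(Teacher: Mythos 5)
Your argument is correct, and it follows essentially the same strategy as the paper's: the paper proves (i) exactly as you do via Proposition \ref{prop:decay-Rn}, and for (ii)--(iii) it simply refers to the proof of \cite[Lemma 4.2]{KP} (noting that the assumed decay of $d\omega$ and $d^{*}\omega$ replaces the closed/co-closed hypothesis), which is precisely the solid-harmonic peeling plus the algebraic vanishing of the leading homogeneous coefficients that you carry out explicitly. Your computations check out (in particular the identities $(n-1-k)A(v)=\tfrac{n-2}{n}v^{\flat}\wedge B$ and $c(n-k+1)B=B$, whose discrepancy $2(k-1)$ correctly isolates the degrees $k\in\{2,\dots,n-2\}$), so your proposal is a complete, self-contained version of the argument the paper only sketches by citation.
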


\begin{proof}[Sketch of the proof]

Since $\delta<0$, one has $k_-(\delta)\leq 2-n$, hence by Proposition \ref{prop:decay-Rn}, $\omega\in \mathcal{O}_\infty(r^{2-n})$, which yields (i). If now $k\in\{1,\cdots,n-1\}$, then the proof of (ii) and (iii) follows \cite[Lemma 4.2]{KP}; indeed, the only difference with \cite[Lemma 4.2]{KP} is that $\omega$ is not supposed to be closed and co-closed. However, one easily sees that the same proof applies, given the assumed decay rate of $d\omega,d^*\omega$. Details are left to the interested reader.

\end{proof}
The second lemma, which will be used repeatedly in the proof of Proposition \ref{prop:decay_harmonic_forms}, is the following:

\begin{lemma}\label{lem:iterate}

Let $M$ be ALE to order $\tau>0$, $\delta\in \R$ and $p\in (1,\infty)$. Let $\omega\in \mathrm{ker}_\delta(\Delta)$, then at each end $E_i$, there exists $R>0$, $\epsilon>0$ and $\bar{\omega}\in \Lambda^*((\R^n\setminus B_R)/\Gamma_i)$, $\Delta_{\R^n}\bar{\omega}=0$ on $(\R^n\setminus B_R)/\Gamma_i$, such that

$$\omega=\bar{\omega}+\mathcal{O}_\infty(r^{k_-(\delta)-\epsilon}).$$
Moreover, $\bar{\omega}=\mathcal{O}_\infty(r^{k_-(\delta)})$, and 

$$\omega=\mathcal{O}_\infty(r^{k_-(\delta)}).$$

\end{lemma}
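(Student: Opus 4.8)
The plan is to work one end at a time and to run a bootstrap on the decay rate, exploiting the fact that the Hodge Laplacian on an ALE manifold is a decaying perturbation of the flat Hodge Laplacian. Fix an end $E_i$ and identify it via $\phi_i$ with $(\R^n\setminus\overline{B(0,1)})/\Gamma_i$. Since $\omega\in\ker_\delta(\Delta)$ we have $\Delta\omega=0$ and $\omega=\mathcal{O}_\infty(r^\delta)$, so on the end
\[
\Delta_{\R^n}\omega=(\Delta_{\R^n}-\Delta)\omega=-(\Delta-\Delta_{\R^n})\omega=\mathcal{O}_\infty(r^{\delta-2-\tau}),
\]
by the extra-decay property \eqref{eq:extra_decay_L2}. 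Thus $\omega$ is flat-harmonic up to a right-hand side decaying $(2+\tau)$ orders faster, and the whole point is to absorb this error into a genuinely flat-harmonic form.

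First I would establish the single improvement step: if $\omega=\mathcal{O}_\infty(r^{a})$ on the end for some $a\le\delta$, then $\omega=\mathcal{O}_\infty(r^{\max(k_-(\delta),\,a-\tau+\epsilon')})$ for any small $\epsilon'>0$. To see this, cut off $\Delta_{\R^n}\omega$ to a form $g$ on $\R^n/\Gamma_i$ which equals $\Delta_{\R^n}\omega$ for $r\ge 2R$ and is supported in $\{r\ge R\}$, so that $g=\mathcal{O}_\infty(r^{a-2-\tau})$; in particular $g\in W^{0,p}_{b-2}$ for every $b>a-\tau$. Using the mapping properties of the flat Laplacian between weighted Sobolev spaces (Bartnik, \cite{Bar86}) I would solve $\Delta_{\R^n}u=g$ with a weight $b$ that is non-exceptional and as close to $a-\tau$ as we like; by Proposition \ref{prop:Sob} and Corollary \ref{cor:ell_reg} the solution satisfies $u=\mathcal{O}_\infty(r^{b})=\mathcal{O}_\infty(r^{a-\tau+\epsilon'})$. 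Then $\omega-u$ is flat-harmonic on $\{r\ge 2R\}/\Gamma_i$ and of order $\le a\le\delta$, so Proposition \ref{prop:decay-Rn} (applied componentwise) forces $\omega-u=\mathcal{O}_\infty(r^{k_-(a)})$, which is $\mathcal{O}_\infty(r^{k_-(\delta)})$ since $k_-$ is nondecreasing and $a\le\delta$. Writing $\omega=(\omega-u)+u$ gives the improvement. Starting from $a=\delta$ and iterating, the exponent drops by $\tau-\epsilon'>0$ at each stage until it reaches $k_-(\delta)$; after finitely many steps this yields $\omega=\mathcal{O}_\infty(r^{k_-(\delta)})$, the last assertion of the lemma.

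For the decomposition $\omega=\bar\omega+\mathcal{O}_\infty(r^{k_-(\delta)-\epsilon})$ I would apply the construction one final time, now that $\omega=\mathcal{O}_\infty(r^{k_-(\delta)})$ is known: then $\Delta_{\R^n}\omega=\mathcal{O}_\infty(r^{k_-(\delta)-2-\tau})$, and solving $\Delta_{\R^n}u=g$ as above with $b$ non-exceptional and $k_-(\delta)-\tau<b<k_-(\delta)$ produces $u=\mathcal{O}_\infty(r^{b})=\mathcal{O}_\infty(r^{k_-(\delta)-\epsilon})$ with $\epsilon:=k_-(\delta)-b>0$. Setting $\bar\omega:=\omega-u$ then gives a form with $\Delta_{\R^n}\bar\omega=0$ on $(\R^n\setminus B_R)/\Gamma_i$ and $\bar\omega=\mathcal{O}_\infty(r^{k_-(\delta)})$ by Proposition \ref{prop:decay-Rn}, while $\omega-\bar\omega=u=\mathcal{O}_\infty(r^{k_-(\delta)-\epsilon})$, exactly as required.

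The main technical obstacle is that the target weight $b\approx a-\tau$ may be exceptional for the flat Laplacian, in which case neither the refined estimate \eqref{eq:ell-reg-lapl2} nor the clean solvability of $\Delta_{\R^n}$ is available and the inverse could reintroduce a harmonic mode of the wrong order. I would circumvent this precisely as above, perturbing $b$ by an arbitrarily small $\epsilon'>0$ to a non-exceptional value; since the exceptional weights are discrete and we only forfeit $\epsilon'$ of decay per step, this is harmless for an iteration whose exponent decreases by nearly $\tau$ each time. A secondary bookkeeping point is that every estimate must be upgraded from pointwise or $L^p$ control to the $\mathcal{O}_\infty$ (all-derivatives) control in the statement; this is automatic from interior elliptic regularity together with the Sobolev embeddings of Proposition \ref{prop:Sob}, which is why I phrase each step directly in terms of $\mathcal{O}_\infty$.
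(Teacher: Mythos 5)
Your proposal is correct and follows essentially the same route as the paper's proof: in each step one cuts off $\Delta_{\R^n}\omega$, inverts the flat Laplacian at a nearby non-exceptional weight (Bartnik), upgrades to $\mathcal{O}_\infty$ control via elliptic regularity and weighted Sobolev embeddings, caps the flat-harmonic remainder at $r^{k_-(\delta)}$ by Proposition \ref{prop:decay-Rn}, and iterates until the gain of (almost) $\tau$ per step reaches $k_-(\delta)$. The only cosmetic difference is that the paper fixes the weight shift in $[\tfrac{\tau}{2},\tau]$ and extracts $\bar\omega$ within the iteration, whereas you first establish the decay and then perform one final solve to produce $\bar\omega$; both are equivalent.
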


\begin{proof}

The hypothesis that $M$ is ALE to order $\tau>0$ implies by \eqref{eq:extra_decay_L2} that $\Delta_{\R^n}\omega= \mathcal{O}_\infty(r^{\delta-2-\tau})$. Hence, in particular $\Delta_{\R^n}\omega= \mathcal{O}_\infty(r^{\delta-2-\epsilon})$ for any $\varepsilon\in (0,\tau]$. Choose $\epsilon\in [\frac{\tau}{2},\tau]$ such that $\delta-\epsilon$ is non-exceptional; this is possible since the exceptional set is discrete. Since by \cite[Theorem 1.7]{Bar86}, $\Delta_{\R^n_*} : W'^{2,p}_{\delta-\epsilon}\rightarrow W'^{0,p}_{\delta-2-\epsilon}$ is an isomorphism , one can find $\omega_0\in W_{\delta-\epsilon}^{2,p}((\R^n\setminus B_R)/\Gamma_i)$ such that 

$$\Delta_{\R^n}\omega_0=\Delta_{\R^n}\omega$$
in restriction to $(\R^n\setminus B_R)/\Gamma_i$. Elliptic regularity (Proposition \ref{prop:ell-reg}) and Sobolev embeddings (Proposition \ref{prop:Sob}) imply that $\omega_0= \mathcal{O}_\infty(r^{\delta-\epsilon})$, hence $\omega_0=\mathcal{O}_\infty(r^{\delta-\frac{\tau}{2}})$ since $\epsilon\geq \frac{\tau}{2}$. Define $\bar{\omega}=\omega-\omega_0$, then $\Delta_{\R^n}\bar{\omega}=0$ outside $B_R/\Gamma_i$. Clearly, one has $\bar{\omega}\in \mathcal{O}_\infty(r^\delta)$. According to Proposition \ref{prop:decay-Rn}, in fact $\bar{\omega}\in \mathcal{O}_\infty(r^{k_-(\delta)})$. Thus, starting from the assumption that $\omega\in\ker_\delta(\Delta)$, we have obtained the existence of $\bar{\omega}\in \Lambda^*((\R^n\setminus B_R)/\Gamma_i)$, $\Delta_{\R^n}\bar{\omega}=0$ on $(\R^n\setminus B_R)/\Gamma_i$, such that

$$\omega=\bar{\omega}+\mathcal{O}_\infty(r^{\delta-\frac{\tau}{2}}).$$
And furthermore, $\bar{\omega}=\mathcal{O}_\infty(r^{k_-(\delta)})$. This implies that 

$$\omega=\mathcal{O}_\infty(r^\lambda),\quad \lambda=\max(k_-(\delta),\delta-\frac{\tau}{2}).$$
One can now iterate this argument, starting with the new decay rate for $\omega$: for any $\ell\in \N$, this gives a decomposition

$$\omega=\bar{\omega}+\mathcal{O}_\infty(r^{\lambda-\frac{\tau}{2}}),\quad \lambda=\max(k_-(\delta),\delta-(\ell-1)\frac{\tau}{2}),$$
where $\bar{\omega}=\mathcal{O}_\infty(r^{k_-(\delta)})$ is harmonic for the Euclidean Laplacian outside a compact set. Now choose $\ell$ large enough, so that 

$$\delta-\ell\frac{\tau}{2}<k_-(\delta)\leq \delta-(\ell-1)\frac{\tau}{2},$$
then we get for $\epsilon=k_-(\delta)-\delta-\ell\frac{\tau}{2}>0$ that

$$\omega=\bar{\omega}+\mathcal{O}_\infty(r^{k_-(\delta)-\epsilon}),$$
and this concludes the proof of the lemma.

\end{proof}
Now we are ready to give the proof of Proposition \ref{prop:decay_harmonic_forms}.

\begin{proof}[Proof of Proposition \ref{prop:decay_harmonic_forms}]

Let $p\in \ker_{L^p}(\Delta_k)$. Note that $L^p=L^p_\delta$ with $\delta=-\frac{n}{p}$, so $\omega\in \mathrm{ker}_\delta(\Delta)$. According to Lemma \ref{lem:iterate}, we get

$$\omega= \mathcal{O}_\infty(r^{k_-(\delta)}).$$
Now, $\delta<0$, so $k_-(\delta)\leq 2-n$, hence 

$$\omega= \mathcal{O}_\infty(r^{2-n}).$$
This implies that $d\omega,d^*\omega=\mathcal{O}_\infty(r^{1-n})$. Consider for $R>0$ large enough, relatively compact open sets $B_R$ in $M$, which have boundary that identifies using the coordinate systems at infinity in each end to the spheres $\partial B_{\R^n}(0,R)$ in $\R^n$ quotiented by the action of $\Gamma_i$. Integration by parts implies that

\begin{eqnarray*}
0 &=& (\Delta\omega,\omega)_{L^2}\\
&=& \lim_{R\to\infty}\left\{\int_{B_R} [|d\omega|^2+|d^*\omega|^2]\dv+\int_{\partial B_R} [(d^*\omega,\iota_\nu \omega)+(\omega,\iota_\nu d\omega)]\dS\right\}.
\end{eqnarray*}
Given the asymptotics of $\omega,d^*\omega,d\omega$, the boundary integral is $O(R^{2-n})$, hence tends to zero as $R\to\infty$. Therefore, we conclude that

$$0=\lim_{R\to \infty} \int_{B_R}[ |d\omega|^2+|d^*\omega|^2]\dv,$$
so $d\omega=0$ and $d^*\omega=0$. We have thus proved that $\omega$ is closed and co-closed, hence point (a) of the proposition is proved. Now, let us apply once more Lemma \ref{lem:iterate}: we obtain a form $\bar{\omega}$ such that at each end $E_i$, $\Delta_{\R^n}\bar{\omega}=0$ on $(\R^n\setminus B_R)/\Gamma_i$, $\bar{\omega}=\mathcal{O}_\infty(r^{2-n})$, and for some $\epsilon>0$,

\begin{equation}\label{eq:decomp_harmo_decay}
\omega=\bar{\omega}+\mathcal{O}_\infty(r^{2-n-\epsilon}).
\end{equation}
Since $\omega$ is closed, the above equation implies that

$$d\bar{\omega}=\mathcal{O}_\infty(r^{1-n-\epsilon}).$$
Moreover, since $\omega$ is co-closed, one has $d^*\bar{\omega}=\mathcal{O}_\infty(r^{1-n-\epsilon})$. Since $M$ is ALE of order $\tau>0$, equation \eqref{eq:extra_decay_d*2} entails that, up to lowering the value of $\epsilon>0$,

$$d^*_{\R^n}\bar{\omega}=\mathcal{O}_\infty(r^{1-n-\epsilon}).$$
All in all, we have obtained that 

$$d\bar{\omega},\,d^*_{\R^n}\bar{\omega}=\mathcal{O}_\infty(r^{1-n-\epsilon}).$$
According to Lemma \ref{lem:decay-harmo-Rn}, we then obtain that 

$$\bar{\omega}\in \mathcal{O}_\infty(r^{1-n}).$$
Coming back to \eqref{eq:decomp_harmo_decay}, and lowering the value of $\epsilon>0$ if necessary, we obtain that

$$\omega=\mathcal{O}_\infty(r^{2-n-\epsilon}).$$
Applying Lemma \ref{lem:iterate} with this new decay rate, we obtain

$$\omega=\mathcal{O}_\infty(r^{k_-(2-n-\epsilon)}).$$
But $k_-(2-n-\epsilon)\leq 1-n$, therefore

$$\omega=\mathcal{O}_\infty(r^{1-n}).$$
This concludes the proof of point (b) of the proposition. 

We now assume that $k\in \{2,\cdots,n-2\}$; we play the same game as before: by Lemma \ref{lem:iterate}, starting with the decay rate $\omega=\mathcal{O}_\infty(r^{1-n})$, one obtains, instead of \eqref{eq:decomp_harmo_decay},

\begin{equation}\label{eq:decomp_harmo_decay2}
\omega=\bar{\omega}+\mathcal{O}_\infty(r^{1-n-\epsilon}).
\end{equation}
with $\bar{\omega}=\mathcal{O}_\infty(r^{1-n})$ harmonic for the Euclidean Laplacian outside a compact set. One then shows in a similar way as before that

$$d\bar{\omega},\,d^*_{\R^n}\bar{\omega}=\mathcal{O}_\infty(r^{-n-\epsilon}).$$
Now, Lemma \ref{lem:decay-harmo-Rn} yields that

$$\bar{\omega}\in \mathcal{O}_\infty(r^{-n}),$$
and coming back to \eqref{eq:decomp_harmo_decay2}, we arrive to

$$\omega=\mathcal{O}_\infty(r^{1-n-\epsilon}).$$
Applying once more Lemma \ref{lem:iterate} with this new decay rate, we get

$$\omega=\mathcal{O}_\infty(r^{k_-(1-n-\epsilon)}),$$
and noticing finally that $k_-(1-n-\epsilon)\leq -n$, we arrive to

$$\omega=\mathcal{O}_\infty(r^{-n}).$$
This concludes the proof of point (c) of the proposition in the case $k\in\{2,\cdots,n-2\}$.

Finally, one assumes that $k\in \{1,\cdots,n-1\}$ and $p\leq \frac{n}{n-1}$. In this case, $\delta \leq 1-n$. According to Corollary \ref{cor:ell_reg}, one has

$$\omega\in o_\infty(r^{1-n}).$$
Therefore, applying Lemma \ref{lem:iterate}, one finds a form $\bar{\omega}$ such that $\Delta_{\R^n}\bar{\omega}=0$ on $\R^n\setminus B_R$, and

\begin{equation}
\omega=\bar{\omega}+\mathcal{O}_\infty(r^{1-n-\epsilon}).
\end{equation}
Moreover, the decay of $\omega$ implies that

$$\bar{\omega}=o_\infty(r^{1-n}).$$
A variation on Proposition \ref{prop:decay-Rn}, which is left to the reader, implies that
$$\bar{\omega}=\mathcal{O}_\infty(r^{-n})$$
(every harmonic function on $(\R^n\setminus B_R)/\Gamma_i$ which decays faster than $r^{1-n}$ has to decay at least to order $r^{-n}$). The end of the proof is now the same as in the case $k\in \{2,\cdots,n-2\}$. The proof of point (c) of the proposition is now complete.

Let us now prove point (d). Notice that if $p>\frac{n}{n-1}$, then

$$\ker_{1-n}(\Delta_k)\subset \ker_{L^p}(\Delta_k).$$
So, using (b),

$$\ker_{L^p}(\Delta_k)\subset \ker_{1-n}(\Delta_k)\subset \ker_{L^p}(\Delta_k).$$
Hence,

$$\ker_{L^p}(\Delta_k)=\ker_{1-n}(\Delta_k).$$
Finally, concerning point (e), since $\ker_{-n}(\Delta_k)\subset \ker_{1-n}(\Delta_1)$ it is enough to prove that the latter is finite dimensional. But according to point (d), one has

$$\ker_{1-n}(\Delta_k)=\ker_{L^2}(\Delta_k)=\mathcal{H}^k(M),$$
and the dimension of the latter space is equal to the $k$th reduced $L^2$ Betti number of $M$, which is known to be finite for ALE manifolds (see for instance \cite{C1}).

\end{proof}

\begin{proof}[Proof of Lemma \ref{lem:expansion_fn}]

Lemma \ref{lem:iterate} yields a decomposition $u=u_0+u_1$, where $\Delta^{\R^n}u_0=0$, $u_0$ is bounded, and $u_1\in \mathcal{O}_{\infty}(r^{-\epsilon})$, $\epsilon>0$. It is well-known that a bounded function $v$ which is harmonic outside a compact set of $\R^n$ has a limit at infinity. Let us recall briefly a proof of this fact; let $w:=v-\Delta^{-1}(\Delta v)$, then $w$ is harmonic and bounded on $\R^n$. Let us consider $\tilde{w}:=w+c$, where the constant $c$ is chosen so that $\tilde{w}$ is non-negative and the infimum of $\tilde{w}$ (which is attained at infinity) if zero. We claim that $\tilde{w}$ is zero everywhere, which implies the following representation formula for $v$:

$$v=-c+\Delta^{-1}(\Delta v),$$
and using the expression of the Green operator in $\R^n$ and the fact that $\Delta v$ has compact support, it follows that $v$ tends to $c$ at infinity. Thus, let us come back to $\tilde{w}$ and prove the claim. Denote $A_R$ the annulus $B(0,2R)\setminus B(0,R)$. Then, $A_R$ can be covered by a number $C(n)$ of balls of radius $R$. The Harnack inequality for these balls implies that the annuli $A_R$ satisfy too a Harnack inequality, with a constant independant of $R$. Hence, there is a constant $C>0$ such that for every $R>0$,

$$\sup_{A_R}\tilde{w}\leq C \inf_{A_R}\tilde{w}.$$
But the right-hand side tends to $0$ as $R\to\infty$, hence the left-hand side as well. However, thanks to the maximum principle,

$$\max_{\R^n}\tilde{w}=\lim_{R\to \infty} \max_{A_R}\tilde{w},$$
and we conclude that the maximum of $\tilde{w}$ is zero. Since $\tilde{w}$ is non-negative, it follows that $\tilde{w}$ is identically zero, which achieves the proof of the claim.

Coming back to $u_0$, and applying the result of \cite[Lemma 4.1]{KP}, one thus obtains the existence of constants $c_i$ and $A_i$, $i=1,\cdots,N$ such that as $r\to \infty$ in the end $E_i$,

$$u_0=c_i+A_ir^{2-n}+\mathcal{O}_\infty(r^{1-n}),\quad r\to\infty.$$
Applying iteratively Lemma \ref{lem:iterate} to $u-c_i$ in each end $E_i$, one can in fact obtain a decomposition $u=u_0+u_1$ with $u_0$ satisfying the above asymptotics in each end (with constants $A_i$ that may be different), and $u_1\in \mathcal{O}_\infty(r^{-\lambda})$, $\lambda=\max(2-n,-\ell\frac{\tau}{2})$, $\ell\in \N$. Taking $\ell$ large enough so that $\ell\frac{\tau}{2}>n-2$, we obtain $u-c_i=\mathcal{O}_\infty(r^{2-n})$ in the end $E_i$. Applying one last time Lemma \ref{lem:iterate} gives $u=u_0+u_1$ with $u_0$ as above, and $u_1=\mathcal{O}_\infty(r^{2-n-\frac{\tau}{2}})$. The lemma is thus proved, with the choice $\epsilon=\frac{\tau}{2}>0$.

\end{proof}

\begin{proof}[Proof of Lemma \ref{lem:expansion_forms}]
Because $\omega\in\ker_{-\alpha}(\Delta_1)$, we have $\omega\in L^p$ for each $p> \frac{\alpha}{n}$. By Proposition 
\ref{prop:decay_harmonic_forms},
$\omega\in\ker_{1-n}(\Delta_1)$ 
 and $\omega$ is closed and co-closed.
By Lemma \ref{lem:iterate},
one finds a form $\bar{\omega}\in\mathcal{O}_\infty(r^{1-n})$ such that at each end $E_i$, we have $\Delta_{\R^n}\bar{\omega}=0$ on $(\R^n\setminus B_R)/\Gamma_i$, and
\begin{equation}
\omega=\bar{\omega}+\mathcal{O}_\infty(r^{1-n-\epsilon}).
\end{equation}
Moreover, $d\bar{\omega}=0$ and according to \eqref{eq:extra_decay_d*2}, up to lowering the value of $\epsilon>0$, $d_{\R^n}^*\bar{\omega}= \mathcal{O}_\infty(r^{-n-\epsilon})$.
Because $\bar{\omega}$ is a one-form, we have 
following the proof of 
\cite[Lemma 4.2]{KP} that
\begin{equation*}\bar{\omega}=A_id(r^{2-n})+\mathcal{O}(r^{-n})=(n-2)A_i r^{1-n}dr+\mathcal{O}(r^{-n}),
\end{equation*}
 for some constant $A_i\in\R$. Letting $B_i=(n-2)A_i$, we get that in each end $E_i$, as $r\to\infty$,
 
 $$\omega=B_ir^{1-n}dr+\mathcal{O}_\infty(r^{1-n-\epsilon}),$$
which concludes the proof of the lemma.
\end{proof}
The rest of this section consists of {\em new} material, which for reasons of clarity of the exposition we choose to present here. It is devoted to the proof of Proposition \ref{prop:closed_subspace}. We start with the following new result, which is a variation on point (a) of Proposition \ref{prop:decay_harmonic_forms}:

\begin{Lem}\label{lem:bounded_kernel}

Let $\delta<1$ and $\omega\in \mathrm{ker}_\delta(\mathcal{D})$; then, $\omega$ is $\mathcal{O}_\infty(1)$, and is closed and co-closed.

\end{Lem}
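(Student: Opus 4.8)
The plan is to deduce the Laplace equation, establish boundedness via the iteration lemma, and then transfer the ``closed and co-closed'' question from $\omega$ (which need not decay) to the form $\alpha:=d\omega$ (which does).

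First I would note that $\mathcal{D}^2=\Delta$, so $\mathcal{D}\omega=0$ gives $\Delta\omega=0$; thus $\omega\in\ker_\delta(\Delta)$, and Lemma \ref{lem:iterate} yields $\omega=\mathcal{O}_\infty(r^{k_-(\delta)})$. Since $\delta<1$ and $n\geq 3$, the only exceptional weight of the Laplacian in $[0,1)$ is $0$, while the next one below is $2-n\leq -1$; hence $k_-(\delta)\leq 0$ and $\omega=\mathcal{O}_\infty(1)$. This is the first assertion.

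For the second assertion I set $\alpha:=d\omega$. Because $\mathcal{D}\omega=0$ we also have $\alpha=-d^*\omega$, so $\alpha$ is simultaneously closed ($d\alpha=d^2\omega=0$) and co-closed ($d^*\alpha=-(d^*)^2\omega=0$), hence harmonic; and proving $\alpha=0$ is equivalent to $\omega$ being closed and co-closed. Differentiating the bound on $\omega$ gives $\alpha=\mathcal{O}_\infty(r^{-1})$, so by Lemma \ref{lem:iterate} again $\alpha=\mathcal{O}_\infty(r^{k_-(-1)})=\mathcal{O}_\infty(r^{2-n})$, whence $\alpha\in L^p$ for $p$ large and Proposition \ref{prop:decay_harmonic_forms}(b) upgrades this to $\alpha\in\ker_{1-n}(\Delta)$.

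The crucial step is to push the decay of $\alpha$ down to order $r^{-n}$. Decomposing $\alpha=\sum_j\alpha_j$ by degree, one has $\alpha_0=\alpha_n=0$ automatically, and for each $j$, $\alpha_j=-d^*\omega_{j+1}$ with $\omega_{j+1}=\mathcal{O}_\infty(1)$. Cutting off $\omega_{j+1}$ by a function $\chi_R$ and using $d^*(\chi_R\omega_{j+1})=\chi_Rd^*\omega_{j+1}-\iota_{\nabla\chi_R}\omega_{j+1}$ together with $\|\iota_{\nabla\chi_R}\omega_{j+1}\|_p^p=O(R^{n-p})\to0$ for $p>n$ shows $\alpha_j\in\im_{L^p}(d^*_{j+1})$. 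Combined with $\alpha_j\in\ker_{1-n}(\Delta_j)$, I would conclude $\alpha_j\in\ker_{-n}(\Delta_j)$: for $j\notin\{1,n-1\}$ this is immediate from $\ker_{1-n}(\Delta_j)=\ker_{-n}(\Delta_j)$ (Remark following Theorem \ref{thm:main1}), while for $j\in\{1,n-1\}$ it is exactly Proposition \ref{pro:Lp1}(i) (directly for $j=1$, and by Hodge duality for $j=n-1$). Thus $\alpha_j=\mathcal{O}_\infty(r^{-n})$ for every $j$. Finally, since each $\alpha_j=d\omega_{j-1}$ is co-closed and $\mathcal{O}(r^{-n})$, integration by parts on the balls $B_R$ gives $\|\alpha_j\|_{L^2(B_R)}^2=\int_{\partial B_R}\langle\iota_\nu\alpha_j,\omega_{j-1}\rangle$, whose integrand is $O(r^{-n})$ against the bounded $\omega_{j-1}$ over a sphere of area $O(R^{n-1})$, hence $O(R^{-1})\to0$; therefore $\alpha_j=0$ for all $j$, so $\alpha=0$ and $\omega$ is closed and co-closed.

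The main obstacle is precisely the borderline behaviour at weight $0$: in contrast with Proposition \ref{prop:decay_harmonic_forms}(a), here $\omega$ itself need not decay, so a direct integration by parts on $\omega$ leaves boundary terms that need not vanish. The device that resolves this is to work with $\alpha=d\omega$ instead, which \emph{does} decay, and to invoke the already established decay dichotomy (Proposition \ref{pro:Lp1}, and the identity $\ker_{1-n}=\ker_{-n}$ away from degrees $1$ and $n-1$) to force $\alpha$ to order $r^{-n}$ --- fast enough that only the boundedness of $\omega$, and not any decay of $\omega$, is required to annihilate the boundary term.
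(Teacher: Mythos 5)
Your argument is sound in outline and reaches the conclusion by a genuinely different route from the paper's. Both proofs share the skeleton: boundedness via Lemma \ref{lem:iterate} (identical first step), then forcing $d\omega$ and $d^*\omega$ down to $\mathcal{O}_\infty(r^{-n})$, then an integration by parts in which only the \emph{boundedness} of $\omega$ is needed to kill the boundary term. The divergence is in the middle. The paper treats the components $\theta_j$ of $d^*\omega$ and $\eta_i$ of $d\omega$ separately, kills $\theta_0,\eta_n$ by the maximum principle, and disposes of the problematic degree-one component $\theta_1$ by writing $\theta_1=dh+\mathcal{O}_\infty(r^{-n})$ (Corollary \ref{cor:decomp_kernel_1}), subtracting $\omega_n$ so that $*d^*\bar\omega$ becomes a sum of exact forms, and running a Stokes-on-spheres computation to show the coefficients $A_i$ of $h$ vanish. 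You instead exploit $d\omega=-d^*\omega=:\alpha$, which makes $\alpha_0=\alpha_n=0$ immediate and lets you show by a cutoff (using only $\omega=\mathcal{O}_\infty(1)$ and $p>n$) that each $\alpha_j$ lies in the relevant $L^p$-image; you then delegate the hard degree-$1$ and degree-$(n{-}1)$ cases to Proposition \ref{pro:Lp1}(i), whose proof contains essentially the same Poincar\'e-duality/Stokes argument. There is no circularity in doing so (Proposition \ref{pro:Lp1} does not rely on this lemma or on Proposition \ref{prop:closed_subspace}), and your packaging is arguably cleaner; the paper's version is self-contained at the cost of redoing the sphere integration by hand.

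One step, as literally written, does not go through and must be repaired. For $j=n-1$ you establish $\alpha_{n-1}=-d^*\omega_n\in\im_{L^p}(d^*_n)$ and then invoke Proposition \ref{pro:Lp1}(i) ``by Hodge duality''. But the Hodge dual of Proposition \ref{pro:Lp1}(i) is
$\ker_{1-n}(\Delta_{n-1})\cap\im_{L^p}(d_{n-2})\subset\ker_{-n}(\Delta_{n-1})$,
since $*\im_{L^p}(d_2^*)=\im_{L^p}(d_{n-2})$, whereas $*\im_{L^p}(d_0)=\im_{L^p}(d_n^*)$. The inclusion you would actually need, namely $\ker_{1-n}(\Delta_{n-1})\cap\im_{L^p}(d^*_n)\subset\ker_{-n}(\Delta_{n-1})$, is \emph{false} when $M$ has $N\geq 2$ ends: by Propositions \ref{pro:im_p>n} and \ref{pro:Lp2} its left-hand side contains $*\,d(\ker_0(\Delta_0))$, which is $(N-1)$-dimensional and not in $\ker_{-n}$. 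The fix is already in your hands: $\alpha_{n-1}$ also equals $d\omega_{n-2}$, and the same cutoff argument (with the Leibniz term $d\chi_R\wedge\omega_{n-2}$, again $O(R^{(n-p)/p})$ in $L^p$ for $p>n$) gives $\alpha_{n-1}\in\im_{L^p}(d_{n-2})$, to which the correctly dualized Proposition \ref{pro:Lp1}(i) applies. With that substitution the proof is complete.
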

Note that if $-n<\delta<0$, one can write $\delta=-\frac{n}{p}$ for some $p\in (1,+\infty)$; since $\Delta\omega=\mathcal{D}^2\omega=0$, one has thus that $\ker_\delta(\mathcal{D})\subset \ker_{L^p}(\Delta)$, and the result in this case for forms of degree $\neq 0$ or $n$ is then a consequence of Proposition \ref{prop:decay_harmonic_forms}, point (a); the result for any $\delta<0$ and under the same restriction on the degree follows. The real improvement here is that one can allow $\delta$ to be (slightly) positive, at the expense of the stronger assumption $\mathcal{D}\omega=0$ instead of just $\Delta\omega=0$.

\begin{proof}

Let us first prove that $\omega\in \ker_0(\mathcal{D})$, which follows from the iterative argument already used in the proof of Proposition \ref{prop:decay_harmonic_forms}. First, since $\mathcal{D}^2=\Delta$, we have $\omega\in \ker_\delta(\Delta)$. Lemma \ref{lem:iterate} implies that

$$\omega = \mathcal{O}_\infty(r^{k_-(\delta)}).$$
Since $\delta<1$, one has $k_-(\delta)\leq 0$, therefore

$$\omega= \mathcal{O}_\infty(1).$$
Since $d$ and $d^*$ commute with $\Delta$, and $\Delta\omega=0$, we conclude that

$$d\omega,\,d^*\omega\in \ker_{-1}(\Delta).$$
Write

$$d^*\omega=\sum_{j=0}^{n-1}\theta_j,\quad \theta_j\in \Lambda^jM,$$
and

$$d\omega=\sum_{i=1}^{n} \eta_i,\quad \eta_i\in \Lambda^iM.$$
Obviously, since $\Delta$ preserves the degree of differential forms, we have for any $i$ and $j$,

$$\eta_i,\,\theta_j\in \ker_{-1}(\Delta).$$
By the maximum principle for the scalar Laplacian, a decaying harmonic form of degree $0$ or $n$ is necessarily identically zero. Therefore, $\eta_n=0$ and $\theta_0=0$. Thus, since $\ker_{-1}(\Delta)\subset \ker_{L^p}(\Delta)$ for any $p>n$, Proposition \ref{prop:decay_harmonic_forms} point (b) implies that $d\omega,\,d^*\omega\in \ker_{1-n}(\Delta)$. We are going to prove, using some results from Section 4, that actually, for any $i\neq 1$ and $j\neq n-1$, $\eta_i,\,\theta_j\in \ker_{-n}(\Delta)$. First, notice that Proposition \ref{prop:decay_harmonic_forms} point (c) implies that any $\theta_j$, $\eta_i$ with $i,j\in\{2,\cdots,n-2\}$ belongs to $\ker_{-n}(\Delta)$. It thus remains to prove that $\theta_{1}$ and $\eta_{n-1}$ belong to $\ker_{-n}(\Delta)$. As will be apparent, the proofs of these two facts are completely similar one to another, so we do it only for $\theta_1$. Decompose

$$\omega=\sum_{k=0}^n\omega_k,$$
then it follows from degree considerations that

\begin{equation}\label{eq:theta_n-1}
\theta_{n-1}=d^*\omega_n.
\end{equation}
According to Corollary \ref{cor:decomp_kernel_1}, there exists $h\in \ker_0(\Delta_0)$ such that

$$\theta_1=dh+\mathcal{O}_\infty(r^{-n}).$$
Let 

$$\bar{\omega}:=\omega-\omega_n,$$
and define $\bar{\theta}_j$, $\bar{\eta}_i$ analogously to $\theta_j$, $\eta_i$ by decomposing $d^*\bar{\omega}$ and $d\bar{\omega}$ according to the degrees. Then, by \eqref{eq:theta_n-1},

$$\bar{\theta}_j=\theta_j,\quad j\neq n-1,$$
and

$$\bar{\theta}_{n-1}=0.$$
Therefore,

$$d^*\bar{\omega}=dh+\mathcal{O}_\infty(r^{-n}).$$
Taking the Hodge star of this identity, we get

\begin{equation}\label{eq:exact_dh}
*d^*\bar{\omega}=*dh+\mathcal{O}_\infty(r^{-n}).
\end{equation}
But using the well-known facts that $d^*=\pm *d*$ and $*^2=\pm id$ in restriction to forms of a given degree (where the signs depend on the particular degree), we see that $*d^*\bar{\omega}$ is a sum of exact forms (of various degrees). Integrate the degree $n-1$ component of \eqref{eq:exact_dh} over each Euclidean sphere $S_i(R)$ of radius $R$ and center $0$ in the quotient space $\R^n/\Gamma_i$, at each end $E_i$ of $M$. By Stokes formula and the fact that the left-hand side of \eqref{eq:exact_dh} is exact, we obtain for each sphere that
$$\int_{S_i(R)} *dh=O(\frac{1}{R}),$$
as $R\to\infty$. Furthermore, Lemma \ref{lem:expansion_fn} gives the following expansion of $h$ in each end $E_i$:

$$h=c_i+A_ir^{2-n}+\mathcal{O}_\infty(r^{2-n-\epsilon}),\quad c_i\in \R,\,\epsilon>0,$$
so

$$dh=(2-n)A_ir^{1-n}dr+\mathcal{O}_\infty(r^{1-n-\epsilon}).$$
From this expansion, it is easy to see that

$$\int_{S_i(R)} *dh=(2-n)A_i\mathrm{Vol}(S^{n-1}/\Gamma_i)+o(1),$$
as $R\to \infty$. Therefore, we conclude that for any end $E_i$, one has $A_i=0$. By (ii) in Lemma \ref{lem:Ai}, this implies that $h$ is constant, and so $dh=0$. Therefore, since $\theta_1=dh+\mathcal{O}_\infty(r^{-n})$, we conclude that

$$\theta_1\in\ker_{-n}(\Delta).$$
As indicated above, the proof that $\eta_{n-1}\in\ker_{-n}(\Delta)$ is completely similar and will be skipped. Define now

$$\tilde{\omega}:=\omega-\omega_0-\omega_n,$$
and $\tilde{\theta}_j$, $\tilde{\eta}_i$ the associated forms in the decomposition of $d^*\tilde{\omega}$, $d\tilde{\omega}$ according to the degrees. We have

$$d^*\tilde{\omega}=d^*\omega-d^*\omega_n,$$
and

$$d\tilde{\omega}=d\omega-d\omega_0,$$
which implies that

$$\tilde{\theta}_j=\theta_j,\quad j\neq n-1,$$

$$\tilde{\theta}_{n-1}=0,$$

$$\tilde{\eta}_i=\eta_i,\quad i\neq 1,$$

$$\tilde{\eta}_1=0.$$
It follows that $d\tilde{\omega},\,d^*\tilde{\omega}\in \ker_{-n}(\Delta)$. Moreover, one also has $\tilde{\omega}\in \ker_0(\Delta)$. Let us see that this implies $d\tilde{\omega}=0$, $d^*\tilde{\omega}=0$. The argument for this is similar to the one already used in the proof of Proposition \ref{prop:decay_harmonic_forms}: consider for $R>0$ large enough, relatively compact open sets $B_R$ in $M$, which have boundary that identifies using the coordinate systems at infinity to the spheres $\partial B_{\R^n}(0,R)$ in $\R^n$ quotiented by the action of $\Gamma_i$. Integration by parts imply that

\begin{eqnarray*}
0 &=& (\Delta\tilde{\omega},\tilde{\omega})_{L^2}\\
&=&\lim_{R\to\infty}\left\{ \int_{B_R}[ |d\tilde{\omega}|^2+|d^*\tilde{\omega}|^2]\dv+\int_{\partial B_R} [(d^*\tilde{\omega},\iota_\nu \tilde{\omega})+(\tilde{\omega},\iota_\nu d\tilde{\omega})]\dS\right\}.
\end{eqnarray*}
Given the asymptotics of $\tilde{\omega},d^*\tilde{\omega},d\tilde{\omega}$, the boundary integral is $O(\frac{1}{R})$, hence tends to zero, as $R\to\infty$. Therefore, we conclude that

$$0=\lim_{R\to \infty} \int_{B_R}[ |d\tilde{\omega}|^2+|d^*\tilde{\omega}|^2]\dv,$$
so $d\tilde{\omega}=0$ and $d^*\tilde{\omega}=0$. Let us now come back to $\omega$. We obtain

$$d\omega=d\omega_0,\quad d^*\omega=d^*\omega_n,$$
However, by assumption one also has $0=\mathcal{D}\omega=(d+d^*)\omega=d\omega_0+d^*\omega_n$. But $d\omega_0$ and $d^*\omega_n$ are of degree respectively $1$ and $n-1$, and since $n\geq 3$ one has $1\neq n-1$. Therefore, one concludes that $d\omega_0=0$ and $d^*\omega_n=0$. Hence, $d\omega=0$ and $d^*\omega=0$, and this concludes the proof.

\end{proof}
We are now ready for the proof of Proposition \ref{prop:closed_subspace}.

\begin{proof}[Proof of Proposition \ref{prop:closed_subspace}]

Let $\omega\in  \im_{L^p}(d_{k-1})+ \im_{L^p}(d^*_{k+1})$,  and consider sequences  $\alpha_i\in C^{\infty}_{c}(\Lambda^{k-1}M)$, $\beta_i\in C^{\infty}_{c}(\Lambda^{k+1}M)$, $i\in\N$  such that we have $d\alpha_i+d^*\beta_i\to\eta$ in $L^p$.
 Let $\eta \in \ker_{L^q}(\Delta_{k})$.
  By integration by parts and Proposition \ref{prop:decay_harmonic_forms},
	\begin{align*}
	 (d\alpha_i,\eta)_{L^2}=(\alpha_i,d^*\eta)_{L^2}=0,\qquad (d^*\beta_i,\eta)=(\beta_i,d\eta)=0.
	 \end{align*}
	 Therefore, by taking $i\to\infty$, we have
	 $(\omega,\eta)_{L^2}=0$, so
	  \begin{align*} \im_{L^p}(d_{k-1})+ \im_{L^p}(d^*_{k+1})\subset \mathrm{Ann}_{L^p}(\ker_{L^q}(\Delta_k)).
	  \end{align*}
	  To prove the converse inclusion, we consider the Hodge-Dirac operator $\mathcal{D}=d+d^*: C^{\infty}(\Lambda^*M)\to C^{\infty}(\Lambda^*M)$ on the whole exterior algebra, and denote
	  
$$\mathrm{im}_{L^p}(\mathcal{D})=\overline{\mathcal{D}(C_0^\infty(\Lambda^*M))}^{L^p}.$$	  
Because the operator $\mathcal{D}$ is self-adjoint, 

$$\ker_{L^q}(\mathcal{D})=\mathrm{Ann}_{L^q}(\im_{L^p}(\mathcal{D})),$$
so by reflexivity of $L^p$,

$$\im_{L^p}(\mathcal{D})=\mathrm{Ann}_{L^p}\left(\mathrm{Ann}_{L^q}(\im_{L^p}(\mathcal{D}))\right)=\mathrm{Ann}_{L^p}(\ker_{L^q}(\mathcal{D})).$$
But according to Proposition \ref{prop:decay_harmonic_forms}, 

$$\ker_{L^q}(\mathcal{D})=\ker_{L^q}(\Delta),$$
therefore

$$\im_{L^p}(\mathcal{D})=\mathrm{Ann}_{L^p}(\ker_{L^q}(\Delta)).$$
We now regard $\mathcal{D}$ as an operator between weighted Sobolev spaces

\begin{align*}
\mathcal{D}: W^{1,p}_{\delta}(\Lambda^*M)\to L^p_{\delta-1}(\Lambda^*M) 
\end{align*}
and by Corollary \ref{cor:fredholm}, this operator is Fredholm if $\delta$ is non-exceptional, that is, if $\delta\notin\left\{0,1,2,\ldots\right\}\cup\left\{1-n,-n,-n-1,\ldots\right\}$. Choose now $\delta-1=-\frac{n}{p}$, so that
\begin{align*}
L^p_{\delta-1}(\Lambda^*M)=L^p(\Lambda^*M).
\end{align*}
Observe also that $\delta=1-\frac{n}{p}$ is non-exceptional as long as $p\in (1,\infty)$, $p\neq n$. 
Therefore, 
\begin{align*}
\mathcal{D}(W^{1,p}_{1-\frac{n}{p}}(\Lambda^*M))\subset L^p(\Lambda^*M)
\end{align*}
is a closed subspace. But by definition,

$$\im_{L^p}(\mathcal{D})\subset \overline{\mathcal{D}(W^{1,p}_{1-\frac{n}{p}}(\Lambda^*M))}^{L^p},$$
so the closeness of the image implies that we have the inclusion
\begin{align*}
\im_{L^p}(\mathcal{D})\subset \mathcal{D}(W^{1,p}_{1-\frac{n}{p}}(\Lambda^*M)).
\end{align*}
Furthermore, because
\begin{align*}
\ker_{W^{1,p}_{1-\frac{n}{p}}}(\mathcal{D})
\end{align*}
is finite-dimensional, one can find a closed space $V\subset W^{1,p}_{1-\frac{n}{p}}(\Lambda^*M)$ such that
\begin{align*}
W^{1,p}_{1-\frac{n}{p}}(\Lambda^*M)=V\oplus \ker_{W^{1,p}_{1-\frac{n}{p}}}(D),
\end{align*}
and furthermore, $\mathcal{D}$ restricts to an isomorphism
\begin{align}\label{eq:Hodge_iso}
\mathcal{D}|_V:V\to \mathcal{D}(W^{1,p}_{1-\frac{n}{p}}(\Lambda^*M))\subset L^p(\Lambda^*M).
\end{align}
Now let
\begin{align*}
\omega\in \mathrm{Ann}_{L^p}(\ker_{L^q}(\Delta_k))\subset 
L^p(\Lambda^kM)
\end{align*}
If we consider $\omega$ as sitting in the whole exterior algebra, we have
\begin{align*}
\omega\in \mathrm{Ann}_{L^p}(\ker_{L^q}(\mathcal{D}))= \im_{L^p}(\mathcal{D}).
\end{align*}
Let $\{\omega_i\}_{i\in\N}\in C^{\infty}_0(\Lambda^*M)$ be a sequence such that $\mathcal{D}\omega_i\to\omega$ in $L^p$. Because 
\begin{align*}
\mathcal{D}\omega_i\in \im_{L^p}(\mathcal{D})\subset \mathcal{D}(W^{1,p}_{1-\frac{n}{p}}(\Lambda^*M)),
 \end{align*}we have by \eqref{eq:Hodge_iso} unique forms $\tilde{\omega}_i\in V$ such that $\mathcal{D}\tilde{\omega}_i=\mathcal{D}\omega_i$. Because $\tilde{\omega}_i-\omega_i$ belongs to $\mathrm{ker}_\delta(\mathcal{D})$ and $\delta=1-\frac{n}{p}<1$, Lemma \ref{lem:bounded_kernel} implies that it is in fact bounded, and is closed and co-closed. So, we get 
\begin{align*}
d\omega_i=d\tilde{\omega_i},\qquad d^*\omega_i=d^*\tilde{\omega}_i.
\end{align*}
Now, because
 $\mathcal{D}\omega_i=\mathcal{D}\tilde{\omega}_i\to\omega$ in $L^p$, $\mathcal{D}\tilde{\omega}_i$
  is a Cauchy sequence, hence converges, in $L^p=L^p_{-\frac{n}{p}}$. By \eqref{eq:Hodge_iso}, $\tilde{\omega}_i$ is thus a Cauchy sequence in $W^{1,p}_{1-\frac{n}{p}}$. Denote by $\omega_\infty$ its limit. We claim that the following estimate holds: for every $\alpha\in C^\infty(\Lambda^*M)$,
  
\begin{equation}\label{eq:semi-norm}
||\nabla \alpha||_p\lesssim ||\alpha||_{W^{1,p}_{1-\frac{n}{p}}}.
\end{equation}
Indeed, the estimate is trivial locally, and thus it is enough to prove it in charts at infinity. We can assume that $\alpha$ is a $k$-form for some $k\in \{1,\cdots,n\}$. Note then that the fact that the metric is ALE to order $\tau$ implies that the Christoffel symbols in a chart at infinity satisfy

$$\Gamma_{ij}^k=\mathcal{O}_\infty (\sigma^{-1-\tau}),\quad i,j,k\in\{1,\cdots,n\}$$
(recall that $\sigma=(1+r^2)^{1/2}$).
 Thus, writing

$$\nabla_{\partial_i}\alpha=\partial_k\alpha+\sum_{1\leq i_1,\cdots,i_k\leq n}\alpha_{i_1\cdots i_k} \sum_{\ell=1}^n\sum_{s=1}^k(-1)^\ell\Gamma_{si}^\ell dx_\ell\wedge dx_{i_1}\wedge\cdots \wedge  \widehat{dx_{i_s}}\wedge \cdots \wedge dx_{i_k},$$
we conclude that

$$||\nabla \alpha||_p\lesssim \sum_{|\alpha|=1}||\partial^\alpha \alpha||_p+||\sigma^{-\tau-1}\alpha||_p.$$
It follows by definition of the weighted Sobolev norm that

$$\sum_{|\gamma|=1}||\partial^\gamma \alpha||_p+||\sigma^{-\tau-1}\alpha||_p\leq \sum_{|\gamma|=1}||\partial^\gamma \alpha||_p+||\sigma^{-1}\alpha||_p\equiv ||\alpha||_{W^{1,p}_{1-\frac{n}{p}}},$$
and \eqref{eq:semi-norm} follows.
 
The estimate \eqref{eq:semi-norm} implies $\nabla{\tilde{\omega}}_i \to \nabla \omega_\infty$ in $L^p$, hence the following sequences converge strongly in $L^p$:
\begin{align*}
d\omega_i=d\tilde{\omega_i} \to d\omega_\infty:= \eta,\qquad d^*\omega_i=d^*\tilde{\omega}_i\to d^*\omega_\infty:=\xi,
\end{align*}
in particular $\eta,\xi\in L^p(\Lambda^*M)$. Since $\mathcal{D}\omega_i\to \omega$ and $\mathcal{D}\omega_i\to \eta+\xi$ in $L^p$, we get by uniqueness of the limit that $\eta+\xi=\omega$ a.e. Decompose $\omega_i=\sum_{k=0}^n\omega_i^k$
where $\omega_i^k\in L^p(\Lambda^kM)$. Then we also have convergence for the $L^p$-sequences
\begin{align*}
d_k\omega_i^k\to \eta^{k+1}\in L^p(\Lambda^{k+1}M) 
\qquad 
d^*_k\omega_i^k\to \xi^{k-1}\in L^p(\Lambda^{k-1}M)
\end{align*}
and in particular,
\begin{align*}
d_{k-1}\omega_i^{k-1}+d^*_{k+1}\omega_i^{k+1}\to \eta^k+\xi^k=\omega\in L^p(\Lambda^{k}M)
\end{align*}
so that
\begin{align*}
\omega\in \im_{L^p}(d_{k-1})+ \im_{L^p}(d^*_{k+1}).
\end{align*}
This finishes the proof of the proposition.
\end{proof}
Actually, Proposition \ref{prop:closed_subspace} and its proof imply the following Corollary which is used in Section \ref{Section:Hodge-Sob}:

\begin{Cor}\label{Cor:Hodge-Sob}

Assume that $p\neq n$, then every $\omega\in \im_{L^p}(d)+\im_{L^p}(d^*)$ writes uniquely 

$$\omega=d\alpha+d^*\beta,$$
with $\alpha\in \dot{W}^{1,p}(\Lambda^{k-1}M)$, $\beta\in \dot{W}^{1,p}(\Lambda^{k-1}M)$, and moreover

$$||\alpha||_{\dot{W}^{1,p}}\lesssim ||\omega||_p,\quad ||\beta||_{\dot{W}^{1,p}}\lesssim ||\omega||_p.$$

\end{Cor}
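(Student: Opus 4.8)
The plan is to read the potentials directly off the weighted form $\omega_\infty$ that was constructed in the proof of Proposition~\ref{prop:closed_subspace}. By linearity and decomposition into pure degrees it suffices to treat a $k$-form $\omega\in\im_{L^p}(d_{k-1})+\im_{L^p}(d^*_{k+1})$ with $k\in\{1,\dots,n-1\}$ (degrees $0$ and $n$ being trivial). Viewing $\omega$ inside the full exterior algebra, that proof shows $\omega\in\im_{L^p}(\mathcal{D})$ and, via the isomorphism \eqref{eq:Hodge_iso} together with the limiting argument there, produces $\omega_\infty\in V\subset W^{1,p}_{1-n/p}(\Lambda^*M)$ with $\mathcal{D}\omega_\infty=\omega$ in $L^p$. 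Writing $\omega_\infty=\sum_j\omega_\infty^j$ by degree and extracting the degree-$k$ component of $\mathcal{D}\omega_\infty=d\omega_\infty+d^*\omega_\infty$, I would set $\alpha:=\omega_\infty^{k-1}$ and $\beta:=\omega_\infty^{k+1}$, so that $d\alpha+d^*\beta=\omega$. This is the candidate decomposition, and it only remains to control the regularity, prove the bounds, and establish uniqueness.

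Next I would upgrade $\alpha,\beta$ from the weighted space to $\dot{W}^{1,p}$ and prove the estimates. The two inputs are the seminorm bound \eqref{eq:semi-norm}, namely $\norm{\nabla\gamma}_p\lesssim\norm{\gamma}_{W^{1,p}_{1-n/p}}$ for smooth $\gamma$, and the density of $C_c^\infty(\Lambda^*M)$ in $W^{1,p}_{1-n/p}(\Lambda^*M)$, a standard feature of Bartnik's weighted Sobolev spaces (see \cite{Bar86}). Combining them yields a continuous inclusion $W^{1,p}_{1-n/p}(\Lambda^*M)\hookrightarrow\dot{W}^{1,p}(\Lambda^*M)$: approximating $\gamma\in W^{1,p}_{1-n/p}$ by $\phi_j\in C_c^\infty$ in the weighted norm, \eqref{eq:semi-norm} forces $\nabla\phi_j\to\nabla\gamma$ in $L^p$, so $\gamma\in\dot{W}^{1,p}$ with $\norm{\gamma}_{\dot{W}^{1,p}}=\norm{\nabla\gamma}_p\lesssim\norm{\gamma}_{W^{1,p}_{1-n/p}}$. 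In particular $\alpha,\beta\in\dot{W}^{1,p}$. For the quantitative bound, the bounded inverse theorem applied to the isomorphism $\mathcal{D}|_V$ of \eqref{eq:Hodge_iso} gives $\norm{\omega_\infty}_{W^{1,p}_{1-n/p}}\lesssim\norm{\mathcal{D}\omega_\infty}_p=\norm{\omega}_p$; chaining this with the inclusion estimate yields $\norm{\alpha}_{\dot{W}^{1,p}}\lesssim\norm{\omega}_p$ and likewise for $\beta$.

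For uniqueness I would show that the summands $d\alpha$ and $d^*\beta$ are determined by $\omega$. If $d\alpha_1+d^*\beta_1=d\alpha_2+d^*\beta_2$ with all potentials in $\dot{W}^{1,p}$, set $\zeta:=d(\alpha_1-\alpha_2)=d^*(\beta_2-\beta_1)$. Since $d\gamma$ (resp. $d^*\gamma$) is an $L^p$-limit of $dC_c^\infty$ (resp. $d^*C_c^\infty$) whenever $\gamma\in\dot{W}^{1,p}$, we get $\zeta\in\im_{L^p}(d_{k-1})\cap\im_{L^p}(d^*_{k+1})$. This intersection is $\{0\}$ for every $p\neq n$, by the following case analysis with $q=p'$: for $k\in\{2,\dots,n-2\}$ one has $\ker_{L^p}(\Delta_k)=\ker_{L^q}(\Delta_k)=\ker_{-n}(\Delta_k)$ (Proposition~\ref{prop:decay_harmonic_forms}(c)), and Lemma~\ref{lem : de Rham intersection}, eq.~\eqref{eq : de Rham intersection 2}, applies; for $k\in\{1,n-1\}$ and $\tfrac{n}{n-1}<p<n$ one has $\ker_{L^p}(\Delta_k)=\ker_{L^q}(\Delta_k)=\ker_{1-n}(\Delta_k)$, so again \eqref{eq : de Rham intersection 2} applies; for $k\in\{1,n-1\}$ and $p\leq\tfrac{n}{n-1}$ one has $\ker_{L^p}(\Delta_k)=\ker_{-n}(\Delta_k)\subset\ker_{1-n}(\Delta_k)=\ker_{L^q}(\Delta_k)$, so \eqref{eq : de Rham intersection 2} still applies; and for $k\in\{1,n-1\}$ and $p>n$ the triviality is precisely Proposition~\ref{pro:Lp1}(iii)--(iv). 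Hence $\zeta=0$, i.e. $d\alpha_1=d\alpha_2$ and $d^*\beta_1=d^*\beta_2$, which is the asserted uniqueness.

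The step I expect to be the main obstacle is the passage from $W^{1,p}_{1-n/p}$ to $\dot{W}^{1,p}$: one must make sure the weighted potential is genuinely approximable by compactly supported forms in the homogeneous seminorm, which relies on density of $C_c^\infty$ in the weighted space together with the uniform control \eqref{eq:semi-norm}, and this is exactly where the hypothesis $p\neq n$ enters (it makes $\delta=1-\tfrac{n}{p}$ non-exceptional, hence $\mathcal{D}$ Fredholm and \eqref{eq:Hodge_iso} available). By comparison, the degree bookkeeping in $\mathcal{D}\omega_\infty=\omega$ and the case analysis for the trivial intersection are routine.
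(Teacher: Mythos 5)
Your proof is correct and follows essentially the same route the paper intends: the corollary is stated there as a direct consequence of the proof of Proposition \ref{prop:closed_subspace}, and your reconstruction — reading $\alpha$ and $\beta$ off the degree components of $\omega_\infty$, combining \eqref{eq:semi-norm} with density of $C_c^\infty$ in $W^{1,p}_{1-n/p}$ to land in $\dot{W}^{1,p}$, and invoking the open mapping theorem for \eqref{eq:Hodge_iso} to get the quantitative bounds — is exactly the intended argument. The uniqueness step, which the paper leaves implicit, you handle correctly via the triviality of $\im_{L^p}(d_{k-1})\cap\im_{L^p}(d^*_{k+1})$ in each case of $p\neq n$.
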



\begin{thebibliography}{99}

\bibitem{AC} P.~Auscher, T.~Coulhon, Riesz transform on manifolds and Poincaré inequalities, {\em Ann. Sc. Norm. Super. Pisa, Cl. Sci.} (5) {\bf 4}, No. 3, 531--555 (2005)

\bibitem{AMR} P.~Auscher, A.~McIntosh, E.~Russ, Hardy spaces of differential forms on Riemannian manifolds, {\em J. Geom. Anal.} {\bf 18}, No. 1, 192--248 (2008)

\bibitem{Badr} N.~Badr,  Real interpolation of Sobolev spaces, {\em Math. Scand.} {\bf 105}, No. 2, 235--264 (2009). 

\bibitem{BCS} C.~Benett, R.~Sharpley, Interpolation of operators, {\em Pure and Applied Mathematics}, Vol. {\bf 129}.  Academic Press (1988)

\bibitem{B} H.~Brezis, Functional analysis, Sobolev spaces and partial differential equations, Springer , 599 p. (2011).

\bibitem{Bakry} D.~Bakry, \'Etude des transformations de Riesz dans les vari\'et\'es riemanniennes \`a courbure de Ricci minor\'ee, S\'emin. probabilit\'es XXI, {\em Lect. Notes Math.} {\bf 1247}, 137--172 (1987).

\bibitem{Bar86} R.~Bartnik, The Mass of an Asymptotically Flat Manifold, {\em Comm. Pure Appl. Math. } {\bf 39}, No. 5, 661--693 (1986).

\bibitem{BKN} S.~Bando, A.~Kasue, H.~Nakajima, On a construction of coordinates at infinity on manifolds with fast curvature decay and maximal volume growth, {\em Invent. math.} {\bf 97}, 313--349 (1989)

\bibitem{C1} G.~Carron, $L^2$-cohomologie et in\'{e}galit\'{e}s de Sobolev, {\em Math. Ann. }{\bf 314} (1999), no. 4, 613--639.
 
\bibitem{C2} G.~Carron, Une suite exacte en $L^2$-cohomologie, {\em Duke Math. J.} {\bf 95}, No. 2, 343--372 (1998). 

\bibitem{C3} G.~Carron, $L^2$-cohomology of manifolds with flat ends, {\em Geom. Funct. Anal.} {\bf  13}, No. 2, 366--395 (2003).

\bibitem{C4} G.~Carron, $L^2$ harmonic forms on non-compact Riemannian manifolds, {\em Surveys in analysis and operator theory (Canberra, 2001)}, 49--59, Proc. Centre Math. Appl. Austral. Nat. Univ., {\bf 40}, Austral. Nat. Univ., Canberra, 2002.

\bibitem{C5} G.~Carron, Riesz transforms on connected sums, {\em Ann. Inst. Fourier} {\bf 57}, No. 7, 2329--2343 (2007).

\bibitem{C6} G.~Carron, Riesz transform on manifolds with quadratic curvature decay, {\em Rev. Mat. Iberoam.} {\bf 33}, No. 3, 749--788 (2017).

\bibitem{CCH} G.~Carron, T.~Coulhon and A.~Hassell, Riesz transform and Lp
-cohomology for manifolds with Euclidean ends, {\em Duke Math. J.} {\bf 133}, No. 1, 59--93 (2006).

\bibitem{CDun} T.~Coulhon, N.~Dungey, Riesz transform and perturbation, {\em J. Geom. Anal.} {\bf 17}, No. 2, 213--226 (2007).

\bibitem{CD} T.~Coulhon, X.T.~Duong, Riesz transforms for $1\leq p \leq 2$, {\em Trans. Am. Math. Soc.} {\bf 351}, No. 3, 1151--1169 (1999).

\bibitem{CD2} T.~Coulhon, X.T.~Duong, Riesz transform and related inequalities on non-compact Riemannian manifolds, {\em Comm. Pure Appl. Math.} {\bf 56}, No. 12, 1728--1751 (2003).

\bibitem{CHS} T.~Coulhon, I.~Holopainen, L.~Saloff-Coste, Harnack inequality and hyperbolicity for subelliptic p
-Laplacians with applications to Picard type theorems, {\em Geom. Funct. Anal.} {\bf 11}, No. 6, 1139--1191 (2001).

\bibitem{D2} B.~Devyver, A perturbation result for the Riesz transform, {\em Ann. Sc. Norm. Super. Pisa, Cl. Sci.} (5) {\bf 14}, No. 3, 937--964 (2015).

\bibitem{D} B.~Devyver, On gradient estimates for heat kernels, {\em 
Commun. Partial Differ. Equations} {\bf 46}, No. 5, 717--779 (2021).

\bibitem{DFP} B.~Devyver, M.~Fraas and Y.~Pinchover, Optimal Hardy weight for second-order elliptic operator: an answer to a problem of Agmon, {\em J. Funct. Anal.} {\bf 266}, No. 7, 4422--4489 (2014)

\bibitem{Ev} L.C.~Evans, Partial Differential Equations (Second Edition), Graduate Studies in Math. Vol. {\bf 19}, AMS

\bibitem{GM} S.~Gallot, D.~Meyer, Op\'erateur de courbure et laplacien des formes diff\'erentielles d’une vari\'et\'e riemannienne, {\em J. Math. Pures Appl.}, {\bf 54}, 1975 , p. 259--284

\bibitem{GKS} V.M.~Goldshtein, V.I.~Kuzminov and I.A.~Shvedov, Reduced $L^p$-cohomology of warped cylinders, {\em Sib. Math. J.} {\bf 31}, No. 5, 716--727 (1990)

\bibitem{GolT} V..M.~Goldshtein, M. Troyanov, A Short Proof of the H\"older-Poincar\'e Duality for
$L^p$-Cohomology, Rend. Sem. Mat. Univ. Padova, Vol. {\bf 124} (2010)

\bibitem{Grig} A.~Grigor'yan,  Analytic and geometric background of recurrence and non-explosion of the Brownian motion on Riemannian manifolds, {\em Bulletin of Amer. Math. Soc.} {\bf 36} (1999) 135--249.

\bibitem{Grig2} A.~Grigor'yan, Heat Kernel and Analysis on Manifolds, {\em AMS/IP Studies in Advanced Mathematics}, {\bf 47}, 2009.

\bibitem{GH} C.~Guillarmou, A.~Hassell, Resolvent at low energy and Riesz transform for Schr\"odinger operators on asymptotically conic manifolds I, {\em Math. Ann.} {\bf 341}, No. 4, 859--896 (2008).

\bibitem{GS} C.~Guillarmou, D.~Sher, Low energy resolvent for the Hodge Laplacian: applications to Riesz transform, Sobolev estimates, and analytic torsion, {\em Int. Math. Res. Not.} No. 15, 6136--6210 (2015).

\bibitem{H} A.~Hatcher, Algebraic topology, {\em Cambridge University Press} (2002).

\bibitem{IM} T.~Iwaniec, G.~Martin, Quasiregular mappings in even dimensions, {\em Acta Math.} {\bf 170}, No. 1, 29--81 (1993).

\bibitem{KP} K.~Kr\"oncke and O. Petersen, Long-time estimates for heat flows on ALE manifolds, {\em Int. Math. Res. Not.} (2021).

\bibitem{LT1} P.~Li, L.F.~Tam, Harmonic functions and the structure of complete manifolds, {\em J. Differ. Geom.} {\bf 35}, No. 2, 359--383 (1992).

\bibitem{LT2} P.~Li, L.F.~Tam, Green’s functions, harmonic functions, and volume comparison, {\em J. Differ. Geom.} {\bf 41}, No. 2, 277--318 (1995).

\bibitem{PP} P.~Pansu, $L^p$ cohomology of symmetric spaces, in Geometry, analysis and topology of discrete groups. Selected papers of the conference on geometry, topology and analysis of locally symmetric spaces and discrete groups, Beijing, China, July 17--August 4, 2006.

\bibitem{PRS} S.~Pigola, M.~Rigoli, A.~Setti, Vanishing and finiteness results in geometric analysis. A generalization of the Bochner technique, Progress in Mathematics {\bf 266} Basel: Birkhäuser, 282 p. (2008). 

\bibitem{P} Y.~Pinchover, Topics in the theory of positive solutions of second-order elliptic and parabolic partial differential equations, in Gesztesy, Fritz (ed.) et al., {\em Spectral theory and mathematical physics. A Festschrift in honor of Barry Simon’s 60th birthday}, Proceedings of Symposia in Pure Mathematics {\bf 76}, Pt. 1, 329--355 (2007).

\bibitem{S} L.~Saloff-Coste, Aspects of Sobolev-type inequalities, {\em London Mathematical Society Lecture Note Series} {\bf 289}, Cambridge: Cambridge University Press. x, 190 p. (2002).

\bibitem{XDLi1} X-D.Li, Riesz transforms on forms and $L^p$-Hodge decomposition on complete Riemannian manifolds, {\em Rev. Mat. Iberoam.} {\bf 26}, No. 2, 481--528 (2010); erratum ibid. {\bf 30}, No. 1, 369--370 (2014).

\bibitem{XDLi2} X-D.~Li, On the strong $L^p$-Hodge decomposition over complete Riemannian manifolds, {\em J. Funct. Anal.} 257, No. 11, 3617--3646 (2009).

\bibitem{Y} S.T.~Yau, Some Function-theoretic Properties of complete Riemannian manifolds and their Applications to Geometry, {\em Indiana Univ. Math. J.}, {\bf 25}, no.7, 659--670 (1976)


%




%
\end{thebibliography}
\end{document}